\def\biblio{\bibliography{duality}\bibliographystyle{alpha}}
\definecolor{dark-red}{rgb}{0.5,0.15,0.15}
\definecolor{dark-blue}{rgb}{0.15,0.15,0.6}
\definecolor{dark-green}{rgb}{0.15,0.6,0.15}
\newcommand{\iHom}{\underline{\operatorname{Hom}}}
\renewcommand*{\backref}[1]{}
\renewcommand*{\backrefalt}[4]{%
  \ifcase #1 %
No citations.
  \or
(cit. on p. #2).%
  \else
(cit on pp. #2).%
  \fi%
}
\newtheorem{thm}{Theorem}[section]
\newtheorem*{thm*}{Theorem}
\newtheorem{cor}[thm]{Corollary}
\newtheorem*{cor*}{Corollary}
\newtheorem{prop}[thm]{Proposition}
\newtheorem{lem}[thm]{Lemma}
\newtheorem{conj}[thm]{Conjecture}
\newtheorem{quest}[thm]{Question}
\theoremstyle{definition}
\newtheorem{defn}[thm]{Definition}
\theoremstyle{remark}
\newtheorem{rem}[thm]{Remark}
\let\c@equation\c@thm
\numberwithin{equation}{section}
\DeclareMathOperator{\Sp}{Sp}
\DeclareMathOperator{\Hom}{Hom}
\DeclareMathOperator{\End}{End}
\DeclareMathOperator{\colim}{colim}
\DeclareMathOperator{\cA}{\mathcal{A}}
\DeclareMathOperator{\cC}{\mathcal{C}}
\DeclareMathOperator{\cD}{\mathcal{D}}
\DeclareMathOperator{\Id}{\mathrm{Id}}
\DeclareMathOperator{\Ext}{Ext}
\DeclareMathOperator{\Tor}{Tor}
\DeclareMathOperator{\Spec}{Spec}
\DeclareMathOperator{\Mod}{Mod}
\DeclareMathOperator{\Stable}{Stable}
\DeclareMathOperator{\Comod}{Comod}
\DeclareMathOperator{\fib}{fib}
\DeclareMathOperator{\cofib}{cofib}
\DeclareMathOperator{\Loc}{Loc}
\DeclareMathOperator{\Thick}{Thick}
\newcommand{\Thickid}{\Thick^\otimes}
\DeclareMathOperator{\coker}{coker}
\DeclareMathOperator{\Tot}{Tot}
\DeclareMathOperator{\Alg}{Alg}
\DeclareMathOperator{\Ind}{Ind}
\DeclareMathOperator{\Her}{Her}
\DeclareMathOperator{\Coh}{Coh}
\DeclareMathOperator{\pdim}{pdim}
\DeclareMathOperator{\Spc}{Spc}
\DeclareMathOperator{\Cotor}{Cotor}
\DeclareMathOperator{\Tel}{Tel}
\newcommand{\cal}{\mathcal}
\newcommand{\xr}{\xrightarrow}
\newcommand{\Z}{\mathbb{Z}}
\Crefname{figure}{Figure}{Figures}
\Crefname{assu}{Assumption}{Assumptions}
\Crefname{lem}{Lemma}{Lemmas}
\newcommand{\cL}{\mathcal{L}}
\newcommand{\cS}{\mathcal{S}}
\newcommand{\cT}{\mathcal{T}}
\newcommand{\cG}{\mathcal{G}}
\newcommand{\F}{\mathbb{F}}
\newcommand{\bc}[1]{\langle#1\rangle}
\title{Algebraic chromatic homotopy theory for $BP_*BP$-comodules}
\author{Tobias Barthel}
\address{Department of Mathematical Sciences, University of Copenhagen, Universitetsparken 5, 2100 K{\o}benhavn {\O}, Denmark}
\email{tbarthel@math.ku.dk}
\author{Drew Heard}
\address{Universit{\"a}t Hamburg, Bundesstrasse 55, 20146 Hamburg, Germany}
\email{drew.heard@uni-hamburg.de}
\date{\today}
\begin{document}

\begin{abstract}
In this paper, we study the global structure of an algebraic avatar of the derived category of ind-coherent sheaves on the moduli stack of formal groups. In analogy with the stable homotopy category, we prove a version of the nilpotence theorem as well as the chromatic convergence theorem, and construct a generalized chromatic spectral sequence. Furthermore, we discuss analogs of the telescope conjecture and chromatic splitting conjecture in this setting, using the local duality techniques established earlier in joint work with Valenzuela.
\end{abstract}
\maketitle

\tableofcontents
\def\biblio{}

\section{Introduction}

The chromatic approach to stable homotopy theory is a powerful tool both for  understanding the local and global structure of the stable homotopy as well as for making explicit computations. The goal of this paper is to study an algebraic version of this theory, based on the category of $BP_*BP$-comodules. As such, it is deeply intertwined with recent efforts to implement the chromatic perspective in motivic homotopy theory as well as in more algebraic contexts. 

More specifically, we work with a suitable version $\Stable_{BP_*BP}$ of the derived category of $BP_*BP$-comodules, which is an algebraic avatar of the category of ind-coherent sheaves on the moduli stack of formal groups. This category was introduced by Hovey \cite{hovey_htptheory,hovey_chromatic} and in related work of Palmieri~\cite{palmieri_memoir}, and then further studied by the authors and Valenzuela \cite[Sec.~8]{bhv1}. From an axiomatic point of view, $\Stable_{BP_*BP}$ is a prominent example of a non-Noetherian stable homotopy theory in the sense of~\cite{hps_axiomatic}, so that many of the standard techniques do not apply. The importance of this category is due to the fact that it sits at the intersection of three different areas, so that its local and global structure provides new insights in each of them:
\begin{enumerate}
	\item \emph{Stable homotopy theory: As an approximation to the category of spectra.} Many structural patterns of the stable homotopy category are visible through the lens of the Adams--Novikov spectral sequence. The $E_2$-term of this spectral sequence for the sphere spectrum is isomorphic to $\pi_* BP_*$ in $\Stable_{BP_*BP}$, so that this category is a very close algebraic approximation to the category of spectra. 
	In particular, the chromatic filtration in stable homotopy theory provides a filtration of $\Stable_{BP_*BP}$ by full subcategories $\Stable_{E(n)_*E(n)}$, whose suitably defined limit over $p$ is essentially equivalent to the limit of the category of $E(n)$-local spectra \cite{bss}.
	\item \emph{Algebraic geometry: The relationship to ind-coherent sheaves on the moduli stack of formal groups $\cal{M}_{fg}$.} By work of Quillen \cite{quillen_fg} there is a close connection between stable homotopy theory and the theory of formal groups. More specifically, our results can be translated into properties of the category of ind-coherent sheaves over a certain moduli stack $\cal{M}_{fg}$ of formal groups. The stack $\cal{M}_{fg}$ is stratified by height, and this height filtration corresponds to the chromatic filtration in stable homotopy. Thus, studying the category $\Stable_{E(n)_*E(n)}$ corresponds to geometrically studying ind-coherent sheaves on open substacks of $\cal{M}_{fg}$. One may therefore consider $\Stable_{BP_*BP}$ as a toy example of a category of ind-coherent sheaves on stratified stacks, which are for instance relevant in the geometric Langlands program~\cite{gaitsgory_indcoh}. 
		\item \sloppy \emph{Motivic homotopy theory: Motivic module spectra over the cofiber of $\tau$.} Via work of Isaksen \cite{isaksen2014stable}, $\Ext_{BP_*BP}^{\ast}(BP_*,BP_*)$ also appears naturally in motivic homotopy theory as the homotopy groups of $C\tau$, the motivic cofiber of $\tau$ over $\Spec(\mathbb{C})$.\footnote{Recall that, working in the $p$-complete setting, the motivic cohomology of a point over $\Spec(\mathbb{C})$ is isomorphic to $\F_p[\tau]$, where $\tau$ has bidegree $(0,1)$, and that this gives rise to an essential map $\tau \colon S^{0,-1} \to S^{0,0}$.} Joint work of Gheorghe--Xu--Wang \cite{gheorghe2017bp} shows that this isomorphism extends to an equivalence between $\Stable_{BP_*BP}$ and a category closely related to the category $\Mod_{C\tau}^{\mathrm{cell}}$ of cellular motivic $C\tau$-modules. Thus, our results about $\Stable_{BP_*BP}$ can be translated to results in the stable motivic homotopy category. 
\end{enumerate}

This exhibits $\Stable_{BP_*BP}$ as an important test case for the more in-depth study of related categories in these areas. 
	
\subsection*{Main results}
In \cite{mrw_77} Miller, Ravenel, and Wilson introduced the chromatic spectral sequence, which converges to the $E_2$-term of the Adams--Novikov spectral sequence. Based on systematic algebraic patterns seen in this work, Ravenel was lead to his famous nilpotence and periodicity conjectures \cite{ravenel_localization}, later proved by Devinatz, Hopkins, and Smith \cite{nilpotence1,nilpotence2}, giving rise to the field of chromatic homotopy theory. In this paper we develop and prove algebraic analogs of Ravenel's conjectures in the category of $BP_*BP$-comodules.

As noted previously we work with the category $\Stable_{BP_*BP}$ instead of the usual derived category $\cD_{BP_*BP}$. As is already clear from work of Hovey \cite{hovey_htptheory}, the usual derived category is homotopically poorly behaved; for example, the tensor unit $BP_*$ is not compact, and this necessitates working with the more complicated category $\Stable_{BP_*BP}$.

In order to construct $\Stable_{BP_*BP}$ we must first study the abelian category of $BP_*BP$-comodules. We do this in more generality in \Cref{sec:structurebp}, by recalling some basic properties of the abelian category of comodules over a flat Hopf algebroid. We quickly specialize to the case of $BP_*BP$ and $E(n)_*E(n)$, giving a classification of hereditary torsion theories for $\Comod_{E(n)_*E(n)}$. In \Cref{sec:stable} we recall the construction of the stable category $\Stable_{\Psi}$ associated to a flat Hopf algebroid, and give a change of rings theorem for Hopf algebroids associated to faithfully flat extensions. 

With the stable category associated to a flat Hopf algebroid defined, we move on to the study of the global structure of $\Stable_{BP_*BP}$ and $\Stable_{E(n)_*E(n)}$. On the abelian level, the structure of the category of $E(n)_*E(n)$-comodules is known to be much simpler when the prime is large compared to $n$. For example, $\pi_*E(n)_* \cong \Ext^s_{E(n)_*E(n)}(E(n)_*,E(n)_*)$ vanishes for $s> n^2+n$, whenever $p > n+1$. This is reflected in \Cref{thm:genericprimes}, where we prove the following; here we denote by $(K(n)_*,\Sigma(n))$ the Hopf algebroid studied extensively by Miller, Ravenel, and Wilson. 
\begin{thm*}
	 If $p > n+1$, then there is an equivalence $\Stable_{E(n)_*E(n)} \simeq \cD_{E(n)_*E(n)}$, between the stable category of $E(n)_*E(n)$-comodules and the usual derived category of $E(n)_*E(n)$-comodules. Similarly, if $n$ does not divide $p-1$, then there is an equivalence $\Stable_{\Sigma(n)} \simeq \cD_{\Sigma(n)}$. 
\end{thm*}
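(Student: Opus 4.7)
My plan is to deduce both equivalences from a general criterion: the natural comparison functor $\cD_\Psi \to \Stable_\Psi$, which is always available because $\Stable_\Psi$ is built as a localisation/completion of complexes of $\Psi$-comodules, is an equivalence whenever the flat Hopf algebroid $\Psi = (A,\Gamma)$ has uniformly bounded Ext dimension. The two stated cases will then follow by quoting the classical bounds on $\Ext_{E(n)_*E(n)}^{\ast}$ and on continuous cohomology of the Morava stabilizer group.

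First I would establish the general criterion. By construction $\Stable_\Psi$ is compactly generated by the (images of the) dualizable comodules, and the comparison functor $F\colon \cD_\Psi \to \Stable_\Psi$ is symmetric monoidal and preserves the generating dualizables. Since both sides are presentable stable $\infty$-categories, $F$ is an equivalence as soon as these generators remain compact in $\cD_\Psi$. Using the fact that $\Ext^\ast_\Psi$ is the Hom in $\cD_\Psi$, compactness of $A$ in $\cD_\Psi$ follows once $\Ext^s_\Psi(A,-)$ commutes with arbitrary coproducts, for which it suffices that $\Ext^s_\Psi(A,-)=0$ for $s\gg 0$. A suspension and shearing argument (since every dualizable comodule is a retract of an iterated tensor construction on $A$) then upgrades this to compactness of every dualizable generator, giving the criterion.

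Next I would invoke the vanishing lines. For $(E(n)_*,E(n)_*E(n))$ with $p>n+1$, the hypothesis $p-1>n$ forces the Morava stabilizer group $\mathbb{S}_n$ to have no $p$-torsion, and a standard descent (via the faithfully flat map to $(K(n)_*,\Sigma(n))$ given in the preceding section) plus the classical cohomological dimension computation yields $\Ext^s_{E(n)_*E(n)}(M,N)=0$ for $s>n^2+n$ and all comodules $M,N$. Analogously, under the assumed divisibility hypothesis the Hopf algebroid $(K(n)_*,\Sigma(n))$ has cohomological dimension bounded by $n^2$. Applying the criterion in both cases delivers the two equivalences.

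The most delicate step will be the uniform Ext-vanishing: one needs the bound to hold not just for $\Ext^\ast_\Psi(A,A)$ but for all pairs of comodules. I would handle this by reducing to the unit via the fact that $\Comod_\Psi$ admits a set of dualizable generators (so $\Ext^\ast_\Psi(M,N)$ for arbitrary $M,N$ can be controlled by $\Ext^\ast_\Psi(A,-)$ applied to internal Homs), together with the cohomological dimension statement above. Once the uniform bound is in hand, the compact generation argument is formal and the two equivalences drop out.
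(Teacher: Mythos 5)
Your overall strategy --- show that the dualizable comodules remain compact in $\cD_\Psi$, so that $\cD_\Psi$ is compactly generated by them and therefore agrees with $\Ind(\Thick_\Psi(\cG)) = \Stable_\Psi$ --- is a legitimate alternative to the paper's argument, which instead works on the $\Stable_\Psi$ side and shows that the kernel of the localization $\omega\colon \Stable_\Psi \to \cD_\Psi$ at the homology isomorphisms is trivial, by proving $A \in \Thickid(\Psi)$, i.e., descendability of $A \to \Psi$. Both routes rest on finite cohomological dimension, and your reduction of $\Ext^*_\Psi(P,-)$ for dualizable $P$ to $\Ext^*_\Psi(A,DP\otimes -)$ is the right way to handle the generators (not the claim that dualizables are retracts of iterated tensor constructions on $A$, which is false). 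Note also that the natural comparison functor runs $\Stable_\Psi \to \cD_\Psi$, not the other way around.

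There are, however, two genuine gaps. First, the implication ``$\Ext^s_\Psi(A,-)=0$ for $s\gg 0$ implies $\Ext^s_\Psi(A,-)$ commutes with coproducts'' is a non sequitur: commutation with coproducts of comodules in each fixed degree $s$ is a separate input (valid here because the Hopf algebroids are Adams and $A$ is finitely presented, but it must be invoked on its own), while the uniform vanishing is what lets you pass from bounded to unbounded complexes via the hypercohomology spectral sequence. Second, and more seriously, your source for the uniform bound in the $E(n)$ case does not exist: $E(n)_* \to K(n)_*$ is a quotient, not a faithfully flat extension, so there is no ``descent to $(K(n)_*,\Sigma(n))$''. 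The bound $\Ext^s_{E(n)_*E(n)}(E(n)_*,-)=0$ for $s>n^2+n$ comes from the height-$n$ chromatic filtration, whose layers are controlled by the cohomology of $\mathbb{S}_k$ for \emph{all} $0\le k\le n$ --- which is exactly why the hypothesis is $p>n+1$ rather than a condition at height $n$ alone. Moreover, you need this vanishing uniformly in the second variable, which is stronger than the classical statement for $\Ext_{E(n)_*E(n)}(E(n)_*,E(n)_*)$; the paper avoids this by using vanishing only against the fixed target $E_*$ for dualizable sources (via the Landweber filtration theorem) and then splitting an injective resolution of $E_*$ to conclude $E_* \in \Thickid(E_*E)$. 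Until you either establish the uniform two-variable bound or adopt such a bootstrapping step, the $E(n)$ half of your argument is incomplete. The $\Sigma(n)$ half is in better shape: the finite resolution of $K(n)_*$ by summands of finite free $\Sigma(n)$-comodules does yield the uniform vanishing directly.
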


 In stable homotopy theory, the Morava $K$-theories $K(n)$ detect nilpotence.  In our algebraic setting we use the $BP_*BP$-comodule $\Tel(n)_* = v_n^{-1}BP_*/I_n$ as our detecting family for nilpotence, proving the following version of the nilpotence theorem in \Cref{sec:nilpotence}. This result crucially relies on the use of the category $\Stable_{BP_*BP}$ instead of the derived category, as here $BP_*$ is compact. 

\begin{thm*}(Algebraic nilpotence theorem - weak version)\label{thm:intronil}
	\begin{enumerate}
		\item  Suppose $F,X \in \Stable_{BP_*BP}$ with $F$ compact, then a map $f\colon F \to X$ is smash nilpotent, i.e., $f^{(m)}=0$ for some $m \gg 0$, if $\Tel(n)_*\otimes_{BP_*}f = 0$ for all $0 \le n \le \infty$. 
		\item A self map $f \colon \Sigma^i F \to F$ for $F$ compact in $\Stable_{BP_*BP}$ is nilpotent, in the sense that $f^j \colon \Sigma^{ij} F \to F$ is null for some $j \gg 0$, if and only if $\Tel(n)_*\otimes_{BP_*}f$ is nilpotent for all $0 \le n \le \infty$. 
		\item Suppose $X \in \Stable_{BP_*BP}$, then a map $f\colon BP_* \to X$ is smash nilpotent if $\pi_*(\Tel(n)_*\otimes_{BP_*}f) = 0$ for all $0 \le n \le \infty$. 
		\item Let $R$ be a ring object in $\Stable_{BP_*BP}$. Then an element $\alpha \in \pi_*R \cong \Ext_{BP_*BP}(BP_*,R)$ is nilpotent if and only if $\pi_*(\Tel(n)_* \otimes_{BP_*}\alpha)$ is nilpotent for all $0 \le n \le \infty$. 
\end{enumerate}
\end{thm*}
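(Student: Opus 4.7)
The plan is to follow the Hopkins--Smith strategy: identify part (4) as the essential content of the theorem and deduce parts (1), (2), and (3) formally using the tensor-triangulated structure of $\Stable_{BP_*BP}$. Since $BP_*$ is compact in $\Stable_{BP_*BP}$, thick subcategory machinery is available, and the formal deductions parallel those in the classical proof of the nilpotence theorem.

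The core ingredient is the joint conservativity of the functors $\{X \mapsto \Tel(n)_* \otimes_{BP_*} X\}_{0 \le n \le \infty}$ on $\Stable_{BP_*BP}$. I expect this to follow from a chromatic convergence theorem for $\Stable_{BP_*BP}$, one of the paper's other main results, together with the identification of the monochromatic layer $M_n X = \fib(L_n X \to L_{n-1} X)$ as $\Tel(n)$-cellular, so that $\Tel(n)_* \otimes_{BP_*} M_n X = 0$ implies $M_n X = 0$. The inclusion of $n = \infty$, corresponding to the $H\F_p$-like layer at the top of the chromatic tower, is essential because of the non-Noetherian nature of $BP_*BP$. Establishing this joint conservativity rigorously in the non-Noetherian setting will be the main obstacle.

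Given joint conservativity, part (4) follows via a telescope argument. For a ring object $R$ and $\alpha \in \pi_* R$ satisfying the hypothesis, form the telescope $\alpha^{-1} R$, which is again a ring object. The assumption that $\pi_*(\Tel(n)_* \otimes_{BP_*} \alpha)$ is nilpotent in the graded ring $\pi_*(\Tel(n)_* \otimes_{BP_*} R)$ implies $\Tel(n)_* \otimes_{BP_*} (\alpha^{-1} R) \simeq 0$ for every $n$, so joint conservativity forces $\alpha^{-1} R \simeq 0$, whence $\alpha$ is nilpotent in $\pi_* R$.

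The remaining implications are standard formal consequences. To deduce (3) from (4), apply (4) to the free associative ring $T(X) = \bigoplus_{k \ge 0} X^{\otimes k}$, with $\alpha \in \pi_* T(X)$ the image of $f$; nilpotence of $\alpha$ translates to $f^{\otimes k} = 0$ by splitting the summand $X^{\otimes k}$ off $T(X)$. To deduce (1) from (3), observe that the full subcategory of compact objects $F \in \Stable_{BP_*BP}$ for which (1) holds for every target $X$ is thick and contains $BP_*$, hence equals the full subcategory of compact objects. Finally, (2) follows from (1) for dualizable $F$ by converting a self-map into an element of the endomorphism ring, where smash nilpotence coincides with ordinary nilpotence.
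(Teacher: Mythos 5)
There is a genuine gap at the foundation of your argument: the claimed joint conservativity of $\{\Tel(n)_*\otimes_{BP_*}-\}_{0\le n\le\infty}$ on all of $\Stable_{BP_*BP}$. This is much stronger than anything the paper proves or needs, and your suggested route to it does not work. The algebraic chromatic convergence theorem only applies to objects of finite projective $BP_*$-dimension (and even then identifies $\lim_n L_n$ with $L_\infty$, not the identity, in general), whereas the objects you must apply conservativity to --- $\alpha^{-1}R$ and the smash telescopes $T_f$ --- are filtered colimits with no such finiteness. What the paper actually establishes is the Bousfield decomposition $\bc{BP_*}=\bc{F(m+1)_*}\oplus\bigoplus_{i=0}^m\bc{\Tel(i)_*}$ for each \emph{finite} $m$; passing to $m=\infty$ is exactly the delicate point, and it is handled not by a limit of Bousfield classes (which need not commute with the intersection $\bigwedge_m\bc{F(m)_*}$) but by a telescope argument: for a ring-like object $T_f$, the vanishing of $\F_p\otimes T_f\simeq\colim_m F(m)_*\otimes T_f$ forces the unit $BP_*\to F(m)_*\otimes T_f$ to be null at some finite stage, by compactness of $BP_*$. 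Your proposal replaces this mechanism with an unproven global detection statement whose topological analogue (detection of all spectra by the $K(n)$'s together with $H\F_p$) is false.

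A secondary point: your logical order is the reverse of the paper's, which derives (3) and (4) from (1) --- the paper even footnotes that deducing (1) from (4) would be circular in its setup. Deducing (3) from (4) via the free associative algebra and (4) from conservativity is formally fine \emph{if} conservativity were available, but your reduction of (1) to (3) by a thick subcategory argument in the variable $F$ is also shaky: the class of compact $F$ for which ``every $f\colon F\to X$ killed by all $\Tel(n)_*$ is smash nilpotent'' is not obviously closed under cofiber sequences. The standard (and the paper's) reduction is by duality, replacing $f\colon F\to X$ with its adjoint $f^{\#}\colon BP_*\to DF\otimes X$, using that compact objects are dualizable in the monogenic category $\Stable_{BP_*BP}$.
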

We call this a weak version of the algebraic nilpotence theorem, because the results are not as strong as those in \cite{nilpotence2}. Indeed, they do not account for all periodic elements in $\pi_*BP_*$, but only those appearing in Adams--Novikov filtration 0, which is a reflection of the fact that $\Tel(n)_*$ is not a field object. Indeed, the nilpotence theorem implies that there is a vanishing curve on the $E_{\infty}$-page of  the Adams--Novikov spectral sequence for the sphere that has slope tending to zero as $t-s$ approaches $\infty$. However, this vanishing curve is not present on the $E_2$-page and in fact there are non-nilpotent elements of positive Adams--Novikov filtration. It follows that there are many more non-nilpotent elements in $\Stable_{BP_*BP}$ than in stable homotopy theory. This complicates the structure of $\Stable_{BP_*BP}$; there appear to be many more thick subcategories than in stable homotopy theory. We will return to the systematic study of self maps and thick subcategories of $\Stable_{BP_*BP}$ in forthcoming work with Achim Krause. 

 One formulation of the telescope conjecture in stable homotopy is that $\bc{\Tel(m)} = \bc{K(m)}$ \cite{hovey_csc}, where $\bc{\Tel(m)}$ denotes the Bousfield class of the telescope of a finite spectrum of type $m$. Since $K(n)_*$ is not a $BP_*BP$-comodule, strictly speaking this question does not make sense in $\Stable_{BP_*BP}$. Nonetheless, it is a $BP_*$-module, and so one can formulate a variant of the telescope conjecture, which we show in  \Cref{thm:moravak} does hold. This gives some explanation for the use of $\Tel(n)_*$ in the nilpotence theorem above. 

\begin{thm*}
For all $n\ge 0$, there is an identity of Bousfield classes $\bc{K(n)_*} = \bc{\Tel(n)_*}$.
\end{thm*}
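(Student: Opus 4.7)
The plan is to prove the two Bousfield class inclusions separately. The inclusion $\bc{K(n)_*} \leq \bc{\Tel(n)_*}$ is immediate: since $K(n)_* \cong \Tel(n)_*/(v_{n+1}, v_{n+2}, \ldots)$ is naturally a $\Tel(n)_*$-module, for any $X \in \Stable_{BP_*BP}$ one has the identification $X \otimes_{BP_*} K(n)_* \cong (X \otimes_{BP_*} \Tel(n)_*) \otimes_{\Tel(n)_*} K(n)_*$, so $\Tel(n)_*$-acyclicity of $X$ passes to $K(n)_*$-acyclicity.

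For the reverse inclusion $\bc{\Tel(n)_*} \leq \bc{K(n)_*}$, I would take $X \in \Stable_{BP_*BP}$ with $X \otimes_{BP_*} K(n)_* = 0$, set $Y := X \otimes_{BP_*} \Tel(n)_*$, and use the change-of-rings of \Cref{sec:stable} to regard $Y$ as a comodule over the localized Hopf algebroid $(\Tel(n)_*, v_n^{-1} BP_*BP/I_n)$. Writing $J := (v_{n+1}, v_{n+2}, \ldots) \subset \Tel(n)_*$, the hypothesis yields $Y/JY = 0$. Each associated graded piece $J^k/J^{k+1}$ of the $J$-adic filtration of $\Tel(n)_*$ is, as a $K(n)_*$-module, free on the degree-$k$ monomials in the $v_{n+j}$, hence a coproduct of suspensions of $K(n)_*$. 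Since $(-) \otimes_{BP_*} X$ commutes with coproducts and preserves cofiber sequences, an induction on $k$ via the short exact sequences $0 \to J^k/J^{k+1} \to \Tel(n)_*/J^{k+1} \to \Tel(n)_*/J^k \to 0$ shows that $X \otimes_{BP_*} \Tel(n)_*/J^k = 0$ for every $k \geq 1$; equivalently, the canonical inclusion $J^k Y \to Y$ is an equivalence for all $k$.

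The hard part is the final bootstrap from ``$J^k Y = Y$ for all $k$'' to ``$Y = 0$''. For arbitrary $BP_*$-modules this fails---the module $BP_*[v_{n+1}^{-1}]$ is $K(n)_*$-acyclic but not $\Tel(n)_*$-acyclic, and it crucially is not a $BP_*BP$-comodule---so the comodule structure on $Y$ must be decisive. My plan is to exploit that the invariant prime ideals of $\Tel(n)_*$ under the coaction of $v_n^{-1} BP_*BP/I_n$ reduce to $(0)$ alone, since every Landweber ideal $I_m$ with $m > n$ becomes the unit ideal after inverting $v_n$. This should force any nonzero such comodule to admit a nonzero primitive element, and a nonzero primitive cannot belong to $JY$ without violating its invariance under the coaction, contradicting $Y = JY$. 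Making this rigorous in the non-Noetherian setting of $\Stable_{BP_*BP}$---likely drawing on the local duality framework of \cite{bhv1}---is the technical heart of the argument.
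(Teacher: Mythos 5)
Your first two steps are sound: the easy inclusion via the $\Tel(n)_*$-module structure on $K(n)_*$ is fine, and the induction over the $J$-adic associated graded correctly yields $X \otimes_{BP_*} \Tel(n)_*/J^k \simeq 0$ for all $k$. But the proof is not complete, and you have correctly located where it breaks: the passage from ``$Y = J^kY$ for all $k$'' to ``$Y=0$'' is not a formal limit argument ($\Tel(n)_*$ is not $J$-adically complete, and $X\otimes-$ would not commute with the limit anyway), and the strategy you sketch for it does not go through as stated. The object $Y = X \otimes_{BP_*}\Tel(n)_*$ is an arbitrary object of a stable $\infty$-category, not a discrete comodule, so ``admits a nonzero primitive element'' has no direct meaning; and even for a nonzero discrete comodule $Y$ over $(\Tel(n)_*, v_n^{-1}BP_*BP/I_n)$, the claim that a nonzero primitive cannot lie in $JY$ is unjustified, since $J$ is not an invariant ideal --- a primitive $y$ generates a free subcomodule $\Tel(n)_*\cdot y$, but nothing prevents $y$ from being divisible by some $v_{n+j}$ inside the ambient $Y$. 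Making this work requires a Landweber-filtration/vanishing-line argument in the localized comodule category together with a descent to arbitrary (non-compact, non-discrete) objects of the stable category, and that is precisely the content of a change-of-rings theorem, not something that falls out of the observation that the only invariant prime of $\Tel(n)_*$ is $(0)$.

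The paper's proof packages exactly this missing ingredient into \Cref{cor:ethybasechange} (resting on \Cref{thm:pmstablebasechange}, the derived Hovey--Strickland/Miller--Ravenel change of rings): both $\Tel(n)_*$ and $K(n)_*$ are Landweber exact $F(n)_*$-algebras of the same height, so base change along either induces, compatibly, an equivalence $\Stable_{v_n^{-1}F(n)_*F(n)} \simeq \Stable_{K(n)_*K(n)}$ carrying $\Tel(n)_*\otimes_{F(n)_*}N$ to $K(n)_*\otimes_{F(n)_*}N$; since equivalences detect zero objects, the two acyclicity conditions coincide, and one concludes by writing $K(n)_*\otimes_{BP_*}M \simeq K(n)_*\otimes_{F(n)_*}(F(n)_*\otimes_{BP_*}M)$. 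If you want to salvage your more hands-on approach, the honest route is to first prove (or cite) that every nonzero object of $\Stable_{v_n^{-1}F(n)_*F(n)}$ has nonzero $K(n)_*$-homology --- but at that point you have reproved the change-of-rings equivalence, so you should simply invoke it.
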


 In \Cref{sec:localduality} we move on to the study of the local structure of $\Stable_{BP_*BP}$. We begin by constructing localization functors $L_n$ for $0 \le n \le \infty$ which in particular define an exhaustive filtration of the full subcategory of compact objects. Their essential images $L_n\Stable_{BP*BP}$ are algebraic counterparts of the categories of $E(n)$-local spectra, which in turn form the building blocks of chromatic homotopy theory. In geometric terms, $L_n$ corresponds to the restriction to an open substack of $\cal{M}_{fg}$. Such functors have previously been studied by Hovey and Strickland \cite{hovey_chromatic,hs_localcohom,hs_leht}, who proved that there is an equivalence of categories between $L_n\Stable_{BP_*BP}$ and $\Stable_{E(n)_*E(n)}$.  As Hovey points out in \cite[p.~171]{hovey_chromatic} an alternative formulation of the telescope conjecture, namely that $L_n$ is the same as the Bousfield localization with the homology theory associated with $E(n)_*$, is false in general in $\Stable_{BP_*BP}$, however we note that this holds when $n < p-1$, see \Cref{rem:algtelescoperev}. 

The algebraic localization functors $L_n$ assemble into the algebraic chromatic tower
\[
\xymatrix{\ldots \ar[r] & L_2 \ar[r] & L_1 \ar[r] & L_0,}
\]
precisely as in stable homotopy theory. Hopkins and Ravenel have shown \cite{orangebook} that a compact spectrum is the limit of its chromatic tower. We prove the following variant of this in \Cref{sec:chromaticconvergence}. 

\begin{thm*}[Chromatic convergence]\label{thm:introcc}
If $M\in \Stable_{BP_*BP}$ has finite projective dimension, then there is a natural equivalence $M \simeq \lim_n L_{n}M$.
\end{thm*}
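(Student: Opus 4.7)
The strategy is to study the fiber $C_n M := \fib(M \to L_n M)$ of the $n$th stage of the chromatic tower and show $\lim_n C_n M \simeq 0$; by the octahedral axiom this is equivalent to the stated equivalence $M \simeq \lim_n L_n M$.

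The first step is to identify $C_n M$ with a derived local cohomology object. Using the local duality framework of \cite{bhv1}, together with the description of $L_n$ as the restriction to the open substack of formal groups of height strictly less than $n$, the fiber $C_n M$ is naturally equivalent to $\Gamma_n M$, the derived torsion functor at the invariant ideal $I_n = (p, v_1, \ldots, v_{n-1})$. Concretely, $\Gamma_n M$ is computed by the homotopy colimit of Koszul-type complexes $K(p^{a_0}, v_1^{a_1}, \ldots, v_{n-1}^{a_{n-1}}) \otimes_{BP_*} M$ as the exponents $a_i$ grow.

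The finite projective dimension hypothesis now enters through a resolution argument. Pick a projective resolution $P_\bullet \to M$ of length $d = \pdim M$, apply $\Gamma_n$ termwise, and use the resulting hyperhomology spectral sequence to reduce the problem to the case of projective generators. For these generators the Koszul model gives explicit control on $\Gamma_n$, and one verifies that the internal connectivity of $\Gamma_n P$ grows without bound as $n \to \infty$, since each additional $v_i$ contributes a Koszul-style internal suspension. Because the resolution has bounded length $d$, this connectivity statement propagates to $\Gamma_n M$ up to a uniform shift by $d$.

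Finally, the Milnor $\lim$--$\lim^1$ short exact sequence upgrades the vanishing of $\lim_n \pi_* C_n M$ and $\lim^1_n \pi_* C_n M$ in each bidegree to the equivalence $\lim_n C_n M \simeq 0$ in $\Stable_{BP_*BP}$; the Mittag-Leffler condition needed to control $\lim^1$ again follows from the bounded-length resolution. The main obstacle is the connectivity estimate for $\Gamma_n P$: in the non-Noetherian comodule setting one cannot invoke a standard depth argument and must instead argue directly with the Koszul tower, and it is precisely in propagating the resulting bound across the resolution that the finite projective dimension hypothesis is indispensable, since otherwise the hyperhomology spectral sequence has unboundedly many rows and the connectivity estimate fails to close.
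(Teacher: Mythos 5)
Your overall strategy---show that $\lim_n \fib(M \to L_nM) \simeq \lim_n\Gamma_nM \simeq 0$ by combining a coconnectivity estimate for $\Gamma_nM$ that improves with $n$ with a Milnor $\lim$--$\lim^1$ argument---is sound, and is in fact close to the mechanism the paper uses for its first convergence theorem (\Cref{lem:limzerohomolstructuremaps} and \Cref{thm:chromaticconvergence}). The estimate itself is also correct: $\Gamma_n$ is smashing, $\Gamma_n^{BP_*}$ applied to a bounded complex of projectives of length $d$ is, up to a degree shift by $n$, that complex tensored with the discrete module $BP_*/I_{n+1}^{\infty}$, so its homology sits in a window of width $d$ that marches off to infinity with $n$; this is exactly what \Cref{lem:acccover} records.

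The genuine gap is in where your resolution lives and how you return to $\Stable_{BP_*BP}$. The hypothesis ``finite projective dimension'' refers to the underlying $BP_*$-module $\epsilon_*M$: the category $\Comod_{BP_*BP}$ does not have enough projectives, so there is no projective resolution $P_\bullet \to M$ by comodules to which the comodule-level functor $\Gamma_n$ could be applied termwise. You must therefore resolve $\epsilon_*M$ in $\cD_{BP_*}$. That does give coconnectivity of $\epsilon_*\Gamma_nM \simeq \Gamma_n^{BP_*}\epsilon_*M$, hence of $H_*\Gamma_nM$ since $\epsilon_*$ is exact, but it does not by itself control $\lim_n\Gamma_nM$, which is formed in $\Stable_{BP_*BP}$: the forgetful functor $\epsilon_*$ is a \emph{left} adjoint and does not commute with inverse limits. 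Moreover your Milnor sequence is for the homotopy groups $\pi_*=\Ext_{BP_*BP}(BP_*,-)$, not for $H_*$, so feeding the homological estimate into it requires the convergent hyperext spectral sequence $\Ext^p_{BP_*BP}(BP_*,H^q\Gamma_nM)\Rightarrow \Ext^{p+q}_{BP_*BP}(BP_*,\Gamma_nM)$ on each (uniformly coconnective) term; none of this is addressed. The paper closes this gap differently: it proves the module-level statement (\Cref{lem:acccover}) and then descends along the cosimplicial Amitsur complex $M \simeq \Tot\bigl((\epsilon^*\epsilon_*)^{\bullet+1}M\bigr)$, using that the \emph{right} adjoint $\epsilon^*$ does commute with limits. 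Either that descent, or a direct appeal to \Cref{lem:limzerohomolstructuremaps} after verifying that the tower $\{\Gamma_nM\}$ is uniformly coconnective with pro-zero homology, is needed to make your argument complete.
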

In particular, this implies that all compact objects of $\Stable_{BP_*BP}$ satisfy chromatic convergence. The strength of this algebraic chromatic convergence theorem is akin to that of the first author's generalization of the chromatic convergence theorem in stable homotopy theory \cite{barthel_cc}. We also show that $\lim_n L_n \simeq L_{\infty}$ where the latter is the localization functor associated to $BP_*/I_{\infty} \cong \Z/p$.

The Bousfield--Kan spectral sequence associated to the chromatic tower in stable homotopy leads to a spectral sequence of the form $E_1 = \pi_kM_nS^0 \implies \pi_kS^0$, where $M_n$ is the fiber of $L_n \to L_{n-1}$. Associated to the algebraic chromatic tower, we can similarly construct a spectral sequence. This recovers, and indeed generalizes, the classical chromatic spectral sequence, which is obtained by setting $X = Y = S^0$. 
\begin{thm*}[The chromatic spectral sequence]
 	For any spectra $X,Y$, there is a natural convergent spectral sequence
\[
E_1^{n,s,t} = \Ext^{s,t}_{BP_*BP}(BP_*X,M_{{n}}BP_*Y) \implies \Ext^{s,t}_{BP_*BP}(BP_*X, L_{\infty}BP_*Y). 
\]
Furthermore, if $BP_*Y$ satisfies the conditions of \Cref{thm:introcc}, then the spectral sequence converges to $\Ext_{BP_*BP}(BP_*X,BP_*Y)$.
 \end{thm*}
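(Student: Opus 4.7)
The idea is to construct the spectral sequence as the standard tower spectral sequence associated to the algebraic chromatic tower of $BP_*Y$, after mapping out of $BP_*X$. First I would apply the chromatic tower $\ldots \to L_2 \to L_1 \to L_0$ to $BP_*Y$, producing a tower
\[
\ldots \to L_2 BP_*Y \to L_1 BP_*Y \to L_0 BP_*Y
\]
whose $n$-th fiber is $M_n BP_*Y$ by definition, and whose homotopy limit is $L_\infty BP_*Y$ by the identification $\lim_n L_n \simeq L_\infty$ recorded earlier in the introduction.

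Next I would apply the internal mapping spectrum $\uHom(BP_*X,-)$ in $\Stable_{BP_*BP}$; this functor is exact and commutes with homotopy limits in the second variable, so it yields a tower of spectra with $n$-th fiber $\uHom(BP_*X, M_n BP_*Y)$ and homotopy limit $\uHom(BP_*X, L_\infty BP_*Y)$. Combining the standard spectral sequence of this tower with the identification $\pi_{t-s}\uHom(BP_*X,N)\cong \Ext^{s,t}_{BP_*BP}(BP_*X,N)$ then produces the desired
\[
E_1^{n,s,t} = \Ext^{s,t}_{BP_*BP}(BP_*X, M_n BP_*Y) \Longrightarrow \Ext^{s,t}_{BP_*BP}(BP_*X, L_\infty BP_*Y),
\]
manifestly natural in $X$ and $Y$. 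For the second assertion, if $BP_*Y$ satisfies the hypotheses of the chromatic convergence theorem then $BP_*Y \simeq \lim_n L_n BP_*Y \simeq L_\infty BP_*Y$, so the abutment is identified with $\Ext^{s,t}_{BP_*BP}(BP_*X, BP_*Y)$.

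The hard part will be upgrading conditional to strong convergence of this spectral sequence. In the classical Noetherian setting this would follow automatically from Mittag--Leffler behavior of the inverse system $\{\Ext^{s,t}(BP_*X, L_n BP_*Y)\}_n$, but in the non-Noetherian category $\Stable_{BP_*BP}$ this has to be argued more carefully. I would expect to either exploit the explicit description of the fibers $M_n$ as local-cohomology-type pieces coming from the local duality framework of \cite{bhv1}, or bootstrap off the $\lim^1$-vanishing used in the proof of chromatic convergence to control the relevant derived inverse limits.
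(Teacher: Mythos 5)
Your proposal is essentially the paper's proof: the spectral sequence is constructed as the Bousfield--Kan spectral sequence of the tower obtained by applying $\iHom(BP_*X,-)$ to the algebraic chromatic tower, with the abutment identified via $\lim_n L_n \simeq L_\infty$ and the chromatic convergence theorem. The only step you elide is the verification that $M_nBP_*Y$ and $L_\infty BP_*Y$ lie in $\Stable_{BP_*BP}^{<\infty}$ (using the smashing formulas for $M_n$ and $\Gamma_n$), which is what licenses the identification $\pi_*\iHom(BP_*X,-)\cong\Ext_{BP_*BP}(BP_*X,-)$; and note that the paper claims only the conditional convergence furnished by the tower construction, so the strong-convergence issue you flag is not something the paper resolves either.
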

 
  By truncating the chromatic tower, we can also build a height $n$ analog of the chromatic spectral sequence which, as a special case, recovers the truncated chromatic spectral sequence constructed by Hovey and Sadofsky \cite[Thm.~5.1]{hov_sadofsky}.

As a concrete application of our results, we obtain the following transchromatic comparison between the $E_2$-terms of the $BP$-Adams spectral sequence and the $E$-Adams spectral sequence at height $n$, see \Cref{cor:comparision}. 
\begin{cor*}
If $X$ is a $p$-local bounded below spectrum such that $BP_*X$ has projective $BP_*$-dimension $\pdim(BP_*X) \le r$, then the natural map
\[
\xymatrix{\Ext_{BP_*BP}^s(BP_*,BP_*X) \ar[r] & \Ext_{E_*E}^s(E_*,E_*X)}
\]
is an isomorphism if $s < n-r-1$ and injective for $s = n-r-1$. 
\end{cor*}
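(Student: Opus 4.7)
The plan is to compare the two sides through the algebraic chromatic tower. First, the Hovey--Strickland equivalence $L_n\Stable_{BP_*BP}\simeq \Stable_{E_*E}$ identifies $\Ext^s_{E_*E}(E_*,E_*X)$ with the algebraic homotopy group $\pi_s L_n(BP_*X)$ computed in $\Stable_{BP_*BP}$, and under this identification the natural comparison map in the corollary becomes the map on homotopy groups induced by the algebraic chromatic localization $BP_*X\to L_n(BP_*X)$.

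Writing $C=\fib(BP_*X\to L_n BP_*X)$, the long exact sequence in $\pi_*$ reduces the statement to showing that $\pi_s C=0$ for every $s\le n-r-1$. Since $\pdim_{BP_*}(BP_*X)\le r<\infty$, the chromatic convergence theorem applies and yields $BP_*X\simeq\lim_{k}L_k BP_*X$; consequently $C$ is the inverse limit over $k>n$ of the relative fibers $\fib(L_k BP_*X\to L_n BP_*X)$, whose successive graded pieces are the monochromatic layers $M_j BP_*X$ for $n<j\le k$. Equivalently, the obstruction to computing $\pi_*C$ is the input of the truncated algebraic chromatic spectral sequence at height $n$ established earlier in the paper.

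The crucial technical input is a connectivity estimate on the monochromatic layers $M_j BP_*X$ for $j>n$. Using the identification of $M_j$ with local cohomology at the invariant prime ideal $I_j=(p,v_1,\ldots,v_{j-1})$ after inverting $v_j$, and combining the bound of $j$ on the cohomological dimension of local cohomology at $I_j$ with the bound $\pdim_{BP_*}(BP_*X)\le r$ coming from a length-$r$ projective resolution of $BP_*X$ over $BP_*$, one shows that $\pi_s M_j BP_*X$ vanishes in a range that grows linearly with $j$ and degrades only by the fixed constant $r$. This is the main obstacle of the argument, since the local-cohomology/projective-resolution estimate must be carried out inside the Hopf algebroid setting of $\Stable_{BP_*BP}$ rather than on the underlying module category, and the degree shifts introduced by passing between comodule $\Ext$ and module resolutions must be tracked carefully.

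Feeding these vanishing estimates into the truncated chromatic spectral sequence (equivalently, iterating the long exact sequence up the tower of relative fibers), the bounds improve strictly as $j$ grows, so the inverse limit defining $C$ retains enough connectivity to conclude that $\pi_s C=0$ in the required range $s\le n-r-1$. Combined with the long exact sequence for the fiber triangle $C\to BP_*X\to L_n BP_*X$, this yields the isomorphism for $s<n-r-1$ and the injection at $s=n-r-1$.
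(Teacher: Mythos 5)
Your proposal is correct in outline and arrives at the right numerology, but it takes a genuinely longer route than the paper. Both arguments begin the same way: identify $\Ext^s_{E_*E}(E_*,E_*X)$ with $\Ext^s_{BP_*BP}(BP_*,L_nBP_*X)$ via $L_n\simeq \Phi^*\Phi_*$ (\cref{thm:bhvbpresults}), and reduce to a vanishing range for $\Ext^s_{BP_*BP}(BP_*,\Gamma_nBP_*X)$ via the long exact sequence of the cofiber sequence $\Gamma_nBP_*X\to BP_*X\to L_nBP_*X$. At that point the paper finishes in one step: since $\Gamma_n$ is smashing, $\Gamma_nBP_*X\simeq \Sigma^{-(n+1)}BP_*/I_{n+1}^{\infty}\otimes BP_*X$, and a length-$r$ projective resolution of $BP_*X$ shows this object has cohomology concentrated in cohomological degrees $\ge n-r$ (as in \cref{lem:acccover}); the hyperext spectral sequence $\Ext^p(H^q\Gamma_nM)\Rightarrow\Ext^{p+q}(\Gamma_nM)$ then gives the vanishing. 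You instead re-expand $\Gamma_nBP_*X$ as $\lim_{k>n}\fib(L_kBP_*X\to L_nBP_*X)$ using chromatic convergence (\cref{thm:chromaticconvergence2}) and estimate each monochromatic layer $M_jBP_*X\simeq \Sigma^{-j}v_j^{-1}BP_*/I_j^{\infty}\otimes BP_*X$ for $j>n$. This works — the connectivity of $M_jBP_*X$ improves linearly in $j$, so for each fixed $s$ the tower of relative fibers is pro-constant and the $\lim^1$ term vanishes — but it purchases nothing: the per-layer estimate is exactly the same local-cohomology-plus-projective-resolution bound the paper applies once to $\Gamma_n$ itself (cf.\ \cref{prop:localcohomvanishing}), and your route additionally needs chromatic convergence and the inverse-limit bookkeeping as inputs. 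Two smaller remarks: the step you flag as the "main obstacle" — carrying out the connectivity estimate in the comodule category — is in fact soft, because the forgetful functor $\epsilon_*$ is exact, faithful, and commutes with the smashing functors $\Gamma_n$ and $M_j$, so the $H^*$-bound is a computation in $\cD_{BP_*}$ and the comodule structure only enters through the hyperext spectral sequence; and you should make explicit that $E_*X\cong E_*\otimes_{BP_*}BP_*X$ by Landweber exactness when translating the target of the comparison map.
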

A related result can be found in work of Goerss \cite[Thm.~8.24]{goerss_quasi-coherent_2008}, however the authors are unaware of a result of this generality in the literature. 

\subsection*{Relation to other work}
The present paper is a natural continuation of work of Hovey and Strickland \cite{hs_leht,hs_localcohom,hovey_chromatic} as well as unpublished work of Goerss \cite{goerss_quasi-coherent_2008}. In contrast to our algebraic approach, Goerss works more geometrically, studying the derived category of quasi-coherent sheaves on the moduli stack of $p$-typical formal group laws. However, both approaches are equivalent and consequently some of our results are equivalent to those obtained by Goerss.  For example, Goerss's chromatic convergence theorem \cite[Thm.~8.22]{goerss_quasi-coherent_2008} translates into a special case of \Cref{thm:chromaticconvergence2}. Similar geometric approaches have been studied by Hollander \cite{hollander}, Naumann \cite{naumann_stack_2007}, Pribble \cite{pribble}, Sitte \cite{sitte2014local}, and Smithling \cite{smithling}.

\subsection*{Conventions}\label{sec:conventions}
In this paper we work with stable $\infty$-categories in the quasi-categorical setting developed by Joyal~\cite{joyalqcat} and Lurie~\cite{htt,ha}. For simplicity, we will refer to a quasi-category as an $\infty$-category throughout this paper.

Unless otherwise noted, all categorical constructions are implicitly considered derived. For example, the tensor product $\otimes$ usually refers to the derived tensor product, and limits and colimits mean homotopy limits and homotopy colimits, respectively. The symbol $\boxtimes$ is reserved for the underived tensor product.

If $\cC$ is a closed symmetric monoidal stable $\infty$-category, the internal function object will be denoted by $\iHom_{\cC}$ to distinguish it from the merely spectrally enriched categorical mapping object $\Hom_{\cC}$. This is related to the usual mapping space via a natural weak equivalence $\Omega^\infty \Hom_{\cC}(X,Y) \simeq \operatorname{Map}_{\cC}(X,Y)$. If no confusion is likely to arise, the subscript $\cC$ will be omitted from the notation. 

When dealing with chain complexes, we will always  employ homological grading, i.e., complexes are written as 
\[
\xymatrix{\ldots \ar[r]^-d & X_{1} \ar[r]^-d & X_0 \ar[r]^-d & X_{-1} \ar[r]^-d & \ldots}
\]
with the differential $d$ lowering degree by 1. As usual, taking cohomology of a chain complex $X$ reverses the sign of the homology, that is $H^*(X) = H_{-*}(X)$.

We work with Hopf algebroids $(A,\Psi)$ over a commutative ring $K$ throughout; that is, $A$ and $\Psi$ are both commutative $K$-algebras. We will always assume that $\Psi$ is a flat $A$-module, and we call such Hopf algebroids flat. 

\subsection*{Acknowledgments}
This work was inspired in part by the aforementioned unpublished manuscript of Goerss \cite{goerss_quasi-coherent_2008}. We would like to thank Paul Goerss, Sharon Hollander, Mark Hovey, Achim Krause, and Gabriel Valenzuela for helpful conversations on the subject of this paper. Moreover, we are grateful to Gabriel Valenzuela for useful comments on an earlier draft of this document. This project began at the Max Planck Institute for Mathematics in Bonn, which we thank for its hospitality. The first-named author was partially supported by the DNRF92. The second-named author was partially supported by the SPP1786.

\section{Hopf algebroids and the structure of $\Comod_{BP_*BP}$}\label{sec:structurebp}
In this section we will prove some basic results about the abelian category $\Comod_{\Psi}$ of comodules over a flat Hopf algebroid $(A,\Psi)$. We assume the reader is familiar with the notion of comodules over a Hopf algebroid, for which good references include \cite[App.~A]{greenbook} and \cite{hovey_htptheory}.  We finish with a classification of the hereditary torsion theories for Landweber exact $BP_*$-algebras of height $n$, extending work of Hovey and Strickland \cite{hs_leht}.

We note that since we work with abelian categories in this section, all functors are assumed to be underived. 
\subsection{Recollections on Hopf algebroids and comodules}
Given a flat Hopf algebroid $(A,\Psi)$ over a commutative ring $K$ (i.e., $A$ and $\Psi$ are commutative $K$-algebras), we will write $\Comod_{\Psi}$ for the abelian category of $\Psi$-comodules. The following proposition, which is essentially a compendium of results in \cite[Sec.~1]{hovey_htptheory}, establishes the basic properties of the category $\Comod_{\Psi}$. 

\begin{prop}\label{prop:comodbasics}
The abelian category $\Comod_{\Psi}$ of comodules over a flat Hopf algebroid $(A,\Psi)$ is a complete and cocomplete locally presentable Grothendieck abelian category with a closed symmetric monoidal structure. A $\Psi$-comodule is compact or dualizable if and only if the underlying $A$-module is finitely presented or finitely presented and projective, respectively. Moreover, the forgetful functor
\[
\xymatrix{\epsilon_*\colon \Comod_{\Psi} \ar[r] & \Mod_A}
\]
is exact, faithful, symmetric monoidal, and preserves all colimits. The corresponding right adjoint $\epsilon^*$, which sends an $A$-module $M$ to the cofree $\Psi$-comodule $\Psi \otimes M$, is exact as well.
\end{prop}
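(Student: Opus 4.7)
My plan is to exploit the forgetful--cofree adjunction $\epsilon_* \dashv \epsilon^*$ together with flatness of $\Psi$ over $A$ to transport all of the required structure from the familiar category $\Mod_A$. First I would verify that $\epsilon^*(M) = \Psi \otimes_A M$, endowed with the coaction coming from the left unit $\eta_L \colon A \to \Psi$, is indeed right adjoint to $\epsilon_*$, with unit given by the coaction $\psi_N$ on a comodule $N$ and counit assembled from the counit $\varepsilon \colon \Psi \to A$ of the Hopf algebroid. Exactness of $\epsilon_*$ is essentially built into the definition of $\Comod_\Psi$ and relies critically on flatness of $\Psi$: this is exactly what ensures that the obvious coactions on kernels and cokernels of comodule maps make them into kernels and cokernels in $\Comod_\Psi$. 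Exactness of $\epsilon^*$ is then again flatness of $\Psi$ over $A$, and faithfulness of $\epsilon_*$ is immediate.

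For (co)completeness and the Grothendieck axioms, I would build colimits on the level of underlying $A$-modules, with the extended coaction existing because $\Psi \otimes_A -$ is cocontinuous; exactness of $\epsilon_*$ then upgrades AB5 for $\Mod_A$ to AB5 for $\Comod_\Psi$, and completeness follows from cocompleteness plus a set of generators via Freyd's adjoint functor theorem. A small set of generators is produced, following Hovey, by showing that every comodule is the filtered union of its subcomodules whose underlying $A$-module is finitely presented; local presentability is then automatic. The closed symmetric monoidal structure is obtained by setting $M \otimes N := M \otimes_A N$ with diagonal coaction, where associativity and symmetry are inherited from $\Mod_A$ via exactness of $\epsilon_*$; the internal hom $\iHom(M,N)$ is constructed as an equalizer of two maps between cofree objects coming from the coaction on $N$, and the tensor--hom adjunction is checked first for cofree $M$ (where it reduces to the $\Mod_A$ adjunction) and then extended to arbitrary $M$ by expressing $M$ as a colimit of cofree comodules.

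For the characterization of compact and dualizable objects: strong monoidality of $\epsilon_*$ means it preserves duals, which forces the underlying $A$-module of any dualizable comodule to be finitely generated projective; conversely, for such $M$ the $A$-linear dual carries a canonical coaction (built from the antipode together with a finite dual basis) promoting it to the comodule dual of $M$. For compactness, $\epsilon_*$ preserving filtered colimits forces finite presentation of $\epsilon_* M$ whenever $M$ is compact; the converse uses that $\Hom_{\Comod_\Psi}(M,-)$ is the equalizer of the two parallel arrows $\Hom_A(M,-) \rightrightarrows \Hom_A(M, \Psi \otimes_A -)$, and finite presentation of $\epsilon_* M$ makes both of these commute with filtered colimits, hence so does the equalizer. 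The main obstacle in all of this is producing a small set of generators and constructing the internal hom explicitly enough to verify its adjunction; the remaining pieces are either formal consequences of the adjunction or direct verifications using flatness of $\Psi$.
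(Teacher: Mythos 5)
The paper gives no proof of this proposition at all: it is stated as ``essentially a compendium of results in [Hovey, Sec.~1]'', and your sketch is in effect a reconstruction of Hovey's arguments, which is the right route. Most of it is sound: exactness of $\epsilon_*$ and $\epsilon^*$ from flatness of $\Psi$, colimits created on underlying modules, generators from finitely presented subcomodules, dualizability via strong monoidality of $\epsilon_*$ plus the antipode, and the equalizer formula $\Hom_{\Psi}(M,-) = \mathrm{eq}\bigl(\Hom_A(M,-) \rightrightarrows \Hom_A(M,\Psi\otimes_A -)\bigr)$ for the compactness claim.

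One step as written would fail: you propose to verify the tensor--hom adjunction ``by expressing $M$ as a colimit of cofree comodules.'' A comodule is in general \emph{not} a colimit of cofree comodules; the cobar resolution exhibits it as a \emph{limit} (the equalizer of $\Psi\otimes_A M \rightrightarrows \Psi\otimes_A\Psi\otimes_A M$), and since $\Hom(-,X)$ converts colimits to limits, this is the wrong variance for your reduction. The fix is already in your toolkit: apply the equalizer formula for $\Hom_{\Psi}$ in terms of $\Hom_A$ (which you invoke for the compactness argument) to both sides of the putative adjunction; the cofree case together with the ordinary tensor--hom adjunction in $\Mod_A$ then identifies the two equalizer diagrams term by term. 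Two smaller slips: the coaction on the cofree comodule $\Psi\otimes_A M$ is $\Delta\otimes 1$, not something built from $\eta_L$ (the left unit only supplies the $A$-module structure on the result); and ``compact implies finitely presented underlying'' does not follow from $\epsilon_*$ preserving filtered colimits --- you need that the \emph{right} adjoint $\epsilon^*$ preserves filtered colimits (true, because colimits of comodules are computed on underlying modules and $\Psi\otimes_A-$ is cocontinuous), which is exactly what makes the left adjoint $\epsilon_*$ preserve compact objects.
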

Given $\Psi$-comodules $M$ and $N$, we write $M \otimes_A N$ for the monoidal product, and $\iHom_{\Psi}(M,N)$ for the internal Hom object. We will often omit the subscript if it is clear from context. No confusion should arise with the use of $M \otimes_A N$; for example, given a $\Psi$-comodule $M$, $\Psi \otimes_A M$ could be interpreted as the extended comodule on $M$ or as the symmetric monoidal product of the comodules $\Psi$ and $M$, but these turn out to be the naturally isomorphic, see \cite[Lem.~1.1.5]{hovey_htptheory}. 

We say that a $\Psi$-comodule $I$ is relatively injective if $\Hom_{\Psi}(-,I)$ takes $A$-split short exact sequences to short exact sequences. 

\begin{lem}\label{lem:injcomodules}
A $\Psi$-comodule $M$ is injective if and only if it is a retract of an extended comodule $\Psi \otimes I$ on an injective $A$-module $I$. A comodule $M$ is relatively injective if and only if it is a retract of an extended comodule. 
\end{lem}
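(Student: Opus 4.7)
The plan is to leverage the adjunction $(\epsilon_*, \epsilon^*)$ from \Cref{prop:comodbasics}, together with the fact that both functors are exact, to transport (relative) injectivity back and forth between $\Comod_\Psi$ and $\Mod_A$.

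For the first statement, I would begin with the easy direction: if $I$ is an injective $A$-module, then since $\epsilon^*$ is right adjoint to the exact functor $\epsilon_*$, the comodule $\Psi \otimes I = \epsilon^*(I)$ is injective in $\Comod_\Psi$, and so is any retract thereof. For the converse, suppose $M$ is injective. The coaction $\psi\colon M \to \Psi \otimes M$ is a monomorphism of comodules, since $(\epsilon \otimes 1)\circ \psi = \mathrm{id}_M$ splits it as an $A$-module map. Now choose an embedding of $A$-modules $\epsilon_*(M) \hookrightarrow I$ into an injective $A$-module $I$; since $\Psi$ is flat over $A$, tensoring yields a monomorphism $\Psi \otimes M \hookrightarrow \Psi \otimes I$ of comodules. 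Composing, we obtain an embedding $M \hookrightarrow \Psi \otimes I$ into an injective object of $\Comod_\Psi$, which must split by the assumed injectivity of $M$, exhibiting $M$ as the desired retract.

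For the second statement, the same strategy applies, with $A$-split sequences replacing arbitrary short exact sequences. If $M = \Psi \otimes N$ is extended, then the adjunction isomorphism $\Hom_\Psi(-, \Psi \otimes N) \cong \Hom_A(\epsilon_*(-), N)$ turns any $A$-split short exact sequence of comodules into a split short exact sequence of $A$-modules, which remains exact after applying $\Hom_A(-, N)$; hence $\Psi \otimes N$, and any retract of it, is relatively injective. Conversely, given a relatively injective $M$, the short exact sequence
\[
0 \to M \xrightarrow{\psi} \Psi \otimes M \to \coker(\psi) \to 0
\]
is $A$-split (again by $\epsilon \otimes 1$), so the relative injectivity of $M$ forces $\mathrm{id}_M$ to lift along $\psi$ in $\Comod_\Psi$, realising $M$ as a retract of the extended comodule $\Psi \otimes M$.

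The argument is essentially formal once one has the exactness and adjunction properties of $\epsilon_*$ and $\epsilon^*$ in hand, so there is no single hard step; the only subtlety worth flagging is the need to verify that the coaction $\psi$ is $A$-split (a direct consequence of the counit axiom for a comodule) and that the flatness of $\Psi$ is used to preserve the $A$-module embedding $M \hookrightarrow I$ after applying $\Psi \otimes -$.
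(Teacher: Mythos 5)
Your proof is correct and is exactly the standard argument given in the references the paper cites for this lemma (the paper itself only points to \cite[Lem.~2.1]{hs_localcohom} and \cite[Lem.~3.1.2]{hovey_htptheory} rather than writing out a proof): embed via the $A$-split coaction $\psi$, use flatness of $\Psi$ and the exact adjunction $(\epsilon_*,\epsilon^*)$ to see that extended comodules on injectives are injective, and split off $M$ using (relative) injectivity.
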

\begin{proof}
	This is well known, see for example \cite[Lem.~2.1]{hs_localcohom} for the first statement, and \cite[Lem.~3.1.2]{hovey_htptheory} for the latter. 
\end{proof}

The link between Hopf algebroids and topology arises from the observation that if $F$ is a ring spectrum with $F_*F$ flat over $F_*$, then $(F_,F_*F)$ is a flat Hopf algebroid, and $F_*X$ is an $F_*F$-comodule for any spectrum $X$. We will be particularly interested in Hopf algebroids that are Landweber exact over $BP_*$ in the following sense, see \cite[Def.~2.1 and Def.~4.1]{hs_leht}

\begin{defn}\label{defn:bpcomodule}
	Suppose $f \colon BP_* \to E_*$ is a ring homomorphism, then $E_*$ is said to be a Landweber exact $BP_*$-algebra of height $n \in \mathbb{N} \cup \{\infty\}$ if the following conditions are satisfied:
	\begin{enumerate}
	 	\item The $BP_*$-module $E_*/I_n$ is non-zero, and $E_*/I_k \cong 0$ for all $k > n$. If $E_*/I_n$ is non-zero for all $n$, then the height is set to be $\infty$. 
	 	\item The functor from $BP_*BP$-comodules to $E_*$-modules induced by $M \mapsto E_* \otimes_{BP_*} M$  is exact. 
	 \end{enumerate} 
\end{defn}

Typical examples include Johnson--Wilson theories $E(n)_*$, Morava $E$-theory $(E_n)_*$, and $v_n^{-1}BP_*$, all of which have height $n$, see also \Cref{sec:torsen}. Given such an $E_*$, we can associate a Hopf algebroid $(E_*,E_*E) = (BP_*,E_* \otimes_{BP_*}BP_*BP \otimes_{BP_*}E_*)$. By \cite[Thm.~C]{hs_leht} any two Landweber exact $BP_*$-algebras of the same height have equivalent categories of comodules. 

Recall that a flat Hopf algebroid $(A,\Psi)$ is an Adams Hopf algebroid if $\Psi$ is a filtered colimit of comodules $\Psi_i$ that are finitely generated and projective as $A$-modules. We then have the following, which is proved in \cite[Sec.~1.4]{hovey_htptheory}.  

\begin{prop}[Hovey]\label{prop:adamsha}
The Hopf algebroids $(BP_*,BP_*BP)$ and $(E_*,E_*E)$, where $E_*$ is any height $n$ Landweber exact $BP_*$-algebra, are Adams Hopf algebroids, i.e., the corresponding category of comodules is generated by the dualizable comodules. 
\end{prop}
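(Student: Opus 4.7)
The plan is to treat the two Hopf algebroids in turn, in both cases presenting $\Psi$ explicitly as a filtered colimit of dualizable sub-comodules (i.e., sub-comodules that are finitely generated projective over $A$).

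For $(BP_*, BP_*BP)$, this is essentially due to Hovey; see [hovey\_htptheory, Sec.~1.4]. The key input is the polynomial algebra structure $BP_*BP \cong BP_*[t_1, t_2, \ldots]$ with generators $|t_i| = 2(p^i - 1)$ in strictly positive degrees, together with the observation that the coproduct $\Delta(t_n)$ can be expressed in terms of the $t_i$ with $i \le n$. Exploiting the bigraded structure, one constructs an exhaustive filtration of $BP_*BP$ by sub-$BP_*BP$-comodules that are finitely generated and free as $BP_*$-modules, directly verifying the Adams condition.

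For the Landweber exact case, we use the standard identification $E_*E \cong E_* \otimes_{BP_*} BP_*BP \otimes_{BP_*} E_*$ as flat Hopf algebroids. Given the filtration $BP_*BP \cong \colim_i \Psi_i$ produced in the first step, the fact that filtered colimits commute with tensor products yields
\[
E_*E \cong \colim_i (E_* \otimes_{BP_*} \Psi_i \otimes_{BP_*} E_*).
\]
Each $E_* \otimes_{BP_*} \Psi_i \otimes_{BP_*} E_*$ is finitely generated and projective as an $E_*$-module, since base change along a ring map preserves these properties. Moreover, the natural map into $E_*E$ is a sub-$E_*E$-comodule inclusion, being the base change of the sub-comodule inclusion $\Psi_i \hookrightarrow BP_*BP$ along the induced map of Hopf algebroids $(BP_*, BP_*BP) \to (E_*, E_*E)$. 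Thus $(E_*, E_*E)$ is Adams.

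The main obstacle is the $BP$-case: Hovey's construction of the filtration requires a careful choice of degree truncation and verification that it is closed under the coproduct of a polynomial algebra whose base ring $BP_*$ itself fails to be Noetherian. Once this technical point is in hand, the Landweber exact case follows almost formally from base change and the compatibility of sub-comodules with the induced map of Hopf algebroids.
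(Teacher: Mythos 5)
Your first step is essentially sound: taking $F_d\subseteq BP_*BP$ to be the left $BP_*$-span of the monomials in the $t_i$ of internal degree $\le d$, the facts that $\Delta$ preserves degree, that $BP_*BP$ is free as a left $BP_*$-module on these monomials, and that everything is non-negatively graded force $\Delta(F_d)\subseteq BP_*BP\otimes_{BP_*}F_d$; each $F_d$ is finitely generated free, so $(BP_*,BP_*BP)$ is Adams. (The paper itself offers no argument here; it cites \cite[Sec.~1.4]{hovey_htptheory}, where the proof is topological, via the Adams condition on the spectrum $BP$.)

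The base-change step, however, has a genuine gap. The $F_d$ are left $BP_*$-submodules but are \emph{not} closed under the right unit: for instance $m\cdot\eta_R(v_1)$ involves the monomial $t_1m$, whose degree typically exceeds $d$. So $\Psi_i$ carries no right $BP_*$-module structure and the expression $E_*\otimes_{BP_*}\Psi_i\otimes_{BP_*}E_*$ is not defined; nor does the comodule base-change functor along $(BP_*,BP_*BP)\to(E_*,E_*E)$ help, since it is the one-sided tensor $M\mapsto E_*\otimes_{BP_*}M$ and sends $BP_*BP$ to $E_*\otimes_{BP_*}BP_*BP$ (i.e.\ $E_*BP$), not to $E_*E$. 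If instead you take the $E_*$-span of the images of the degree-$\le d$ monomials in $E_*E$, you get a finitely generated submodule, but not obviously a projective one: in $E(n)_*E(n)\cong E(n)_*[t_1,t_2,\dots][\eta_R(v_n)^{-1}]/(\eta_R(v_{n+1}),\dots)$ the relations $\eta_R(v_{n+j})=0$ create relations among the monomials, and for $n\ge 2$ the ring $E(n)_*$ has global dimension $>1$, so finitely generated submodules of flat modules need not be projective. Exhibiting finitely generated projective sub-comodules of $E_*E$ is precisely the nonformal content of the claim, and it does not follow from the $BP$ case by tensoring. Hovey's argument gets projectivity from topology: $E$ is a filtered colimit of finite spectra $E_\alpha$ with $E_*DE_\alpha$ projective, whence $E_*E\cong\colim_\alpha E_*E_\alpha$ with each $E_*E_\alpha$ a finitely generated projective sub-comodule by Spanier--Whitehead duality and Landweber exactness. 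If you only need the weaker formulation that $\Comod_{E_*E}$ is generated by dualizables, you could alternatively deduce it from the $BP_*BP$ case using that $\Comod_{E_*E}$ is a monoidal localization of $\Comod_{BP_*BP}$ (\Cref{thm:bphtt}), the localization functor being essentially surjective, exact, and dualizable-preserving.
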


\subsection{The cotensor product and Cotor}
In this section we recall some basic facts about the cotensor product and its derived functor $\Cotor$. Our approach is slightly non-standard, in that we prefer to work with a relative version of $\Cotor$, which agrees with the one defined using standard homological algebra only when the first variable is flat. 

To begin, recall that given a right $\Psi$-comodule $M$ and a left $\Psi$-comodule $N$, the cotensor product $M \square_{\Psi} N$ is defined as the equalizer
\[
\xymatrix{M \square_\Psi N \ar[r] &  M \otimes_A N \ar@<0.5ex>[r]^-{\psi_M \otimes 1}  \ar@<-0.5ex>[r]_-{ 1 \otimes \psi_N} & M \otimes_A \Psi \otimes_A N.}
\]
Note that this only inherits the structure of a $K$-module. 

\begin{lem}\label{lem:cotorextended}
	If $N = \Psi \otimes_A N'$ is an extended comodule, then $M \square_{\Psi}N \cong M \otimes_A N'$.
\end{lem}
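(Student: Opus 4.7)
The plan is to unwind the defining equalizer and exhibit an explicit pair of inverse maps between $M\square_{\Psi}(\Psi\otimes_A N')$ and $M\otimes_A N'$. Substituting $N=\Psi\otimes_A N'$ with its extended left coaction $\psi_N=\Delta\otimes 1_{N'}$, the cotensor product becomes the equalizer
\[
M\square_\Psi(\Psi\otimes_A N')\;\longrightarrow\; M\otimes_A\Psi\otimes_A N'\;\rightrightarrows\;M\otimes_A\Psi\otimes_A\Psi\otimes_A N',
\]
where the parallel arrows are $\psi_M\otimes 1_\Psi\otimes 1_{N'}$ and $1_M\otimes\Delta\otimes 1_{N'}$.

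Next, I would construct the two candidate maps. In one direction, define $\alpha:=\psi_M\otimes 1_{N'}\colon M\otimes_A N'\to M\otimes_A\Psi\otimes_A N'$. Coassociativity of the right $\Psi$-comodule structure on $M$, i.e.\ $(\psi_M\otimes 1_\Psi)\psi_M=(1_M\otimes\Delta)\psi_M$, shows that the two parallel arrows agree after composing with $\alpha$, so $\alpha$ factors through a canonical map $\bar\alpha\colon M\otimes_A N'\to M\square_\Psi(\Psi\otimes_A N')$. In the other direction, use the counit $\epsilon\colon\Psi\to A$ of the Hopf algebroid to define $\beta:=1_M\otimes\epsilon\otimes 1_{N'}\colon M\otimes_A\Psi\otimes_A N'\to M\otimes_A N'$, and let $\bar\beta$ denote its restriction to $M\square_\Psi(\Psi\otimes_A N')$.

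Finally, I would check that $\bar\alpha$ and $\bar\beta$ are mutually inverse. The composite $\beta\circ\alpha=((1_M\otimes\epsilon)\psi_M)\otimes 1_{N'}$ equals the identity on $M\otimes_A N'$ by the counit axiom for the right coaction on $M$. For the other composite, take $x\in M\square_\Psi(\Psi\otimes_A N')$. The equalizer relation $(\psi_M\otimes 1_\Psi\otimes 1_{N'})(x)=(1_M\otimes\Delta\otimes 1_{N'})(x)$ becomes, after applying $1_M\otimes 1_\Psi\otimes\epsilon\otimes 1_{N'}$, the identity $\alpha(\beta(x))=x$, using the Hopf algebroid counit relation $(1_\Psi\otimes\epsilon)\Delta=1_\Psi$ on the right-hand side. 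This shows $\bar\alpha\bar\beta=\mathrm{id}$ and completes the argument.

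I expect no serious obstacle here; the only bookkeeping issue is being careful that the two $A$-module structures on $\Psi$ coming from the left and right units $\eta_L,\eta_R$ play along with the tensor products as written, which they do because all of the maps $\psi_M$, $\Delta$, and $\epsilon$ are bimodule maps with respect to the relevant structures. Morally, the lemma expresses the familiar fact that cofree (extended) comodules behave like a free resolution of the unit for the cotensor pairing.
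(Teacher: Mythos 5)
Your proposal is correct and is essentially the paper's argument: the paper simply asserts that $\psi_M\otimes 1\colon M\otimes_A N'\to M\otimes_A\Psi\otimes_A N'$ is the kernel of $\psi_M\otimes 1-1\otimes\psi_N$, and you have carried out exactly that verification, with the counit supplying the inverse. No issues.
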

\begin{proof}
	It is easy to check that the map $M \otimes_A N' \xr{\Psi_M \otimes 1} M \otimes_A \Psi \otimes_ A N'$ is the kernel of $\psi_M \otimes 1 - 1 \otimes \psi_N$. 
\end{proof}
The following definition naturally arises when using the methods of relative homological algebra, see \cite[p.~15]{em_relative} or \cite[Sec.~2]{barthel2014e_2}. 

\begin{defn}
 A proper injective resolution of a comodule $M$ is a resolution of $M$ by relative injectives such that each map in the resolution is split as map of $A$-modules. 
\end{defn}

Such resolutions always exist; indeed, for a comodule $M$, the standard cobar resolution
\[
C^\ast_{\Psi}(M) = \left ( \xymatrix{M \ar[r]^-{\psi_M} & \Psi \otimes_A M \ar@<0.5ex>[r]^-{\Delta \otimes 1} \ar@<-0.5ex>[r]_-{1 \otimes \Psi_M} & \Psi \otimes_A \Psi \otimes_A M \ar@<0.75ex>[r] \ar@<-0.75ex>[r] \ar[r] & \cdots } 
\right )
\]
is a resolution by relative injectives, which is split using the map $\epsilon \colon \Psi \to A$. Moreover, such resolutions are unique up to chain homotopy \cite[Thm.~2.2]{gm_dgha}. 

\begin{defn}\label{defn:cotorunderived}
Given $\Psi$-comodules $M$ and $N$, let $0 \to M \xr{i} J^\bullet$ and $0 \to N \xr{i'} L^\bullet$ be proper injective resolutions. We define
\[
\Cotor_\Psi^n(M,N) = H^n(\Tot^{\oplus}(J^\bullet \square_{\Psi} L^\bullet)),
\]
where $\Tot^{\oplus}$ is the totalization of the bicomplex with respect to the direct sum.
\end{defn}

The next result says that it is enough to take a resolution of either of the variables. 
\begin{lem}\label{lem:cotorbalanced}
	The maps
	\[
\xymatrix{
J^\bullet \square_{\Psi} N \ar[r]^{1 \square i'} & J^\bullet \square_{\Psi} L^\bullet & M \square_{\Psi} J^\bullet \ar[l]_{i \square 1}}
	\]
	induce isomorphisms on homology. 
\end{lem}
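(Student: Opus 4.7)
The approach is to run the standard spectral-sequence argument for a double complex with two augmentations. By the symmetry of the cotensor product in its two variables, it suffices to show that $1 \square i'$ is a quasi-isomorphism; the argument for $i \square 1$ is identical with the roles of $J^\bullet$ and $L^\bullet$ swapped (and one of the resolutions relabelled appropriately). The main obstacle is the key input that $J \square_\Psi -$ preserves $A$-split exactness whenever $J$ is a relative injective comodule, and dually on the other side.

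To establish this key input, recall from \Cref{lem:injcomodules} that any relative injective comodule $J$ is a retract of an extended comodule $\Psi \otimes_A P$ for some $A$-module $P$, and by \Cref{lem:cotorextended} there is a natural isomorphism $(\Psi \otimes_A P) \square_\Psi X \cong P \otimes_A \epsilon_*(X)$. Hence $J \square_\Psi -$ is naturally a retract of $P \otimes_A \epsilon_*(-)$. Since any additive functor preserves split short exact sequences in the underlying category, and a splitting of a comodule sequence over $A$ passes through $\epsilon_*$, the composite $P \otimes_A \epsilon_*(-)$, and therefore also its retract $J \square_\Psi -$, sends $A$-split short exact sequences of $\Psi$-comodules to short exact sequences of $K$-modules.

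Now apply this to the proper injective resolution $0 \to N \xrightarrow{i'} L^\bullet$, which is $A$-split exact by definition: for each $k$ the augmented complex $0 \to J^k \square_\Psi N \to J^k \square_\Psi L^\bullet$ is exact. In other words, in the first-quadrant bicomplex $J^\bullet \square_\Psi L^\bullet$, each column is acyclic except in the bottom row, where it computes $J^\bullet \square_\Psi N$. The spectral sequence of the bicomplex then collapses at $E_2$ to $H^\ast(J^\bullet \square_\Psi N)$, and the resulting edge map is precisely the one induced by $1 \square i'$; convergence is automatic because the bicomplex is concentrated in a single quadrant, so each total degree has only finitely many contributing bidegrees and $\Tot^\oplus$ agrees with $\Tot^\Pi$. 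This identifies $1 \square i'$ as a quasi-isomorphism, and the symmetric argument, using the dual statement that $- \square_\Psi L^k$ is exact on $A$-split sequences for each relative injective $L^k$, handles $i \square 1$.
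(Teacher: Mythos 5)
Your proof is correct and follows essentially the same route as the paper: both establish that $J^k \square_\Psi(-)$ preserves $A$-split exactness via \Cref{lem:injcomodules} and \Cref{lem:cotorextended}, and then run the (column-)filtration spectral sequence of the bicomplex, which degenerates because the augmented columns are acyclic. The only cosmetic difference is that the paper phrases the filtration as $F_n(J^\bullet)=J^{\le n}$ and checks the isomorphism on $E_1$-terms, whereas you phrase the same degeneration as collapse at $E_2$ with the edge map.
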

\begin{proof}
	This is proved in the context of comodules over a coalgebra in \cite{em_coalgebras}.  By \Cref{lem:injcomodules} each $J^i$ is a retract of an extended comodule $\tilde{J}^i \otimes_A \Psi$, so we have $J^i \square_{\Psi} N \cong \tilde{J}^i \otimes_A N$ by \Cref{lem:cotorextended}. It follows that $J^i \square_{\Psi} (-)$ preserves $A$-split exact sequences. 

	Filter $J^\bullet$ by $F_n(J^\bullet) = J^{\le n}$, which induces filtrations on $J^\bullet \square_{\Psi} N$ and $J^\bullet \square_{\Psi} L^\bullet$. One checks, using the fact that $J^i \square_{\Psi} (-)$ preserves $A$-split exact sequences, that in the associated spectral sequence the map $J^\bullet \square_{\Psi} N \xr{1 \square i} J^\bullet \square_{\Psi} L^\bullet$ induces an isomorphism on $E_1$-terms, and so is an isomorphism on homology. The argument for $i \square 1$ is similar. 
\end{proof}
\begin{rem}
	Note that this result implies that $\Cotor^*_\Psi(M,N)$ is independent of the choice of resolution of $M$ or $N$. 
\end{rem}
Since $\Comod_\Psi$ has enough injectives, it is more customary to define $\Cotor_\Psi^*(M,N)$ by taking an injective resolution $I^*$ of $N$ to construct the derived functors of $M \square_{\Psi} -$. Let us temporarily write $\widetilde{\Cotor}_{\Psi}(M,N)$ for this functor. 
\begin{lem}
	For $M,N \in \Comod_\Psi$ with $M$ flat, then there is an isomorphism
	\[
\Comod^*_\Psi(M,N) \cong \widetilde {\Comod}^*_{\Psi}(M,N).
	\]
\end{lem}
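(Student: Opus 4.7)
The plan is to prove that, when $M$ is $A$-flat, a proper injective resolution of $N$ is an acyclic resolution for the left-exact functor $M \square_\Psi(-)$, so it computes $\widetilde{\Cotor}^*_\Psi(M,N)$; the desired agreement with $\Cotor^*_\Psi(M,N)$ then follows immediately from \Cref{lem:cotorbalanced}.

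First I would check that $M \square_\Psi(-) \colon \Comod_\Psi \to \Mod_K$ is left exact under the flatness hypothesis. Given a short exact sequence $0 \to N_1 \to N_2 \to N_3 \to 0$ of $\Psi$-comodules, the forgetful functor $\epsilon_*$ is exact by \Cref{prop:comodbasics}, so the underlying sequence of $A$-modules is exact. Since $M$ is $A$-flat, tensoring with $M$ (respectively with $M \otimes_A \Psi$) preserves exactness, and the cotensor $M \square_\Psi N_i$ is the kernel of the natural difference $M \otimes_A N_i \rightrightarrows M \otimes_A \Psi \otimes_A N_i$. Kernels are left exact, so $0 \to M \square_\Psi N_1 \to M \square_\Psi N_2 \to M \square_\Psi N_3$ is exact, which legitimates the derived functor $\widetilde{\Cotor}^*_\Psi(M,-)$.

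Next I would show that every relative injective is acyclic for $M \square_\Psi(-)$. By \Cref{lem:injcomodules} and stability of acyclicity under retracts, it suffices to verify this for an extended comodule $\Psi \otimes_A N'$. Choose a (genuine) injective resolution $N' \to I^\bullet$ in $\Mod_A$. The functor $\epsilon^* = \Psi \otimes_A(-)$ is right adjoint to the exact functor $\epsilon_*$, so it preserves injective objects; combined with the flatness of $\Psi$ over $A$, this shows that $\Psi \otimes_A I^\bullet$ is an injective resolution of $\Psi \otimes_A N'$ in $\Comod_\Psi$. By \Cref{lem:cotorextended}, applying $M \square_\Psi(-)$ gives the complex $M \otimes_A I^\bullet$, whose cohomology vanishes in positive degrees because $M$ is $A$-flat. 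Hence $\widetilde{\Cotor}^{>0}_\Psi(M, \Psi \otimes_A N') = 0$.

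Finally, given a proper injective resolution $L^\bullet$ of $N$—which is in particular an exact resolution by relative injectives—the acyclicity from the previous step lets us compute $\widetilde{\Cotor}^n_\Psi(M,N) = H^n(M \square_\Psi L^\bullet)$. The right-hand side equals $\Cotor^n_\Psi(M,N)$ by \Cref{lem:cotorbalanced}, giving the isomorphism. I expect the main technical obstacle to be the verification in step two that $\Psi \otimes_A I$ is injective as a $\Psi$-comodule whenever $I$ is an injective $A$-module, and the clean translation of acyclicity along the retract in \Cref{lem:injcomodules}; once these ingredients are in place, the rest is a formal consequence of the balancing lemma.
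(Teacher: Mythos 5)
Your argument is correct and complete. The paper itself merely cites Ravenel's Lemma A1.2.8 (with the remark that his projectivity hypothesis can be weakened to flatness); your proof is essentially a self-contained version of that argument --- first checking that $M \square_\Psi(-)$ is left exact for $M$ flat, then showing relative injectives are acyclic for it by reducing to extended comodules and using flatness of $M$ and $\Psi$ over $A$, so that a proper injective resolution computes the genuine derived functor, after which the balancing lemma closes the loop.
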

\begin{proof}
	This is proved in \cite[Lem.~A1.2.8]{greenbook}: Ravenel assumes $M$ projective, but it is clear from the proof that $M$ flat is sufficient.  
\end{proof}
We prefer to use the relative version of $\Cotor$ since it allows us to dispense with flatness hypothesis in certain results, such as \Cref{lem:cotorcomparison}.

Given a left (respectively right) $\Psi$-comodule, we can always turn it into a right (respectively left) $\Psi$-comodule, by conjugating the action by the antipode $\chi$ of $\Psi$. We use that implicitly in the next result. 
\begin{lem}\label{lem:cotortwist}
Suppose given two comodules $M,N \in \Comod_{\Psi}$, then there is a natural isomorphism
\[
M \Box_{\Psi} N \xr{\sim} A \Box_{\Psi} (M \otimes_A N)
\]
of $K$-modules. 
\end{lem}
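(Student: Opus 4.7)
The plan is to realize both sides as $K$-submodules of the underlying $A$-module of $M \otimes_A N$ via their equalizer descriptions, and then exhibit an explicit isomorphism between the two equalizer diagrams.

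First I would make the comodule data explicit. Writing $\psi_M, \psi_N$ for the given left coactions in Sweedler notation as $m \mapsto \sum m_{(-1)} \otimes m_{(0)}$ and using the antipode $\chi$, $M$ acquires a right $\Psi$-comodule structure $\widetilde{\psi}_M(m) = \sum m_{(0)} \otimes \chi(m_{(-1)})$, so that
\[
M \square_\Psi N = \bigl\{ x \in M \otimes_A N : (\widetilde{\psi}_M \otimes 1)(x) = (1 \otimes \psi_N)(x) \bigr\}.
\]
Meanwhile, $M \otimes_A N$ carries the diagonal left coaction $\psi_{M \otimes N}(m \otimes n) = \sum m_{(-1)} n_{(-1)} \otimes m_{(0)} \otimes n_{(0)}$ built from $\psi_M$, $\psi_N$ and the multiplication of $\Psi$, and $A$ is a right $\Psi$-comodule via $\eta_R$. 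Under the canonical identification $A \otimes_A (M \otimes_A N) \cong M \otimes_A N$, the right-hand cotensor becomes
\[
A \square_\Psi (M \otimes_A N) = \bigl\{ x \in M \otimes_A N : \eta_R(1) \otimes x = \psi_{M \otimes N}(x) \bigr\}.
\]

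The crux is then to construct an $A$-linear isomorphism
\[
\tau \colon M \otimes_A \Psi \otimes_A N \xrightarrow{\;\sim\;} \Psi \otimes_A M \otimes_A N, \qquad m \otimes \xi \otimes n \longmapsto \sum m_{(-1)} \xi \otimes m_{(0)} \otimes n,
\]
with inverse $\xi \otimes m \otimes n \mapsto \sum m_{(0)} \otimes \chi(m_{(-1)}) \xi \otimes n$. Using coassociativity and the antipode identity $\mu(\chi \otimes 1)\Delta = \eta_R \epsilon$, a direct Sweedler calculation shows that $\tau$ sends the pair $(\widetilde{\psi}_M \otimes 1, \, 1 \otimes \psi_N)$ to the pair $(\eta_R \otimes 1_{M \otimes N}, \, 1_\Psi \otimes \psi_{M \otimes N})$ on the nose. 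Consequently $\tau$ identifies the two equalizer diagrams, yielding the claimed isomorphism, and naturality in $M$ and $N$ is immediate from the explicit formula.

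The main obstacle is purely organisational: unlike in the Hopf algebra case, one must keep track of the distinct left and right $A$-module structures on $\Psi$ induced by $\eta_L$ and $\eta_R$, which the antipode interchanges. Once these conventions are pinned down, the verifications reduce to a routine unwinding of the Hopf algebroid axioms.
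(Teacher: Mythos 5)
Your proof is correct and is essentially the paper's own argument: the paper likewise proves the lemma by "comparing the (co)equalizers defining the two cotensor products" via a diagram chase, recording exactly the same antipode-twisted diagonal coaction on $M \otimes_A N$ that you use. You have simply made explicit the comparison map $\tau$ and the Sweedler/antipode verifications that the paper leaves to the reader.
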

\begin{proof}
	This follows by comparing the coequalizers defining the two cotensor products, and a careful diagram chase. We note that if we write $\psi_M(m) = \Sigma_i m_i \otimes x_i$ and $\psi_N(n) = \Sigma_j y_j \otimes n_j$, then the comodule structure map on $M \otimes_A N$ is given by $\psi_{M \otimes_A N}(m \otimes n) = \Sigma_{i,j} (\chi(x_i)y_j \otimes m_i \otimes n_j)$. 
\end{proof}
This leads to the following. 

\begin{lem}\label{lem:cotorcomparison}
Suppose given two comodules $M,N \in \Comod_{\Psi}$, then there is a natural isomorphism
\[
\xymatrix{\Cotor_{\Psi}^*(M,N) \ar[r]^-{\sim} & \Cotor_{\Psi}^*(A,M\otimes_AN)}
\] 
of $K$-modules. 
\end{lem}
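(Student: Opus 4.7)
The plan is to reduce the claim to \Cref{lem:cotortwist} by upgrading the underived isomorphism there to a statement about proper injective resolutions. First, I would fix a proper injective resolution $N \to L^\bullet$ of $N$. By \Cref{lem:cotorbalanced}, it suffices to compute $\Cotor$ using a resolution in one variable, so
\[
\Cotor_\Psi^*(M,N) \cong H^*(M \,\square_\Psi\, L^\bullet).
\]
Applying \Cref{lem:cotortwist} degreewise produces a natural isomorphism of cochain complexes of $K$-modules
\[
M \,\square_\Psi\, L^\bullet \;\xrightarrow{\sim}\; A \,\square_\Psi\, (M \otimes_A L^\bullet).
\]

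Next I would show that $M \otimes_A L^\bullet$ is a proper injective resolution of $M \otimes_A N$. The $A$-splittings of the resolution $N \to L^\bullet$ remain $A$-splittings after applying $M \otimes_A -$, since splittings are preserved by any additive functor. For relative injectivity, by \Cref{lem:injcomodules} each $L^i$ is a retract of an extended comodule $\Psi \otimes_A I^i$, so $M \otimes_A L^i$ is a retract of $M \otimes_A (\Psi \otimes_A I^i)$. The standard shearing isomorphism (encoded, for instance, in the behavior of extended comodules noted after \Cref{prop:comodbasics}) identifies $M \otimes_A (\Psi \otimes_A I^i)$ with the extended comodule $\Psi \otimes_A (M \otimes_A I^i)$, which is relatively injective by \Cref{lem:injcomodules}. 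Hence $M \otimes_A L^\bullet$ is an $A$-split resolution of $M \otimes_A N$ by relative injectives.

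Invoking \Cref{lem:cotorbalanced} once more, we may use this resolution to compute
\[
\Cotor_\Psi^*(A,\, M\otimes_A N) \cong H^*\!\bigl(A \,\square_\Psi\, (M\otimes_A L^\bullet)\bigr).
\]
Combining the three displays gives the desired natural isomorphism.

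The main obstacle is the relative-injectivity verification in the second paragraph: one must be careful that the $\Psi$-comodule structure on $M \otimes_A L^i$ induced from the diagonal coaction indeed corresponds, under the shearing isomorphism, to the extended structure on $\Psi \otimes_A (M\otimes_A I^i)$. Everything else is a formal manipulation of the definitions together with \Cref{lem:cotortwist,lem:cotorbalanced}, and naturality in $M$ and $N$ follows because each isomorphism used is natural.
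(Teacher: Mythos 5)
Your proof is correct and is essentially the paper's argument in slightly greater generality: the paper runs the same reduction with the specific choice $L^\bullet = C^\ast_{\Psi}(N)$ (the cobar resolution), for which the fact that $M \otimes_A L^\bullet$ is again a proper injective resolution of $M \otimes_A N$ is immediate from \Cref{lem:cotorextended}, whereas you verify it for an arbitrary proper injective resolution via the shearing isomorphism for extended comodules. Both routes rest on the termwise application of \Cref{lem:cotortwist} combined with \Cref{lem:cotorbalanced}.
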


\begin{proof}
As noted above, given a comodule $X$, the $\Psi$-cobar complex $C_{\Psi}^*(X)$ is a proper injective resolution, and so can be used to compute $\Cotor$. The isomorphism $M \otimes_A N \cong M \Box_{\Psi} (\Psi \otimes_A N)$ of \Cref{lem:cotorextended}, along with \Cref{lem:cotortwist} shows that
\[
\begin{split}
M \Box_{\Psi} C^{k}_{\Psi}(N) &= M \Box_{\Psi} (\Psi \otimes_{A} \Psi^{\otimes k} \otimes_A N) \\
							& \cong M \otimes_A \Psi^{\otimes k} \otimes_A N \\
							& \cong \Psi \Box_{\Psi}(M \otimes_A \Psi^{\otimes k} \otimes_A N) \\
							& \cong A \Box_{\Psi} (M \otimes_A \Psi^{\otimes (k+1)} \otimes_A N).
\end{split}
\]
for all $k$. This leads to a quasi-isomorphism
\[
M \Box_{\Psi} C_{\Psi}^*(N) \simeq A \Box_{\Psi} C_{\Psi}^*(M \otimes_A N),
\] 
hence the desired isomorphism of Cotor groups. 
\end{proof}

\begin{rem}
If we were to use $\widetilde \Cotor_\Psi^*(M,N)$ instead of $\Cotor_\Psi^*(M,N)$, then we only know how to prove this when $M$ is a flat $A$-module. 
\end{rem}

\subsection{Hereditary torsion theories}
In this subsection we give a brief introduction to hereditary torsion theories, and prove a result relating hereditary torsion theories under certain localizations of categories. We use the terminology of hereditary torsion theories in order to distinguish it from the notion of localizing subcategory used in the context of stable $\infty$-categories in later sections. 
\begin{defn}\label{defn:hereditarytorsion}
	Let $\cA$ be a cocomplete abelian category. A full subcategory $\cT$ of $\cA$ is said to be a hereditary torsion theory if it is closed under subobjects, quotient objects, extensions, and arbitrary coproducts in $\cA$. 
\end{defn}
We recall that given a class of maps $\cal{E}$ in a category $\cA$, we say that an object $M \in \cA$ is $\cal{E}$-local if $\Hom_{\cA}(f,M)$ is an isomorphism for all $f \in \cal{E}$, and we denote the full subcategory of $\cal{E}$-local objects by $L_{\cal{E}}\cA$. Given such a class of maps it is known (for example by \cite{MR521257}) that there exists a localization functor $L \colon \cA \to \cA$ such that for each $M \in \cA$ we have $LM \in L_{\cal{E}}\cA$. 

Given a hereditary torsion theory $\cT$, we let $\cal{E}_\cT$ denote the class of $\cT$-equivalences, i.e., those maps whose kernel and cokernel are in $\cT$. 
\begin{defn}\label{defn:gabriellocal}
	Let $\cA$ be an abelian category and $\cT$ a hereditary torsion theory in $\cA$, then the Gabriel localization $L_{\cT} \colon \cA \to \cA$ is the localization functor associated to the class $\cal{E}_{\cT}$ of ${\cT}$-equivalences.
\end{defn}
\begin{thm}
Suppose $\cA$ is a Grothendieck abelian category. There is a natural bijection between hereditary torsion theories of $\cA$ and Gabriel localizations of $\cA$: 
\begin{align*}
\Her(\cA) & \longleftrightarrow    \Loc^G(\cA)  \\
\cT &  \longmapsto  L_{\cT} \\
\ker(L) & \longmapsfrom L.
\end{align*}
This bijection is realized by sending a hereditary torsion theory $\cT$ to the localization $L_{\cT}$ defined above; conversely, a Gabriel localization functor $L$ determines a hereditary torsion theory $\cT_L = \ker(L)$. 
\end{thm}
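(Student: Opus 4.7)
The plan is to verify that the two assignments $\cT \mapsto L_\cT$ and $L \mapsto \ker(L)$ are well-defined and mutually inverse, drawing on Gabriel's classical theory of quotient categories. The technical input is the existence of $L_\cT$ as an exact reflective localization, which relies on the Grothendieck hypothesis (a generator plus AB5).

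First I would construct $L_\cT$ for a given hereditary torsion theory $\cT$. The closure of $\cT$ under subobjects, quotients, extensions, and coproducts guarantees that the torsion radical $t_\cT(M)$, defined as the largest subobject of $M$ lying in $\cT$, is a well-defined left exact functor, and that $M/t_\cT(M)$ is $\cT$-torsion-free. One then obtains the $\cT$-local replacement $L_\cT(M)$ by forming the maximal $\cT$-essential extension of $M/t_\cT(M)$ inside its injective envelope, via a transfinite construction that uses the generator of $\cA$. This is Gabriel's construction, and it produces an exact localization with fully faithful right adjoint whose class of inverted maps is precisely $\cal{E}_\cT$. This step is the principal technical obstacle; the remainder is formal.

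Granting the construction, computing $\ker(L_\cT)$ is immediate: an object $M$ satisfies $L_\cT(M) = 0$ if and only if $M \to 0$ belongs to $\cal{E}_\cT$, which occurs if and only if its kernel $M$ lies in $\cT$ (the cokernel being zero). Hence $\ker(L_\cT) = \cT$, which shows that the assignment $\cT \mapsto L_\cT$ is injective. Surjectivity is tautological from the definition of $\Loc^G(\cA)$: any element is by construction of the form $L_{\cT'}$ for some $\cT' \in \Her(\cA)$, so $\ker(L) = \cT'$ by the preceding calculation and thus $L = L_{\ker(L)}$.

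Finally, one checks that $\ker(L)$ is always a hereditary torsion theory. Exactness of $L$ shows that $\ker(L)$ is closed under subobjects, quotients, and extensions: a short exact sequence $0 \to N \to M \to Q \to 0$ maps under the exact functor $L$ to a short exact sequence, so $L(M) = 0$ is equivalent to $L(N) = L(Q) = 0$. Closure under arbitrary coproducts follows from the cocontinuity of $L$, which holds since $L$ is a left adjoint to the inclusion of $L$-local objects. Combined with the bijection above, this completes the correspondence; the entire argument reduces to the existence and exactness of the Gabriel localization $L_\cT$.
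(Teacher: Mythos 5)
The paper does not actually prove this theorem; it simply cites \cite[Thm.~1.13.5]{Borceux_2}. Your reconstruction is the standard Gabriel argument that such a reference contains, and it is correct in outline: the only real content is the existence of the exact quotient $\cA \to \cA/\cT$ with fully faithful right adjoint, which you correctly identify as the technical heart and sketch via the torsion radical and maximal essential extensions; the verification that the two assignments are mutually inverse is then formal, as you say.

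One small imprecision worth fixing in the last step: the endofunctor $L_{\cT}$ (the composite $S\circ T$ of the quotient functor $T\colon \cA \to \cA/\cT$ with its fully faithful right adjoint $S$) is in general only \emph{left} exact and does not preserve coproducts, so you cannot literally invoke "exactness of $L$" or "cocontinuity of $L$" to show that $\ker(L)$ is closed under quotients, extensions, and coproducts. The repair is immediate: since $S$ is fully faithful (hence conservative on zero objects), $\ker(L) = \ker(T)$, and $T$ is exact and colimit-preserving, so all the closure properties you list follow by running your argument with $T$ in place of $L$. With that substitution the proof is complete.
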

\begin{proof}
	See \cite[Thm.~1.13.5]{Borceux_2}. 
\end{proof}
\begin{prop}\label{prop:quotienthtt}
Suppose $\cT \subseteq \cA$ is a hereditary torsion theory, and let $\cA/{\cT}$ be the associated local category with localization functor $\Phi_*\colon \cA \to \cA/{\cT}$. If $\cS \subseteq \cA/{\cT}$ is a hereditary torsion theory in $\cA/\cT$, then there exists a hereditary torsion theory $\overline{\cS} \subseteq \cA$ with $\cT \subseteq \overline{\cS}$ and such that $\cS = \Phi_*(\overline{\cS})$. 
\end{prop}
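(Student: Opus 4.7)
The plan is to simply set $\overline{\cS} := \Phi_*^{-1}(\cS)$, i.e., the full subcategory of $\cA$ whose objects are those $M$ with $\Phi_*(M) \in \cS$, and then check the three required properties. The argument is entirely formal, relying only on the fact that $\Phi_*$ is an exact, cocontinuous localization functor whose (fully faithful) right adjoint $\Phi^*$ realizes $\cA/\cT$ as a reflective subcategory of $\cA$.

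First I would verify that $\overline{\cS}$ is a hereditary torsion theory in $\cA$. Exactness of $\Phi_*$ ensures that it sends subobjects to subobjects and quotient objects to quotient objects, and sends a short exact sequence $0 \to M' \to M \to M'' \to 0$ to a short exact sequence in $\cA/\cT$; combined with the corresponding closure properties of $\cS$ inside $\cA/\cT$, this gives closure of $\overline{\cS}$ under subobjects, quotients, and extensions. Since $\Phi_*$ is a left adjoint, it commutes with arbitrary coproducts, giving the remaining closure property required by \Cref{defn:hereditarytorsion}.

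Next, the inclusion $\cT \subseteq \overline{\cS}$ is immediate: any $M \in \cT = \ker(\Phi_*)$ satisfies $\Phi_*(M) = 0 \in \cS$. For the equality $\cS = \Phi_*(\overline{\cS})$, the inclusion $\Phi_*(\overline{\cS}) \subseteq \cS$ holds by construction. Conversely, given $N \in \cS \subseteq \cA/\cT$, I would set $M := \Phi^*(N)$; since $\Phi^*$ is fully faithful, the counit induces a natural isomorphism $\Phi_*(M) = \Phi_* \Phi^*(N) \xrightarrow{\sim} N$, so $M \in \overline{\cS}$ and $\Phi_*(M) \cong N$, establishing $\cS \subseteq \Phi_*(\overline{\cS})$.

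There is no real obstacle here; the only point at which one must be slightly careful is in using the precise form of the Gabriel localization from \Cref{defn:gabriellocal}, namely that $\Phi_*$ is exact and admits a fully faithful right adjoint, so that both the closure properties of $\overline{\cS}$ and the surjectivity statement $\cS = \Phi_*(\overline{\cS})$ come out automatically.
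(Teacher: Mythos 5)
Your proof is correct. The subcategory you construct is in fact identical to the one in the paper: there $\overline{\cS}$ is defined as the kernel of the composite localization $F_{\cS}\Phi_*$ (where $F_{\cS}$ is the localization of $\cA/\cT$ at $\cS$), and since $\ker(F_{\cS}) = \cS$ this kernel is exactly your preimage $\Phi_*^{-1}(\cS)$. The difference is in how one certifies that this is a hereditary torsion theory: the paper first checks that $(F_{\cS}\Phi_*, \Phi^*G_{\cS})$ is again a localization adjunction (exact left adjoint with invertible counit) and then invokes the bijection between Gabriel localizations and hereditary torsion theories, whereas you verify the four closure axioms of \Cref{defn:hereditarytorsion} directly from exactness and cocontinuity of $\Phi_*$. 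Your route is more elementary and self-contained; the paper's route is slightly slicker in that it produces $\overline{\cS}$ together with its associated localization in one step, which is in the spirit of how the correspondence is used elsewhere in that section. The arguments for $\cT \subseteq \overline{\cS}$ and for $\cS = \Phi_*(\overline{\cS})$ (via $\Phi^*$ being fully faithful, so $\Phi_*\Phi^*N \cong N$) are essentially identical in both versions.
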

\begin{proof}
Write for $(\Phi_*,\Phi^*)$ and $(F_{\cS},G_{\cS})$ for the localization adjunction corresponding to $\cT$ and $\cS$, respectively, so that we have a diagram
\[
\xymatrix{\cA \ar@<0.5ex>[r]^-{\Phi_*} & \cA/\cT \ar@<0.5ex>[r]^-{F_{\cS}} \ar@<0.5ex>[l]^-{\Phi^*} & (\cA/\cT)/\cS. \ar@<0.5ex>[l]^-{G_{\cS}}}
\]
We first claim that $(\Phi_*^{\cS},\Phi^*_{\cS}) = (F_{\cS}\Phi_*,\Phi^*G_{\cS})$ is a localization adjunction. Indeed, $\Phi_*^{\cS}$ is exact and there are natural isomorphisms
\[
\xymatrix{\Phi_*^{\cS}\Phi^*_{\cS} = F_{\cS}\Phi_*\Phi^*G_{\cS} \ar[r]^-{\sim} & F_{\cS}G_{\cS} \ar[r]^-{\sim} & \Id.}
\]
Therefore, there exists a hereditary torsion theory $\overline{\cS} \subseteq \cA$ corresponding to $(\Phi_*^{\cS},\Phi^*_{\cS})$; in particular, $\cT = \ker(\Phi_*) \subseteq \ker(\Phi_*^{\cS}) = \overline{\cS}$.

It thus remains to show that $\cS = \Phi_*(\overline{\cS})$. Clearly, $F_{\cS}\Phi_*(\overline{\cS}) = \Phi_*^{\cS}(\overline{\cS}) = 0$, so $\Phi_*(\overline{\cS}) \subseteq \ker(F_{\cS}) = \cS$. Conversely, for $X \in \cS$ we have
\[
0 = F_{\cS}X = F_{\cS}\Phi_*\Phi^*X = \Phi_*^{\cS}\Phi^*X,
\]
which implies $\Phi^*X \in \ker(\Phi_*^{\cS}) = \overline{\cS}$. Consequently, $X = \Phi_*\Phi^*X \in \Phi_*(\overline{\cS})$, hence $\cS \subseteq \Phi_*(\overline{\cS})$.
\end{proof}

\begin{rem}
With notation as above, $\cA/\cT$ inherits the structure of a Grothendieck abelian category, see \cite[Cor.~4.6.2]{popescu}. 
\end{rem}
\subsection{Height $n$ cohomology theories and classification of hereditary torsion theories}\label{sec:torsen}
In this section we introduce the Hopf algebroids $(E(n)_*,E(n)_*E(n))$ and $(K(n)_*,\Sigma(n))$ closely related to $(BP_*,BP_*BP)$, and give a classification of the hereditary torsion theories of the former. 

In stable homotopy theory the geometric counterpart of the Brown--Peterson spectrum is the moduli stack of $p$-typical formal group laws. If we restrict to open substacks of formal group laws of height at most $n$, then the corresponding spectrum is Johnson--Wilson $E$-theory $E(n)$, with coefficient ring
\[
E(n)_* \cong \Z_{(p)}[v_1,\ldots,v_{n-1},v_n^{\pm 1}], 
\]
with $|v_i| =2(p^i-1)$. This gives rise to a flat Hopf algebroid $(E(n)_*,E(n)_*E(n))$ where, by Landweber exactness of $E(n)$, we have $E(n)_*E(n) \cong E(n)_* \otimes BP_*BP \otimes E(n)_*$.

The geometric point associated to the open substack corresponds to Morava $K$-theory $K(n)$, whose coefficient ring is the graded field
\[
K(n)_* \cong \F_{p^n}[v_n^{\pm 1}].
\]
 Importantly $K(n)_*$ satisfies a K{\"u}nneth isomorphism: for spectra $X$ and $Y$ there is an isomorphism
 \begin{equation}\label{eq:knkunneth}
K(n)_*(X \otimes Y) \cong K(n)_*X \otimes_{K(n)_*}K(n)_*Y. 
 \end{equation}

Correspondingly one would expect to study the Hopf algebroid $(K(n)_*,K(n)_*K(n))$. However, $K(n)_*$ is not Landweber exact, and it turns out to be slightly more convenient to work with the Hopf algebroid $(K(n)_*,\Sigma(n))$ with $\Sigma(n) = K(n)_* \otimes_{BP_*} BP_*BP \otimes_{BP_*} K(n)_*$; note that if $K(n)_*$ were Landweber exact, then this would be precisely $(K(n)_*,K(n)_*K(n))$. For example, it is the latter Hopf algebroid that appears in the important change of rings theorem of Miller and Ravenel \cite[Thm.~2.10]{miller_ravenel_local}. 

As noted previously, $E(n)_*$ is an example of a Landweber exact $BP_*$-algebra of height $n$ in the sense of \Cref{defn:bpcomodule}. Other examples include $v_n^{-1}{BP_*}$ or $E_*$, where $E=E_n$ denotes the $n$-th Morava $E$-theory, with coefficient ring
\[
E_* \cong \mathbb{W}(\F_{p^n})[\![u_1,\ldots,u_{n-1}]\!][ u^{\pm 1}],
\]
where $|u_i| = 0$ and $|u| = -2$. Geometrically this corresponds to the universal deformation of the geometric point associated to Morava $K$-theory. Since the associated comodule categories are equivalent, the hereditary torsion theories are equivalent for any Landweber exact $BP_*$-algebra of height $n$.  

The geometry of the moduli stack of formal groups is reflected in the global structure of the associated Hopf algebroids, more precisely in the poset of their hereditary torsion theories. A (partial) classification of hereditary torsion theories for $\Comod_{BP_*BP}$ was proved by Hovey and Strickland \cite{hs_leht}, following the classification of thick subcategories (or Serre classes) of finitely presented $BP_*BP$-comodules by Jeanneret, Landweber, and Ravenel \cite{jlr_thick}.  We use this classification of hereditary torsion theories for $BP_*BP$-comodules and the results of the previous subsection to classify the hereditary torsion theories for $E_*E$-comodules.

In what follows, let $\cT_n$ denote the full subcategory of all graded $BP_*BP$-comodules that are $v_n$-torsion, with the convention that $\cT_{-1} = \Comod_{BP_*BP}$. The next result is \cite[Thms.~B and C]{hs_leht}.

\begin{thm}[Hovey--Strickland]\label{thm:bphtt}
Let $\cT \subseteq \Comod_{BP_*BP}$ be a hereditary torsion theory containing a nontrivial compact comodule, then $\cT = \cT_n$ for some $-1 \le n$. Moreover, if $n\ge 0$, then the local category corresponding to $\cT_n$ is naturally equivalent to $\Comod_{E_*E}$ with $E_*$ a Landweber exact $BP_*$-algebra of height $n$. 
\end{thm}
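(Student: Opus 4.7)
The plan is to combine the classification of thick subcategories of compact $BP_*BP$-comodules due to Jeanneret--Landweber--Ravenel with a general correspondence between hereditary torsion theories of finite type and thick subcategories of compact objects.

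\textbf{Step 1: Reduce to the compact part.} First I would observe that any hereditary torsion theory $\cT \subseteq \Comod_{BP_*BP}$ containing a nontrivial compact comodule is generated, in the sense of closure under subobjects, quotients, extensions, and coproducts, by its compact objects $\cT \cap \Comod_{BP_*BP}^{\mathrm{fp}}$. This uses \Cref{prop:comodbasics}: $\Comod_{BP_*BP}$ is locally finitely presented with the $\mathrm{fp}$-objects forming a generating subcategory, and $\Comod_{BP_*BP}$ is locally coherent since $(BP_*,BP_*BP)$ is an Adams Hopf algebroid (\Cref{prop:adamsha}). In this locally coherent setting, hereditary torsion theories of finite type correspond bijectively to Serre subcategories of the full subcategory of compact objects. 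The torsion theory being closed under arbitrary coproducts and containing a compact object forces it to be of finite type in the relevant sense.

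\textbf{Step 2: Apply the thick subcategory theorem.} By Jeanneret--Landweber--Ravenel \cite{jlr_thick}, the Serre subcategories of $\Comod_{BP_*BP}^{\mathrm{fp}}$ containing a nonzero object are precisely the subcategories $\cT_n^{\mathrm{fp}} = \cT_n \cap \Comod_{BP_*BP}^{\mathrm{fp}}$ for $-1 \le n$, characterized by the invariant-prime-ideal filtration $I_{n+1} = (p, v_1, \ldots, v_n)$. Combining with Step 1, $\cT = \cT_n$ for some $n \ge -1$.

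\textbf{Step 3: Identify the Gabriel quotient with $\Comod_{E_*E}$.} For $n \ge 0$ and any Landweber exact $BP_*$-algebra $E_*$ of height $n$, the functor
\[
E_* \otimes_{BP_*}(-) \colon \Comod_{BP_*BP} \longrightarrow \Comod_{E_*E}
\]
is exact (by Landweber exactness, \Cref{defn:bpcomodule}) and symmetric monoidal, and it annihilates $\cT_n$ (any $v_n$-torsion comodule is killed since $v_n$ acts invertibly on $E_*$, and by the height condition $E_*/I_n \ne 0$ but $E_*/I_{n+1}=0$ controls the lower $v_i$-torsion). Hence it factors through the Gabriel quotient $\Phi_*\colon \Comod_{BP_*BP} \to \Comod_{BP_*BP}/\cT_n$ to induce a functor $\bar{F}$. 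I would then show $\bar{F}$ is an equivalence in two stages: essential surjectivity reduces, via the Adams Hopf algebroid property, to realizing every dualizable $E_*E$-comodule as the image of a compact $BP_*BP$-comodule inverted appropriately; full faithfulness reduces, again via compact generation, to checking that for compact $M,N$ the map
\[
\Hom_{\Comod_{BP_*BP}/\cT_n}(\Phi_*M,\Phi_*N) \longrightarrow \Hom_{\Comod_{E_*E}}(E_*\otimes M, E_*\otimes N)
\]
is an isomorphism. The target can be identified, using flatness of $E_*$ and the calculation $E_*E \cong E_* \otimes_{BP_*} BP_*BP \otimes_{BP_*} E_*$, with a colimit of $\Hom$-groups of inversions of $M$ and $N$ matching the description of Gabriel localization.

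\textbf{Main obstacle.} The principal technical hurdle is Step 3: explicitly identifying the Gabriel quotient. The issue is that Gabriel localization at $\cT_n$ is a priori an abstract $\Hom$-set localization, whereas $E_* \otimes_{BP_*}(-)$ is a tensor operation, and matching them requires a careful analysis of $\cT_n$-injective envelopes of compact comodules (which should be precisely the $E_*$-extensions) together with an argument, already implicit in \cite{hs_leht}, that two Landweber exact algebras of the same height produce equivalent comodule categories so that the target is independent of the choice of $E_*$. The compact-generation/Adams-Hopf-algebroid structure from \Cref{prop:adamsha} is what makes this manageable.
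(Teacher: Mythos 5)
First, note that the paper does not prove this statement at all: it is labelled ``Hovey--Strickland'' and its proof consists of the citation to \cite[Thms.~B and C]{hs_leht}. Your proposal is therefore a reconstruction of their external argument rather than an alternative to anything in the paper, and as a reconstruction it has two genuine gaps.

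The first is in Step 1. Closure under arbitrary coproducts is already part of the definition of a hereditary torsion theory (\Cref{defn:hereditarytorsion}), and the inference ``closed under coproducts and containing a compact object, hence of finite type'' is not a valid general principle: in a locally coherent Grothendieck category there exist hereditary torsion theories that are not of finite type, and containing one finitely presented object does not repair this. What actually makes the reduction to $\cT^{\mathrm{fp}} = \cT \cap \Comod_{BP_*BP}^{\mathrm{fp}}$ work is the specific structural fact, established by Hovey--Strickland using the coherence of $BP_*$ and the Landweber filtration theorem, that every $BP_*BP$-comodule is the filtered union of its finitely presented subcomodules. Granting that, $M \in \cT$ forces every finitely presented subcomodule of $M$ into $\cT^{\mathrm{fp}} = \cT_n^{\mathrm{fp}}$, hence $M$ is $v_n$-torsion; conversely every $v_n$-torsion comodule is a filtered colimit of objects of $\cT_n^{\mathrm{fp}} \subseteq \cT$, hence lies in $\cT$. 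Without this input your Step 1 does not close. The second gap is Step 3, which you correctly flag as the main obstacle but do not surmount: exhibiting $\Comod_{E_*E}$ as the Gabriel quotient requires (i) that the kernel of the exact functor $E_* \otimes_{BP_*}(-)$ is \emph{exactly} $\cT_n$, not merely that it contains $\cT_n$, and (ii) that its right adjoint is fully faithful, i.e., that the counit $\Phi_*\Phi^* \to \Id$ is an isomorphism. The proposed identification of $\Hom$-groups in the quotient with ``a colimit of $\Hom$-groups of inversions'' is asserted rather than derived, and this, together with the independence of $\Comod_{E_*E}$ from the choice of height-$n$ Landweber exact algebra, is precisely the substantial content of \cite[Thm.~C]{hs_leht}. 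If those steps are to be quoted from Hovey--Strickland anyway, one may as well cite them for the whole theorem, as the paper does.
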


Fix $-1 \le n$ and let 
\[
\xymatrix{\Comod_{BP_*BP} \ar@<0.5ex>[r]^-{\Phi_*} & \Comod_{E_*E} \ar@<0.5ex>[l]^-{\Phi^*}}
\]
be the localization adjunction corresponding to $\cT_n$.

\begin{cor}\label{cor:ehtt}
Let $\cS \subseteq \Comod_{E_*E}$ be a hereditary torsion theory, then $\cS = \cT_m$ for some $-1 \le m \le n$. 
\end{cor}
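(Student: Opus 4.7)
The plan is to combine Proposition~\ref{prop:quotienthtt}, which produces lifts of hereditary torsion theories along the localization $\Phi_*$, with the Hovey--Strickland classification (Theorem~\ref{thm:bphtt}) applied to the lifted theory. Concretely, given $\cS \subseteq \Comod_{E_*E}$ hereditary torsion, Proposition~\ref{prop:quotienthtt} supplies a hereditary torsion theory $\overline{\cS} \subseteq \Comod_{BP_*BP}$ with $\cT_n \subseteq \overline{\cS}$ and $\Phi_*(\overline{\cS}) = \cS$, reducing the problem to classifying those hereditary torsion theories in $\Comod_{BP_*BP}$ that contain $\cT_n$.

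To apply Theorem~\ref{thm:bphtt}, I need $\overline{\cS}$ to contain a nontrivial compact $BP_*BP$-comodule. For $n \geq 0$, the comodule $BP_*/I_n$ lies in $\cT_n \subseteq \overline{\cS}$, is nontrivial, and is finitely presented over $BP_*$, so it is compact by Proposition~\ref{prop:comodbasics}; for $n = -1$ the statement reduces tautologically to Theorem~\ref{thm:bphtt} itself. Hovey--Strickland therefore yields $\overline{\cS} = \cT_m$ for some $-1 \leq m$. Since the chain $\cT_{-1} \supseteq \cT_0 \supseteq \cT_1 \supseteq \cdots$ is strictly decreasing and $\cT_n \subseteq \cT_m$, this forces $m \leq n$.

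It remains to identify $\cS = \Phi_*(\cT_m)$ with the hereditary torsion theory of $v_m$-torsion comodules inside $\Comod_{E_*E}$, which by a slight abuse of notation is also called $\cT_m$. The inclusion $\Phi_*(\cT_m) \subseteq \cT_m^E$ is immediate because $\Phi_*$ is symmetric monoidal and sends $v_m \in BP_*$ to $v_m \in E_*$, hence preserves $v_m$-torsion. For the reverse inclusion, I would use that $\Phi_*(BP_*/I_k) \cong E_*/I_k$ for $k \leq m$, that $\Phi_*$ is exact and preserves filtered colimits, and that $\cT_m^E$ is generated under colimits and extensions by its dualizable (hence compact) members (Proposition~\ref{prop:adamsha}), which by a Landweber-style argument are built from the $E_*/I_k$ with $k \leq m$.

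The genuinely interesting step is the first one --- producing the lift and invoking Hovey--Strickland in the right way --- but it is handled for us by Proposition~\ref{prop:quotienthtt}. The main technical obstacle is rather the final identification $\Phi_*(\cT_m) = \cT_m^E$; this is purely bookkeeping once one knows the generators of $\cT_m^E$, but it is where one must be careful that the notation $\cT_m$ used in the two categories is consistent. Everything else is a direct translation between hereditary torsion theories and their Gabriel localizations.
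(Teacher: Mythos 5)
Your argument follows the paper's proof essentially verbatim: lift $\cS$ along $\Phi_*$ via \Cref{prop:quotienthtt}, note that the lift $\overline{\cS}$ contains a nonzero compact comodule, invoke \Cref{thm:bphtt} to get $\overline{\cS}=\cT_m$, and push back down; the paper treats the final identification $\Phi_*(\cT_m)=\cT_m^E$ as definitional, which your extra discussion elaborates on but does not contradict. One correction to your witness for compactness: $BP_*/I_n$ is \emph{not} $v_n$-torsion (indeed $E(n)_*\otimes_{BP_*}BP_*/I_n\cong E(n)_*/I_n\neq 0$), so it does not lie in $\cT_n$; the nonzero finitely presented comodule you want in $\cT_n\subseteq\overline{\cS}$ is $BP_*/I_{n+1}$, and with that off-by-one fixed the rest of your argument (including the strict descending chain $\cT_{-1}\supseteq\cT_0\supseteq\cdots$ forcing $m\le n$) goes through.
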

\begin{proof}
By \Cref{thm:bphtt} and \Cref{prop:quotienthtt}, there exists a hereditary torsion theory $\overline{\cS} \subseteq \Comod_{BP_*BP}$ such that $\cT_n^{BP} \subseteq \overline{\cS}$ and $\cS = \Phi_*(\overline{\cS})$. The first property implies that $\overline{\cS}$ contains a nonzero compact $BP_*BP$-comodule, hence \Cref{thm:bphtt} shows that there exists an $m$ with $\overline{\cS} = \cT_m^{BP}$. By definition, $\Phi_*\cT_m^{BP} = \cT_m^{E}$, so the claim follows. 
\end{proof}
We note that, in contrast to \Cref{thm:bphtt}, we do not require that $\cS$ contains a nontrivial compact comodule; instead, this condition is automatically satisfied in this case. 

\section{The Stable category of comodules}\label{sec:stable}
In this section we study the stable category of $\Psi$-comodules, previously introduced in \cite{hovey_htptheory} and \cite{bhv1}. In particular, we use a derived version of the cotensor product considered in the last section to derive Ravenel's base-change spectral sequence for $\Cotor$ \cite[App.~A.1.3.11]{greenbook}, as well as a variant of a change of rings theorem of Hovey and Sadofsky \cite[Thm.~3.3]{hov_sadofsky}. 

\subsection{The definition of $\Stable_{\Psi}$}
As noted by Hovey \cite{hovey_htptheory}, the category of chain complexes of comodules should be thought of as like topological spaces, in the sense that there is both a notion of homology and homotopy, and to form the `correct' version of the derived category we should invert the homotopy, not homology, isomorphisms. In \emph{loc.~cit.}~Hovey constructed such a category $\Stable_\Psi$ associated to a Hopf algebroid\footnote{Actually, Hovey constructed $\Stable_{\Psi}$ for amenable Hopf algebroids, see \cite[Def.~2.3.2]{hovey_htptheory}, but all the Hopf algebroids we consider in this paper are amenable.} $(A,\Psi)$. In \cite[Sec.~4]{bhv1} we gave an alternative construction, which agrees with Hovey's model under some very mild conditions on the Hopf algebroid. We give a brief review of our construction here, referring the reader to \cite{bhv1} for the details.

For some motivation, we start with an observation of Hovey \cite[Sec.~3]{hovey_chromatic}: in the derived category of $BP_*BP$-comodules the tensor unit $BP_*$ is not compact (essentially due to the existence of non-nilpotent elements in $\Ext_{BP_*}(BP_*,BP_*)$). The idea of the following definition is to force the tensor unit (and indeed, all dualizable comodules) to be compact. Thus, let $(A,\Psi)$ be a flat Hopf algebroid, and write $\cG = \cG_{\Psi}$ for the set of dualizable $\Psi$-comodules and $\cD_{\Psi}$ for the usual derived category of comodules. 
\begin{defn}\label{defn:stable_comod}
We define the stable $\infty$-category of $\Psi$-comodules as the ind-category of the thick subcategory of $\cD_{\Psi}$ generated by $\cG$ viewed as complexes concentrated in degree $0$, i.e.,
\[
\Stable_{\Psi} = \Ind(\Thick_{\Psi}(\cG)).
\]
\end{defn}
The next proposition summarizes some basic properties of $\Stable_{\Psi}$. Proofs are given in \cite[Sec.~4]{bhv1}. 
\begin{prop}\label{prop:stableprops}
	Let $(A,\Psi)$ be a flat amenable Hopf algebroid.
	\begin{enumerate}
		\item $\Stable_\Psi$ is a presentable stable $\infty$-category compactly generated by $\cG$, equipped with a closed symmetric monoidal product preserving colimits in both variables. 
		\item There is a cocontinuous functor $\omega \colon \Stable_\Psi \to \cD_\Psi$ to the ordinary derived category, which is a (symmetric monoidal) equivalence when $(A,A)$ is a discrete Hopf algebroid. 
		\item The functor $\omega$ is given by Bousfield localization at the homology isomorphisms, i.e., those morphisms which induce an isomorphism on $H_*$. 
		\item For a Hopf algebroid $(A,\Psi)$ there is a canonical equivalence between $\Stable_{\Psi}$ and the underlying $\infty$-category of the model category constructed by Hovey in \cite{hovey_htptheory}.
		\item There is an adjunction of stable categories
\[
\xymatrix{\epsilon_*\colon\Stable_{\Psi} \ar@<0.5ex>[r]^-{} & \cD_A\colon \epsilon^* \ar@<0.5ex>[l]}
\]
extending the adjunction from \Cref{prop:comodbasics}. 
	\end{enumerate}
\end{prop}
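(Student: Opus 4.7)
My strategy is to exhibit $\Stable_\Psi$ as the ind-completion of a small stable $\infty$-category and then transport the required structure across this equivalence using the standard machinery of \cite{htt,ha}. For (1), I would first check that $\Thick_\Psi(\cG)$ is essentially small: by \Cref{prop:comodbasics}, a dualizable $\Psi$-comodule is finitely presented and projective over $A$, and such comodules form a set up to isomorphism. Then $\Ind(\Thick_\Psi(\cG))$ is automatically presentable, stable, and compactly generated by $\cG$. For the closed symmetric monoidal structure, dualizable comodules are closed under the tensor product, so $\cG$, and hence $\Thick_\Psi(\cG)$, inherits a symmetric monoidal structure from $\cD_\Psi$. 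Day convolution (equivalently, the universal property of ind-completion among symmetric monoidal colimit-preserving functors, \cite[Sec.~4.8.1]{ha}) extends this to a closed symmetric monoidal structure on $\Stable_\Psi$ with tensor preserving colimits in each variable.

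For (2) and (3), the universal property of ind-completion produces a unique cocontinuous functor $\omega \colon \Stable_\Psi \to \cD_\Psi$ extending the inclusion $\Thick_\Psi(\cG) \hookrightarrow \cD_\Psi$; it is symmetric monoidal because the extension is. When $(A,A)$ is discrete we have $\cD_\Psi = \cD_A$, and the generators in $\cG$ coincide with the perfect complexes, which compactly generate $\cD_A$; both sides are thus the ind-completion of the same thick subcategory, so $\omega$ is an equivalence. In general, $\omega$ is a cocontinuous functor between presentable $\infty$-categories and so admits a right adjoint by the adjoint functor theorem. To identify $\omega$ as the Bousfield localization at homology isomorphisms, I would verify directly that $f \in \Stable_\Psi$ is inverted by $\omega$ iff $\omega(f)$ is a quasi-isomorphism iff $H_*(f)$ is an isomorphism, testing against the compact generators $G \in \cG$ where $\omega(G) \simeq G$.

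For (5), the plan is to induce the adjunction at the level of compact objects and extend by filtered colimits. The forgetful functor $\epsilon_*$ carries $\Thick_\Psi(\cG)$ into $\cD_A$ and sends dualizables to perfect $A$-complexes by \Cref{prop:comodbasics}, so ind-completion yields a cocontinuous functor $\epsilon_* \colon \Stable_\Psi \to \cD_A$. Its right adjoint $\epsilon^*$, provided by the adjoint functor theorem, sends $M \in \cD_A$ to the derived cofree comodule $\Psi \otimes^{\mathbb{L}}_A M$, extending the underived adjunction of \Cref{prop:comodbasics}.

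The subtlest point, and the main obstacle I anticipate, is (4): the comparison with Hovey's model category. Hovey equips $\Ch(\Comod_\Psi)$ with a cofibrantly generated stable model structure whose compact generators are essentially given by dualizable comodules placed in degree zero. The underlying $\infty$-category of a stable combinatorial model structure compactly generated by a set $\cG'$ is equivalent to $\Ind(\Thick(\cG'))$, so the comparison reduces to matching Hovey's homotopy-level compact generators with our class $\cG$. Verifying this matching—and in particular that the amenability hypothesis of Hovey's construction rules out pathologies such as phantom maps that would spoil compact generation—is the technical crux of the argument.
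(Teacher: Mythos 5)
Your overall strategy---exhibit $\Stable_\Psi$ as $\Ind(\Thick_\Psi(\cG))$ and transport structure along the universal property of ind-completion---is exactly the one taken in \cite[Sec.~4]{bhv1}, which is all the paper itself offers by way of proof (the proposition is stated as a summary of that reference). Your treatments of (1), (2), (4) and (5) match the source: in particular you correctly identify (4) as the technical crux and give the right reduction, namely that the underlying $\infty$-category of Hovey's combinatorial stable model structure is compactly generated by the dualizable comodules placed in degree zero, which is precisely where the amenability hypothesis enters, as you suspect.

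The one place your plan falls short is (3). Checking that a map $f$ is inverted by $\omega$ if and only if $H_*(f)$ is an isomorphism only identifies the class of $\omega$-equivalences; it does not show that $\omega$ \emph{is} the Bousfield localization at that class, for which you additionally need the right adjoint of $\omega$ to be fully faithful (equivalently, that $\omega$ exhibits $\cD_\Psi$ as the universal cocontinuous localization at the homology isomorphisms). The paper sidesteps this by deducing (3) from (4): in Hovey's construction, $\cD_\Psi$ arises as a left Bousfield localization of the model category presenting $\Stable_\Psi$ at the homology isomorphisms, and the claim transfers along the equivalence of (4). If you want to keep your direct route, you should supplement it by showing that the homology isomorphisms form a strongly saturated class of small generation and that $\cD_\Psi$ has the resulting universal property; otherwise, argue as the paper does via the model-categorical comparison.
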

Since Point (3) is perhaps not clearly outlined in \cite{bhv1}, we note that it follows from Hovey's construction of $\Stable_{\Psi}$ and Point (4) above. 

As noted in \Cref{prop:stableprops}, $\Stable_{\Psi}$ is compactly generated by the set of (isomorphism classes of) dualizable $\Psi$-comodules. We will say that it is monogenic if it is compactly generated by $A$ itself. The following implies that many of the categories we study in this paper are monogenic; in particular $\Stable_{BP_*BP}$ itself is. 
\begin{prop}\cite[Cor.~6.7]{hovey_htptheory}\label{prop:monogenic}
	If $E$ is a ring spectrum that is Landweber exact over $MU$ or $BP$ and $E_*E$ is commutative, then $\Stable_{E_*E}$ is monogenic. 
\end{prop}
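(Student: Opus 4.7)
Monogenicity asserts that $E_*$ alone compactly generates $\Stable_{E_*E}$. By \Cref{defn:stable_comod} together with \Cref{prop:stableprops}(1), the compact objects of $\Stable_{E_*E}$ coincide with $\Thick_{E_*E}(\cG)$, where $\cG$ denotes the set of dualizable $E_*E$-comodules. Hence the goal reduces to establishing the inclusion $\cG \subseteq \Thickid(E_*)$ of thick tensor-ideals inside the compact part.

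The plan is to transport the problem back to the universal case via base change along the map of Hopf algebroids $(MU_*, MU_*MU) \to (E_*, E_*E)$, which induces a functor $E_* \otimes_{MU_*}(-) \colon \Comod_{MU_*MU} \to \Comod_{E_*E}$. This functor is exact by Landweber exactness of $E$, and it is well-defined on comodules precisely because $E_*E$ is assumed commutative, so that the base-changed coaction is compatible with the given Hopf algebroid structure. I would then argue that every dualizable $E_*E$-comodule $M$ is a retract of $E_* \otimes_{MU_*} N$ for some dualizable $MU_*MU$-comodule $N$, using that $M$ is finitely presented and projective over $E_*$. This step invokes an Adams-type realizability theorem identifying dualizable $MU_*MU$-comodules, up to retract, with the $MU$-homologies of finite spectra.

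With the reduction in hand, the inclusion follows inductively from the cell filtration: any finite spectrum $X$ sits in a finite tower of cofiber sequences of spectra, which $MU_*(-)$ sends to cofiber sequences in $\Stable_{MU_*MU}$ whose fibers are shifts of $MU_* = MU_* S^0$, so $MU_*X \in \Thickid(MU_*)$. Applying the exact base change $E_* \otimes_{MU_*}(-)$, which sends $MU_*$ to $E_*$ and preserves cofiber sequences, places $E_*X \in \Thickid(E_*)$; closing under retracts then produces the desired inclusion $\cG \subseteq \Thickid(E_*)$. The central obstacle is the realizability step: while dualizable $MU_*MU$-comodules admit a classical description via Adams' theorem, showing that an abstract dualizable $E_*E$-comodule (with no a priori topological origin) descends, up to retract, to one induced from $MU_*MU$ is where the hypothesis that $E_*E$ is commutative becomes indispensable, since it is what makes extension of scalars along $MU_* \to E_*$ factor coherently through the comodule categories.
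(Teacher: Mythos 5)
The paper itself does not prove this statement; it is quoted from Hovey, whose argument (mirrored elsewhere in this paper, e.g.\ in the proof of \Cref{lem:lamonogenic} and of \Cref{prop:sigmamonog}) is purely algebraic: by the Landweber filtration theorem every finitely presented $E_*E$-comodule admits a finite filtration with subquotients $\Sigma^{?}E_*/I_k$, and each $E_*/I_k$ lies in $\Thick(E_*)$ via the cofiber sequences $\Sigma^{|v_k|}E_*/I_k \xrightarrow{v_k} E_*/I_k \to E_*/I_{k+1}$; hence every dualizable comodule lies in $\Thick(E_*)$ and the category is monogenic. Your proposal takes a different, topological route, and it has a genuine gap at its central step: the ``Adams-type realizability theorem identifying dualizable $MU_*MU$-comodules, up to retract, with the $MU$-homologies of finite spectra'' does not exist. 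Realizing abstract $MU_*MU$- (or $BP_*BP$-) comodules by spectra is a notoriously hard problem --- already $BP_*/I_n$ is realizable only when the relevant Smith--Toda complex exists --- and there is no general statement, even up to retracts, for finitely generated projective comodules. Since your entire reduction funnels through this claim, the argument does not close; the fix is to replace the realizability step by the Landweber filtration theorem, which handles arbitrary finitely presented comodules with no topological input.

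Two smaller points. First, you reduce monogenicity to $\cG \subseteq \Thickid(E_*)$, but the thick \emph{tensor ideal} generated by the unit inside the compacts is automatically everything (tensoring $E_*$ with any $G \in \cG$ returns $G$), so that inclusion is vacuous; what monogenicity requires is $\cG \subseteq \Thick(E_*)$, the plain thick subcategory. Your subsequent cofiber-sequence argument does in fact target $\Thick(MU_*)$, so this is likely a notational slip, but as stated the reduction proves nothing. Second, the role of the hypothesis that $E_*E$ is commutative is not to make base change ``factor coherently''; it is simply what guarantees that $(E_*,E_*E)$ is a (commutative, flat) Hopf algebroid in the sense used throughout, so that $\Stable_{E_*E}$ and the Landweber filtration argument apply.
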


Given $M,N \in \Stable_{\Psi}$ we will again write $M \otimes_{A} N$ for the monoidal product, and $\iHom_{\Psi}(M,N)$ for the internal Hom object (sometimes we will omit the subscripts if the context is clear). Because of \Cref{prop:stableprops}, we always assume our Hopf algebroids are amenable. 

For technical reasons, it is sometimes useful to restrict to a certain subclass of Hopf algebroids. 
\begin{defn}\cite[Def.~4.14]{bhv1}
	Let $(A,\Psi)$ be a flat Hopf algebroid, and write $\Comod^\omega_\Psi[0]$ for the image of the nerve of the abelian category of compact comodules in $\cD_\Psi$. We call $(A,\Psi)$ a Landweber Hopf algebroid if $\Comod_\Psi^\omega[0]$ is contained in $\Thick_\Psi(A)$.
\end{defn}
This definition includes all the commonly used Hopf algebroids in stable homotopy theory, see \cite[Sec.~4.3]{bhv1}. The next result was mentioned without proof in \cite[Rem.~4.30]{bhv1}.
\begin{lem}\label{lem:lamonogenic}
	If $(A,\Psi)$ is a Landweber Hopf algebroid, then $\Stable_{\Psi}$ is monogenic.
\end{lem}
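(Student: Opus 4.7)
The plan is to unwind the definitions: $\Stable_{\Psi}$ is defined as $\Ind(\Thick_{\Psi}(\cG))$, where $\cG$ is the class of dualizable comodules viewed in degree $0$, and ``monogenic'' means that $\Stable_{\Psi} \simeq \Ind(\Thick_{\Psi}(A))$ with the tensor unit $A$ as a compact generator. Since Ind-completion is determined by the underlying small stable subcategory, it suffices to establish an equality
\[
\Thick_{\Psi}(\cG) = \Thick_{\Psi}(A)
\]
inside $\cD_{\Psi}$.

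One inclusion is trivial: $A$ is itself a dualizable $\Psi$-comodule, hence $A \in \cG$, so $\Thick_{\Psi}(A) \subseteq \Thick_{\Psi}(\cG)$. For the reverse inclusion, I would invoke \Cref{prop:comodbasics}: a $\Psi$-comodule is dualizable if and only if the underlying $A$-module is finitely presented and projective, and it is compact if and only if the underlying $A$-module is merely finitely presented. In particular, every dualizable $\Psi$-comodule is compact, so as objects of $\cD_{\Psi}$ placed in degree $0$ we have $\cG \subseteq \Comod_{\Psi}^{\omega}[0]$.

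The Landweber hypothesis now does all the work: by definition it says that $\Comod_{\Psi}^{\omega}[0] \subseteq \Thick_{\Psi}(A)$. Combining the two inclusions yields $\cG \subseteq \Thick_{\Psi}(A)$, and taking thick closures gives $\Thick_{\Psi}(\cG) \subseteq \Thick_{\Psi}(A)$. Applying $\Ind(-)$ to the resulting equality of small stable subcategories gives $\Stable_{\Psi} = \Ind(\Thick_{\Psi}(\cG)) = \Ind(\Thick_{\Psi}(A))$, so $A$ is a compact generator and $\Stable_{\Psi}$ is monogenic.

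There is essentially no obstacle here beyond matching the definitions; the content of the statement has been packaged entirely into the Landweber condition, whose verification in specific examples (as indicated in \cite[Sec.~4.3]{bhv1}) is where the real work would reside.
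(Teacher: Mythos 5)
Your proof is correct and follows essentially the same strategy as the paper's: both reduce the claim to the equality $\Thick_{\Psi}(\cG) = \Thick_{\Psi}(A)$ and derive the nontrivial inclusion from the Landweber condition together with the fact that dualizable comodules are compact. The only cosmetic difference is that the paper routes the reverse inclusion through the identification of $\Thick_{\Psi}(A)$ with the subcategory $\cD_0$ of complexes with compact homology concentrated in finitely many degrees (citing \cite[Lem.~4.16]{bhv1}), whereas you apply the defining inclusion $\Comod_{\Psi}^{\omega}[0] \subseteq \Thick_{\Psi}(A)$ directly, which is marginally more economical but uses the same inputs.
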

\begin{proof}
	It suffices to show that $\Thick_{\Psi}(A) = \Thick_{\Psi}(\cG)$; we always have $\Thick_{\Psi}(A) \subseteq \Thick_{\Psi}(\cG)$, and so we must show the other inclusion. Let $\cD_0 \subset \cD_{\Psi}$ be the full subcategory of complexes $Q$ with homology concentrated in finitely many degrees such that $H_d(Q) \in \Comod_{\Psi}$ is compact. In \cite[Lem.~4.16]{bhv1} we showed that $\cD_0 = \Thick_{\Psi}(A)$. But since $G \in \cD_0$ for each $G \in \cG$ there is an inclusion $\Thick_{\Psi}(\cG) \to \cD_0 = \Thick_{\Psi}(A)$, completing the lemma. 
\end{proof}
Landweber Hopf algebroids have another important property, which rests on a theorem due to Krause~\cite{krause_deriving}.
 
\begin{prop}\label{prop:landweberha}
Assume $(A, \Psi)$ is a Landweber Hopf algebroid with $A$ coherent. There is
a natural $t$-structure on $Stable_\Psi$ such that the inclusion functor $\iota \colon  \cD_{\Psi} \to \Stable_{\Psi}$ is $t$-exact and induces natural equivalences
\[
\xymatrix{ \cD_{\Psi}^{\le k} \ar[r]^-{\sim} &\Stable_{\Psi}^{\le k}}
\]
on the full subcategories of $k$-coconnective objects for all $k \in \Z$. The inverse equivalence is given by inverting the homology isomorphisms. 
\end{prop}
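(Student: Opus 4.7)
The plan is to apply a theorem of Krause \cite{krause_deriving} that produces a compatible $t$-structure on an ind-completion $\Ind(\cT)$ from a $t$-structure on a small thick subcategory $\cT$. By \Cref{lem:lamonogenic} we may identify $\Stable_\Psi = \Ind(\Thick_\Psi(A))$, so the task reduces to equipping $\Thick_\Psi(A)$ with a suitable $t$-structure.

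First, I would use coherence of $A$ to verify that the category $\Comod_\Psi^\omega$ of compact comodules is closed under kernels and cokernels inside $\Comod_\Psi$. By \Cref{prop:comodbasics}, compact comodules are those whose underlying $A$-module is finitely presented, and for coherent $A$ the finitely presented modules form an abelian subcategory of $\Mod_A$; the exactness and faithfulness of $\epsilon_*$ then transports this conclusion to comodules. Combined with \cite[Lem.~4.16]{bhv1}, which identifies $\Thick_\Psi(A) \subseteq \cD_\Psi$ as the subcategory of complexes with compact homology concentrated in finitely many degrees, this shows that the standard truncation functors on $\cD_\Psi$ restrict to $\Thick_\Psi(A)$ and define a bounded $t$-structure there. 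Krause's theorem then yields a $t$-structure on $\Stable_\Psi$ extending it, whose connective part is the closure of $\Thick_\Psi(A)^{\le 0}$ under filtered colimits and extensions, and whose coconnective part is closed under filtered colimits as well.

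For the equivalence of $k$-coconnective parts, I first observe that on compact objects $\iota$ agrees with the natural embedding $\Thick_\Psi(A) \hookrightarrow \Stable_\Psi$, since $\omega\iota \simeq \Id_{\cD_\Psi}$ and $\omega$ restricts to the identity on compact objects by \Cref{prop:stableprops}. Given $X \in \cD_\Psi^{\le k}$, write $X \simeq \colim X_\alpha$ as a filtered colimit of objects in $\Thick_\Psi(A)^{\le k}$, which is possible because $\cD_\Psi$ is compactly generated and $\cD_\Psi^{\le k}$ is closed under filtered colimits (a consequence of exactness of filtered colimits in the Grothendieck abelian category $\Comod_\Psi$). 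Since $\Stable_\Psi^{\le k}$ is similarly closed under filtered colimits and contains each $\iota(X_\alpha)$, one concludes $\iota(X) \in \Stable_\Psi^{\le k}$, establishing $t$-exactness in the coconnective direction; the connective direction follows dually from the Krause construction. For essential surjectivity, any $M \in \Stable_\Psi^{\le k}$ is a filtered colimit of objects $M_\alpha \in \Thick_\Psi(A)^{\le k}$, and $\omega(M) \simeq \colim M_\alpha$ lies in $\cD_\Psi^{\le k}$, giving a candidate preimage.

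The main obstacle is showing that the unit $M \to \iota\omega(M)$ is an equivalence for $M \in \Stable_\Psi^{\le k}$, or equivalently that coconnective objects in $\Stable_\Psi$ are already local with respect to the Bousfield localization defining $\omega$. This does not hold formally, since $\iota$ is a right adjoint that need not preserve filtered colimits in general; it is precisely the coherence hypothesis on $A$ that supplies the finiteness control on homology needed for coconnective filtered colimits to be well-behaved, and this is the input that drives Krause's theorem and forces $\iota$ to restrict to an equivalence in the coconnective range.
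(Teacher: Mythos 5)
Your overall strategy---equip $\Thick_\Psi(A)$ with the standard $t$-structure (using coherence of $A$ to ensure that compact comodules form an abelian subcategory, via \Cref{prop:comodbasics} and the identification of $\Thick_\Psi(A)$ in \cite[Lem.~4.16]{bhv1}), then ind-complete via Krause's theorem---is exactly the route the paper takes: its proof consists of citing \cite[Prop.~4.17]{bhv1} for Noetherian $A$ and invoking Krause \cite{krause_deriving}, as extended by Lurie \cite[App.~C.5.8]{sag}, for the coherent generalization. So the approach matches.

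However, your argument for $t$-exactness of $\iota$ in the coconnective direction does not work as written. You propose to write $X \in \cD_\Psi^{\le k}$ as a filtered colimit of objects of $\Thick_\Psi(A)^{\le k}$ ``because $\cD_\Psi$ is compactly generated'' and then push the colimit through $\iota$. But the objects of $\Thick_\Psi(A)$ are \emph{not} compact in $\cD_\Psi$ --- the non-compactness of $BP_*$ in $\cD_{BP_*BP}$ is the entire reason $\Stable_\Psi$ is introduced --- so compact generation of $\cD_\Psi$ (itself unclear) does not produce such a presentation; and even granting one, $\iota$ is a right adjoint and need not commute with the filtered colimit, as you yourself observe in your final paragraph. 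The workable direction is the reverse: every $M \in \Stable_\Psi^{\le k}$ is a filtered colimit of compact coconnective objects (compactness now genuine), $\omega$ preserves colimits and restricts to the identity on $\Thick_\Psi(A)$, and the substantive claim --- that coconnective objects of $\Stable_\Psi$ are already local for the homology isomorphisms, i.e., that the unit $M \to \iota\omega M$ is an equivalence --- is precisely what the Krause/Lurie theorem supplies under the coherence hypothesis. You correctly isolate this as the crux but leave it as an assertion; that parallels the paper's proof-by-citation, but the preceding filtered-colimit paragraph should not be mistaken for an independent argument.
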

\begin{proof}
	This is the content of \cite[Prop.~4.17]{bhv1}, where we proved this under the hypothesis that $A$ is Noetherian. This can be generalized to the case that $A$ is coherent using the work of Krause \cite{krause_deriving}, as extended to the $\infty$-categorical setting by Lurie \cite[App.~C.5.8]{sag}
\end{proof}

\begin{defn}
Let $\Stable_{\Psi}^{< \infty}$ be the full subcategory of those $M \in \Stable_{\Psi}$ for which there exists some $k$ such that $M \in \Stable_{\Psi}^{\le k}$.
\end{defn}

Since $\Stable_{\Psi}$ is a stable $\infty$-category, $\Hom_{\Psi}(A,M)$ canonically has the structure of a spectrum. To avoid confusion in the following definition, we write $\pi^{\mathrm{st}}_*$ for the homotopy groups of a spectrum. 

\begin{defn}
For $M \in \Stable_\Psi$, we define the homotopy groups of $M$ as $\pi_*M = \pi^{\mathrm{st}}_*\Hom_\Psi(A,M)$. 	
\end{defn}

\begin{rem}\label{rem:cotor}
By \cite[Prop.~6.10]{hovey_htptheory}, $\pi_*A \cong \Ext_\Psi^*(A,A)$, so that $\pi_*M$ is always a graded module over the graded-commutative ring $\Ext_\Psi^*(A,A)$. More generally given any discrete $\Psi$-comodule $M$, thought of as an object of $\Stable_\Psi$, Hovey's result shows that $\pi_*M \cong \Ext_\Psi^*(A,M)$. By \cite[A1.1.6]{greenbook} this is in turn isomorphic to $\Cotor_\Psi^*(A,M)$. 
\end{rem}

The relation between homology and homotopy in $\Stable_{\Psi}$ is given by the following:

\begin{lem}\label{lem:homrep}
	For any $M \in \Stable_{\Psi}$ we have $\pi_*(\Psi \otimes M) \cong H_*M$. 
\end{lem}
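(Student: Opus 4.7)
The plan is to realize $\Psi \otimes M$ as the cofree (extended) comodule on the underlying $A$-module complex of $M$, and then to evaluate $\pi_*$ by means of the forgetful-cofree adjunction from \Cref{prop:stableprops}(5).

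First, I would establish the projection-formula equivalence
\[
\Psi \otimes M \;\simeq\; \epsilon^{*}(\epsilon_{*}M) \qquad \text{in } \Stable_{\Psi},
\]
where $\epsilon^{*}$ is the right adjoint of the forgetful functor $\epsilon_{*}\colon \Stable_{\Psi}\to \cD_{A}$ and sends an $A$-module complex $N$ to the extended comodule $\Psi \otimes_{A} N$. On the underlying abelian category this is precisely Hovey's \cite[Lem.~1.1.5]{hovey_htptheory}, whose isomorphism is natural in $M$. Since $\Psi$ is flat over $A$, both functors $\Psi \otimes (-)$ and $\epsilon^{*}\epsilon_{*}(-)$ are exact and cocontinuous as endofunctors of $\Stable_{\Psi}$; they already agree on the compact generators $\cG$ of dualizable (discrete) comodules by Hovey's abelian statement, so the equivalence propagates to all of $\Stable_{\Psi}$.

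Granting this identification, I would then map out of the tensor unit $A$ and unwind. Using the adjunction $\epsilon_{*}\dashv \epsilon^{*}$ and the fact that $\epsilon_{*}$ is symmetric monoidal (so $\epsilon_{*}A \simeq A$), one gets
\[
\Hom_{\Psi}(A, \Psi\otimes M) \;\simeq\; \Hom_{\Psi}(A, \epsilon^{*}\epsilon_{*}M) \;\simeq\; \Hom_{A}(\epsilon_{*}A, \epsilon_{*}M) \;\simeq\; \epsilon_{*}M,
\]
the last step since mapping out of the unit of $\cD_{A}$ is the identity. Applying $\pi_{*}^{\mathrm{st}}$ and recalling that $\pi_{*}^{\mathrm{st}}$ of the mapping spectrum out of the unit of $\cD_{A}$ computes the homology of the chain complex, we conclude $\pi_{*}(\Psi\otimes M) \cong H_{*}(\epsilon_{*}M) = H_{*}M$.

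The only non-formal input is the projection formula of the first step; everything after that is a short manipulation of adjoints. Thus the key piece of work is upgrading Hovey's abelian Lemma 1.1.5 to $\Stable_{\Psi}$, which is routine once one knows that the dualizable comodules compactly generate $\Stable_{\Psi}$ and that both sides are cocontinuous in $M$.
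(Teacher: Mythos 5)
Your proof is correct and follows essentially the same route as the paper's: the paper also evaluates the forgetful--cofree adjunction of \Cref{prop:stableprops}(5) at the unit $A$ to identify $\Hom_{\Psi}(A,\Psi\otimes M)$ with $\epsilon_*M$ in $\cD_A$. The only difference is that you make explicit the identification $\Psi\otimes M\simeq \epsilon^*\epsilon_*M$ (upgrading Hovey's abelian Lemma~1.1.5 by checking on compact generators), which the paper's two-line proof leaves implicit.
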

\begin{proof}
	This follows easily by the adjunction between $\Stable_{\Psi}$ and $\cD_A$ stated in \Cref{prop:stableprops}(5); indeed, we have 
\[
\Hom_{{\Psi}}(N,\Psi \otimes M) \simeq \Hom_{\cD_A}(\epsilon_* N,M)
\]
for $N \in \Stable_{\Psi}$ and $M \in \cD_A$, so that in particular $\pi_*(\Psi \otimes M) \cong H_*M$.    
\end{proof}
\subsection{Some derived functors}
Given a morphism $\Phi \colon (A,\Psi) \to (B,\Sigma)$ of Hopf algebroids, there exists a functor $\Phi_* \colon \Comod_{\Psi} \to \Comod_{\Sigma}$ induced by $M \mapsto B \otimes_AM$, with a right adjoint $\Phi^*$. We shall see in the next lemma that both of these exist in the associated stable categories and that, interestingly, there is a third adjoint. 
\begin{lem}
If $\Phi \colon (A,\Psi) \to (B,\Sigma)$ is a map of Hopf algebroids, then there exist adjoint functors
\[
\xymatrix{\Stable_{\Psi} \ar@<-1ex>[r]_-{\Phi_*} \ar@<1ex>[r]^-{\Phi_!} & \Stable_{\Sigma} \ar[l]|-{\Phi^*}}
\]
where $\Phi_*$ is left adjoint to $\Phi^*$, which in turn is left adjoint to $\Phi_!$.
\end{lem}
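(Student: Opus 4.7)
The plan is to build $\Phi_*$ first and then produce both of its companions by the adjoint functor theorem applied twice.

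First I would construct $\Phi_*$ as a cocontinuous functor. On the underlying categories of modules, extension of scalars $B\otimes_A(-)$ induces a symmetric monoidal cocontinuous functor $\cD_A\to\cD_B$, and the standard lift to comodules sends a $\Psi$-comodule $M$ to $B\otimes_A M$ equipped with the $\Sigma$-coaction induced from the coaction of $M$ via $\Phi$. By \Cref{prop:comodbasics}, dualizable $\Psi$-comodules are precisely those whose underlying $A$-module is finitely presented projective; since tensoring with $B$ preserves this property, $\Phi_*$ sends $\cG_\Psi$ to $\cG_\Sigma$. Consequently $\Phi_*$ restricts to a functor of thick subcategories $\Thick_\Psi(\cG_\Psi)\to\Thick_\Sigma(\cG_\Sigma)$ inside the ordinary derived categories, and Ind-extending using \Cref{defn:stable_comod} yields a cocontinuous functor
\[
\Phi_*\colon \Stable_\Psi\longrightarrow \Stable_\Sigma
\]
that agrees with $B\otimes_A(-)$ on dualizables.

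Next I would produce the two right adjoints. Since $\Stable_\Psi$ and $\Stable_\Sigma$ are presentable stable $\infty$-categories (\Cref{prop:stableprops}) and $\Phi_*$ preserves all colimits, the adjoint functor theorem supplies a right adjoint $\Phi^*\colon \Stable_\Sigma\to\Stable_\Psi$. For the existence of $\Phi_!$ I would argue that $\Phi^*$ itself is cocontinuous: by construction $\Phi_*$ sends the compact generators $\cG_\Psi$ to compact objects of $\Stable_\Sigma$, so $\Phi_*$ preserves compact objects and hence its right adjoint $\Phi^*$ preserves filtered colimits. As a right adjoint between stable $\infty$-categories $\Phi^*$ is also exact, so it preserves finite colimits. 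Combining these two facts, $\Phi^*$ preserves arbitrary colimits, and a second application of the adjoint functor theorem delivers the desired right adjoint $\Phi_!\colon \Stable_\Psi\to\Stable_\Sigma$.

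The main obstacle I expect is the very first step: making sure the construction of $\Phi_*$ at the $\infty$-categorical level is genuinely coherent rather than merely a homotopy-category statement. Once one has $\Phi_*$ as a cocontinuous functor of presentable stable $\infty$-categories sending compact generators to compact generators, the double application of adjoint functor theorem is formal. In practice I would either reuse Hovey's model from \Cref{prop:stableprops}(4), where $\Phi_*$ is a left Quillen functor realized by $B\otimes_A(-)$ on chain complexes, or invoke the Ind-extension directly; either approach simultaneously ensures cocontinuity and preservation of compact objects, which is all that the adjoint-functor arguments in Steps 2 and 3 require.
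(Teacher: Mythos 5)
Your proposal is correct and follows essentially the same route as the paper: construct $\Phi_*$ by noting that $B\otimes_A(-)$ preserves dualizable comodules, restrict to the thick subcategories, apply $\Ind$ to get a cocontinuous, compact-object-preserving functor, and then obtain $\Phi^*$ and $\Phi_!$ by two applications of the adjoint functor theorem. The only difference is cosmetic: where you spell out why $\Phi^*$ is cocontinuous (preservation of compacts by $\Phi_*$ plus exactness), the paper simply cites the corresponding result of Balmer--Dell'Ambrogio--Sanders.
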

\begin{proof}
	 If $M$ is finitely generated and projective over $A$ (and hence dualizable in $\Comod_\Psi$, see \Cref{prop:comodbasics}), then $B \otimes_A M$ is finitely-generated and projective over $B$, so that 
	 \[
	 \xymatrix{B\otimes_A -\colon \Comod_{\Psi} \ar[r] & \Comod_{\Sigma}}
	 \] 
	 preserves dualizable comodules. It follows that there is an induced exact functor 
	 $\Phi_*\colon \Thick_{\Psi}{(\cG_{\Psi})} \to \Thick_{\Psi}{(\cG_{\Sigma})}$. Applying $\Ind$, we get a functor $\Phi_* \colon \Stable_{\Psi} \to \Stable_{\Sigma}$ that preserves all colimits and compact objects. Thus, by \cite[Prop.~5.3.5.13]{htt} and \cite[Thm.~1.7]{bds}, $\Phi_*$ has a right adjoint $\Phi^*$, which has a further right adjoint $\Phi_!$. 
\end{proof}

The canonical map from the initial Hopf algebroid $(K,K)$ to any Hopf algebroid $(A,\Psi)$ will always be denoted by $\gamma_{\Psi}\colon (K,K) \to (A,\Psi)$; if the Hopf algebroid is clear from context, the subscript $\Psi$ will be omitted. We now give a simple proof of the fact that $\gamma_{\Psi}^*$ is the functor of derived primitives.

\begin{lem}\label{lem:gammaformula}
	For $M \in \Stable_{\Psi}$ there is a natural equivalence 
	\[
\gamma_{\Psi}^*M \simeq \Hom_{{\Psi}}(A,M). 
	\]
\end{lem}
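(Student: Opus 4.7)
The plan is to apply the adjunction $((\gamma_\Psi)_*,\gamma_\Psi^*)$ to the tensor unit of $\cD_K$, after identifying $(\gamma_\Psi)_* K$ with $A \in \Stable_\Psi$. First, since $(K,K)$ is a discrete Hopf algebroid, \Cref{prop:stableprops}(2) gives a symmetric monoidal equivalence $\Stable_K \simeq \cD_K$. In particular $K$ is the unit of $\cD_K$, and for any $X \in \cD_K$ there is a canonical natural equivalence $X \simeq \Hom_{\cD_K}(K,X)$ of $K$-module spectra.

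Next, I would identify $(\gamma_\Psi)_*K$ with $A$ viewed as the unit of $\Stable_\Psi$. The functor $(\gamma_\Psi)_*$ arises from the Hopf algebroid map $\gamma_\Psi\colon (K,K) \to (A,\Psi)$, and at the underived level it is the extension of scalars $N \mapsto A \otimes_K N$ equipped with the coaction inherited from the unit coaction $\eta_R$ on $A$; evaluating at $N = K$ recovers the unit comodule $A$. More conceptually, $(\gamma_\Psi)_*$ is a symmetric monoidal left adjoint and hence preserves the tensor unit, so $(\gamma_\Psi)_* K \simeq A$ in $\Stable_\Psi$.

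Combining these two observations with the adjunction $(\gamma_\Psi)_* \dashv \gamma_\Psi^*$ yields the chain of natural equivalences
\[
\gamma_\Psi^* M \;\simeq\; \Hom_{\cD_K}(K,\gamma_\Psi^* M) \;\simeq\; \Hom_{\Stable_\Psi}\!\bigl((\gamma_\Psi)_* K,\, M\bigr) \;\simeq\; \Hom_{\Stable_\Psi}(A,M),
\]
which is the desired statement.

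The only step that warrants comment is the middle one, where the adjunction is used at the level of the spectrally enriched mapping objects $\Hom_\cC(-,-)$ rather than merely as an adjunction of underlying $\infty$-categories. This is the only potentially delicate point, but it is automatic, since any adjunction between stable presentable $\infty$-categories is canonically enriched in spectra and the adjunction equivalence can be promoted to an equivalence of mapping spectra. I do not anticipate any serious obstacle beyond this bookkeeping.
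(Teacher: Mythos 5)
Your proof is correct and follows exactly the same chain of equivalences as the paper's: the identification $\gamma_\Psi^*M \simeq \Hom_{\cD_K}(K,\gamma_\Psi^*M)$ via $\Stable_K \simeq \cD_K$, followed by the adjunction $(\gamma_\Psi)_* \dashv \gamma_\Psi^*$ and the identification $(\gamma_\Psi)_*K \simeq A$. Your extra remarks on the monoidality of $(\gamma_\Psi)_*$ and the spectral enrichment of the adjunction only make explicit what the paper leaves implicit.
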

\begin{proof}
	By \Cref{prop:stableprops}(2) there is a symmetric monoidal equivalence of $\infty$-categories $\Stable_K \simeq \cD_K$. Let $M \in \Stable_{\Psi}$, then
	\[
\gamma_{\Psi}^\ast M \simeq \Hom_{\cD_K}(K,\gamma_{\Psi}^\ast M) \simeq \Hom_{{\Psi}}((\gamma_{\Psi})_*K,M) \simeq \Hom_{{\Psi}}(A,M).\qedhere
	\]
\end{proof}
Note that by definition we have $\pi_*M = \pi_*^{\mathrm{st}}(\gamma_{\Psi}^\ast M)$. Moreover, given a map $\Phi \colon (A,\Psi) \to (B,\Sigma)$ there is a commutative diagram of Hopf algebroids
\[
\xymatrix{
	(K,K) \ar[r]^{\gamma_{\Psi}}  \ar[dr]_{\gamma_{\Sigma}}& (A,\Psi) \ar[d]^{\Phi} \\
	 & (B,\Sigma),
}
\]
so that $\gamma_{\Sigma}^* \simeq \gamma_{\Psi}^\ast \Phi^*$. 

The next result is known as the projection formula.

\begin{lem}[Projection formula]\label{lem:projformula}
For $M \in \Stable_{\Sigma}$ and $N \in \Stable_{\Psi}$, there is a natural equivalence
\[
\xymatrix{(\Phi^*M)\otimes_A N \ar[r]^-{\sim} & \Phi^*(M \otimes_B \Phi_*(N)).}
\]
\end{lem}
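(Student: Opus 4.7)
The plan is to build the canonical projection map, show both sides preserve colimits in $N$, reduce to dualizable $N$ via the compact generators, and then verify the equivalence by a standard adjunction calculation.

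First, I would construct the comparison map. The change of ring functor $\Phi_*$ is symmetric monoidal: at the level of comodules it is $B \otimes_A(-)$ and satisfies $\Phi_*(P \otimes_A Q) \cong \Phi_* P \otimes_B \Phi_* Q$, and since $\Phi_*$ preserves colimits this extends to $\Stable_\Psi$ by Ind-completion. The counit $\varepsilon \colon \Phi_*\Phi^* \to \Id$ then yields
\[
\Phi_*\bigl((\Phi^*M) \otimes_A N\bigr) \simeq (\Phi_*\Phi^*M) \otimes_B \Phi_*N \xrightarrow{\varepsilon \otimes 1} M \otimes_B \Phi_*N,
\]
and its mate under $\Phi_* \dashv \Phi^*$ is the desired comparison map $c_{M,N}\colon (\Phi^*M)\otimes_A N \to \Phi^*(M \otimes_B \Phi_*N)$.

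Next I would verify both sides of $c_{M,N}$ preserve arbitrary colimits in $N$. The left-hand side does by the closed monoidal structure of $\Stable_\Psi$ from \Cref{prop:stableprops}(1). For the right-hand side, $\Phi_*$ preserves colimits as a left adjoint, $M \otimes_B (-)$ preserves colimits, and $\Phi^*$ preserves colimits because it is itself a left adjoint to $\Phi_!$. Since dualizable comodules compactly generate $\Stable_\Psi$, it suffices to check that $c_{M,N}$ is an equivalence when $N$ is dualizable.

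For dualizable $N \in \cG_\Psi$, the symmetric monoidal functor $\Phi_*$ sends $N$ to a dualizable object with $(\Phi_*N)^\vee \simeq \Phi_*(N^\vee)$. Then for any $Y \in \Stable_\Psi$ the tensor-hom adjunction together with $\Phi_* \dashv \Phi^*$ gives natural equivalences
\begin{align*}
\Hom_\Psi\bigl(Y,(\Phi^*M)\otimes_A N\bigr)
&\simeq \Hom_\Psi(Y \otimes_A N^\vee,\Phi^*M) \\
&\simeq \Hom_\Sigma\bigl(\Phi_*(Y \otimes_A N^\vee),M\bigr) \\
&\simeq \Hom_\Sigma\bigl(\Phi_*Y \otimes_B (\Phi_*N)^\vee,M\bigr) \\
&\simeq \Hom_\Sigma\bigl(\Phi_*Y, M \otimes_B \Phi_*N\bigr) \\
&\simeq \Hom_\Psi\bigl(Y,\Phi^*(M \otimes_B \Phi_*N)\bigr),
\end{align*}
and by Yoneda the induced map agrees with $c_{M,N}$.

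The main technical point is the compatibility in the last sentence, that is, identifying the abstract equivalence produced by the string of adjunctions with the specific map $c_{M,N}$ built in step one. This is standard but requires unwinding the unit and counit of $\Phi_* \dashv \Phi^*$ against the evaluation/coevaluation of the dualizable object $N$; once this is done, the reduction via colimit preservation and compact generation closes the argument.
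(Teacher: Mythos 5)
Your construction of the comparison map is exactly the paper's: monoidality of $\Phi_*$ followed by the counit, then passing to the mate under $\Phi_* \dashv \Phi^*$. Where you diverge is in the verification. The paper reduces in \emph{both} variables at once --- "all functors involved preserve colimits, so it suffices to check $M=B$, $N=A$" --- where the claim is immediate; this is very short but implicitly leans on the categories being generated under colimits by their tensor units (or at least leaves the case of a general dualizable generator to the reader). You instead reduce only in $N$ to the dualizable generators of $\Stable_\Psi$ and then handle \emph{arbitrary} $M$ by a Yoneda argument, trading $\otimes_A N$ for $\otimes_A N^\vee$, pushing through $\Phi_* \dashv \Phi^*$, and using that the symmetric monoidal $\Phi_*$ preserves duals. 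This is correct: every step of your chain of equivalences is a legitimate natural adjunction, and colimit preservation of both sides in $N$ (using $\Phi^* \dashv \Phi_!$ for the right-hand side) closes the argument. The cost, which you rightly flag, is the bookkeeping needed to identify the Yoneda-induced equivalence with the specific map $c_{M,N}$; the paper's two-variable reduction avoids this because on the generators the canonical map itself is visibly an equivalence. If you wanted to shorten your argument, you could also reduce in $M$ to dualizables and then run the same duality computation, but as written your proof is complete modulo that standard compatibility check.
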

\begin{proof}
The map is constructed as the adjoint of the natural transformation
\[
\xymatrix{\Phi_*(\Phi^*(M) \otimes_A N) & \Phi_*\Phi^*(M) \otimes_B \Phi_*N \ar[l]_-{\sim} \ar[r]^-{\epsilon \otimes \Id} & M \otimes_B \Phi_*N,}
\]
where $\epsilon$ is the counit of the adjunction $(\Phi_*,\Phi^*)$. Since all functors involved preserve colimits, it suffices to verify the claim for $M=B$ and $N=A$, for which it is clear. 
\end{proof}
We can give an explicit formula for the right adjoint $\Phi^*$. 
\begin{lem}\label{lem:pushforwardformula}
For $\Phi\colon (A,\Psi) \to (B,\Sigma)$ a map of Hopf algebroids, the right adjoint $\Phi^*$ of $\Phi_*$ can be identified as the derived primitives of the extended $\Psi$-comodule functor, i.e., 
\[
\Phi^*M \simeq \Hom_{\Sigma}(B,M \otimes_A \Psi),
\]
for any $M \in \Stable_{\Sigma}$. 
\end{lem}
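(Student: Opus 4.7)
My plan is to prove the formula by computing both sides on extended $\Sigma$-comodules and then bootstrapping to all of $\Stable_{\Sigma}$ via cobar resolutions.

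First, I would verify that $\Hom_{\Sigma}(B, M \otimes_A \Psi)$ is a well-defined object of $\Stable_{\Psi}$. The object $M \otimes_A \Psi$ carries two commuting coactions: a $\Sigma$-coaction inherited from $M$, and a $\Psi$-coaction given by $1_M \otimes \Delta_{\Psi}$ on the right $\Psi$-factor. Taking $\Sigma$-primitives via $\Hom_{\Sigma}(B, -)$ preserves the residual $\Psi$-coaction, producing an object of $\Stable_{\Psi}$.

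Second, I would compare both functors on extended $\Sigma$-comodules. Composing the adjunctions $(\Phi_*, \Phi^*)$ and $(\epsilon_{*}^{\Sigma}, \epsilon_{\Sigma}^{*})$ yields a natural equivalence $\epsilon_{*}^{\Sigma}\Phi_* \simeq (B \otimes_A -) \circ \epsilon_{*}^{\Psi}$ of functors $\Stable_{\Psi} \to \cD_B$. Passing to right adjoints gives
\[
\Phi^*(\Sigma \otimes_B X) \simeq \Psi \otimes_A X
\]
for $X \in \cD_B$. On the other hand, $(\Sigma \otimes_B X) \otimes_A \Psi \simeq \Sigma \otimes_B (X \otimes_A \Psi)$ is an extended $\Sigma$-comodule on $X \otimes_A \Psi \in \cD_B$, so applying $\Hom_{\Sigma}(B, -)$ returns $X \otimes_A \Psi$. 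Via the flip isomorphism for cofree $\Psi$-comodules (Hovey's Lem.~1.1.5, referenced after \Cref{prop:comodbasics}), $X \otimes_A \Psi \simeq \Psi \otimes_A X$ as $\Psi$-comodules, matching the other side.

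Third, I would extend from extended $\Sigma$-comodules to arbitrary $M \in \Stable_{\Sigma}$ using the cobar resolution $M \to \Sigma \otimes_B M \to \Sigma \otimes_B \Sigma \otimes_B M \to \cdots$. As a right adjoint, $\Phi^*$ preserves this totalization; the functor $M \mapsto \Hom_{\Sigma}(B, M \otimes_A \Psi)$ does as well, since $\Hom_{\Sigma}(B, -)$ preserves limits and tensoring with $\Psi$ commutes with the relevant cosimplicial totalizations. The natural identification on each term then assembles into the desired equivalence on $M$.

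The main obstacle I anticipate is the bookkeeping of the two coactions throughout: one must verify that the identifications in the second step respect not just the underlying $A$-complex but also the $\Psi$-comodule structure. The flip isomorphism for cofree comodules is the key ingredient that makes the $\Psi$-structures match, and promoting it to a natural equivalence of bicomodules in the $\infty$-categorical derived setting requires care.
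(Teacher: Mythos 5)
Your strategy---identify both functors on cofree $\Sigma$-comodules and then descend along the cobar resolution---is genuinely different from the paper's argument, which is a short direct computation: by \Cref{lem:gammaformula} one has $\Hom_{\Sigma}(B,M\otimes_A\Psi)\simeq\gamma_{\Sigma}^*(M\otimes_A\Psi)$, and then rewriting $M\otimes_A\Psi\simeq M\otimes_B\Phi_*\Psi$, using $\gamma_{\Sigma}^*\simeq\gamma_{\Psi}^*\Phi^*$ and the projection formula (\Cref{lem:projformula}) gives $\gamma_{\Psi}^*(\Phi^*M\otimes_A\Psi)\simeq\gamma_{\Psi}^*\epsilon^*\Phi^*M\simeq\Phi^*M$, with no resolutions and no restriction on $M$. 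Your step 2 is correct: passing to right adjoints in $\epsilon_*^{\Sigma}\Phi_*\simeq(B\otimes_A-)\circ\epsilon_*^{\Psi}$ does yield $\Phi^*(\Sigma\otimes_BX)\simeq\Psi\otimes_AX$, and the cofree computation of the other side matches via the flip isomorphism; this is a reasonable alternative to the projection formula on that subcategory.

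The gap is in step 3, in two places. First, the assertion that ``tensoring with $\Psi$ commutes with the relevant cosimplicial totalizations'' is unjustified and false in general: $-\otimes_A\Psi$ is exact but exact functors do not commute with infinite limits such as $\Tot$. The repair is to notice that $C^{\bullet}(M)\otimes_A\Psi$ is \emph{itself} the cobar resolution $C^{\bullet}(M\otimes_A\Psi)$, because the $\Sigma$-coaction on $M\otimes_A\Psi$ is $\psi_M\otimes 1$; then one only needs convergence of the cobar resolution for $M\otimes_A\Psi$, not a commutation statement. Second, and more fundamentally, your argument hinges on $M\simeq\Tot C^{\bullet}(M)$ for \emph{every} $M\in\Stable_{\Sigma}$. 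This is a substantive descent/comonadicity input---the paper invokes it elsewhere (in the proof of \Cref{thm:chromaticconvergence2}) only for $BP_*BP$ with explicit references---and it is not among the hypotheses of the lemma, which is stated for an arbitrary map of flat Hopf algebroids. As written your proof therefore establishes less than the lemma claims; with the Adams-type hypothesis and the fix above it goes through, but the projection-formula route is both shorter and more general.
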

\begin{proof}
We first note that the statement makes sense: $M \otimes_A \Psi$ obtains the structure of a $\Sigma$-comodule via the comodule structure on $M$. It is also clearly a $\Psi$-comodule, and $\Hom_{\Sigma}(B,M \otimes_A \Psi)$ obtains a $\Sigma$-comodule structure by an argument similar to \cite[1.3.11(a)]{greenbook}.

There are natural equivalences 
\begin{align*}
\gamma_{\Sigma}^*(M\otimes_A \Psi)  & \simeq \gamma_{\Sigma}^*(M\otimes_B \Phi_*(\Psi)) & (\text{since } \Phi_*(\Psi) \simeq B \otimes_A \Psi) \\
& \simeq \gamma_{\Psi}^*\Phi^*(M \otimes_{B} \Phi_*(\Psi)) & (\text{since } \gamma_{\Sigma}^* \simeq  \gamma_{\Psi}^*\Phi^*) \\
& \simeq \gamma_{\Psi}^*(\Phi^*(M)\otimes_A \Psi) & (\text{by \Cref{lem:projformula}}) \\
& \simeq \gamma_{\Psi}^*\epsilon^*(\Phi^*(M)) & (\text{since } \epsilon^*(-) = \Psi \otimes_A -)\\
& \simeq \Phi^*(M) 
\end{align*}
where $\epsilon^*$ is as in \Cref{prop:stableprops}(5). The same argument as in \cite[1.3.11(a)]{greenbook} shows that these equivalences are compatible with the comodule structures.
\end{proof}

In virtue of \Cref{lem:cotorcomparison}, the following definition is a natural generalization of the classical construction of the Cotor groups of discrete comodules. 

\begin{defn}\label{defn:cotor}
We define the derived cotensor product of any two objects $M,N \in \Stable_{\Psi}$ as the derived primitives of their tensor product, 
\[
\Cotor_{\Psi}(M,N) = \gamma_{\Psi}^*(M \otimes_A N),
\]
viewed as an object of $\Stable_{K}$. 
\end{defn}
We then define $\Cotor^i_\Psi(M,N) = \pi_i^{\mathrm{st}}\Cotor_{\Psi}(M,N) = \pi_i(M \otimes_A N)$. If $M$ and $N$ are discrete comodules then, by \Cref{rem:cotor,lem:cotorcomparison}, this agrees with the definition of $\Cotor$ given in \Cref{defn:cotorunderived}. Furthermore:
\begin{lem}\label{lem:iHomhomotopy}
	If $(A,\Psi)$ is a Landweber Hopf algebroid with $A$ coherent, then for $M,N \in \Stable_\Psi^{< \infty}$ we have $\pi_*\iHom_{\Psi}(M,N) \cong \Ext_{\Psi}^\ast(\omega_*M,\omega_* N)$. 
\end{lem}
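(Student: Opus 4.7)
The plan is to identify both sides with mapping groups in the respective triangulated categories and then invoke the comparison of coconnective subcategories provided by \Cref{prop:landweberha}. Throughout I will use that, since $\omega$ is a Bousfield localization (\Cref{prop:stableprops}(3)), it has a fully faithful right adjoint $\iota$ with $\omega\iota \simeq \Id_{\cD_\Psi}$.

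First, I unwind the left-hand side. The tensor–hom adjunction in $\Stable_\Psi$ together with the definition of homotopy groups yields
\[
\pi_i\iHom_\Psi(M,N) \cong \pi_i^{\mathrm{st}}\Hom_\Psi(A,\iHom_\Psi(M,N)) \cong \pi_i^{\mathrm{st}}\Hom_\Psi(M,N),
\]
which is the group of maps from $M$ to a shift of $N$ in the homotopy category of $\Stable_\Psi$. On the other side, $\Ext^i_\Psi(\omega_*M,\omega_*N)$ is the corresponding group of maps from $\omega_*M$ to a shift of $\omega_*N$ in $\cD_\Psi$, and the adjunction $\omega \dashv \iota$ gives
\[
\Hom_{\cD_\Psi}(\omega_*M,\omega_*N[i]) \simeq \Hom_{\Stable_\Psi}(M,\iota\omega_*N[i]).
\]

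It therefore suffices to establish a natural equivalence $\iota\omega_*N[i]\simeq N[i]$ for every $i$. Choose $k$ with $N\in\Stable_\Psi^{\le k}$; since the shift in the stable $\infty$-category is $t$-exact for the $t$-structure of \Cref{prop:landweberha}, the shifted object lies in $\Stable_\Psi^{\le k+i}$ (in particular still in $\Stable_\Psi^{<\infty}$). That same proposition provides an equivalence $\iota\colon \cD_\Psi^{\le k+i}\xrightarrow{\sim}\Stable_\Psi^{\le k+i}$ with inverse $\omega$, so every object of $\Stable_\Psi^{\le k+i}$ lies in the essential image of $\iota$, and consequently the unit $N[i]\to\iota\omega_*N[i]$ is an equivalence. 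Stringing the identifications together produces the desired isomorphism $\pi_i\iHom_\Psi(M,N)\cong\Ext^i_\Psi(\omega_*M,\omega_*N)$ for all $i$, natural in $M$ and $N$.

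The main (and essentially only) subtlety is the last step: we need the unit of the adjunction $\omega \dashv \iota$ to be an equivalence not just on $N$ but on every shift of $N$. This is where the hypothesis $N\in\Stable_\Psi^{<\infty}$ is crucial, since only bounded-above objects are protected by the equivalence of coconnective parts; nothing in the argument forces $M$ to be bounded above, and indeed the assumption on $M$ is not used here. The coherence of $A$ enters only through \Cref{prop:landweberha}, via Krause's theorem in its proof, so no further work is needed beyond invoking that proposition.
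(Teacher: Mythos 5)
Your proof is correct and takes essentially the same route as the paper: the paper likewise first identifies $\pi_*\iHom_{\Psi}(M,N)$ with $\pi_*^{\mathrm{st}}\Hom_{\Psi}(M,N)$ by adjunction and then cites \cite[Cor.~4.19]{bhv1} together with \Cref{prop:landweberha}, so your second step simply inlines a direct proof of that cited comparison via the localization adjunction $\omega\dashv\iota$ and the equivalence of coconnective subcategories. (One cosmetic point: the shift functor is not $t$-exact but rather shifts the $t$-structure; your actual conclusion, that $N[i]$ stays bounded above and hence lies in the essential image of $\iota$, is nonetheless correct.)
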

\begin{proof}
	\sloppy By adjunction $\pi_*\iHom_{\Psi}(M,N) \cong \pi^{\mathrm{st}}_* \Hom_{\Psi}(M,N)$. Now apply \cite[Cor.~4.19]{bhv1}, using \Cref{prop:landweberha}. 
\end{proof}

As an easy application of the results of this section, we can reinterpret the base-change spectral sequence for Cotor constructed by Ravenel in \cite[App.~A.1.3.11]{greenbook}. Note that we can dispense of the hypothesis that $M$ is flat by our use of the relative Cotor functor. 

\begin{cor}
Let $f\colon (A,\Psi) \to (B,\Sigma)$ be a map of Hopf algebroids. If $M$ is a discrete (right) $\Psi$-comodule and $N$ is a discrete (left) $\Sigma$-comodule, then there is a natural convergent spectral sequence
\[
\Cotor_{\Psi}^s(M,\Cotor_{\Sigma}^t(B \otimes_A \Psi,N)) \implies \Cotor_{\Sigma}^{s+t}(M \otimes_A B,N)
\]
with differentials $d^r\colon E_r^{s,t} \to E_r^{s+r,t-r+1}$. 
\end{cor}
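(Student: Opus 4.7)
The plan is to reduce the target to an iterated $\Cotor$ inside $\Stable_\Psi$ and then produce the spectral sequence from a cobar resolution of $M$.

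\textbf{Step 1 (Reduction via projection formula).} Applying \Cref{lem:projformula} with the roles of the two variables swapped (taking the $\Sigma$-comodule to be $N$ and the $\Psi$-comodule to be $M$) yields a natural equivalence $\Phi^*(\Phi_* M \otimes_B N) \simeq M \otimes_A \Phi^* N$. Combined with $\gamma_\Sigma^* \simeq \gamma_\Psi^* \circ \Phi^*$ and \Cref{defn:cotor}, the abutment can be rewritten as
\[
\Cotor_\Sigma(M \otimes_A B, N) = \gamma_\Sigma^*(\Phi_* M \otimes_B N) \simeq \gamma_\Psi^*(M \otimes_A \Phi^* N) = \Cotor_\Psi(M, \Phi^* N).
\]
The task thus reduces to constructing a spectral sequence computing $\Cotor_\Psi(M, \Phi^* N)$.

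\textbf{Step 2 (Homotopy of $\Phi^*N$).} Combining \Cref{lem:pushforwardformula} with \Cref{lem:gammaformula} (applied to $(B, \Sigma)$) gives
\[
\Phi^* N \simeq \Hom_\Sigma(B, N \otimes_A \Psi) \simeq \gamma_\Sigma^*(N \otimes_A \Psi),
\]
equipped with the residual left $\Psi$-comodule structure coming from the $\Psi$-factor. Tracing through the definition of $\Cotor_\Sigma$, this identifies $\pi_t \Phi^* N \cong \Cotor_\Sigma^t(B \otimes_A \Psi, N)$ as a $\Psi$-comodule.

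\textbf{Step 3 (Spectral sequence from the cobar resolution).} Resolve $M$ by its standard $\Psi$-cobar resolution, whose terms are extended $\Psi$-comodules. Applying $(-) \Box_\Psi \Phi^*N$ and invoking \Cref{lem:cotorextended}, each cosimplicial degree simplifies to an expression of the form $M \otimes_A \bar\Psi^{\otimes s} \otimes_A \Phi^* N$, and the totalization provides a model for $\Cotor_\Psi(M, \Phi^*N)$. The associated Bousfield--Kan spectral sequence, using that $\Psi$ is $A$-flat, has $E_1$-page $E_1^{s,t} = M \otimes_A \bar\Psi^{\otimes s} \otimes_A \pi_t \Phi^*N$ with $d_1$ the cobar differential; passing to $E_2$ and substituting Step 2 produces
\[
E_2^{s,t} \cong \Cotor_\Psi^s(M, \pi_t \Phi^*N) \cong \Cotor_\Psi^s\bigl(M, \Cotor_\Sigma^t(B \otimes_A \Psi, N)\bigr),
\]
abutting by Step 1 to $\Cotor_\Sigma^{s+t}(M \otimes_A B, N)$ with the stated differentials.

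\textbf{Main obstacle.} The principal difficulty is bookkeeping: one must verify that the residual $\Psi$-comodule structure on $\Phi^*N$ arising from the projection formula in Step 1 agrees, under the identification in Step 2, with the left $\Psi$-comodule structure on $\Cotor_\Sigma^t(B \otimes_A \Psi, N)$ induced by the $\Psi$-action on the first factor. Once this compatibility is in place, convergence of the cosimplicial spectral sequence is standard given that the cobar resolution is a filtration by extended comodules.
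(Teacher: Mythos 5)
Your Steps 1 and 2 reproduce exactly the two displayed chains of equivalences in the paper's proof: the identification $\gamma_\Sigma^*((B\otimes_A\Psi)\otimes_B N)\simeq f^*N$ via \Cref{lem:gammaformula} and \Cref{lem:pushforwardformula}, and the rewriting of the abutment as $\gamma_\Psi^*(M\otimes_A f^*N)$ via the projection formula \Cref{lem:projformula} and $\gamma_\Sigma^*\simeq\gamma_\Psi^*f^*$. Where you diverge is in how the spectral sequence itself is produced: the paper invokes the classical Grothendieck (composite-functor) spectral sequence for the pair $\gamma_\Psi^*(M\otimes_A-)$ and $\gamma_\Sigma^*((B\otimes_A\Psi)\otimes_B-)$, verifying its hypotheses by testing on extended $\Sigma$-comodules, whereas you filter by the cobar resolution of $M$ and run the Bousfield--Kan spectral sequence. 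That is a legitimate alternative in principle, and the compatibility of comodule structures you flag as the main obstacle is a real point, which the paper disposes of inside the proof of \Cref{lem:pushforwardformula}.

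However, Step 3 has a gap that your flatness remark does not cover. The cosimplicial object you form has $s$-th term $\gamma_\Psi^*(C^s_\Psi(M)\otimes_A\Phi^*N)\simeq \bar\Psi^{\otimes s}\otimes_A M\otimes_A\Phi^*N$ with \emph{derived} tensor products over $A$. Flatness of $\Psi$ lets you pull the $\bar\Psi^{\otimes s}$ factor out of $\pi_t$, but not the factor $M$: in general $\pi_t(M\otimes_A \Phi^*N)$ is not $M\boxtimes_A\pi_t(\Phi^*N)$, since there are $\Tor^A_*(M,\pi_*\Phi^*N)$ contributions. Hence your claimed $E_1$-page, and with it the identification $E_2^{s,t}\cong\Cotor_\Psi^s(M,\pi_t\Phi^*N)$ with the \emph{relative} Cotor groups appearing in the statement, only goes through when $M$ is flat over $A$ or the relevant Tor groups vanish. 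Since the whole point of the paper's use of relative $\Cotor$ --- stressed in the sentence introducing the corollary --- is precisely to drop the flatness hypothesis on $M$, this is the case your argument must handle and does not. The paper sidesteps the issue by running the Grothendieck spectral sequence with the underived, relative constructions throughout, and only identifying the abutment with the derived object at the end via \Cref{lem:cotorcomparison}.
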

\begin{proof}
First, using \Cref{lem:gammaformula} and \Cref{lem:pushforwardformula}, we obtain equivalences
\[
\gamma_{\Sigma}^*((B \otimes_A \Psi) \otimes_B N) \simeq \Hom_{\Sigma}(B,(B \otimes_A \Psi) \otimes_B N) \simeq \Hom_{\Sigma}(B,N \otimes_A \Psi) \simeq  f^*N.
\]
The projection formula \Cref{lem:projformula} then gives natural equivalences
\begin{align*}
\gamma_{\Sigma}^*(f_*(M) \otimes_B N) & \simeq \gamma_{\Psi}^*f^*(f_*(M) \otimes_B N) \\
& \simeq \gamma_{\Psi}^*(M \otimes_A f^*(N)) \\
& \simeq \gamma_{\Psi}^*(M \otimes_A (\gamma_{\Sigma}^*((B\otimes_A\Psi) \otimes_B N))).
\end{align*}
By testing on extended $\Sigma$-comodules as in \cite[Sec.~6]{barthel2014e_2}, the Grothendieck spectral sequence associated to the two functors
\[
\xymatrix{ \gamma_{\Psi}^*(M \otimes_A -) & \text{and} & \gamma_{\Sigma}^*((B\otimes_A\Psi) \otimes_B -)}
\]
exists and converges \cite[Thm.~5.8.3]{weibel_homological}. By construction and \Cref{lem:cotorcomparison}, the resulting spectral sequence recovers the Cotor spectral sequence. 
\end{proof}

\begin{rem}[Geometric interpretation]
We recall from \cite{naumann_stack_2007} that to a flat Hopf algebroid $(A,\Psi)$ we can associate an algebraic stack $\mathfrak{X}$ with a fixed presentation $\Spec(A) \to \mathfrak{X}$, and that this gives rise to an equivalence of 2-categories between flat Hopf algebroids and rigidified algebraic stacks \cite[Thm.~8]{naumann_stack_2007}. Moreover, there is an equivalence of abelian categories between $\mathrm{QCoh}(\mathfrak{X})$, the category of quasi-coherent sheaves on $\mathfrak{X}$, and $\Comod_{\Psi}$. Using this we can define the category $\Ind\Coh_{\mathfrak{X}}$ of ind-coherent sheaves on $\mathfrak{X}$, and show that it is equivalent to $\Stable_{\Psi}$, see \cite[Prop.~5.40]{bhv1}. This equivalence is symmetric monoidal. Geometrically, this means that Cotor as defined in \Cref{defn:cotor} corresponds to the derived global sections of the tensor product of ind-coherent sheaves. 
\end{rem}
\subsection{Change of rings}
As another application, we will prove a change of rings theorem for Hopf algebroids associated to faithfully flat extensions. For precursors of this result, see \cite[Prop.~3.2]{baker_elliptic}, \cite[Thm.~3.3]{hov_sadofsky}, \cite[Thm.~6.2]{hs_leht} and \cite[Thm.~D]{hovey_morita}.

Given a Hopf algebroid $(A,\Psi)$ and morphism $\Phi \colon A \to B$ of $K$-algebras, let $\Sigma_\Phi = B \boxtimes_A \Psi \boxtimes_A B$, where we use the symbol $\boxtimes$ to denote the underived tensor product. Note that $(B,\Sigma_\Phi)$ forms a Hopf algebroid, and there is a natural morphism of Hopf algebroids $\Phi \colon (A,\Psi) \to (B,\Sigma_{\Phi})$. In general $(B,\Sigma_{\Phi})$ need not be a flat Hopf algebroid, even when $(A,\Psi)$ is. It is, however, when $B \boxtimes_A \Psi$ is a flat $A$-module. 

\begin{lem}\label{lem:ffconservative}
Suppose $(A,\Psi)$ is a Landweber Hopf algebroid with $A$ coherent. If $T$ is a faithfully-flat $A$-module, then the composite
\[
\xymatrix{\Stable_{\Psi}^{<\infty} \ar[r]^-{\epsilon_*} & \Stable_{A}^{<\infty} \ar[r]^-{T \otimes_A-} & \cD_A}
\]
is conservative.
\end{lem}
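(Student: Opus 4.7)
My plan is to reduce the conservativity of the composite to classical conservativity statements in ordinary derived categories, using the t-structure identification provided by \Cref{prop:landweberha}. Concretely, suppose $M \in \Stable_{\Psi}^{<\infty}$ satisfies $T \otimes_A \epsilon_*M \simeq 0$ in $\cD_A$; the goal is to conclude $M \simeq 0$.

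The first step is to identify the functor on coconnective parts. By \Cref{prop:landweberha}, the inclusion $\iota \colon \cD_\Psi \to \Stable_\Psi$ restricts to equivalences $\cD_\Psi^{\le k} \xrightarrow{\sim} \Stable_\Psi^{\le k}$ for all $k$, with inverse $\omega$. Combined with the equivalence $\Stable_A \simeq \cD_A$ from \Cref{prop:stableprops}(2), and using that $\epsilon_*$ is the $\infty$-categorical derived version of the classical forgetful functor $\epsilon_*^{\mathrm{cl}} \colon \cD_\Psi \to \cD_A$, one obtains a commutative square
\[
\xymatrix{\cD_\Psi^{<\infty} \ar[r]^-{\epsilon_*^{\mathrm{cl}}} \ar[d]_\iota^\simeq & \cD_A \ar@{=}[d] \\ \Stable_\Psi^{<\infty} \ar[r]^-{\epsilon_*} & \Stable_A^{<\infty}.}
\]
Exactness of the underlying forgetful functor $\Comod_\Psi \to \Mod_A$ (\Cref{prop:comodbasics}) ensures that $\epsilon_*^{\mathrm{cl}}$ indeed preserves the bounded-above subcategories, so the diagram lands where claimed.

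The second step is to observe that each of the two classical functors involved is conservative. For $\epsilon_*^{\mathrm{cl}} \colon \cD_\Psi \to \cD_A$, the homology of a complex $X \in \cD_\Psi$ computed in $\Comod_\Psi$ coincides (as $A$-modules) with its homology viewed in $\Mod_A$, again by exactness of the forgetful. If $\epsilon_*^{\mathrm{cl}}(X) \simeq 0$ then each $H_n(X)$ vanishes as an $A$-module, hence as a $\Psi$-comodule by faithfulness of $\epsilon_*$ (\Cref{prop:comodbasics}), so $X \simeq 0$ in $\cD_\Psi$. For the tensor functor, since $T$ is flat the natural map $T \otimes_A H_n(C) \to H_n(T \otimes_A C)$ is an isomorphism for any $C \in \cD_A$, and faithful flatness yields $T \otimes_A H_n(C) = 0 \Rightarrow H_n(C) = 0$; hence $T \otimes_A -$ reflects zero objects.

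Chaining the two conservativity statements finishes the proof: $T \otimes_A \epsilon_* M \simeq 0$ forces $\epsilon_* M \simeq 0$, which under the identification of the first step means $\epsilon_*^{\mathrm{cl}}(\omega M) \simeq 0$ in $\cD_A$, whence $\omega M \simeq 0$ in $\cD_\Psi^{<\infty}$, and finally $M \simeq 0$ in $\Stable_\eq\Psi^{<\infty}$ via the equivalence of \Cref{prop:landweberha}. The main obstacle is really just the first step, namely making the identification of $\epsilon_*$ with the classical forgetful functor under the coconnective equivalence precise; this should follow formally from the universal property of $\Stable_\Psi$ as $\Ind$ of the thick subcategory of dualizables in $\cD_\Psi$, together with the explicit description of $\omega$ as Bousfield localization at homology isomorphisms (\Cref{prop:stableprops}(3)), and naturality of the extended/cofree adjunction.
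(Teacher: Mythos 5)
Your argument is correct and follows essentially the same route as the paper: the paper likewise uses the equivalence $\Stable_\Psi^{<\infty} \simeq \cD_\Psi^{<\infty}$ from \Cref{prop:landweberha} to get conservativity of $\epsilon_*$ on the coconnective part, and then handles $T\otimes_A-$ by passing to the cofiber of a map and invoking faithful flatness, which is exactly your reduction to reflecting zero objects. The only difference is that you spell out the identification of homology under the forgetful functor in more detail than the paper does; also note the stray \verb|\eq| in your final sentence, which should just read $\Stable_\Psi^{<\infty}$.
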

\begin{proof}
By \Cref{prop:landweberha} there is an equivalence of categories $\Stable_\Psi^{<\infty} \simeq \cD_\Psi^{<\infty}$, so $\epsilon_*$ restricted to $\Stable_\Psi^{<\infty}$ is conservative. Now let $f\colon M \to N$ be a morphism in $\Stable_{A}^{<\infty}\simeq\cD_A^{<\infty}$ such that $T \otimes_A f$ is an equivalence. The morphism $f$ gives rise to a cofiber sequence $M \xr{f} N \to \cofib(f)$ where, by assumption, $T \otimes_A \cofib(f) \simeq 0$. Since $T$ is faithfully flat over $A$, this implies that $\cofib(f) \simeq 0$, so that $f$ was an equivalence to begin with. 
\end{proof}

For the following compare \cite[Thm.~6.2]{hs_leht}. 
\begin{prop}
	Let $\Phi \colon A \to B$ be as above, and suppose that $(A,\Psi)$ is a Landweber Hopf algebroid with $A$ coherent. Suppose the composite
	\[
A \xr{\eta_R} \Psi \xr{1 \otimes \Phi}\Psi \otimes_A B
	\]
is a faithfully flat extension of $A$, then $\Phi^*$ induces an equivalence
\[
\xymatrix{\Stable_{\Sigma_{\Phi}} \ar[r]^-{\sim} & \Stable_{\Psi}.} 
\]
\end{prop}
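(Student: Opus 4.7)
The strategy is to show that the adjunction $(\Phi_*, \Phi^*)$ is an adjoint equivalence, equivalently, that both the unit $\eta\colon \Id \Rightarrow \Phi^*\Phi_*$ and the counit $\varepsilon\colon \Phi_*\Phi^* \Rightarrow \Id$ are natural equivalences. Since $\Phi^*$ has the further right adjoint $\Phi_!$, both $\Phi_*$ and $\Phi^*$ preserve all colimits, and $\Phi_*$ preserves compact objects. As $\Stable_\Psi$ and $\Stable_{\Sigma_\Phi}$ are compactly generated, it suffices to verify $\eta$ on a set of compact generators of $\Stable_\Psi$ and $\varepsilon$ on a set of compact generators of $\Stable_{\Sigma_\Phi}$.

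For the unit, set $T = \Psi \otimes_A B$ with its faithfully flat $A$-algebra structure supplied by the hypothesis. By \Cref{lem:ffconservative}, the functor $(T \otimes_A -) \circ \epsilon_*$ is conservative on $\Stable_\Psi^{<\infty}$. Dualizable $\Psi$-comodules lie in the heart, and \Cref{lem:pushforwardformula} ensures that $\Phi^*\Phi_* M$ is also bounded above for compact $M$, so it suffices to show $T \otimes_A \eta_M$ is an equivalence in $\cD_A$. Combining the projection formula \Cref{lem:projformula} with \Cref{lem:pushforwardformula}, the underlying $A$-module of $\Phi^*\Phi_* M$ is identified with the totalization of a cobar-type cosimplicial object built from $\Psi$, $B$, and $M$; tensoring with $T$ collapses this totalization to $T \otimes_A M$ via the Amitsur descent equivalence $A \simeq \Tot(T^{\otimes_A \bullet +1})$ for the faithfully flat extension $A \to T$.

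The counit $\varepsilon_N$ is treated analogously on the $\Sigma_\Phi$-side. The Hopf algebroid structure provides a faithfully flat $B$-algebra (for instance $\Sigma_\Phi$ itself, which splits $B$ off via the unit $\Sigma_\Phi \to B$), and the corresponding analog of \Cref{lem:ffconservative} reduces $\varepsilon_N$ on dualizable $N$ to a descent equivalence in $\cD_B$ via the same cobar/Amitsur manipulations. Equivalently, once the unit is known to be invertible, the triangle identities show that $\varepsilon$ is already inverted on the essential image of $\Phi_*$, which is closed under colimits and contains the compact generators $\Phi_*(\cG_\Psi)$ of $\Stable_{\Sigma_\Phi}$; this gives essential surjectivity of $\Phi_*$ and concludes the argument.

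The main obstacle will be the cobar/Amitsur identification in the unit calculation: rigorously matching $T \otimes_A \epsilon_*(\Phi^*\Phi_* M)$ with the Amitsur complex of $A \to T$ tensored with $M$, which requires careful tracking of the derived projection formula and the extended comodule structures implicit in \Cref{lem:pushforwardformula}. Once that identification is in place, the faithful flatness hypothesis together with classical flat descent yields the desired equivalence via \Cref{lem:ffconservative}.
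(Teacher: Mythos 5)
Your skeleton matches the paper's: show unit and counit are equivalences, reduce the unit to (compact) generators, and verify it after applying the conservative functor $(\Psi\otimes_A B)\otimes_A\epsilon_*(-)$ from \Cref{lem:ffconservative}. The counit argument via the triangle identity plus the fact that the image of $\Phi_*$ contains a compact generator ($B=\Phi_*A$) is also essentially the paper's argument. The problem is that the one step where the hypotheses actually get used --- computing $(\Psi\otimes_A B)\otimes_A \Phi^*\Phi_*M$ --- is exactly the step you defer, and the route you sketch for it (identify $\epsilon_*\Phi^*\Phi_*M$ with a cobar totalization, then invoke Amitsur descent for $A\to \Psi\otimes_A B$) does not obviously go through: $T\otimes_A-$ is a colimit-preserving functor and need not commute with the totalization computing $\Hom_{\Sigma_\Phi}(B,-)$, and derived faithfully flat descent in the form $A\simeq\Tot(T^{\otimes_A\bullet+1})$ is itself a nontrivial input that the paper never needs. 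So as written this is a genuine gap, not just an omitted routine verification.

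The paper's computation avoids all of this by applying the projection formula \emph{before} evaluating $\Phi^*$: using \Cref{lem:projformula},
\[
(\Psi\otimes_A B)\otimes_A\Phi^*\Phi_*A\simeq \Phi^*\Phi_*(\Psi\otimes_A B)\simeq \Hom_{\Sigma_\Phi}\bigl(B,\Psi\otimes_A(B\otimes_A\Psi\otimes_A B)\bigr),
\]
and then the flatness hypotheses identify $B\otimes_A\Psi\otimes_A B$ with the underived $\Sigma_\Phi$, so the target is an extended $\Sigma_\Phi$-comodule and the Hom collapses to $\Psi\otimes_A B$ with no totalization or descent statement required. (Monogenicity of $\Stable_\Psi$ from \Cref{lem:lamonogenic} further reduces the unit to the single object $A$, so this computation only has to be done once.) If you want to salvage your version, replace the cobar/Amitsur paragraph with this projection-formula manipulation; faithful flatness of $A\to\Psi\otimes_A B$ is then used only through \Cref{lem:ffconservative}, which is where it belongs.
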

\begin{proof}
For this proof, we use the notation $M \boxtimes N$ to denote the underived tensor product between two modules $M$ and $N$. 

We will first show that the unit $u \colon \text{id} \to \Phi^*\Phi_*$ is an equivalence. Since $\Phi_*$ and $\Phi^*$ preserve all colimits and $\Stable_{\Psi}$ is monogenic by \Cref{lem:lamonogenic}, the unit $u$ is a natural equivalence if and only if it is so when evaluated on $A$. Moreover, $u_A\colon A \to \Phi^*\Phi_*A$ is a map between objects in $\Stable_{\Psi}^{\le 0}$ and so by \Cref{lem:ffconservative} it suffices to show that $(\Psi \otimes_A B) \otimes_A u_A$ is an equivalence. To see this, first observe that the projection formula \Cref{lem:projformula} together with \Cref{lem:pushforwardformula} give
\[
\begin{split}
(\Psi \otimes_A B) \otimes_A \Phi^* \Phi_* A 
& \simeq \Phi^*(\Phi_*(\Psi \otimes_A B) \otimes_B \Phi_* A) \\
& \simeq \Phi^*\Phi_*(\Psi \otimes_A B) \\
& \simeq \Hom_{\Sigma_{\Phi}}(B, \Psi \otimes_A (B \otimes_A \Psi \otimes_A B)).
\end{split}
\]
Note that $B \otimes_A \Psi \simeq B \boxtimes_A \Psi$ since $\Psi$ is flat over $A$. Then, since $B \boxtimes_A \Psi$ is assumed to be flat over $A$, we deduce an equivalence $B \otimes_A \Psi \otimes_A B \simeq B \boxtimes_A \Psi \boxtimes_A B = \Sigma_{\Phi}$. Thus, we have
\[
\begin{split}
(\Psi \otimes_A B) \otimes_A \Phi^* \Phi_* A & \simeq \Hom_{\Sigma_{\Phi}}(B,\Psi \otimes_A \Sigma_{\Phi}) \\
& \simeq \Hom_{\Sigma_{\Phi}}(B,(\Psi \otimes_A B) \otimes_B \Sigma_{\Phi}) \\
& \simeq \Hom_{B}(B,\Psi \otimes_A B) \\
& \simeq \Psi \otimes_A B.
\end{split}
\]
It is standard to verify that this equivalence is induced by $u_A$, i.e., $(\Psi \otimes_A B) \otimes_A u_A$ is an equivalence, as required. 

Let $c$ denote the counit of the adjunction $(\Phi_*,\Phi^*)$ and suppose $Y \in \Stable_{\Sigma_{\Phi}}$. In order to show that $c\colon \Phi_*\Phi^*Y \to Y$ is an equivalence, it suffices to prove that the top morphism in the following commutative diagram is an equivalence
\[
\xymatrixcolsep{5pc}
\xymatrix{\Hom_{\Sigma_{\Phi}}(B, \Phi_*\Phi^*Y) \ar[r]^-{\Hom(B,c_{Y})} \ar[d]_{\sim} & \Hom_{\Sigma_{\Phi}}(B,Y) \ar[d]^{\sim} \\
\Hom_{\Psi}(A,\Phi^*\Phi_*\Phi^*Y) \ar[r]_-{\Hom(A,\Phi^*c_{Y})} & \Hom_{\Psi}(A,\Phi^*Y),}
\]
since $B = \Phi_*A$ is a compact generator of $\Stable_{\Sigma_{\Phi}}$. Because the unit of the adjunction is an equivalence, the triangle identity implies that the bottom horizontal map is an equivalence as well, and the claim follows. 
\end{proof}

\begin{rem}
This demonstrates how working systematically on the derived level can help to considerably simplify arguments, cf.~the proof of \cite[Thm.~D]{hovey_morita}. 
\end{rem}

\section{Morava theories and generic primes}\label{sec:morava}

In this section we study the stable categories $\Stable_{\Sigma(n)}$ and $\Stable_{E_*E}$ associated to the Hopf algebroids ($K(n)_*,\Sigma(n))$ and $(E_*,E_*E)$ introduced in \Cref{sec:torsen}, proving that for certain primes they are equivalent to their respective derived categories. In particular, we show that this is true whenever $p$ is large with respect to $n$. This implies that in these cases the stable category of comodules is much simpler, an algebraic manifestation of the well-known fact that chromatic homotopy at height $n$ simplifies when the prime $p$ is much larger than $n$. 

Recall that the homology theory $E_*$ is complex-oriented and the associated formal group law over $E_*$ is the universal deformation of the Honda formal group, the formal group law associated to Morava $K$-theory. We define the Morava stabilizer group $\mathbb{S}_n$ to be the group of automorphisms of the Honda formal group law of height $n$. If $n$ is not divisible by $p-1$, then $\mathbb{S}_n$ is of finite cohomological dimension $n^2$, which implies that $\Ext^{s,t}_{\Sigma(n)}(K(n)_*,K(n)_*)$  is zero for $s > n^2$. This leads to the following definition, where as usual $E_*$ denotes any height $n$ Landweber exact $BP_*$-algebra. 
\begin{defn}
	For any $n$, the set of $K(n)$-generic primes is the set of primes $p$ for which $n$ is not divisible by $p-1$, and the set of $E$-generic primes is the intersection of the sets of $K(i)$-generic primes for $0 \le i \le n$. 
\end{defn}

In the case of Morava $E$-theory, the $E$-based chromatic spectral sequence can be used to show that if $p$ is an $E$-generic prime, then $\Ext^{s,t}_{E_*E}(E_*,E_*)=0$ for $s > n^2+n$ \cite[Thm.~5.1]{hov_sadofsky}. The main result of this section is that the natural functors
\[
\Stable_{\Sigma(n)} \to \cD_{\Sigma(n)} \quad \text{ and  } \quad \Stable_{E_*E} \to \cD_{E_*E}
\]
are equivalences for the set of $K(n)$-generic and $E$-generic primes, respectively. Note that such a statement is not true for $\Stable_{BP_*BP}$ since, for example, $BP_*$ is compact in $\Stable_{BP_*BP}$ but not in $\cD_{BP_*BP}$. This is shown by Hovey \cite[Sec.~3]{hovey_chromatic} using the existence of non-nilpotent elements in $\Ext_{BP_*BP}^{s,t}(BP_*,BP_*)$ of positive cohomological degree. 
\subsection{Field theories}

Let $(K,\Upsilon)$ be a Hopf algebroid over a field $K$, so that $\Upsilon$ is in fact a Hopf algebra over $K$. There are two important types of examples. First, for any finite group $G$, the group ring of $G$ over the field $k$ has the structure of a Hopf algebra, so that $(k,kG)$ is a Hopf algebroid. Secondly, for any field object $K$ in the category of spectra, $(K_*,K_*K)$ is a Hopf algebroid over $K_*$. In particular, we can consider the Steenrod algebra $(\F_p,\mathcal{A}_*)$ and $(K(n)_*,\Sigma(n))$ corresponding to $H\F_p$ and Morava $K$-theory $K(n)$ for a given prime $p$ and height $n\ge 0$, respectively. As a consequence of the nilpotence theorem, these are essentially all fields of the stable homotopy category \cite[Prop.~1.9]{nilpotence2}.

The following two lemmata generalize Corollary 1.2.10 and Lemma 1.3.9 of  \cite{palmieri_memoir}. As is standard, we define the homology theory associated to $E \in \Stable_{\Upsilon}$ via the assignment $X \mapsto \pi_*(E \otimes X)$. 

\begin{lem}\label{lem:extendedcomodulessplit}
Let $(K,\Upsilon)$ be a Hopf algebroid over a field $K$. 
\begin{enumerate}
	\item The homology theory represented by $\Upsilon$ is ordinary (chain) homology $H_*$, and this satisfies the K\"unneth formula. 
	\item For any $M \in \Stable_{\Upsilon}$, $\Upsilon \otimes M$ decomposes as a direct sum of suspensions of $\Upsilon$. 
\end{enumerate}
\end{lem}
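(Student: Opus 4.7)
The plan is to reduce both parts to straightforward statements about chain complexes over the field $K$, via the forgetful/extended comodule adjunction $(\epsilon_*,\epsilon^*)$ recorded in \Cref{prop:stableprops}(5). The fundamental input is that $\cD_K$ is semisimple: every object is (non-canonically) quasi-isomorphic to the direct sum of shifts of $K$ given by a choice of basis for its homology.

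For part (1), the identification of the homology theory represented by $\Upsilon$ is essentially immediate from \Cref{lem:homrep}: by definition this theory sends $M$ to $\pi_*(\Upsilon \otimes M)$, and the lemma identifies this with $H_*M$. For the K\"unneth formula, I would use that $\epsilon_*$ is symmetric monoidal (\Cref{prop:comodbasics} together with \Cref{prop:stableprops}(5)), so $H_*(M \otimes N) \cong H_*(\epsilon_*M \otimes_K \epsilon_*N)$, and then invoke the classical K\"unneth theorem for chain complexes over a field to deduce $H_*(\epsilon_*M \otimes_K \epsilon_*N) \cong H_*M \otimes_K H_*N$.

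For part (2), the key step is to identify the monoidal product $\Upsilon \otimes M$ in $\Stable_\Upsilon$ with the extended comodule $\epsilon^*(\epsilon_*M) = \Upsilon \otimes_K \epsilon_*M$; this is the derived avatar of the natural isomorphism recalled after \Cref{prop:comodbasics} from \cite[Lem.~1.1.5]{hovey_htptheory}. Since $K$ is a field, the semisimplicity of $\cD_K$ gives a splitting $\epsilon_*M \simeq \bigoplus_i \Sigma^{n_i} K$, and since $\Upsilon \otimes_K -$ commutes with arbitrary direct sums in $\cD_K$, applying $\epsilon^*$ to this splitting yields the desired decomposition $\Upsilon \otimes M \simeq \bigoplus_i \Sigma^{n_i} \Upsilon$.

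The main potential subtlety — though I do not expect it to be a serious obstacle — is verifying that the classical abelian-level identification $\Upsilon \otimes M \cong \Upsilon \otimes \epsilon_*M$ upgrades correctly to the stable $\infty$-category $\Stable_\Upsilon$, and that $\epsilon^*$ preserves the chosen splitting in $\cD_K$. Both points follow from the formal properties of the adjunction $(\epsilon_*,\epsilon^*)$ and the closed symmetric monoidal structure already recorded in \Cref{prop:stableprops}, together with the general fact that tensoring with any object over a field preserves arbitrary coproducts of complexes.
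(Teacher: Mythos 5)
Your proposal is correct and takes essentially the same route as the paper: both parts rest on \Cref{lem:homrep}, the identification of $\Upsilon \otimes M$ with the extended comodule on the underlying complex $\epsilon_*M$, and the semisimplicity of $\cD_K$. The only cosmetic difference is in part (2), where the paper builds the splitting map $\bigoplus_{b \in H_*M} \Sigma^{|b|}\Upsilon \to \Upsilon \otimes M$ directly in $\Stable_{\Upsilon}$ from a $K$-basis of $H_*M$, whereas you first split $\epsilon_*M$ in $\cD_K$ and then apply $\epsilon^*$ — the same argument in different packaging.
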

\begin{proof}
That $\Upsilon$ represents homology is a special case of \Cref{lem:homrep}. Since $\pi_*(\Upsilon \otimes M) \cong H_*M$ is a free graded $K$-module, it satisfies the K\"unneth formula, and so (1) holds. Moreover,	 we can construct a map 
\[
\xymatrix{\bigoplus_{b\in H_*M}\Sigma^{|b|}\Upsilon \ar[r] & \Upsilon \otimes M}
\]
in $\Stable_{\Upsilon}$, where the direct sum is indexed by a $K$-basis of $H_*M$. By construction, this map is an equivalence, and (2) follows. 
\end{proof}

\begin{lem}\label{lem:ftlocsubcatcrit}
Assume that $\Stable_{\Upsilon}$ is monogenic and suppose that $\cD$ is a localizing subcategory of $\Stable_{\Upsilon}$ containing a non-acyclic object $M_0$, then $\Loc(\Upsilon) \subseteq \cD$.  
\end{lem}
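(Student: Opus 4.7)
The plan is to first establish that, because $\Stable_\Upsilon$ is monogenic, any localizing subcategory is automatically a tensor ideal. Once this is in hand, invoking \Cref{lem:extendedcomodulessplit}(2) to split $\Upsilon \otimes M_0$ into a sum of suspensions of $\Upsilon$ will produce $\Upsilon$ itself as a retract inside $\cD$.

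First I would verify the tensor-ideal property. In a monogenic compactly generated stable $\infty$-category with unit $K$, every compact object lies in $\Thick(K)$. Hence for any compact $X \in \Stable_\Upsilon$ and any $M \in \cD$, the object $X \otimes M$ is built from $K \otimes M \simeq M$ by finitely many shifts, cofibers, and retracts, all of which preserve $\cD$ since $\cD$ is localizing (in particular thick). For an arbitrary $Y \in \Stable_\Upsilon$, writing $Y$ as a filtered colimit of compacts $X_\alpha$ and using that $\cD$ is closed under small colimits gives $Y \otimes M \simeq \colim_\alpha (X_\alpha \otimes M) \in \cD$. Thus $\cD$ is stable under tensoring with every object of $\Stable_\Upsilon$.

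Applied to $M_0 \in \cD$, this yields $\Upsilon \otimes M_0 \in \cD$. By \Cref{lem:extendedcomodulessplit}(2),
\[
\Upsilon \otimes M_0 \simeq \bigoplus_{b} \Sigma^{|b|} \Upsilon,
\]
indexed by a $K$-basis of $H_*M_0$, which is nonempty by the assumption that $M_0$ is non-acyclic. Each summand $\Sigma^{|b|} \Upsilon$ is a retract of $\Upsilon \otimes M_0$ and therefore lies in $\cD$, and closure under shifts gives $\Upsilon \in \cD$. Since $\cD$ is localizing, $\Loc(\Upsilon) \subseteq \cD$ follows immediately.

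The only genuine obstacle is the automatic tensor-ideal property for localizing subcategories in the monogenic setting; the rest of the argument is a direct consequence of the splitting produced in the previous lemma.
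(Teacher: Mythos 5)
Your proof is correct and follows the same route as the paper: the paper simply cites the standard fact that in a monogenic category localizing subcategories coincide with localizing ideals, and then applies the splitting of $\Upsilon \otimes M_0$ from \cref{lem:extendedcomodulessplit} exactly as you do. The only difference is that you spell out the proof of the tensor-ideal property (reducing to compacts via $\Thick(K)$ and passing to filtered colimits), which the paper leaves implicit.
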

\begin{proof}
Because $\Stable_{\Upsilon}$ is monogenic, the localizing ideals coincide with the localizing subcategories. Since $0 \not\simeq \Upsilon \otimes M_0 \in \cD$, we get $\Upsilon \in \cD$ by \Cref{lem:extendedcomodulessplit}.
\end{proof}
In order to apply this to the examples of interest, we need the following. 
\begin{prop}\label{prop:sigmamonog}
	The category $\Stable_{\Sigma(n)}$ is monogenic. 
\end{prop}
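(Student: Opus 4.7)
The plan is to apply Lemma~\ref{lem:lamonogenic}, which reduces the claim to showing that $(K(n)_*, \Sigma(n))$ is a Landweber Hopf algebroid: every compact $\Sigma(n)$-comodule $M$ must lie in $\Thick_{\Sigma(n)}(K(n)_*)$ when viewed as a complex concentrated in degree zero. A useful initial observation is that, because $K(n)_*$ is a graded field, \Cref{prop:comodbasics} implies that compact and dualizable $\Sigma(n)$-comodules coincide; both are precisely the finitely generated free $K(n)_*$-modules. In particular, every compact comodule has finite $K(n)_*$-length.

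I would then prove the Landweber property by induction on the $K(n)_*$-rank of $M$. The inductive step reduces to constructing a subcomodule $N \hookrightarrow M$ isomorphic to a suspension of $K(n)_*$: given such $N$, the short exact sequence
\[
0 \to N \to M \to M/N \to 0
\]
in $\Comod_{\Sigma(n)}$ yields a cofiber sequence in $\cD_{\Sigma(n)}$ whose outer terms lie in $\Thick_{\Sigma(n)}(K(n)_*)$ by the inductive hypothesis (noting that $M/N$ is again compact and of strictly smaller rank), whence $M \in \Thick_{\Sigma(n)}(K(n)_*)$.

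The crux of the proof is thus producing a primitive element $x \in M$, i.e., a nonzero $x$ with $\psi_M(x) = 1 \otimes x$, since $K(n)_* \cdot x$ is then the required subcomodule. Here I would use the explicit description of $\Sigma(n)$ as a polynomial algebra over $K(n)_*$ on the Ravenel generators $t_i$ together with the coproduct formula $\Delta(t_k) = \sum_{i+j=k} t_i \otimes t_j^{p^i}$ from~\cite[Thm.~6.3.8]{greenbook}. These imply that the weight filtration $F^k \Sigma(n)$ by the $t$-degree is a coalgebra filtration with $F^0 \Sigma(n) = K(n)_*$, and pulling it back along $\psi_M$ produces an exhaustive increasing filtration of $M$ whose bottom nonzero piece necessarily consists of primitives, by counitality.

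The main obstacle I anticipate is the verification that this pulled-back weight filtration on $M$ is genuinely exhaustive with a nonzero bottom piece. This should ultimately reduce to local finiteness of $\Comod_{\Sigma(n)}$ (every comodule is a filtered colimit of its finite-dimensional subcomodules, applied internally degreewise) combined with the finite $K(n)_*$-length of a compact $M$; the leading-weight analysis of $\psi_M$ then forces at least one primitive to exist.
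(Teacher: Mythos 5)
Your proposal is essentially correct but takes a genuinely different route from the paper. The paper never touches the internal coalgebra structure of $\Sigma(n)$: it base-changes along the quotient map $f\colon (E_*,E_*E) \to (K(n)_*,\Sigma(n))$ with $E_*/I_n \cong K(n)_*$, identifies $f^*$ on bounded objects, uses the already-established monogenicity of $\Stable_{E_*E}$ (\Cref{prop:monogenic}) to place $f^*N$ in $\Thick(E_*)$, and then computes $K(n)_* \otimes_{E_*} K(n)_* \simeq \bigoplus_j \Sigma^j K(n)_*^{\binom{n}{j}}$ via Koszul-type cofiber sequences so that $N$ is a retract of $f_*f^*N \in \Thick(K(n)_*)$. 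Your argument instead proves the stronger, purely abelian statement that every compact $\Sigma(n)$-comodule admits a finite filtration with subquotients suspensions of $K(n)_*$ (i.e., that $(K(n)_*,\Sigma(n))$ is a Landweber Hopf algebroid), and then invokes \Cref{lem:lamonogenic}; this is closer in spirit to how the paper handles $(B_n,W_n)$ later on, and it yields a Landweber-type filtration theorem for $\Sigma(n)$-comodules as a byproduct. Two points need care in your write-up. First, $\Sigma(n)$ is not a polynomial algebra: it is the quotient $K(n)_*[t_1,t_2,\ldots]/(v_nt_i^{p^n}-v_n^{p^i}t_i)$, and the diagonal on the $t_k$ has correction terms beyond $\sum_{i+j=k}t_i\otimes t_j^{p^i}$; neither issue breaks the weight filtration, since the relations identify high-weight elements with low-weight ones and the corrections respect the filtration, but the formulas should be quoted correctly from \cite[Ch.~6]{greenbook}. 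Second, the existence of a nonzero homogeneous primitive in every nonzero finite-dimensional comodule is the real content of your proof and deserves more than a leading-weight heuristic: the clean statement is that the weight filtration is an exhaustive coalgebra filtration with $F^0\Sigma(n)=K(n)_*$, which forces the coradical of $\Sigma(n)$ to equal $K(n)_*$ (equivalently, the strict Morava stabilizer group is pro-$p$), and then the standard fact that every nonzero comodule over a field contains a simple subcomodule, necessarily a comodule over the coradical, produces the primitive. With those repairs your argument is complete; what the paper's approach buys is the avoidance of any structural input on $\Sigma(n)$ beyond the single computation of $K(n)_*\otimes_{E_*}K(n)_*$, at the cost of routing through $\Stable_{E_*E}$.
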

\begin{proof}
In this proof we will again use the symbol $\boxtimes$ to denote the underived tensor product.

	Let $N \in \Stable_{\Sigma(n)}$ be compact. By construction of $\Stable_{\Sigma(n)}$, $N$ is in the thick subcategory generated by the dualizable $\Sigma(n)$-comodules. We will show that it is in the thick subcategory generated by $K(n)_*$. We note that by \Cref{prop:comodbasics} each dualizable discrete $\Sigma(n)$-comodule is finitely generated and projective as a $K(n)_*$-module, i.e., as a $K(n)_*$-module it is isomorphic to a finite direct sum of copies of $K(n)_*$, up to suspension. 

	Let $E$ be the Landweber exact cohomology theory with $E_* \cong \Z_p[v_1,\ldots,v_{n-1},v_n^{\pm 1}]$, so that $E_*/I_n \cong K(n)_*$ and $E_*E/I_n \cong K(n)_*E \cong \Sigma(n)$ \cite[p.~15]{hovey_morava_1999}. Let $f \colon (E_*,E_*E) \to (K(n)_*,\Sigma(n))$ denote the quotient morphism of Hopf algebroids. Then, for a $\Sigma(n)$-comodule $M$ there are equivalences
\begin{align*}
\Sigma(n) \boxtimes_{K(n)_*} M&\cong E_*E \boxtimes_{E_*} K(n)_* \boxtimes_{K(n)_*} M \\
& \cong E_*E \boxtimes_{E_*} M .
\end{align*}

In particular, $M$ is also an $E_*E$-comodule, with comodule structure map given by the composite
	\[
M \xr{\psi_M} \Sigma(n) \boxtimes_{K(n)_*} M \cong E_*E \boxtimes_{E_*} M.
	\]
 We will write $M^\sharp$ when we think of $M$ as an $E_*E$-comodule.

	For arbitrary $M \in \Stable^{< \infty}_{\Sigma(n)}$, \Cref{lem:pushforwardformula} gives equivalences
	\[
	\begin{split}
f^*M \simeq \Hom_{\Sigma(n)}(K(n)_*,M \otimes_{E_*}E_*E) &\simeq \Hom_{\Sigma(n)}(K(n)_*,M \boxtimes_{E_*}E_*E) \\
& \simeq \Hom_{\Sigma(n)}(K(n)_*,M \boxtimes_{K(n)_*} \Sigma(n)) \\
  & \simeq \Hom_{\Sigma(n)}(K(n)_*,M \otimes_{K(n)_*} \Sigma(n)) \\
& \simeq M,
\end{split}
	\]
with $E_*E$-comodule structure given as above, where we have used that our Hopf algebroids are flat. 

 It follows that $f^*M \simeq M^\sharp$, and in particular that $f^*(K(n)_*) \simeq K(n)^{\sharp}_* \simeq (E_*/I_n)^{\sharp}$. Since $I_n$ is a finitely-generated invariant ideal of $E_*$, it follows that $E_*/I_n$ is a finitely-presentable $E_*$-module, and hence so is $f^*(P) \simeq P^{\sharp}$ for any dualizable $\Sigma(n)$-comodule $P$. Then, $f^*(P)$ is dualizable in $\Stable_{E_*E}$, but because $\Stable_{E_*E}$ is monogenic by \Cref{prop:monogenic} and $E_*$ itself is compact, $f^*P$ is actually compact in $\Stable_{E_*E}$. 

 Since $f^*$ is exact, this implies that if $N \in \Stable_{\Sigma(n)}^{< \infty}$ is in the thick subcategory generated by the dualizable $\Sigma(n)$-comodules, then $f^*N \simeq N^{\sharp}$ is in the thick subcategory generated by the dualizable $E_*E$-comodules, i.e., $N^{\sharp}$ is compact in $\Stable_{E_*E}$. Again, using the fact that $\Stable_{E_*E}$ is monogenic, we see that $N^{\sharp}$ is in the thick subcategory generated by $E_*$. It follows that $f_*f^*N$ is in the thick subcategory generated by $K(n)_*$. Now we have cofiber sequences 
	\[
E_*/I_k \otimes K(n)_* \xr{ \cdot v_k} E_*/I_k \otimes K(n)_* \xr{\hspace{3mm}} E_*/I_{k+1} \otimes K(n)_*,
	\] 
	and since $K(n)_*$ is killed by $I_n$, these give rise to equivalences
	\[
E_*/I_k \otimes K(n)_* \simeq (E_*/I_{k-1} \otimes K(n)_*) \oplus (E_*/I_{k-1} \otimes \Sigma K(n)_*)
	\]
	for all $0 \le k \le n$, hence
	\[
	K(n)_* \otimes_{E_*} K(n)_* \cong E_*/I_n \otimes_{E_*} K(n)_* \cong \bigoplus_{0\le j \le n}\Sigma^j K(n)_*^{\lambda_n(j)},
	\]
	where $\lambda_n(j) = \binom{n}{j}$. Therefore,
	\[
	\begin{split}
f_*f^*N \simeq K(n)_* \otimes_{E_*} N  &\simeq K(n)_* \otimes_{E_*} K(n)_* \otimes_{K(n)_*} N\\ &
\simeq N \oplus \bigoplus_{1 \le j \le n} \Sigma^jN^{\oplus \lambda_n(j)}
\end{split}
	\]
	is in the thick subcategory generated by $K(n)_*$. It follows that $N \in \Thick(K(n)_*)$ as required.
\end{proof}

\subsection{Generic primes}
We now focus on the behavior of $\Stable_{\Sigma(n)}$ and $\Stable_{E_*E}$ at the set of $K(n)$-generic and $E$-generic primes, respectively. We start with $\Stable_{\Sigma(n)}$. 

\begin{lem}\label{prop:knresolution}
If $p-1 \nmid n$, then $K(n)_* \in \Thick_{\Sigma(n)}(\Sigma(n))$. 
\end{lem}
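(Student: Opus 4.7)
The plan is to construct a finite length resolution of $K(n)_*$ in $\Comod_{\Sigma(n)}$ by finite direct sums of shifts of $\Sigma(n)$. Since $\Thick_{\Sigma(n)}(\Sigma(n))$ is closed under finite colimits and retracts, such a resolution immediately forces $K(n)_* \in \Thick_{\Sigma(n)}(\Sigma(n))$.

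The essential input comes from the Morava stabilizer group $\mathbb{S}_n$. Under the hypothesis $p-1\nmid n$, the group $\mathbb{S}_n$ is $p$-torsion free and hence a Poincar\'e duality compact $p$-adic analytic group of dimension $n^2$. By Morava's change-of-rings theorem,
\[
\Ext^{s,*}_{\Sigma(n)}(K(n)_*,K(n)_*) \cong H^s_c(\mathbb{S}_n,K(n)_*),
\]
so this Ext group vanishes for $s>n^2$. More importantly, the Poincar\'e duality structure provides a finite length resolution of the trivial continuous $K(n)_*[\![\mathbb{S}_n]\!]$-module by finitely generated free modules of length $n^2$. I would transport this resolution along the standard correspondence between continuous Morava modules and $\Sigma(n)$-comodules (under which the free module of rank one matches the cofree extended comodule $\Sigma(n) = \epsilon^* K(n)_*$) to obtain the sought finite resolution of $K(n)_*$ by finite sums of shifts of $\Sigma(n)$.

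The main obstacle is the upgrade from the cohomological vanishing to a resolution with \emph{finitely generated} terms. A naive truncated cobar resolution built purely from the Ext vanishing only produces extended comodules $\epsilon^*(-)$ on arbitrary $K(n)_*$-modules; these are infinite direct sums of shifts of $\Sigma(n)$, so they lie in $\Loc(\Sigma(n))$ rather than in $\Thick(\Sigma(n))$. The Poincar\'e duality property of $\mathbb{S}_n$, available precisely because $p-1\nmid n$, is what makes the finite-rank statement possible. An alternative route avoiding the full strength of Poincar\'e duality would be to exploit compactness of $K(n)_*$ in $\Stable_{\Sigma(n)}$ (\Cref{prop:sigmamonog}) in order to extract, stage by stage, a finite-rank sub-resolution from a minimal cobar-style model; the cohomological dimension bound guarantees that this minimisation terminates after $n^2$ steps.
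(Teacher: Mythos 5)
Your main argument is essentially the paper's proof: for $p-1\nmid n$ the Morava stabilizer group $\mathbb{S}_n$ has cohomological $p$-dimension $n^2$, so the trivial $\Z_p[\![\mathbb{S}_n]\!]$-module admits a length-$n^2$ resolution by finitely generated projectives, which (by Henn's theorem, the reference the paper uses for both this and the lifting step) transports to a finite resolution of $K(n)_*$ by summands of finite sums of copies of $\Sigma(n)$, whence the claim by splicing into short exact sequences. The finite-generation issue you correctly flag is exactly what the citation handles; only your parenthetical ``alternative route'' is suspect, since $\Sigma(n)$ is not compact in $\Stable_{\Sigma(n)}$, so compactness of $K(n)_*$ alone does not upgrade membership in $\Loc(\Sigma(n))$ to membership in $\Thick_{\Sigma(n)}(\Sigma(n))$.
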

\begin{proof}
If $p-1 \nmid n$, the cohomological $p$-dimension of the Morava stabilizer group $\mathbb{S}_n$ is $n^2$, so there exists a length $n^2$ projective resolution
\[
\xymatrix{0 \ar[r] & P_{\bullet} \ar[r] & \Z_p \ar[r] & 0}
\]
of the trivial $\Z_p\llbracket \mathbb{S}_n\rrbracket$-module $\Z_p$, see \cite[Thm.~4]{henn_finite}. As shown in \emph{loc.~cit.}, this resolution can be lifted to a finite resolution of $K(n)_*$ as a $\Sigma(n)$-comodule, such that each term is a direct summand of a finite wedge of copies of $\Sigma(n)$. In the usual way, we can split the long exact sequence into short exact sequences. Starting from the final term and working our way back to $K(n)_*$, the claim follows inductively. 
\end{proof}

\begin{rem}\label{rem:kncdimsmallprimes}
For $p-1 \mid n$, while $\mathbb{S}_n$ has infinite cohomological $p$-dimension, it is still of virtual cohomological dimension $n^2$. In stable homotopy theory, this fact manifests itself in the existence of a finite spectrum $X_{p,n}$ of type $0$ such that $K(n)^*X_{p,n}$ has projective dimension $n^2$ over $\Sigma(n)^*$, see \cite[Proof of Thm.~8.9]{hovey_morava_1999}. Such complexes were constructed by Hopkins, Ravenel, and Smith as explained in \cite[Sec.~8.3]{orangebook}; note, however, that $X_{p,n}$ cannot be taken to be $S^0$ if $p-1 \mid n$.
\end{rem}

\begin{prop}\label{prop:kngeneric}
Suppose $p$ is a $K(n)$-generic prime, that is $p-1 \nmid n$, then the natural functor
\[
\xymatrix{\omega \colon\Stable_{\Sigma(n)} \ar[r]^-{\sim} & \cD_{\Sigma(n)}}
\]
is an equivalence of symmetric monoidal stable $\infty$-categories.
\end{prop}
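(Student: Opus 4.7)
The strategy is to exhibit $K(n)_*$ as a common compact generator of both $\Stable_{\Sigma(n)}$ and $\cD_{\Sigma(n)}$ under the hypothesis $p-1 \nmid n$, and then apply the standard criterion that a cocontinuous functor between compactly generated stable $\infty$-categories is an equivalence if and only if it restricts to an equivalence on a set of compact generators. Cocontinuity of $\omega$ is furnished by \Cref{prop:stableprops}(1), and since $\omega$ arises from a symmetric monoidal Bousfield localization by \Cref{prop:stableprops}(3), the monoidal compatibility will come for free.

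By \Cref{prop:sigmamonog}, $\Stable_{\Sigma(n)}$ is monogenic with compact generator $K(n)_*$. Under $p-1 \nmid n$, \Cref{prop:knresolution} provides a finite proper injective resolution
\[
0 \to C_{n^2} \to \cdots \to C_0 \to K(n)_* \to 0
\]
in $\Comod_{\Sigma(n)}$ with each $C_i$ a summand of a finite direct sum of copies of $\Sigma(n)$. Combining \Cref{lem:cotorbalanced} with \Cref{lem:cotorextended}, one identifies the derived mapping spectrum $\iHom_{\Sigma(n)}(K(n)_*, -)$ in $\cD_{\Sigma(n)}$ with the bounded totalization of $C_\bullet \Box_{\Sigma(n)} (-)$, whose terms are summands of copies of the input. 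Since such finite totalizations commute with filtered colimits in a stable $\infty$-category, this forces $K(n)_*$ to be compact in $\cD_{\Sigma(n)}$ as well.

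To finish, one verifies fully-faithfulness of $\omega$ on $K(n)_*$ using \Cref{lem:iHomhomotopy}, which identifies $\pi_*\iHom_{\Stable_{\Sigma(n)}}(K(n)_*, K(n)_*) \cong \Ext^*_{\Sigma(n)}(K(n)_*, K(n)_*)$. This is applicable since $(K(n)_*, \Sigma(n))$ is Landweber with $K(n)_*$ a graded field, hence coherent, and $K(n)_*$ lies in $\Stable_{\Sigma(n)}^{<\infty}$. The right-hand side agrees with $\pi_*\iHom_{\cD_{\Sigma(n)}}(K(n)_*, K(n)_*)$ by the usual derived-category computation, and the two identifications are compatible with $\omega$. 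The step I expect to be the main obstacle is the complementary check that $K(n)_*$ actually generates $\cD_{\Sigma(n)}$ in the compactly-generated sense, i.e.\ that every compact object of $\cD_{\Sigma(n)}$ lies in $\Thick_{\cD_{\Sigma(n)}}(K(n)_*)$. I expect to deduce this from the $t$-structure of \Cref{prop:landweberha}, exploiting that the finite cohomological dimension makes every bounded complex of compact comodules reachable from $K(n)_*$ by finitely many extensions and retracts. Once generation is in place, the standard criterion delivers the desired equivalence $\omega$ of symmetric monoidal stable $\infty$-categories.
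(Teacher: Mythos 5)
Your strategy --- matching a compact generator on both sides and invoking the standard recognition criterion for cocontinuous functors between compactly generated stable $\infty$-categories --- is workable, but it is a genuinely different and considerably longer route than the paper's. The paper uses \Cref{prop:stableprops}(3) directly: $\omega$ is the Bousfield localization of $\Stable_{\Sigma(n)}$ at the homology isomorphisms, so it suffices to show that every $H_*$-acyclic object vanishes. If $H_*M \cong \pi_*(\Sigma(n)\otimes M) = 0$, then $\pi_*(X\otimes M)=0$ for every $X\in\Thick(\Sigma(n))$, in particular for $X=K(n)_*$ by \Cref{prop:knresolution}; since $K(n)_*$ is the unit this gives $\pi_*M=0$, and monogenicity (\Cref{prop:sigmamonog}) forces $M\simeq 0$, so $\omega$ is localization at $(0)$. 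That is the entire proof. Your argument consumes the same two inputs (\Cref{prop:knresolution} for compactness of $K(n)_*$ in $\cD_{\Sigma(n)}$, \Cref{prop:sigmamonog} for generation of the source) but adds two further verifications that the paper's route avoids entirely.

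Of those, the step you flag as the main obstacle --- that $K(n)_*$ generates $\cD_{\Sigma(n)}$ --- should not be attacked through \Cref{prop:landweberha}: that proposition (and likewise \Cref{lem:iHomhomotopy}, which you invoke for full faithfulness) requires $(K(n)_*,\Sigma(n))$ to be a \emph{Landweber} Hopf algebroid, a hypothesis verified nowhere in the paper; the proof of \Cref{prop:sigmamonog} establishes monogenicity by base change to $E_*E$, not via the Landweber property. For full faithfulness on the unit you can instead use Hovey's identification $\pi_*N\cong\Ext^*_{\Sigma(n)}(K(n)_*,N)$ for discrete comodules (\Cref{rem:cotor}), which needs no such hypothesis. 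For generation, the clean fix is to remember that $\omega$ is itself a Bousfield localization, hence essentially surjective and colimit-preserving: the full subcategory of $X\in\Stable_{\Sigma(n)}$ with $\omega X\in\Loc_{\cD_{\Sigma(n)}}(K(n)_*)$ is localizing and contains $K(n)_*$, hence is everything, and essential surjectivity gives $\cD_{\Sigma(n)}=\Loc(K(n)_*)$. But once you have observed that $\omega$ is a localization whose source is generated by the unit, it is shorter to bypass the generator-comparison machinery altogether and simply check that the acyclics vanish --- which is exactly the paper's argument.
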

\begin{proof}
The functor $\omega $ exhibits $\cD_{\Sigma(n)}$ as the localization of $\Stable_{\Sigma(n)}$ at the quasi-isomorphisms, i.e., it is localization at the localizing subcategory of all $M \in \Stable_{\Sigma(n)}$ such that $\pi_*(\Sigma(n) \otimes M) \cong H_*M =0$, see \Cref{prop:stableprops}. It follows from \Cref{prop:knresolution} that $\pi_*(M) \cong \pi_*(K(n)_* \otimes M) = 0$, hence $M\simeq 0$ since $\Stable_{\Sigma(n)}$ is monogenic by \Cref{prop:sigmamonog}. Therefore, $\omega$ is localization at $(0)$. 
\end{proof}

\begin{rem}
Combining the proof of \Cref{prop:kngeneric} with \Cref{rem:kncdimsmallprimes}, we see that, for any prime $p$, $M\in \Stable_{\Sigma(n)}$ being acyclic implies $\pi_*(K(n)_*X_{p,n} \otimes M) =0$. 
\end{rem}
For the following we let $\Spc(\Stable_{\Sigma(n)})$ denote the Balmer spectrum associated to $\Stable_{\Sigma(n)}$, see \cite{balmer_spec}. 
\begin{cor}
Suppose $p-1 \nmid n$. If $\cD \subseteq \Stable_{\Sigma(n)}$ is a localizing subcategory containing a non-zero object $M_0$, then $\cD = \Stable_{\Sigma(n)}$. In particular, there are no nontrivial proper thick subcategories in $\Stable_{\Sigma(n)}^{\omega}$, i.e., $\Spc(\Stable_{\Sigma(n)}) = \{\ast\}$.
\end{cor}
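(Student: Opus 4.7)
The plan is to assemble the pieces already at hand: monogenicity of $\Stable_{\Sigma(n)}$ from \Cref{prop:sigmamonog}, the finite resolution $K(n)_\ast \in \Thick_{\Sigma(n)}(\Sigma(n))$ under the generic prime hypothesis from \Cref{prop:knresolution}, and the equivalence $\omega\colon \Stable_{\Sigma(n)} \xrightarrow{\sim} \cD_{\Sigma(n)}$ from \Cref{prop:kngeneric}.

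First I would observe that a non-zero $M_0 \in \Stable_{\Sigma(n)}$ is automatically non-acyclic in the sense of \Cref{lem:ftlocsubcatcrit}, i.e., $\Sigma(n) \otimes M_0 \not\simeq 0$. This is immediate from the combination of \Cref{prop:kngeneric} and \Cref{prop:stableprops}(3), since $\omega$ is realised as Bousfield localization at the homology isomorphisms and is an equivalence under the generic hypothesis, so every acyclic object must already be zero. With this input, \Cref{lem:ftlocsubcatcrit} applied to $\cD$ (the monogenicity assumption being supplied by \Cref{prop:sigmamonog}) produces $\Loc(\Sigma(n)) \subseteq \cD$.

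To close the first statement I would then combine \Cref{prop:knresolution} with monogenicity: the former places the compact generator $K(n)_\ast$ inside $\Thick_{\Sigma(n)}(\Sigma(n)) \subseteq \Loc(\Sigma(n))$, so $\Stable_{\Sigma(n)} = \Loc(K(n)_\ast) \subseteq \Loc(\Sigma(n)) \subseteq \cD$. For the Balmer-spectrum statement I would take any thick $\otimes$-ideal $\mathcal{I} \subseteq \Stable_{\Sigma(n)}^{\omega}$ containing a non-zero object; then $\Loc(\mathcal{I})$ is a non-trivial localizing subcategory (and automatically a $\otimes$-ideal by monogenicity), so by the first part $\Loc(\mathcal{I}) = \Stable_{\Sigma(n)}$. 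The compact unit $K(n)_\ast$ therefore lies in $\Loc(\mathcal{I})$, and Neeman's classical compact-in-localizing-implies-in-thick theorem places it in $\Thick(\mathcal{I}) = \mathcal{I}$, forcing $\mathcal{I} = \Stable_{\Sigma(n)}^{\omega}$. Hence the only thick $\otimes$-ideals are the trivial ones and $\Spc(\Stable_{\Sigma(n)}) = \{\ast\}$.

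The argument is essentially an assembly of previously established results, with no step presenting a genuine obstacle within this corollary itself. The load-bearing input is \Cref{prop:knresolution}, encapsulating the cohomological-dimension fact for the Morava stabilizer group when $p-1 \nmid n$; absent this, the key inclusion $K(n)_\ast \in \Loc(\Sigma(n))$ collapses. The one non-formal auxiliary step is the Neeman compact-to-thick passage used in the Balmer-spectrum part, but this is standard for compactly generated tensor-triangulated categories.
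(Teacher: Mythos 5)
Your proposal is correct and follows essentially the same route as the paper: non-zero implies non-acyclic via \cref{prop:kngeneric}, then \cref{lem:ftlocsubcatcrit}, \cref{prop:sigmamonog}, and \cref{prop:knresolution} give the chain $\Stable_{\Sigma(n)} = \Loc(K(n)_*) \subseteq \Loc(\Sigma(n)) \subseteq \cD$, and the Balmer-spectrum claim is deduced from Neeman's identification $\Loc(\cT)^{\omega} = \Thick(\cT)$ exactly as in the paper. No gaps.
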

\begin{proof}
By \Cref{prop:kngeneric} such an $M_0$ corresponds to a non-acyclic object of $\Stable_{\Sigma(n)}$. Hence, combining \Cref{lem:ftlocsubcatcrit}, \Cref{prop:sigmamonog}, and \Cref{prop:knresolution}, we get 
\[
\Stable_{\Sigma(n)} = \Loc(K(n)_*) \subseteq \Loc(\Sigma(n)) \subseteq \cD,
\]  
so $\Stable_{\Sigma(n)} = \cD$. 

In order to prove the second claim, consider a nontrivial thick subcategory $\cT \subseteq \Stable_{\Sigma(n)}^{\omega}$ and write $\cL = \Loc(\cT)$ for the corresponding localizing subcategory of $\Stable_{\Sigma(n)}$. It follows from the first part that $\cL = \Stable_{\Sigma(n)}$ and therefore, by \cite[Thm.~2.1(3)]{neeman_brown}, that $\cT = \Loc(\cT)^{\omega} = \Stable_{\Sigma(n)}^{\omega}$. 
\end{proof}
\begin{quest}
Is it possible to classify the thick subcategories of $\Stable_{\Sigma(n)}$ for $p-1 \mid n$? 
\end{quest}

Let $E_*$ be any height $n$ Landweber exact $BP_*$-algebra. As noted previously these give rise to a category of comodules $(E_*,E_*E)$, and the comodule categories of any two such $BP_*$-algebras are equivalent. Thinking of $E_*$ as the coefficient ring of Morava $E$-theory, the following result gives a lift of \Cref{prop:kngeneric} from Morava $K$-theory to Morava $E$-theory. 
\begin{thm}\label{thm:genericprimes}
\sloppy If $p$ is an $E$-generic prime, that is $p > n+1$, then the localization functor 
$
\xymatrix{\omega\colon\Stable_{E_*E} \ar[r]^-{\sim} & \cD_{E_*E}}
$
is an equivalence of symmetric monoidal stable $\infty$-categories.
\end{thm}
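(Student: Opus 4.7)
The plan is to adapt the argument of \Cref{prop:kngeneric} by reducing the claim to showing that the unit $E_*$ lies in the thick subcategory of $\Stable_{E_*E}$ generated by $E_*E$. By \Cref{prop:stableprops}(3), $\omega$ is Bousfield localization at the class of homology isomorphisms, so it is an equivalence provided every $M \in \Stable_{E_*E}$ with $H_*M = \pi_*(E_*E \otimes_{E_*} M) = 0$ vanishes. Since $\Stable_{E_*E}$ is monogenic with unit $E_*$ by \Cref{prop:monogenic}, the hypothesis $\pi_*(E_*E \otimes_{E_*} M) = 0$ upgrades to $E_*E \otimes_{E_*} M \simeq 0$.

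Granted the claim $E_* \in \Thick_{E_*E}(E_*E)$, applying the exact functor $(-) \otimes_{E_*} M$ to the thick presentation of $E_*$ in terms of $E_*E$ yields
\[
M \simeq E_* \otimes_{E_*} M \in \Thick(E_*E \otimes_{E_*} M) = \Thick(0) = \{0\},
\]
hence $M \simeq 0$, exactly as in the proof of \Cref{prop:kngeneric}.

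To prove $E_* \in \Thick_{E_*E}(E_*E)$ when $p > n+1$, I would exploit the standard chromatic resolution
\[
0 \to E_* \to N^0 \to N^1 \to \cdots \to N^n \to 0
\]
with $N^i = v_i^{-1}E_*/(v_0^{\infty}, \ldots, v_{i-1}^{\infty})$, so that each $N^i$ is an $E_*E$-comodule concentrated at chromatic height $i$. Splicing the cofiber sequences extracted from this finite resolution reduces matters to showing $N^i \in \Thick_{E_*E}(E_*E)$ for each $0 \le i \le n$. The analysis of $N^i$ is carried out via change of rings to the height-$i$ Hopf algebroid $(K(i)_*, \Sigma(i))$; the hypothesis $p > n+1$ forces $p - 1 > n \ge i$, hence $p - 1 \nmid i$ for every $1 \le i \le n$ (the case $i=0$ is essentially rational and handled separately). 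By \Cref{prop:knresolution}, $K(i)_*$ admits a finite resolution by direct summands of finite coproducts of $\Sigma(i)$. Lifting this resolution along the change of rings gives a finite resolution of $N^i$ by extended $E_*E$-comodules with compact $E_*$-coefficients, placing $N^i$ in $\Thick_{E_*E}(E_*E)$.

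The main obstacle is making the change-of-rings lifting step precise, since the chromatic layers $N^i$ involve the $v_j^{\infty}$-torsion colimits that must be tracked carefully to ensure the resulting $E_*E$-resolutions remain of finite length. A cleaner alternative would be to invoke the Hovey--Sadofsky vanishing theorem \cite[Thm.~5.1]{hov_sadofsky}, which yields $\Ext^s_{E_*E}(E_*, E_*) = 0$ for $s > n^2 + n$ at $E$-generic primes, and combine this finite cohomological dimension with the monogenicity of $\Stable_{E_*E}$ and an Adams-type truncation argument to exhibit $E_*$ as an iterated extension of retracts of finite coproducts of shifts of $E_*E$ directly.
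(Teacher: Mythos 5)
Your reduction of the theorem to the claim $E_* \in \Thick_{E_*E}(E_*E)$ is correct and is exactly the paper's strategy: once $H_*M=0$ upgrades to $E_*E\otimes_{E_*}M\simeq 0$ by monogenicity, tensoring the thick presentation of $E_*$ with $M$ kills $M$. The problem lies in your primary route to the key claim. The chromatic layers $N^i = v_i^{-1}E_*/(v_0^{\infty},\dots,v_{i-1}^{\infty})$ are filtered colimits (already $E_*/p^{\infty}=\colim_k E_*/p^k$), and thick subcategories are closed under retracts, shifts and cofibers but \emph{not} under infinite colimits. Even granting a clean change-of-rings lift of the finite $\Sigma(i)$-resolution of $K(i)_*$, what you would obtain is $N^i$ as a filtered colimit of objects of $\Thick_{E_*E}(E_*E)$, which only places $N^i$ in the localizing subcategory $\Loc(E_*E)$ --- and that is all of $\Stable_{E_*E}$, so it proves nothing. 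This is not a bookkeeping issue about "finite length" of the resolutions; it is a structural obstruction to the chromatic-resolution approach as stated.

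Your fallback alternative is essentially the argument the paper actually runs, but as sketched it is missing the step that makes it work. Knowing $\Ext^s_{E_*E}(E_*,E_*)=0$ for $s>n^2+n$ does not by itself let you truncate a resolution of $E_*$: to split off the $(N+1)$-st cosyzygy of an injective resolution $E_*\to I^{\bullet}$ one needs the \emph{uniform} vanishing $\Ext^s_{E_*E}(P,E_*)=0$ for $s>N$ over all dualizable comodules $P$, which the paper deduces by an induction on the $E_*/I_k$ together with the Landweber filtration theorem, combined with the Adams condition that the dualizables generate $\Comod_{E_*E}$ (\Cref{prop:adamsha}). With that, the cokernel of the truncated resolution is injective, hence a retract of an extended comodule by \Cref{lem:injcomodules}, and splicing the resulting short exact sequences exhibits $E_*$ as a finite iterated extension of retracts of extended comodules, i.e.\ $E_*\in\Thickid(E_*E)$. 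You should either supply this uniform vanishing argument or abandon the chromatic-resolution route entirely.
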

\begin{proof}
We will first show that $E_* \in \Thickid(E_*E)$, the thick tensor ideal in $\cD_{E_*E}$ generated by $E_*E$. To this end, let $E_* \to I^{\bullet}$ be a resolution of $E_*$ by injective $E_*E$-comodules. The assumption that $n < p-1$ implies that there exists some $N\ge 0$ such that $\Ext_{E_*E}^s(E_*,E_*)=0$ for all $s>N$, see the proof of Thm.~10.9 of \cite{ravenel_localization}. Induction on $k$ then shows that $N$ can be chosen large enough so that $\Ext_{E_*E}^s(E_*/I_k,E_*)=0$ for all $s>N$ and all $0 \le k \le n$ as well. Since every dualizable discrete comodule $P$ is finitely presented and projective by \Cref{prop:comodbasics}, it thus follows from the Landweber filtration theorem \cite[Thm.~D]{hs_leht} and the long exact sequence in Ext that 
\begin{equation}\label{eq:genericextvanishing}
\Ext_{E_*E}^s(P,E_*) = 0
\end{equation} 
for all $s>N$. Now consider the exact sequence
\begin{equation}\label{eq:shortalgresolution}
\xymatrix{0\ar[r] & E_* \ar[r]^-{f^0} & I^0 \ar[r]^-{f^1} &I^1 \ar[r]^-{f^2} & \ldots \ar[r]^-{f^N} & I^N \ar[r]^-{g} & \coker(f^N) \ar[r] & 0.}
\end{equation}
\sloppy Recall from \Cref{prop:adamsha} that the dualizable discrete comodules generate $\Comod_{E_*E}$, so \eqref{eq:genericextvanishing} forces the map $g$ to be split. Therefore, $\coker(f^N)$ is a retract of an injective comodule and hence itself injective. But every injective comodule is a retract of an extended comodule by \Cref{lem:injcomodules}, so the resolution \eqref{eq:shortalgresolution} is spliced together from short exact sequences involving only extended comodules. Because short exact sequences induce fiber sequences in $\cD_{E_*E}$, this yields $E_* \in \Thickid(E_*E)$.

To finish the argument, recall from \cref{prop:stableprops} that $\omega\colon \Stable_{E_*E} \to \cD_{E_*E}$ is the localization with respect to the homology isomorphisms. Since $\Stable_{E_*E}$ is stable, it suffices to show that any $M \in \Stable_{E_*E}$ with $H_*M = 0$ must be trivial. Suppose $M \in \Stable_{E_*E}$ with $H_*M = 0$. Define a full subcategory $\cC(M) \subseteq \Stable_{E_*E}$ consisting of those $X \in \Stable_{BP_*BP}$ with $\pi_*(X \otimes M) = 0$; note that $\cC(M)$ is a thick tensor ideal. Since $E_*E$ represents homology, i.e., there is a natural equivalence $H_*(-) \cong \pi_*(E_*E \otimes -)$, we get $E_*E \in \cC(M)$, hence 
\[
E_* \in \Thickid(E_*E) \subseteq \cC(M).
\]
This means that $\pi_*M =0$, thus $M\simeq 0$, and the claim follows. 
\end{proof}

\begin{rem}
More conceptually, the fact that $E_*$ is contained in the thick tensor ideal generated by $E_*E$ is equivalent to the morphism $E_* \to E_*E$ being descendable in the language of \cite{mathew_galois}. The latter statement, in turn, can be shown to be equivalent to the existence of a horizontal vanishing line in the (collapsing) Adams spectral sequence, see \cite[Sec.~4]{mathew_galois}, i.e., to the finite cohomological dimension of $E_* \in \Comod_{E_*E}$. 
\end{rem}

\section{The nilpotence theorem}\label{sec:nilpotence}
In this section we present an algebraic version of the nilpotence theorem in $\Stable_{BP_*BP}$. Our results are not as strong as the nilpotence theorem in stable homotopy theory given by Devinatz, Hopkins, and Smith \cite{nilpotence1,nilpotence2}, principally due to the fact that the detecting family we use does not consist of field objects in $\Stable_{BP_*BP}$. 
\subsection{Equivalent statements of the algebraic nilpotence theorem}
In \cite{nilpotence2} Hopkins and Smith prove that the Morava $K$-theories $K(n)$ can be used to detect nilpotence: a map $f \colon F \to X$ from a finite spectrum to a $p$-local spectrum $X$ is smash nilpotent, i.e., $f^{(m)} = 0$ for some $m \gg 0$, if and only if $K(n)_*f = 0$ for all $0 \le n \le \infty$.

In this section we prove a $\Stable_{BP_*BP}$ variant of this. Our results are more like the nilpotence theorems given in \cite[Sec.~5]{hps_axiomatic}, although we note that they do not follow automatically from their work, since (5.1.2) of \emph{loc.~cit.} is not satisfied in our case. 

Recall that, for $0 \le n \le \infty$, $I_n$ denotes the ideal $(p,v_1,\ldots,v_{n-1}) \subset BP_*$ (with the convention that $I_0 = (0)$); by \cite{land_ii} these are the only invariant prime ideals in $BP_*$. In analogy with the notation for a type $n$ complex in stable homotopy theory, we let $F(n)_* = BP_*/I_n$.\footnote{This often appears in the literature as $P(n)_*$.} We then define $\Tel(n)_*$ as the localization $v_n^{-1}F(n)_*$ (by convention we set $\Tel(0)_* = \mathbb{Q}$ and  $\Tel(\infty)_* = \F_p$). These play the role of the detecting theories in this context (see also \Cref{thm:moravak} below).

Our version of the nilpotence theorem takes the following form.  
\begin{thm}\label{thm:algnilpotence1}(Algebraic nilpotence Theorem I - weak version)
	\begin{enumerate}
		\item  Suppose $F,X \in \Stable_{BP_*BP}$ with $F$ compact, then a map $f\colon F \to X$ is smash nilpotent, i.e., $f^{(m)}=0$ for some $m \gg 0$, if $\Tel(n)_*\otimes_{BP_*}f = 0$ for all $0 \le n \le \infty$. 
		\item A self map $f \colon \Sigma^i F \to F$ for $F \in \Stable_{BP_*BP}^\omega$ is nilpotent, in the sense that $f^j \colon \Sigma^{ij} F \to F$ is null for some $j \gg 0$, if and only if $\Tel(n)_*\otimes_{BP_*}f$ is nilpotent for all $0 \le n \le \infty$. 
	\end{enumerate}
\end{thm}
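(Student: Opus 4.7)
The forward direction of (2) is immediate: the symmetric monoidal functor $\Tel(n)_*\otimes_{BP_*}(-)$ commutes with composition, so $\Tel(n)_*\otimes_{BP_*} f^j = (\Tel(n)_*\otimes_{BP_*}f)^j$. The content lies in the converse direction of (2) and the one direction of (1), both of which I expect to reduce to the following key vanishing lemma: any $Y\in\Stable_{BP_*BP}$ of finite projective dimension with $\Tel(n)_*\otimes_{BP_*}Y\simeq 0$ for every $0\le n\le\infty$ is already trivial. My plan for this lemma is to invoke the algebraic chromatic convergence theorem to write $Y\simeq\lim_n L_n Y$, so that it suffices to check $L_n Y\simeq 0$ for each $n$. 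Since $L_n$ corresponds to Bousfield localization at $E(n)_*$, the standard equality $\bc{E(n)_*}=\bigvee_{k\le n}\bc{K(k)_*}$ combined with the identity $\bc{K(k)_*}=\bc{\Tel(k)_*}$ (the Bousfield class theorem stated in the introduction) reduces vanishing against $E(n)_*$ to vanishing against each $\Tel(k)_*$, which is exactly the hypothesis.

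For the converse in (2), I would form the mapping telescope $T=\colim(F\xrightarrow{f}\Sigma^{-i}F\xrightarrow{\Sigma^{-i}f}\Sigma^{-2i}F\to\cdots)$. By compactness of $F$, $T\simeq 0$ if and only if some $f^j$ is null. Nilpotence of $\Tel(n)_*\otimes_{BP_*}f$ for every $n$ yields $\Tel(n)_*\otimes_{BP_*} T\simeq 0$, since applying $\Tel(n)_*\otimes_{BP_*}(-)$ to the telescope produces the telescope of a nilpotent self-map. Moreover $T$ has finite projective dimension because it is a filtered colimit of suspensions of the compact object $F$, whose pdim is finite by coherence of $BP_*$ and the $t$-structure established for Landweber Hopf algebroids. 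The key lemma then forces $T\simeq 0$, concluding (2).

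For (1), smash powers do not directly form a telescope, and I anticipate a more subtle argument. The first step is a thick subcategory reduction: for fixed $X$, the full subcategory of compact $F$ for which the conclusion of (1) holds is closed under retracts and cofibers, so by the monogenicity of $\Stable_{BP_*BP}$ it suffices to treat $F=BP_*$. In that case, $f$ defines an element $\alpha\in\pi_0 R$ in the tensor ring $R=\bigoplus_{m\ge 0}X^{\otimes m}$ with $\alpha^m$ realizing the smash power $f^{(m)}$, so smash nilpotence of $f$ is equivalent to nilpotence of $\alpha$, which in turn is equivalent to $R[\alpha^{-1}]\simeq 0$. Since $\Tel(n)_*\otimes_{BP_*}f=0$ implies $\Tel(n)_*\otimes_{BP_*}\alpha=0$, which forces $\Tel(n)_*\otimes_{BP_*}R[\alpha^{-1}]\simeq 0$ for every $n$, a suitable variant of the key lemma concludes.

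The main obstacle I expect is controlling the finite projective dimension hypothesis of the key lemma. For the telescope in (2) the bound is automatic because pdim is preserved under filtered colimits of compacts, but for the localized ring $R[\alpha^{-1}]$ in (1) the tensor algebra $R$ generally has unbounded pdim over $BP_*$ in the absence of further hypotheses on $X$. Overcoming this will likely require either enlarging the class of objects for which the vanishing lemma holds (for instance to the objects built by the chromatic tower, on which convergence is controlled), or bypassing the direct tensor-ring construction in favor of a nilpotence theorem for elements in ring objects, as in part (4) of the introduction, combined with a careful induction along the chromatic localizations $L_n$ from Section 7.
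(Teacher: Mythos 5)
The gap you yourself flag at the end is real and is not repairable along the lines you suggest. Your entire strategy funnels through a vanishing lemma whose hypothesis is finite projective $BP_*$-dimension (needed to invoke chromatic convergence, \Cref{thm:chromaticconvergence2}), and for part (1) the relevant object --- the smash telescope $T_f = \colim(BP_* \to X \to X^{\otimes 2} \to \cdots)$, equivalently your $R[\alpha^{-1}]$ --- has no projective dimension bound for arbitrary $X \in \Stable_{BP_*BP}$. Neither proposed escape works: appealing to the nilpotence statement for elements of ring objects (part (4) of the introduction, i.e.\ \Cref{thm:algnilpotence2}(2)) is circular, since that statement is deduced \emph{from} \Cref{thm:algnilpotence1}(1); and ``enlarging the class of objects'' is precisely the missing content. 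The idea you are missing is that the hypothesis at $n=\infty$ substitutes for any finiteness assumption on $X$: one uses the elementary Bousfield decomposition $\bc{BP_*} = \bc{F(m+1)_*} \oplus \bigoplus_{i=0}^{m}\bc{\Tel(i)_*}$ (from the cofiber sequences $F(i)_* \xr{v_i} F(i)_* \to F(i+1)_*$) to reduce, after the hypotheses kill $\Tel(i)_* \otimes T_f$ for finite $i$, to showing $F(m)_* \otimes T_f \simeq 0$ for $m \gg 0$. Since $\colim_m F(m)_* \simeq \F_p = \Tel(\infty)_*$ and $BP_*$ is compact, the null map $BP_* \to \F_p \otimes T_f$ factors through some $F(m)_* \otimes T_f$, and the ring-telescope lemma finishes. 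No chromatic convergence, no telescope conjecture, and no projective dimension hypothesis enter.

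Two smaller points. First, your thick-subcategory reduction of (1) to $F = BP_*$ is not justified as stated: the conclusion quantifies over all maps $f \colon F \to X$, and closure under cofibers in the $F$-variable is not clear for smash nilpotence. The correct reduction is by duality, replacing $f$ with its adjoint $f^{\#}\colon BP_* \to DF \otimes X$ (this also reduces (2) to (1), though your direct self-map telescope argument for (2) is fine). Second, even for part (2), where the finite projective dimension of the telescope does hold, your vanishing lemma leans on $\bc{E(n)_*} = \bigvee_{k \le n}\bc{K(k)_*}$ and $\bc{K(k)_*} = \bc{\Tel(k)_*}$; the latter is only proved in the paper for a modified notion of Bousfield class (since $K(k)_*$ is not an object of $\Stable_{BP_*BP}$), and the former is not established in this category, so these steps would need to be supplied --- at which point you have essentially reconstructed the decomposition of $\bc{BP_*}$ above, which is both more elementary and available before the chromatic machinery of the later sections.
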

\begin{thm}\label{thm:algnilpotence2}(Algebraic nilpotence Theorem II - weak version)
	\begin{enumerate}
		\item Suppose $X \in \Stable_{BP_*BP}$, then a map $f\colon BP_* \to X$ is smash nilpotent, i.e., $f^{(m)}=0$ for some $m \gg 0$,  if $\pi_*(\Tel(n)_*\otimes_{BP_*}f) = 0$ for all $0 \le n \le \infty$. 
		\item Let $R$ be a ring object in $\Stable_{BP_*BP}$. Then an element $\alpha \in \pi_*R \cong \Ext_{BP_*BP}(BP_*,R)$ is nilpotent if and only if $\pi_*(\Tel(n)_* \otimes_{BP_*}\alpha)$ is nilpotent for all $0 \le n \le \infty$. 
\end{enumerate}
\end{thm}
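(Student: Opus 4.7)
Both parts of the theorem will be deduced from the compact version, \Cref{thm:algnilpotence1}.

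For part~(1), my plan is to upgrade the hypothesis $\pi_*(\Tel(n)_* \otimes_{BP_*} f) = 0$ to the stronger statement that $\Tel(n)_* \otimes_{BP_*} f\colon \Tel(n)_* \to \Tel(n)_* \otimes_{BP_*} X$ is null in $\Stable_{BP_*BP}$. The key observation is that $\Tel(n)_* \otimes_{BP_*} f$ is a morphism of $\Tel(n)_*$-modules with source the free module of rank one, hence determined by the image of $1 \in \pi_0\Tel(n)_*$ in $\pi_0(\Tel(n)_* \otimes_{BP_*} X)$, and this image is precisely what the hypothesis forces to vanish; since being null as a $\Tel(n)_*$-module map is preserved by the forgetful functor, $\Tel(n)_* \otimes_{BP_*} f$ is itself null. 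As $BP_*$ is compact in $\Stable_{BP_*BP}$, applying \Cref{thm:algnilpotence1}(1) with $F = BP_*$ then yields the smash nilpotence of $f$.

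For part~(2), the forward direction is immediate from the functoriality of $\Tel(n)_* \otimes_{BP_*} -$. For the reverse direction, I would introduce the telescope
\[
Y := R[\alpha^{-1}] = \colim\bigl(R \xr{\tilde\alpha} \Sigma^{-|\alpha|}R \xr{\tilde\alpha} \cdots\bigr),
\]
where $\tilde\alpha$ denotes multiplication by $\alpha$. Since $\Tel(n)_* \otimes_{BP_*} -$ commutes with sequential colimits, one has $\Tel(n)_* \otimes_{BP_*} Y \simeq (\Tel(n)_* \otimes_{BP_*} R)[\Tel(n)_*\alpha^{-1}]$, and the nilpotence of $\Tel(n)_*\alpha$ forces the transition maps of this telescope to be eventually null, so $\Tel(n)_* \otimes_{BP_*} Y \simeq 0$ for every $n$.

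Because $Y$ inherits a ring structure from $R$, I can now apply part~(1) to its unit $\eta\colon BP_* \to Y$: the hypothesis $\pi_*(\Tel(n)_* \otimes_{BP_*} \eta) = 0$ is satisfied vacuously, so $\eta$ is smash nilpotent, i.e., $\eta^{(m)}\colon BP_* \to Y^{\otimes m}$ vanishes for some $m$. Post-composing with the iterated multiplication $\mu^{m-1}\colon Y^{\otimes m} \to Y$ and using the unit axioms identifies this composite with $\eta$, so $\eta = 0$ in $\pi_0 Y$, which in turn forces $\mathrm{id}_Y = \mu \circ (\eta \otimes \mathrm{id}_Y) = 0$ and hence $Y \simeq 0$. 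Finally, the compactness of $BP_*$ gives $0 \cong [BP_*, Y] \cong \colim_k \pi_{|\alpha|k}R$ (with transition maps multiplication by $\alpha$), so $1 \in \pi_0R$ is killed at some finite stage, proving that $\alpha^k = 0$ in $\pi_*R$. The main obstacle in this plan is ensuring that $Y = R[\alpha^{-1}]$ carries a bona fide ring structure in $\Stable_{BP_*BP}$; this is automatic when $R$ is (homotopy-)commutative, but in general requires some care with the centrality of $\alpha$, after which the argument above runs essentially formally.
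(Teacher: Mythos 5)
Your part (1) is essentially the paper's argument: the paper likewise upgrades $\pi_*(\Tel(n)_*\otimes f)=0$ to the nullity of the map $\Tel(n)_*\otimes f$ by exploiting the ring structure on $\Tel(n)_*$ — it factors $1\otimes f$ through $(\mu\otimes 1)\circ(1\otimes((1\otimes f)\circ\eta))$, which is the explicit form of your ``free rank-one module'' observation — and then invokes \Cref{thm:algnilpotence1}(1) with $F=BP_*$. For part (2) you take a genuinely different route. The paper argues directly: choose $t$ with $\pi_*(\Tel(n)_*\otimes\alpha^t)=0$ for all $n$, apply part (1) to $\alpha^t$ to get $(\alpha^t)^{\otimes m}=0$, and compose with the iterated multiplication $R^{\otimes m}\to R$ to conclude $\alpha^{tm}=0$. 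You instead run the classical Devinatz--Hopkins--Smith argument through the localization telescope $R[\alpha^{-1}]$. Both routes are valid, with complementary costs. Your telescope argument treats each $n$ separately and is therefore insensitive to the fact that the nilpotence exponent of $\pi_*(\Tel(n)_*\otimes\alpha)$ may depend on $n$, whereas the paper's choice of a single $t$ tacitly assumes these exponents are bounded over the infinite family $0\le n\le\infty$. On the other hand, the paper's reduction uses nothing beyond the given homotopy ring structure on $R$, while yours requires $R[\alpha^{-1}]$ to be a ring object, which — as you correctly flag — needs $\alpha$ to be central in an appropriate homotopical sense; since the theorem is stated for an arbitrary ring object and an arbitrary $\alpha\in\pi_*R$, this is a genuine extra hypothesis that you would have to either impose or circumvent, the simplest workaround being precisely the paper's factorization of $\alpha^{tm}$ through $(\alpha^t)^{\otimes m}$, which needs no ring structure on the telescope. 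The remaining steps — deducing $\Tel(n)_*\otimes R[\alpha^{-1}]\simeq 0$ from the nilpotence of $\Tel(n)_*\otimes\alpha$, killing $R[\alpha^{-1}]$ via smash nilpotence of its unit, and extracting $\alpha^k=0$ from $\pi_*R[\alpha^{-1}]\cong\colim_k\pi_{*+|\alpha|k}R=0$ using compactness of $BP_*$ — are all correct.
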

	We will prove these below in \Cref{sec:nilpotencepf}. Our proof follows closely the ideas of the original proof of Hopkins and Smith, and we start by building a Bousfield decomposition similar to that seen in the ordinary stable homotopy category.
	
\begin{rem}
We refer to these theorems as weak versions of the nilpotence theorem because they do not account for all periodic elements in $\pi_*BP_*$, but only those of Adams--Novikov filtration 0, which in turn correspond to the classical periodic elements $v_n$ in ordinary stable homotopy theory. This manifests itself in the fact that the telescopes $\Tel(n)_*$ are not field objects, and we thus cannot deduce a description of the Balmer spectrum of $\Stable_{BP_*BP}$. 

However, in forthcoming work with A.~Krause, we will study the global structure of $\Stable_{BP_*BP}$ in more detail. In particular, we will establish a much more refined description of the thick subcategories of compact objects by constructing a more sophisticated detecting family. 
\end{rem} 

\subsection{The proof of the algebraic nilpotence theorem}\label{sec:nilpotencepf}
We start by recalling the basic definition of a Bousfield class, specialized to the category $\Stable_{BP_*BP}$.
\begin{defn}
	Let $M,N \in \Stable_{BP_*BP}$. We say that $M$ and $N$ are Bousfield equivalent if, given any $X \in \Stable_{BP_*BP}$, we have $M \otimes_{BP_*} X \simeq 0$ if and only if $N \otimes_{BP_*} X \simeq 0$. We write $\bc{M}$ for the Bousfield class of $M$.  
\end{defn}
\begin{lem}\label{lem:pinftyhfp}
There is an equivalence $\colim_m F(m)_* \simeq \F_p$ in $\Stable_{BP_*BP}$
\end{lem}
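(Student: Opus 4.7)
The plan is to construct a canonical comparison map $\phi \colon \colim_m F(m)_\ast \to \F_p$ in $\Stable_{BP_\ast BP}$ and verify it is an equivalence by checking $\pi_\ast$. Since $\Stable_{BP_\ast BP}$ is monogenic with compact generator $BP_\ast$ by \Cref{prop:monogenic}, a morphism is an equivalence precisely when it induces an isomorphism on $\pi_\ast = \pi_\ast^{\mathrm{st}}\Hom_{BP_\ast BP}(BP_\ast,-)$, so the task reduces to a computation on homotopy.

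The compatible family of surjections $BP_\ast/I_m \twoheadrightarrow BP_\ast/I_\infty = \F_p$ in $\Comod_{BP_\ast BP}$ assembles into the desired $\phi$. Since $BP_\ast$ is compact in $\Stable_{BP_\ast BP}$, $\pi_\ast$ commutes with the filtered colimit, and \Cref{rem:cotor} identifies $\pi_\ast N \cong \Ext^\ast_{BP_\ast BP}(BP_\ast,N)$ for every discrete comodule $N$. Under these identifications $\pi_\ast(\phi)$ becomes the natural comparison map
\[
\colim_m \Ext^\ast_{BP_\ast BP}(BP_\ast, BP_\ast/I_m) \longrightarrow \Ext^\ast_{BP_\ast BP}(BP_\ast, \F_p),
\]
so it suffices to show that $\Ext^\ast_{BP_\ast BP}(BP_\ast,-)$ carries the filtered colimit $\colim_m BP_\ast/I_m = \F_p$ (computed in $\Comod_{BP_\ast BP}$) to the filtered colimit of the $\Ext$ groups.

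For this final step I would compute $\Ext^\ast_{BP_\ast BP}(BP_\ast,-)$ via the reduced cobar resolution, whose terms are of the form $BP_\ast BP^{\otimes k} \otimes_{BP_\ast}(-)$ and are therefore functors preserving filtered colimits. Exactness of filtered colimits in $\mathrm{Ab}$ then commutes cohomology past the colimit, yielding the required isomorphism. The one place requiring care is precisely this interchange of cohomology and filtered colimit, but it is a standard consequence of the cobar model for $\Ext$ and needs no flatness hypothesis; the rest of the argument is a direct application of compact generation and the identifications set up earlier in the paper.
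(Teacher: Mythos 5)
Your argument is correct, but it takes a genuinely different route from the paper's. The paper's proof is a two-line t-structure argument: the canonical map $\colim_m F(m)_* \to \F_p$ lies in $\Stable_{BP_*BP}^{\le 0}$, where by \Cref{prop:landweberha} the localization $\omega$ restricts to an equivalence onto $\cD_{BP_*BP}^{\le 0}$, so an equivalence is detected by homology — and the map is already an isomorphism in $\Comod_{BP_*BP}$. You instead invoke monogenicity (\Cref{prop:monogenic}) and compactness of $BP_*$ to reduce to a $\pi_*$-computation, then identify $\pi_*$ of a discrete comodule with $\Ext_{BP_*BP}(BP_*,-)$ via \Cref{rem:cotor} and push the filtered colimit through the cobar complex. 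All of these steps are available in the paper's framework and the cobar/colimit interchange is indeed standard (the cobar terms $BP_*BP^{\otimes k}\otimes_{BP_*}(-)$ commute with colimits, and cohomology commutes with filtered colimits of cochain complexes), so the argument closes. What each approach buys: the paper's version is shorter and makes no use of the cobar model, at the price of invoking the comparison of t-structures on $\Stable$ and $\cD$; yours avoids the t-structure entirely but requires the explicit $\Cotor$ bookkeeping. Two small points of care in your version: for the conservativity claim you must take $\pi_*$ to mean the bigraded homotopy groups (mapping out of all internal shifts of $BP_*$), as the paper implicitly does with $\Ext^{s,t}$; and you should note that the colimit you compute $\pi_*$ of is the one formed in $\Stable_{BP_*BP}$ — which your use of compactness handles correctly, since you never need to know a priori that this colimit is discrete.
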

\begin{proof}
There is a natural map $\colim_m F(m)_* \to \F_p$. Since this map is in $\Stable_{BP_*BP}^{\le 0}$, it suffices to check that it is a quasi-isomorphism, which is clear: Indeed, this map is even an isomorphism in $\Comod_{BP_*BP}$. 
\end{proof}
Recall that we denote $\Tel(n)_* = v_n^{-1}F(n)_*$. 
\begin{lem}\label{cor:bcbp}
For any $m \ge 0$, we have an identity of Bousfield classes
\[
\bc{BP_*} = \bc{F(m+1)_*} \oplus \bigoplus_{i=0}^m \bc{\Tel(i)_*}.
\]
\end{lem}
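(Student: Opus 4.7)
The plan is to prove the key identity
\[
\bc{F(n)_*} = \bc{\Tel(n)_*} \vee \bc{F(n+1)_*}
\qquad \text{for all } n \ge 0,
\]
and then iterate starting from $F(0)_* = BP_*$, checking along the way that the Bousfield summands are pairwise orthogonal so that the join $\vee$ deserves to be written as a direct sum $\oplus$.

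The first step is to exploit the cofiber sequence
\[
F(n)_* \longrightarrow \Tel(n)_* \longrightarrow F(n)_*/v_n^\infty
\]
obtained by taking the cofiber of the canonical map $F(n)_* \to v_n^{-1}F(n)_* = \Tel(n)_*$. Tensoring with an arbitrary $X \in \Stable_{BP_*BP}$ gives a fiber sequence of the three tensor products. The first decisive observation is that $\Tel(n)_* \otimes_{BP_*} F(n)_*/v_n^\infty \simeq 0$: on $\Tel(n)_*$ the element $v_n$ is invertible, while on $F(n)_*/v_n^\infty$ it is locally nilpotent (since each finite stage $F(n)_*/v_n^k$ is $v_n$-nilpotent and the tensor product commutes with filtered colimits). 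Consequently $X \otimes F(n)_* \simeq 0$ forces both $X \otimes \Tel(n)_*$ and $X \otimes F(n)_*/v_n^\infty$ to vanish, while the converse is immediate from the cofiber sequence. This establishes
\[
\bc{F(n)_*} = \bc{\Tel(n)_*} \vee \bc{F(n)_*/v_n^\infty}.
\]

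The second step is to identify $\bc{F(n)_*/v_n^\infty}$ with $\bc{F(n+1)_*}$. For the inequality $\leq$, write $F(n)_*/v_n^\infty = \colim_k F(n)_*/v_n^k$ and note that each finite quotient $F(n)_*/v_n^k$ admits a finite filtration whose associated subquotients are suspensions of $F(n+1)_* = F(n)_*/v_n$, so $\bc{F(n)_*/v_n^k} \le \bc{F(n+1)_*}$; the Bousfield class of a filtered colimit is dominated by the join of the Bousfield classes of its terms, yielding the desired bound. For the reverse inequality, suppose $X \otimes F(n)_*/v_n^\infty \simeq 0$; then the cofiber sequence above forces $X \otimes F(n)_* \xrightarrow{\sim} X \otimes \Tel(n)_*$, so multiplication by $v_n$ is an equivalence on $X \otimes F(n)_*$, and hence its cofiber $X \otimes F(n+1)_* \simeq 0$. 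Combining the two inequalities gives the claim.

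The third step is to iterate: starting from $F(0)_* = BP_*$, successive application of the identity produces
\[
\bc{BP_*} = \bc{\Tel(0)_*} \vee \bc{F(1)_*} = \cdots = \bc{F(m+1)_*} \vee \bigvee_{i=0}^m \bc{\Tel(i)_*}.
\]
Finally, to upgrade $\vee$ to $\oplus$ I will verify orthogonality of the summands. For $i < j$, the tensor product $\Tel(i)_* \otimes \Tel(j)_*$ has $v_i$ simultaneously invertible (from the left factor) and zero (because $i < j$ implies $v_i \in I_j$, and $\Tel(j)_*$ is $v_i$-trivial), hence vanishes. Similarly $\Tel(i)_* \otimes F(m+1)_* \simeq 0$ for $i \le m$ since $v_i$ is both invertible and zero. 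I expect the main technical obstacle to be the clean identification $\bc{F(n)_*/v_n^\infty} = \bc{F(n+1)_*}$, whose $\ge$ direction is the crux: the argument above uses the cofiber sequence in an essential way and relies on the fact that $\Tel(n)_*$ is the mapping telescope of $v_n$, so one must verify this carefully inside the non-Noetherian stable $\infty$-category $\Stable_{BP_*BP}$.
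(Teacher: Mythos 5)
Your proof is correct, and its overall shape — establish $\bc{F(n)_*} = \bc{\Tel(n)_*} \oplus \bc{F(n+1)_*}$ and iterate from $F(0)_* = BP_*$ — is exactly the paper's. The difference is entirely in how that one-step identity is obtained: the paper simply invokes the general formula $\bc{M} = \bc{M/v} \oplus \bc{v^{-1}M}$ for a self-map $v\colon \Sigma^d M \to M$, citing \cite[Prop.~3.6.9(d)]{hps_axiomatic} or \cite[Lem.~1.34]{ravenel_localization}, whereas you reprove it from scratch by routing through the auxiliary object $F(n)_*/v_n^\infty$, i.e., the cofiber of $F(n)_* \to \Tel(n)_*$, and then showing $\bc{F(n)_*/v_n^\infty} = \bc{F(n+1)_*}$. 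That detour is valid (your $\ge$ direction — invertibility of $v_n$ on $X \otimes F(n)_*$ forces $X \otimes F(n+1)_* \simeq 0$ — is exactly the crux, and it works), but note that the standard proof of the cited formula is even shorter and avoids $F(n)_*/v_n^\infty$ entirely: if $X \otimes F(n+1)_* \simeq 0$ then $v_n$ acts invertibly on $X \otimes F(n)_*$, so $X \otimes F(n)_* \simeq v_n^{-1}(X \otimes F(n)_*) \simeq X \otimes \Tel(n)_*$, and the latter vanishes by the other hypothesis. Your closing orthogonality check (pairwise smashes of the summands vanish because $v_i$ acts both invertibly and as zero) is a nice bonus but not required if, as in \cite{hps_axiomatic}, $\oplus$ simply denotes the join $\bc{X \oplus Y}$ in the Bousfield lattice; it does justify the stronger reading of $\oplus$ as an orthogonal decomposition. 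One small point of hygiene: multiplication by $v_n$ is only a comodule self-map of $F(n)_*$ because $v_n$ is invariant modulo $I_n$ — worth saying explicitly, though the paper's notation $\Tel(n)_* = v_n^{-1}F(n)_*$ already presupposes it.
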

\begin{proof}
By virtue of the general formula $\bc{M} = \bc{M/v} \oplus \bc{v^{-1}M}$ for any self-map $v\colon \Sigma^dM \to M$, see \cite[Prop.~3.6.9(d)]{hps_axiomatic} or \cite[Lem.~1.34]{ravenel_localization}, we see that $\bc{F(m)_*} = \bc{F(m+1)_*} \oplus \bc{\Tel(m)_*}$.  Since $F(0)_* = BP_*$, the result then follows inductively. 
\end{proof}
\begin{rem}
	This result also appears in the proof of Lemma 4.10 of \cite{hovey_chromatic}. 
\end{rem}

For the remainder of this subsection, we will omit all suspensions from the notation. Let $f \colon BP_* \to X$ be a map in $\Stable_{BP_*BP}$. We write 
\[
T_f = \colim(BP_* \xrightarrow{f} X\simeq BP_*\otimes X \xrightarrow{f \otimes 1} X \otimes X \xrightarrow{f \otimes 1 \otimes 1} \ldots)
\]
for the corresponding telescope and $f^{(\infty)}\colon BP_* \to T_f$ for the canonical map. 
\begin{lem}\label{lem:nilptelescope}
Let $R \in \Stable_{BP_*BP}$ be a ring object with unit $\iota\colon BP_* \to R$ and $f\colon BP_* \to X$ some map in $\Stable_{BP_*BP}$. The following statements are equivalent:
	\begin{enumerate}
		\item $R \otimes T_f \simeq 0$.
		\item $\iota \otimes f^{(\infty)}\colon BP_* \to R \otimes T_f$ is zero.
		\item $\iota \otimes f^{(n)}\colon BP_* \to R \otimes X^{(n)}$ is zero for $n \gg 0$. 
		\item $1_{R} \otimes f^{(n)}\colon R \to R \otimes X^{(n)}$ is zero for $n\gg 0$. 
	\end{enumerate}
\end{lem}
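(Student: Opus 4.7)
The plan is to prove the cyclic chain of implications $(1) \Rightarrow (2) \Rightarrow (3) \Rightarrow (4) \Rightarrow (1)$, which together force all four statements to be equivalent.

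The implication $(1) \Rightarrow (2)$ is immediate. For $(2) \Rightarrow (3)$, I will use that $BP_*$ is compact in $\Stable_{BP_*BP}$ by \Cref{prop:monogenic}. Writing $R \otimes T_f = \colim_n R \otimes X^{(n)}$ as a sequential colimit, compactness yields
\[
\pi_0 \Hom(BP_*, R \otimes T_f) \cong \colim_n \pi_0 \Hom(BP_*, R \otimes X^{(n)}),
\]
under which the class of $\iota \otimes f^{(\infty)}$ is represented at stage $n$ by $\iota \otimes f^{(n)}$. If the colimit class vanishes, it must vanish at some finite stage, yielding $(3)$.

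The equivalence $(3) \Leftrightarrow (4)$ unpacks the unit axiom for the ring object $R$. Precomposing $1_R \otimes f^{(n)}$ with $\iota\colon BP_* \to R$ recovers $\iota \otimes f^{(n)}$, giving $(4) \Rightarrow (3)$. Conversely, $\mu_R \circ (1_R \otimes \iota) \simeq \mathrm{id}_R$ produces the factorization
\[
1_R \otimes f^{(n)} \colon R \simeq R \otimes BP_* \xrightarrow{1_R \otimes (\iota \otimes f^{(n)})} R \otimes R \otimes X^{(n)} \xrightarrow{\mu_R \otimes 1_{X^{(n)}}} R \otimes X^{(n)},
\]
expressing $1_R \otimes f^{(n)}$ as a functorial construction applied to $\iota \otimes f^{(n)}$; hence $(3)$ implies $(4)$.

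The main step is $(4) \Rightarrow (1)$. Given a chosen nullhomotopy of $1_R \otimes f^{(n)}$, tensoring with $1_{X^{(m)}}$ produces a canonical nullhomotopy of the $n$-fold composite transition map $R \otimes X^{(m)} \to R \otimes X^{(m+n)}$ in the sequential diagram defining $R \otimes T_f$. Restricting to the cofinal subsequence $\{R \otimes X^{(nk)}\}_{k \ge 0}$ yields a sequential diagram whose transition maps are coherently null, with all nullhomotopies arising from the single chosen one by tensoring with identities. In any stable $\infty$-category the colimit of such a system vanishes: computing it as $\cofib(\mathrm{shift} - \mathrm{id})$ on $\bigoplus_k R \otimes X^{(nk)}$, the coherent nullhomotopies collapse $\mathrm{shift}$ to zero, so the map becomes homotopic to $-\mathrm{id}$, which is an equivalence and hence has cofiber $0$. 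I expect the main technical obstacle to be precisely the coherence of the nullhomotopies when transported through the symmetric monoidal bifunctor; this should be formal, but it is the single point where the argument must operate at the $\infty$-categorical level rather than merely in the homotopy category.
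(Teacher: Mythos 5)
Your proposal is correct and follows essentially the same route as the paper, which simply defers to the standard argument of Hopkins--Smith (\cite[Lem.~2.4]{nilpotence2}): the cycle $(1)\Rightarrow(2)\Rightarrow(3)\Rightarrow(4)\Rightarrow(1)$ using compactness of $BP_*$ for $(2)\Rightarrow(3)$, the unit/multiplication of $R$ for $(3)\Leftrightarrow(4)$, and the vanishing of a sequential colimit along null transition maps for $(4)\Rightarrow(1)$. One minor remark: the coherence issue you flag in the last step is not actually present, since the colimit is $\cofib(\mathrm{id}-\mathrm{shift})$ and a map out of a coproduct is nullhomotopic as soon as each component is, which is a statement about $\pi_0$ of mapping spaces and requires no higher coherence.
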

\begin{proof}
This is proven as in \cite[Lem.~2.4]{nilpotence2}.
\end{proof}
\begin{rem}
	The proof of this result uses the compactness of $BP_*$, and hence it is cruical that we work in $\Stable_{BP_*BP}$, and not just $\cD_{BP_*BP}$. 
\end{rem}

We now prove the first algebraic nilpotence theorem. 
\begin{proof}[Proof of \Cref{thm:algnilpotence1}]
The `only if' direction of Part (2) is clear, and we note that the other direction follows from Part (1). Indeed, this is the same argument as in \cite{nilpotence2}, namely we replace $f \colon \Sigma^i F \to F$ with its adjoint $f^\# \colon \Sigma^i BP_* \to DF \otimes F$. To prove part (1) we can similarly replace $f \colon F \to X$ with its adjoint $f^\# \colon BP_* \to DF \otimes X$, and so reduce to the case\footnote{In particular, the claim follows from \Cref{thm:algnilpotence2}(2). However, the proof of the latter relies on \Cref{thm:algnilpotence1}, so we cannot apply it here.} where $F = BP_*$.

Let $T_f$ be the telescope associated to $f$. We first start by assuming that $1_{\Tel(n)_*} \otimes f = 0$ for all $0 \le n \le \infty$, so that $\Tel(n)_* \otimes T_f \simeq 0$ for all $n$. By \Cref{lem:nilptelescope} we have to show  $BP_* \otimes T_f \simeq T_f \simeq 0$. By the Bousfield decomposition of \Cref{cor:bcbp}, it then suffices to prove that $F(n)_* \otimes T_f \simeq 0$ for $n \gg 0$. Using \Cref{lem:nilptelescope} again, this will follow from $BP_* \to F(n)_* \otimes T_f$ being null for sufficiently large $n$. To this end, compactness of $BP_*$ gives a factorization
\[
\xymatrix{BP_* \ar[r] \ar@{-->}[d] & \F_p \otimes T_f\ar[d]^{\sim} \\
F(n) \otimes T_f \ar[r] & \colim_n F(n) \otimes T_f,}
\]
where the right vertical equivalence was established in \Cref{lem:pinftyhfp}. By assumption, the top horizontal map is zero, so the claim follows. 
\end{proof}
The proof of the second nilpotence theorem follows closely the one given in \cite[Thm.~5.1.3]{hps_axiomatic}. 
\begin{proof}[Proof of \Cref{thm:algnilpotence2}]
 Once again it suffices to prove part (1). To see this, consider the commutative diagram
	\[
	\xymatrix{
\Sigma^{ktm}BP_* \ar[r]^-{(\alpha^t)^{\otimes m}} \ar[dr]_{\alpha^{tm}} & R^{\otimes m} \ar[d]^{\mu} \\
& R.
}
	\]
If (1) holds, then $\alpha^{tm}$ is null for $m \gg 0$, so that $\alpha$ is nilpotent. The other direction of (2) is clear. 

	The proof of (1) is identical to \cite[Thm.~5.1.3]{hps_axiomatic}, which we repeat for the convenience of the reader. Namely, $\Tel(n)_*$ is a ring object in $\Stable_{BP_*BP}$, so there exist maps $\eta \colon BP_* \to \Tel(n)_*$ and $\mu \colon \Tel(n)_* \otimes_{BP_*} \Tel(n)_* \to \Tel(n)_*$ satisfying the usual relations. Suppose $f \colon BP_* \to X$ is such that $\pi_*(\Tel(n)_* \otimes_{BP_*}f)$ is zero, so that the composite $
BP_* \xr{\eta} \Tel(n)_* \xr{1 \otimes f} \Tel(n)_* \otimes_{BP_*} X$
	is null. But 
	$1 \otimes f$ factors as $\Tel(n)_* \xr{(1 \otimes f) \circ \eta} \Tel(n)_* \otimes_{BP_*} \Tel(n)  \otimes_{BP_*} X \xr{\mu \otimes 1} \Tel(n)_* \otimes_{BP_*} X$ so that $1 \otimes f$ is null. We now apply \Cref{thm:algnilpotence1}(1). 
\end{proof}

\subsection{Base-change and the algebraic telescope conjecture}
The goal of this subsection is to generalize the main structural results of Section 8 in \cite{bhv1} to the Hopf algebroid $(F(m)_*,F(m)_*F(m))$. In particular, we deduce an algebraic version of the telescope conjecture for $\Stable_{BP_*BP}$, which is analogous to Ravenel's theorem that $L_n^fBP \simeq L_nBP$ for all $n\ge 0$. 

There are Landweber exact $F(m)_*$-algebras $E(m,n)_* = \Tel(m)_*/(v_{n+1},v_{n+2},\ldots)$ for all $m \le n$, giving rise to Hopf algebroids $(E(m,n)_*,E(m,n)_*E(m,n))$. These theories come with a natural base-change functor
\[
\xymatrix{\Phi(m,n)_*\colon \Stable_{F(m)_*F(m)} \ar[r] & \Stable_{E(m,n)_*E(m,n)},}
\]
defined by $\Phi(m,n)_*(M) = E(m,n)_* \otimes_{F(m)_*}M$ for any $M \in \Stable_{F(m)_*F(m)}$. This functor clearly preserves arbitrary colimits and thus admits a right adjoint $\Phi(m,n)^*$. 

For a fixed integer $n \ge m$, we will write $(E_*,E_*E)$ for $(E(m,n)_*,E(m,n)_*E(m,n))$, and similarly $(\Phi_*,\Phi^*)$ for the base-change adjunction just constructed.

\begin{prop}\label{prop:pmplethora}
The functor $\Phi^*\colon \Stable_{E_*E} \to \Stable_{F(m)_*F(m)}$ is bimonadic, in the sense that it satisfies the following properties:
\begin{enumerate}
	\item $\Phi^*$ has a left adjoint $\Phi_*$.
	\item $\Phi^*$ has a right adjoint $\Phi_!$.
	\item The counit map $\Phi_*\Phi^*\to \Id$ is an equivalence, so $\Phi^*$ is conservative. 
\end{enumerate}
In particular, the pairs $(\Phi_* \dashv \Phi^*)$ and $(\Phi^*\dashv \Phi_!)$ are monadic and comonadic, respectively.
\end{prop}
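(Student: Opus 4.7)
The plan is to address the three claims in order, with the main content concentrated in assertion (3). Claims (1) and (2) are formal consequences of the general adjunction structure for maps of flat Hopf algebroids established in the lemma preceding \Cref{lem:pushforwardformula}. Specifically, the base change functor $\Phi_*\colon M \mapsto E_* \otimes_{F(m)_*} M$ preserves dualizable comodules by Landweber exactness of $E(m,n)_*$ over $F(m)_*$, so its Ind-extension is a colimit-preserving functor between presentable stable $\infty$-categories; the right adjoint $\Phi^*$ and the further right adjoint $\Phi_!$ both exist by the adjoint functor theorem, as in the general setup.

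The main content is (3), that the counit $c\colon \Phi_*\Phi^* \to \Id$ is an equivalence. I plan to reduce the problem to a single computation on a compact generator. Since $(E_*, E_*E)$ is the Hopf algebroid associated to the Landweber exact theory $E(m,n)_*$ with $E_*E$ commutative, \Cref{prop:monogenic} shows that $\Stable_{E_*E}$ is monogenic with compact generator $E_*$. As both $\Phi_*$ and $\Phi^*$ preserve colimits, it suffices to verify that $c_{E_*}\colon \Phi_*\Phi^* E_* \to E_*$ is an equivalence. Applying \Cref{lem:pushforwardformula} we identify
\[
\Phi^* E_* \simeq \Hom_{E_*E}(E_*,\, E_* \otimes_{F(m)_*} F(m)_*F(m)),
\]
and composing with $\Phi_* = E_* \otimes_{F(m)_*}(-)$, using \Cref{lem:projformula} and the Landweber exact decomposition $E_*E \simeq E_* \otimes_{F(m)_*} F(m)_*F(m) \otimes_{F(m)_*} E_*$, one unravels $\Phi_*\Phi^* E_*$ into an expression on which the counit realizes the natural evaluation map. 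This is the same strategy used in \cite[Sec.~8]{bhv1} for $(BP_*, BP_*BP)$ in place of $(F(m)_*, F(m)_*F(m))$, and the argument should carry over with only bookkeeping changes.

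Conservativity of $\Phi^*$ is then immediate: if $\Phi^* M \simeq 0$ then $M \simeq \Phi_*\Phi^* M \simeq 0$. The final assertions about (co)monadicity follow from Barr-Beck-Lurie. For monadicity of $(\Phi_* \dashv \Phi^*)$, the hypothesis that $\Phi^*$ preserves $\Phi^*$-split geometric realizations is automatic since $\Phi^*$ has a right adjoint $\Phi_!$ and hence preserves all colimits; together with conservativity this gives monadicity. Dually, $\Phi^*$ preserves all limits since it has a left adjoint $\Phi_*$, which together with conservativity yields comonadicity of $(\Phi^* \dashv \Phi_!)$.

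The step I expect to be the main obstacle is the unravelling in (3) of the mixed comodule structures on $E_* \otimes_{F(m)_*} F(m)_*F(m)$: although the equivalence follows formally once one has the pushforward and projection formulas combined with Landweber exactness, confirming that the natural identification truly realizes the adjunction counit requires a careful compatibility check between the $E_*E$-comodule structure coming from $E_*$ and the $F(m)_*F(m)$-comodule structure coming from $F(m)_*F(m)$, and in particular that the two are interchanged correctly by the Hopf algebroid antipode when passing between left and right comodule structures.
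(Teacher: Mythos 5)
Your overall strategy matches the paper's: the proof given there is a one-line deferral to \cite[Prop.~8.13]{bhv1}, and your reconstruction --- formal existence of the two adjoints from preservation of colimits and compact objects, reduction of the counit statement to the compact generator $E_*$ since $\Phi_*\Phi^*$ and $\Id$ both preserve colimits, the computation of $\Phi_*\Phi^*E_*$ via \Cref{lem:pushforwardformula} and \Cref{lem:projformula}, and Barr--Beck--Lurie for the final (co)monadicity claims --- is exactly the argument being imported. One citation does need repair: you invoke \Cref{prop:monogenic} for the monogenicity of $\Stable_{E_*E}$ with $E_*=E(m,n)_*$, but that proposition applies to theories Landweber exact over $MU$ or $BP$, whereas $E(m,n)_*$ is only Landweber exact over $F(m)_*=BP_*/I_m$; for $m\ge 1$ it is an $\F_p$-algebra and so cannot be Landweber exact over $BP$. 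Monogenicity still holds, but it must be obtained from \Cref{lem:lamonogenic} via a Landweber filtration for finitely presented $F(m)_*F(m)$-comodules (in the way \Cref{prop:sigmamonog} handles $(K(n)_*,\Sigma(n))$, which is likewise not covered by \Cref{prop:monogenic}), or by running the relevant argument over $F(m)$ in place of $BP$. Beyond that, the step you flag as the main obstacle --- verifying that the identification of $\Phi_*\Phi^*E_*$ is realized by the adjunction counit --- is genuinely where the content lies, and the paper supplies no more detail on it than you do.
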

\begin{proof}
	The proof is the same as \cite[Prop.~8.13]{bhv1}. 
\end{proof}

Let $m \le k$ and consider the compact object $F(m)_*/I_k\in \Stable_{F(m)_*F(m)}$. In the terminology of \cite{bhv1}, the pair $(\Stable_{F(m)_*F(m)},F(m)_*/I_k)$ forms a local duality context. 

\begin{thm}\label{thm:pmstablebasechange}
Let $E_*$ be a Landweber exact $F(m)_*$-algebra of height $n\ge m$, then the ring map $F(m)_* \to E_*$ induces a natural equivalence
\[
\xymatrix{\Stable_{F(m)_*F(m)}^{I_k-\mathrm{loc}} \ar[r]^-{\sim} & \Stable_{E_*E}^{I_k-\mathrm{loc}}}
\]
for any $m \le k\le n+1$. 
\end{thm}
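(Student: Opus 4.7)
The proof will closely follow the blueprint of the analogous statement for $(BP_*,BP_*BP)$ given in \cite[Sec.~8]{bhv1}, now applied to the Hopf algebroid $(F(m)_*, F(m)_*F(m))$. The bimonadic structure of Proposition~\ref{prop:pmplethora} is tailored precisely to allow this transplantation.

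My plan is first to verify that the base-change adjunction $(\Phi_* \dashv \Phi^*)$ of Proposition~\ref{prop:pmplethora} restricts to an adjunction on $I_k$-local subcategories. Since $\Phi_*$ preserves colimits and compact objects, and
\[
\Phi_*(F(m)_*/I_k) \simeq E_* \otimes_{F(m)_*} F(m)_*/I_k \simeq E_*/I_k,
\]
which is nonzero and $I_k$-torsion precisely because $k \le n+1$ and $E_*$ has height $n$ (Definition~\ref{defn:bpcomodule}), the functor $\Phi_*$ maps the localizing ideal of $I_k$-torsion objects into its counterpart on the $E_*E$-side. By adjunction $\Phi^*$ then preserves $I_k$-local objects, yielding an induced adjunction between the $I_k$-local subcategories.

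Next, Proposition~\ref{prop:pmplethora} tells us that the counit $\Phi_*\Phi^* \to \mathrm{Id}$ is an equivalence, so $\Phi^*$ is fully faithful, and so is its restriction to the local subcategories. It therefore remains to establish essential surjectivity, or equivalently that the restricted $\Phi_*$ is conservative on $I_k$-local objects. The strategy is to leverage compact generation: the $I_k$-local subcategory on each side is generated (as a full subcategory closed under colimits in the localization) by the images of the compact generators of the ambient $\Stable$ category. The key technical input is a change-of-rings comparison showing that $\Phi_*$ induces equivalences on mapping spectra between such compact generators, which I would deduce from the Landweber filtration theorem \cite[Thm.~D]{hs_leht} (adapted to $F(m)_*F(m)$-comodules) combined with the standard cobar-model computation of Cotor and the flatness of $E_*$ over $F(m)_*$ provided by Landweber exactness.

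The main obstacle is verifying this change-of-rings comparison in a non-Noetherian setting where the classical Miller--Ravenel machinery does not directly apply. I would first check the comparison on the single generator $F(m)_*/I_k$, where $\Phi_*(F(m)_*/I_k) \simeq E_*/I_k$ and the statement reduces to a standard Cotor computation as in Section~\ref{sec:stable}, and then extend to the full subcategory of compact $I_k$-torsion objects by devissage along the Landweber filtration. Having verified fully faithfulness on compact generators and that both sides are compactly generated by these generators of the local subcategories, the general criterion for equivalences of presentable stable $\infty$-categories (see \cite[Prop.~5.3.5.13]{htt}) will complete the argument.
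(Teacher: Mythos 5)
Your setup is the right one and, up to the point where you reduce to conservativity, it matches the paper's argument (which simply transports the proof of \cite[Thm.~8.19]{bhv1}): the counit equivalence of \Cref{prop:pmplethora} makes $\Phi^*$ fully faithful, so everything reduces to showing that the induced left adjoint on $I_k$-local subcategories is conservative, equivalently that the unit is an equivalence on $I_k$-local objects. The gap is in how you propose to do this. The compact generators of $\Stable_{F(m)_*F(m)}^{I_k-\mathrm{loc}}$ are the localizations $L_{I_k}G$ of dualizable comodules $G$ (in the monogenic case, $L_{I_k}F(m)_*$); they are emphatically not $F(m)_*/I_k$ or the compact $I_k$-torsion objects, which generate the \emph{kernel} of the localization. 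So your proposed base case and the devissage along the Landweber filtration live entirely inside the subcategory that $L_{I_k}$ annihilates and say nothing about the local category. Worse, in the boundary case $k=n+1$ (the one needed for \Cref{thm:moravak}) your base case is vacuous and your claim that $E_*/I_k$ is nonzero is false: height $n$ means $E_*/I_{n+1}=0$, so $\Phi_*(F(m)_*/I_{n+1})\simeq 0$ and no comparison on this object can be an isomorphism. Finally, full faithfulness of $\Phi_*$ on the actual local generators, i.e.\ the isomorphism $\pi_*L_{I_k}F(m)_*\cong\pi_*L_{I_k}E_*$, is a Hovey--Sadofsky-type change of rings theorem; in this framework it is a \emph{consequence} of the categorical equivalence rather than an available input, so invoking it either makes the argument circular or forces you to redo the Miller--Ravenel/Hovey--Sadofsky machinery from scratch.

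The missing idea is the identification $\Phi^*\Phi_*\simeq L_{I_{n+1}}$ on $\Stable_{F(m)_*F(m)}$, the $F(m)$-analog of \Cref{thm:bhvbpresults}(1), which is proved in \cite[Sec.~8]{bhv1} by showing that the smashing localization $\Phi^*\Phi_*$ has the same acyclics as $L_{I_{n+1}}$, using Landweber exactness and the Bousfield decomposition of \Cref{cor:bcbp} to get $\bc{E_*}=\bigoplus_{i=m}^{n}\bc{\Tel(i)_*}$. Granting this, the unit is an equivalence on $I_k$-local objects for free: since $F(m)_*/I_{n+1}\in\Thick(F(m)_*/I_k)$ for $k\le n+1$, every $I_k$-local object is $I_{n+1}$-local, hence $X\simeq L_{I_{n+1}}X\simeq\Phi^*\Phi_*X$. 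Together with the counit equivalence and the (routine) check that $\Phi_*$ and $\Phi^*$ restrict to the local subcategories, this finishes the proof with no change-of-rings computation.
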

\begin{proof}
		The same as \cite[Thm.~8.19]{bhv1}. 
\end{proof}
\begin{cor}\label{cor:ethybasechange}
For any $n$ and $m \le k \le n+1$, there is a natural equivalence of stable categories
\[
\xymatrix{\Stable_{E_*E}^{I_{k}-\mathrm{loc}} \ar[r]^-{\sim} & \Stable_{F_*F}}
\] 
for any Landweber exact $F(m)_*$-algebra $F_*$ of height $k-1$.
Furthermore, there is a natural equivalence $\Phi_*L_{I_k}^{F(m)} \simeq L_{I_{k}}^E\Phi_*$, i.e., the following diagram commutes:
\[
\begin{xymatrix}
{
	\Stable_{F(m)_*F(m)}	\ar[d]_-{\Phi_*}\ar^-{L_{I_k}^{F(m)}}[r] & \Stable_{F(m)_*F(m)}^{I_k-\mathrm{loc}} \ar[d]^{\Phi_*}\\
	\Stable_{E_*E} \ar_-{L_{I_k}^{E}}[r] & \Stable_{E_*E.}^{I_k-\mathrm{loc}} 
}	
\end{xymatrix}
\]
\end{cor}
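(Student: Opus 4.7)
The plan is to deduce the corollary from \Cref{thm:pmstablebasechange} by applying it twice: once to the Landweber exact $F(m)_*$-algebra $E_*$ of height $n$, and once to the Landweber exact $F(m)_*$-algebra $F_*$ of height $k-1$. The first application gives an equivalence
\[
\Stable_{F(m)_*F(m)}^{I_k-\mathrm{loc}} \xrightarrow{\sim} \Stable_{E_*E}^{I_k-\mathrm{loc}},
\]
valid because $m \le k \le n+1$. For the second application, the hypothesis requires $m \le k \le (k-1)+1 = k$, which holds tautologically, and yields
\[
\Stable_{F(m)_*F(m)}^{I_k-\mathrm{loc}} \xrightarrow{\sim} \Stable_{F_*F}^{I_k-\mathrm{loc}}.
\]

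The key observation is that for $F_*$ of height $k-1$, \Cref{defn:bpcomodule} forces $F_*/I_k \cong 0$, i.e.\ $I_k \cdot F_* = F_*$. Consequently, the $I_k$-torsion subcategory of $\Stable_{F_*F}$ (which is generated by $F_*/I_k$ and its suspensions via the local duality framework of \cite{bhv1}) is trivial, so $L_{I_k}^F \simeq \Id$ and hence $\Stable_{F_*F}^{I_k-\mathrm{loc}} = \Stable_{F_*F}$. Composing the two equivalences above through the inverse of the second then gives the first claim of the corollary.

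For the commutativity of the diagram, I would argue that the equivalence in \Cref{thm:pmstablebasechange} is realized by the restriction of $\Phi_*$ to local subcategories. Since $\Phi_*$ is symmetric monoidal and colimit-preserving, and since $\Phi_*(F(m)_*/I_k) \simeq E_*/I_k$ by Landweber exactness of $E_*$ over $F(m)_*$, the functor $\Phi_*$ maps the $I_k$-torsion subcategory of $\Stable_{F(m)_*F(m)}$ into the $I_k$-torsion subcategory of $\Stable_{E_*E}$. Dually, $\Phi_*$ carries $I_k$-local objects to $I_k$-local objects (as a left adjoint to $\Phi^*$ interacting well with the torsion–local decomposition). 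The universal property of Bousfield localization then yields the natural equivalence $\Phi_* L_{I_k}^{F(m)} \simeq L_{I_k}^{E} \Phi_*$.

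The main obstacle will be checking that the equivalence produced by \Cref{thm:pmstablebasechange} is genuinely induced by $\Phi_*$ (rather than merely abstractly existing); this requires tracking the construction back through \cite[Sec.~8]{bhv1}, where the equivalence on local categories is exhibited as the restriction of the base-change functor. Once this identification is made, the commutativity is a formal consequence of $\Phi_*$ preserving the torsion–local decomposition.
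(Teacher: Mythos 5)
Your proposal is correct and matches the paper's (very terse) argument: the first claim is exactly the double application of \Cref{thm:pmstablebasechange} together with the observation that $F_*/I_k\cong 0$ for $F_*$ of height $k-1$, so that $L_{I_k}\simeq\Id$ on $\Stable_{F_*F}$, and the second claim is the standard compatibility of smashing localizations with a symmetric monoidal, colimit-preserving $\Phi_*$, which the paper defers to \cite[Cor.~8.23]{bhv1}. One small caveat: the parenthetical ``as a left adjoint to $\Phi^*$'' is not the right reason that $\Phi_*$ preserves $I_k$-local objects (left adjoints generally preserve acyclics, while \emph{right} adjoints preserve locals); the correct justification is that the localization is smashing and $\Phi_*$ is monoidal, so $\Phi_*(L_{I_k}X)\otimes\Gamma_{I_k}E_*\simeq\Phi_*(L_{I_k}X\otimes\Gamma_{I_k}F(m)_*)\simeq 0$.
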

\begin{proof}
	The first claims follows since $\Stable_{E_*E}^{I_k-\mathrm{loc}} \simeq \Stable_{F_*F}$, while for the second one argues as in \cite[Cor.~8.23]{bhv1}. 
\end{proof}

Recall that the telescope conjecture in stable homotopy theory is equivalent to the statement that there is an equivalence of Bousfield classes of spectra $\bc{\Tel(m)} = \bc{K(m)}$ \cite{hovey_csc}, where $\bc{\Tel(m)}$ denotes the Bousfield class of the telescope of a finite spectrum of type $m$. 

	One can ask the same question here: Is $\bc{\Tel(m)_*} = \bc{K(m)_*}$?\footnote{However, note that $\Tel(m)_*$ is not isomorphic to $\pi_*\Tel(m)$, but rather $BP_*\Tel(m)$ in case the corresponding Smith--Toda complex exists.} The main problem in asking this question is that $K(m)_*$ is not an object of $\Stable_{BP_*BP}$. If one modifies the definition of Bousfield class so that $-\otimes_{BP_*}-$ refers to the tensor product in the derived category of $BP_*$-modules only, then one can show that the algebraic telescope conjecture holds. 

\begin{thm}[The algebraic telescope conjecture]\label{thm:moravak}With the above definition, there is an identity of Bousfield classes $\bc{K(m)_*} = \bc{\Tel(m)_*}$ for all $m\ge 0$. 
 
\end{thm}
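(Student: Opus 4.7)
I would prove the two inclusions $\bc{K(m)_*} \leq \bc{\Tel(m)_*}$ and $\bc{\Tel(m)_*} \leq \bc{K(m)_*}$ separately.

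The easier direction $\bc{K(m)_*} \leq \bc{\Tel(m)_*}$: set $K_n := \Tel(m)_*/(v_{m+1},\ldots,v_{m+n})$, so that $K_0 = \Tel(m)_*$ and $K(m)_* \simeq \colim_n K_n$. Each $K_n$ sits in a cofiber sequence $\Sigma^{|v_{m+n}|}K_{n-1} \xr{v_{m+n}} K_{n-1} \to K_n$, placing $K_n$ in the thick subcategory of $\cD(BP_*)$ generated by $\Tel(m)_*$. Hence if $\Tel(m)_* \otimes^L_{BP_*} X \simeq 0$ for some $X \in \Stable_{BP_*BP}$, then inductively $K_n \otimes^L_{BP_*} X \simeq 0$, and passing to the filtered colimit yields $K(m)_* \otimes^L_{BP_*} X \simeq 0$.

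For the reverse inclusion, assume $X \in \Stable_{BP_*BP}$ is $K(m)_*$-acyclic and set $Y := \Tel(m)_* \otimes^L_{BP_*} X$; the goal is $Y \simeq 0$. The key observation is that $Y$ is not an arbitrary $\Tel(m)_*$-module, but naturally refines to an object of $\Stable_{\Tel(m)_*\Tel(m)} \simeq \Stable_{E(m)_*E(m)}$, the stable category associated to the height-$m$ Landweber-exact Hopf algebroid. This comodule refinement is essential: in $\cD(BP_*)$ alone the equality $\bc{K(m)_*} = \bc{\Tel(m)_*}$ already fails, as exhibited by the $K(m)_*$-acyclic but not $\Tel(m)_*$-acyclic module $v_{m+1}^{-1}\Tel(m)_*$, which crucially is not a $BP_*BP$-comodule.

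It therefore suffices to show that $K(m)_*$ \emph{detects} objects in $\Stable_{E(m)_*E(m)}$, i.e.\ that $K(m)_* \otimes^L_{E(m)_*} Y \simeq 0$ forces $Y \simeq 0$. For compact $Y$, the Landweber filtration theorem \cite[Thm.~D]{hs_leht} gives a finite filtration with subquotients $E(m)_*/I_k$ for $0 \leq k \leq m$; a Koszul computation shows $K(m)_* \otimes^L_{E(m)_*} E(m)_*/I_k \simeq K(m)_* \otimes_{\F_p}\Lambda(\tau_0,\ldots,\tau_{k-1})$, of positive finite total $K(m)_*$-dimension $2^k$, and additivity of this dimension on cofiber sequences yields detection on compacts. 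The main obstacle is lifting detection from compact $Y$ to arbitrary $Y$, since filtered colimits of non-acyclics can become acyclic. I plan to resolve this by verifying that the localizing tensor ideal of $K(m)_*$-acyclics in $\Stable_{E(m)_*E(m)}$ is generated by its compact part — hence trivial by the previous step — via a smashing/thick-subcategory argument that exploits the rigidity of the height-$m$ comodule structure and is ultimately rooted in the algebraic nilpotence theorem of \Cref{sec:nilpotence}.
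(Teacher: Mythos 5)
Your first inclusion is fine, and your observation that the comodule structure is what rescues the statement from module-level counterexamples like $v_{m+1}^{-1}\Tel(m)_*$ is exactly the right intuition. The second inclusion, however, breaks at the identification $\Stable_{\Tel(m)_*\Tel(m)} \simeq \Stable_{E(m)_*E(m)}$. The ring $\Tel(m)_* = v_m^{-1}BP_*/I_m$ is \emph{not} a Landweber exact $BP_*$-algebra (it kills $I_m$), so Hovey--Strickland's Theorem C does not apply to it; it is a height-$m$ Landweber exact $F(m)_*$-algebra, and the correct identification, supplied by \Cref{cor:ethybasechange}, is $\Stable_{\Tel(m)_*\Tel(m)} \simeq \Stable_{F(m)_*F(m)}^{I_{m+1}-\mathrm{loc}} \simeq \Stable_{\Sigma(m)}$. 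This matters because the detection statement you reduce to is false as stated: for $m\ge 1$ the object $\mathbb{Q}\otimes E(m)_* \in \Stable_{E(m)_*E(m)}$ is nonzero but $K(m)_*$-acyclic, the algebraic shadow of the fact that $K(m)$ does not detect all $E(m)$-local spectra. Moreover, even in the correct target category, your plan to pass from compact $Y$ to arbitrary $Y$ by showing that the localizing ideal of $K(m)_*$-acyclics is generated by its compact part is itself a smashing/telescope-type assertion --- essentially equivalent to what you are trying to prove --- and the algebraic nilpotence theorem of \Cref{sec:nilpotence}, which concerns maps out of compact objects, does not deliver it. As written, the crucial step is a plan rather than a proof.

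The paper's argument sidesteps detection entirely: since $\Tel(m)_*$ and $K(m)_*$ are \emph{both} height-$m$ Landweber exact $F(m)_*$-algebras, \Cref{cor:ethybasechange} identifies the two base-change functors $\Tel(m)_*\otimes_{F(m)_*}(-)$ and $K(m)_*\otimes_{F(m)_*}(-)$ on $\Stable_{F(m)_*F(m)}$ with one and the same localization followed by equivalences of categories, so they have identical kernels; combining this with $K(m)_*\otimes_{BP_*}M \simeq K(m)_*\otimes_{F(m)_*}\bigl(F(m)_*\otimes_{BP_*}M\bigr)$ (and likewise for $\Tel(m)_*$) gives $\bc{K(m)_*}=\bc{\Tel(m)_*}$ with no filtration or compactness argument. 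If you want to salvage your route, replace $\Stable_{E(m)_*E(m)}$ by $\Stable_{\Sigma(m)}$ throughout; the ``detection'' you then need is that $K(m)_*\otimes_{\Tel(m)_*}(-)$ induces an equivalence $\Stable_{\Tel(m)_*\Tel(m)} \xr{\sim} \Stable_{\Sigma(m)}$, which is again \Cref{cor:ethybasechange} --- at which point you have reproduced the paper's proof.
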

\begin{proof}
	Both $\Tel(m)_*$ and $K(m)_*$ are Landweber exact $F(m)_*$-algebras, so \Cref{cor:ethybasechange} for $m = k= n+1$ provides a commutative diagram
\[
\xymatrix{\Stable_{F(m)_*F(m)} \ar[d]_{\Phi_*} \ar[rd]^{\Phi_*} & \\
\Stable_{v_{m}^{-1}F(m)_*F(m)} \ar[r]^-{\sim} & \Stable_{K(m)_*K(m)}.}
\]
It follows that $\Tel(m)_* \otimes_{F(m)_*}N= 0$ if and only if $K(m)_* \otimes_{F(m)_*}N =0$ for all $N \in \Stable_{F(m)_*F(m)}$. Because $K(m)_* \otimes_{BP_*} M \simeq K(m)_* \otimes_{F(m)_*} F(m)_* \otimes_{BP_*} M$ for any $M \in \Stable_{BP_*BP}$, we do indeed have the claimed equivalence $\bc{\Tel(m)_*} = \bc{K(m)_*}$. 
\end{proof}
\begin{rem}
	For an alternative formulation of the algebraic telescope conjecture, see \Cref{rem:algtelescoperev}.
\end{rem}

\begin{rem}\sloppy
There is an algebraic analog of Freyd's generating hypothesis~\cite{freyd_stablehomotopy} for $\Stable_{BP_*BP}$; to wit, the algebraic generating hypothesis asks whether the functor
\[
\xymatrix{\pi_*\colon \Stable_{BP_*BP}^{\omega} \to \Mod_{\pi_*BP_*}}
\]
is faithful. As a special case of Lockridge's result \cite[Prop.~2.2.1]{lockridge_thesis}, the algebraic generating hypothesis holds if and only if the $E_2$-page of the Adams--Novikov spectral sequence for the sphere, $\pi_*BP_*$, is totally incoherent as a ring. However, we are not aware of any results about the ring structure of $\pi_*BP_*$. 

Similarly, one can consider the local algebraic generating hypothesis as in \cite{ars}. Using an algebraic version of Brown--Comenetz duality, we suspect that this local version fails for all positive heights, but we will leave the details to the interested reader. 
\end{rem}

\section{Local duality and chromatic splitting for $\Stable_{BP_*BP}$}\label{sec:localduality}

In this section, we introduce the algebraic analogs of Bousfield localization at Morava $K$-theories and Morava $E$-theories, which play a fundamental role in chromatic homotopy theory. Combined with the local duality theory developed in \cite{bhv1}, this provides a convenient framework in which we can study the local structure of $\Stable_{BP_*BP}$. As one instance of this, we discuss an algebraic version of the chromatic splitting conjecture. 

\subsection{Local cohomology and local homology at height $n$}\label{sec:localcohom}
We begin with some recollections from \cite[Sec.~8]{bhv1}. Recall from he previous section that, for $0 \le n < \infty$, we let $I_n$ denote the ideal $(p,v_1,\ldots,v_{n-1}) \subset BP_*$; in particular, $I_0 = (0)$. If $n=\infty$, we define $I_{\infty} = (p,v_1,\ldots) = \bigcup_n I_n$. We refer to \cite[Sec.~2]{bhv1} for background material on localization and colocalization functors. 

\begin{defn}\label{defn:localbphtn}
Let $\Stable_{BP_*BP}^{I_{n+1}-\mathrm{tors}}$ be the localizing subcategory of $\Stable_{BP_*BP}$ generated by $BP_*/I_{n+1}$. The associated colocalization and localization functors will be denoted by $\Gamma_{n}$ and $L_{n}$, respectively.
\end{defn}
We can represent the categories and functors constructed via the following diagram:
\begin{equation}\label{eq:derivedcomod2}
\xymatrix{& \Stable_{BP_*BP}^{I_{n+1}-\textrm{loc}} \ar@<0.5ex>[d] \ar@{-->}@/^1.5pc/[ddr] \\
& \Stable_{BP_*BP} \ar@<0.5ex>[u]^{L_{n}} \ar@<0.5ex>[ld]^{\Gamma_{n}} \ar@<0.5ex>[rd]^{\Lambda^{n}} \\
\Stable_{BP_*BP}^{I_{n+1}-\textrm{tors}} \ar@<0.5ex>[ru] \ar[rr]_{\sim} \ar@{-->}@/^1.5pc/[ruu] & & \Stable_{BP_*BP}^{I_{n+1}-\textrm{comp}}. \ar@<0.5ex>[lu]}
\end{equation}

\begin{rem}
	In \cite[Thm.~2.21]{bhv1} we denoted $\Gamma_n$ and $L_n$ by $\Gamma_{I_{n+1}}$ and $L_{I_{n+1}}$. In order to emphasize the structural similarity with the stable homotopy category and as no confusion is likely to arise, we have changed the notation to $\Gamma_n$ and $L_n$. 
\end{rem}

For all $n \ge 0$ we have morphisms $\Phi \colon BP_* \to E(n)_*$, which give rise to adjoint pairs 
\[
\xymatrix{\Phi_*=\Phi(n)_* \colon \Stable_{BP_*BP} \ar@<0.5ex>[r] & \ar@<0.5ex>[l]\Stable_{E(n)_*E(n)} \colon \Phi(n)^* = \Phi^*.}
\]
The next result summarizes some of the main results of \cite[Sec.~8]{bhv1}; cf.~\Cref{thm:pmstablebasechange}.
\begin{thm}\label{thm:bhvbpresults}
Let $n$ be a nonnegative integer. 
	\begin{enumerate}
		\item There is a natural equivalence of functors $L_n \xr{\sim} \Phi^\ast\Phi_*$ and $\Phi_\ast\Phi^* \xr{\sim} \Id$. 
		\item For any $k \le n+1$, the maps $BP_* \to E(n)_* \to v_{k-1}^{-1}E(n)_*$ induce symmetric moniodal equivalences
		\[
\xymatrix{\Stable_{BP_*BP}^{I_{k}-\text{loc}} \ar[r]^-{\sim}  & \Stable_{E(n)_*E(n)}^{I_k-\text{loc}} \ar[r]^-{\sim}  & \Stable_{v_{k-1}^{-1}E(n)_*E(n)}} 
		\]
		and there is an equivalence $\Stable_{v_{k-1}^{-1}E(n)_*E(n)} \simeq \Stable_{E(k-1)_*E(k-1)}$.
	\end{enumerate}
\end{thm}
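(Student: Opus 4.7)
The plan is to treat the two parts separately, relying heavily on the base-change machinery already developed in \Cref{thm:pmstablebasechange} and \Cref{cor:ethybasechange}. For part (1), my goal is to identify $\Phi^*\Phi_*$ as a smashing localization with essential image $\Stable^{I_{n+1}-\mathrm{loc}}_{BP_*BP}$. First I would establish that $\Phi_*\Phi^* \to \mathrm{Id}$ is an equivalence by essentially repeating the argument of \Cref{prop:pmplethora} in the case $m=0$: one checks that $\Phi^*$ is conservative (via $\Phi_*\Phi^*(E(n)_*) \simeq E(n)_*$ and the fact that $E(n)_*$ generates $\Stable_{E(n)_*E(n)}$), and the unit/counit computations then force $\Phi_*\Phi^* \simeq \mathrm{Id}$.

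For the identification $\Phi^*\Phi_* \simeq L_n$, the crucial observation is that $E(n)_*/I_{n+1} = 0$ because $v_n$ is a unit in $E(n)_*$, so
\[
\Phi_*(BP_*/I_{n+1}) = E(n)_* \otimes_{BP_*} BP_*/I_{n+1} = 0.
\]
Since $\Phi_*$ preserves colimits and $\Stable_{BP_*BP}^{I_{n+1}-\mathrm{tors}}$ is by definition the localizing ideal generated by $BP_*/I_{n+1}$, this forces $\Phi_* \equiv 0$ on the torsion subcategory. Conversely, for any $M \in \Stable_{E(n)_*E(n)}$, adjunction gives
\[
\Hom_{\Stable_{BP_*BP}}(BP_*/I_{n+1}, \Phi^*M) \simeq \Hom_{\Stable_{E(n)_*E(n)}}(\Phi_*(BP_*/I_{n+1}), M) \simeq 0,
\]
so $\Phi^*M$ is $I_{n+1}$-local. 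Combined with the fact (from $\Phi_*\Phi^* \simeq \mathrm{Id}$) that $\Phi^*\Phi_*$ is idempotent with unit $M \to \Phi^*\Phi_*M$ whose cofiber is $I_{n+1}$-torsion (the latter because it is annihilated by $\Phi_*$), uniqueness of localization gives $\Phi^*\Phi_* \simeq L_n$.

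For part (2), the first equivalence is simply \Cref{thm:pmstablebasechange} in the special case $m=0$, noting that $F(0)_* = BP_*/I_0 = BP_*$ so $F(0)_*F(0) = BP_*BP$, and that $E(n)_*$ is a Landweber exact $BP_*$-algebra of height $n$. The second equivalence is obtained by applying \Cref{cor:ethybasechange} (again with $m=0$), after verifying that $v_{k-1}^{-1}E(n)_*$ is a Landweber exact $BP_*$-algebra of height $k-1$: Landweber exactness is inherited from $E(n)_*$ since localization is flat, and the height claim follows from the identities $v_{k-1}^{-1}E(n)_*/I_{k-1} \neq 0$ (as $v_{k-1}$ becomes a unit in the quotient $E(n)_*/I_{k-1} = E(n-k+1)_*$-style ring of positive height) and $v_{k-1}^{-1}E(n)_*/I_k = 0$ (since $v_{k-1}$ is inverted). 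Since $E(k-1)_*$ is another Landweber exact $BP_*$-algebra of height $k-1$, applying \Cref{cor:ethybasechange} once more with $F_* = E(k-1)_*$ yields the final equivalence $\Stable_{v_{k-1}^{-1}E(n)_*E(n)} \simeq \Stable_{E(k-1)_*E(k-1)}$.

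The main obstacle is conceptual rather than technical: one must carefully track that all of the above equivalences are \emph{symmetric monoidal}, not merely equivalences of stable $\infty$-categories. The monoidal structure on each side is induced from the base ring, and compatibility under base change is what makes \Cref{thm:pmstablebasechange} applicable; this has already been addressed in \cite{bhv1}. A secondary subtlety is verifying that the $I_{n+1}$-local category defined abstractly in \Cref{defn:localbphtn} matches the one appearing in \Cref{thm:pmstablebasechange}, but this is immediate from the definitions.
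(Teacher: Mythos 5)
Your Part (2) is essentially correct and is the intended argument: it is the specialization of \Cref{thm:pmstablebasechange} and \Cref{cor:ethybasechange} to $m=0$ (the paper itself only cites \cite[Sec.~8]{bhv1} here), and your check that $v_{k-1}^{-1}E(n)_*$ is a Landweber exact $BP_*$-algebra of height $k-1$ is fine.

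Part (1), however, has a genuine gap at the step where you conclude that the (co)fiber of the unit $M \to \Phi^*\Phi_*M$ is $I_{n+1}$-torsion ``because it is annihilated by $\Phi_*$''. What you actually established is the opposite implication: from $\Phi_*(BP_*/I_{n+1})=0$ you get $\Loc(BP_*/I_{n+1}) \subseteq \ker\Phi_*$, i.e., torsion objects are $E(n)_*$-acyclic. To run your argument you need the reverse inclusion $\ker\Phi_* \subseteq \Loc(BP_*/I_{n+1})$, i.e., that every $E(n)_*$-acyclic object is $I_{n+1}$-torsion. This is not formal: it is precisely the identification $\bc{L_nBP_*}=\bc{E(n)_*}$, which the paper \emph{deduces from} \Cref{thm:bhvbpresults} in \Cref{rem:algtelescoperev}, so as written your argument is circular. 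Note also that the two facts you do have --- $L_n$-acyclics are contained in $\Phi^*\Phi_*$-acyclics, and the essential image of $\Phi^*$ consists of $I_{n+1}$-local objects --- are formally equivalent to one another (acyclics and locals determine each other by orthogonality), so together they still only give one of the two inclusions needed to identify the localizations.

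The standard way to close the gap is to prove instead that the unit $N \to \Phi^*\Phi_*N$ is an equivalence for $I_{n+1}$-\emph{local} $N$. Since all functors in sight preserve colimits and $\Stable_{BP_*BP}$ is monogenic, this reduces to the single computation that $L_nBP_* \to \Phi^*E(n)_* \simeq \Hom_{E(n)_*E(n)}(E(n)_*, E(n)_*\otimes_{BP_*}BP_*BP)$ (via \Cref{lem:pushforwardformula}) is an equivalence; this is a Miller--Ravenel/Hovey--Strickland change-of-rings statement and is the real content of \cite[Sec.~8]{bhv1}. Equivalently, one proves Part (2) for $k=n+1$ first, where $\Stable_{E(n)_*E(n)}^{I_{n+1}-\text{loc}} = \Stable_{E(n)_*E(n)}$ because $E(n)_*/I_{n+1}=0$, and then reads off Part (1); your proposal inverts this logical order and thereby assumes the hard step.
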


In geometric terms, the localization functor $L_{n}$ corresponds to the restriction of a sheaf to the open substack of $\mathcal{M}_{fg}$ of formal groups of height at most $n$. The second part of \Cref{thm:bhvbpresults} can thus be interpreted as giving a presentation of this open substack in terms of the Johnson--Wilson theories $E(n)$.

The inclusions $\Loc(BP_*/I_{n+1}) \subset \Loc(BP_*/I_n)$ give rise to an algebraic chromatic tower
\begin{equation}\label{eq:chromatictower}
\xymatrix{L_{\infty} \ar[r] & \ldots \ar[r] & L_2 \ar[r] & L_1 \ar[r] & L_0,} 
\end{equation}
which is the algebraic analog of the chromatic tower in stable homotopy theory. Our next goal is to study the layers of this tower in more detail. Recall that we can inductively construct objects $BP_*/I_{n}^\infty \in \Stable_{BP_*BP}$ for $n \ge 0$ via cofiber sequences
\begin{equation}\label{eq:fibseq}
BP_*/I_n^{\infty} \to v_{n}^{-1}BP_*/I_{n}^{\infty} \to BP_*/I_{n+1}^\infty,
\end{equation}
under the usual convention $v_0 = p$.

\begin{prop}\label{prop:localcohomcomputations}
For any $M \in \Stable_{BP_*BP}$, we have $L_{n}M \simeq M \otimes  L_n BP_*$, and $L_nBP_*$ can be computed inductively by $L_0BP_* \simeq p^{-1}BP_*$ and cofiber sequences
\[
\Sigma^{-(n+1)}v_{n+1}^{-1}BP_*/I_{n+1}^{\infty} \to L_{n+1}BP_* \to L_nBP_*
\]
for all $n \ge 0$. 
\end{prop}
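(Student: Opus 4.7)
The statement has two parts: the smashing property of $L_n$, and the inductive formula for $L_{n+1}BP_*$. For the first, I would note that $\Stable_{BP_*BP}^{I_{n+1}-\mathrm{tors}}$ is the localizing tensor ideal generated by the compact object $BP_*/I_{n+1}$, so by Miller's theorem on finite localizations in compactly generated symmetric monoidal stable $\infty$-categories, the functor $L_n$ is smashing. This yields $L_nM \simeq M \otimes L_n BP_*$ and dually $\Gamma_n M \simeq M \otimes \Gamma_n BP_*$. (Alternatively, this fits directly into the local duality framework of \cite{bhv1}.)

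The base case $L_0 BP_* \simeq p^{-1}BP_*$ is immediate: since $I_1 = (p)$, the colocalization $\Gamma_0$ is $p$-power torsion, so the fiber sequence $\Gamma_0 BP_* \to BP_* \to L_0 BP_*$ is precisely $\fib(BP_* \to p^{-1}BP_*) \to BP_* \to p^{-1}BP_*$.

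For the inductive step, the key input is the composition formula $\Gamma_{n+1} \simeq \Gamma_{(v_{n+1})} \circ \Gamma_n$ for local cohomology, arising from the decomposition $I_{n+2} = I_{n+1} + (v_{n+1})$. Since $\Gamma_{(v_{n+1})} M = \fib(M \to v_{n+1}^{-1}M)$, this provides the fiber sequence
\[
\Gamma_{n+1}BP_* \to \Gamma_n BP_* \to v_{n+1}^{-1}\Gamma_n BP_*.
\]
Applying the octahedral axiom to the factorization $\Gamma_{n+1}BP_* \to \Gamma_n BP_* \to BP_*$ of the natural map (which exists because $\Gamma_{n+1}BP_*$ is $\Gamma_n$-torsion) yields a cofiber sequence $\cofib(\Gamma_{n+1}BP_* \to \Gamma_n BP_*) \to L_{n+1}BP_* \to L_n BP_*$; hence the fiber of $L_{n+1}BP_* \to L_n BP_*$ is precisely $v_{n+1}^{-1}\Gamma_n BP_*$.

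Finally, I would establish $\Gamma_n BP_* \simeq \Sigma^{-(n+1)}BP_*/I_{n+1}^\infty$ by induction on $n$ using the same composition formula: the fiber sequence for $\Gamma_{(v_n)}$ converts the inductive hypothesis $\Gamma_{n-1}BP_* \simeq \Sigma^{-n}BP_*/I_n^\infty$ into $\Gamma_n BP_* \simeq \fib(\Sigma^{-n}BP_*/I_n^\infty \to \Sigma^{-n}v_n^{-1}BP_*/I_n^\infty) \simeq \Sigma^{-(n+1)}BP_*/I_{n+1}^\infty$ by direct comparison with the defining cofiber sequence \eqref{eq:fibseq}. Combining with the previous step gives the desired fiber $\Sigma^{-(n+1)}v_{n+1}^{-1}BP_*/I_{n+1}^\infty$. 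I expect the main obstacle to be verifying the composition formula $\Gamma_{n+1} \simeq \Gamma_{(v_{n+1})} \circ \Gamma_n$ within the stable comodule category: while this is classical for modules over a ring, one must ensure that the intermediate telescopes such as $v_n^{-1}\Gamma_{n-1}BP_*$ carry the appropriate comodule-theoretic structure, which should follow from the Koszul/telescope description of local cohomology established in the local duality framework of \cite{bhv1}.
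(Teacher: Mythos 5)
Your proposal is correct and follows essentially the same route as the paper: $L_n$ is smashing because it is a finite localization, and the inductive cofiber sequence is obtained by comparing the tower of colocalizations $\Gamma_{n+1}BP_*\to\Gamma_nBP_*$ with the tower of localizations via the octahedral axiom (the paper uses the equivalent $3\times 3$ diagram of cofiber sequences). The only difference is that you re-derive the identification $\Gamma_nBP_*\simeq\Sigma^{-(n+1)}BP_*/I_{n+1}^{\infty}$ from the composite local-cohomology formula, whereas the paper simply imports it from \cite[Cor.~8.9]{bhv1}, which is proved in exactly the iterated-telescope manner you describe (and which resolves the comodule-structure issue you correctly flag, since $v_{n+1}$ acts invariantly on $I_{n+1}$-torsion objects).
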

\begin{proof}
	The first statement is just that $L_n$ is smashing, which follows from the fact that $L_n$ is a finite localization, see \cite[Lem.~3.3.1]{hps_axiomatic}. By \cite[Cor.~8.9]{bhv1} we have $\Gamma_0BP_* \simeq \Sigma^{-1} BP_*/p^\infty$. By definition, $L_0BP_*$ fits in a cofiber sequence $\Gamma_0BP_* \to BP_* \to L_0BP_*$. Comparison with \eqref{eq:fibseq} shows that $L_0BP_* \simeq p^{-1}BP_*$ as claimed. In order to prove the final claim, consider the following commutative diagram
	\[
	\xymatrix{\fib(g_n) \ar[r] \ar[d] & \Gamma_{n+1}BP_* \ar[r]^-{g_n} \ar[d] & \Gamma_nBP_* \ar[d] \\
	0 \ar[r] \ar[d] & BP_* \ar[r]^{\simeq} \ar[d] & BP_* \ar[d] \\
	\fib(l_n) \ar[r] & L_{n+1}BP_* \ar[r]_-{l_n} & L_nBP_*}
	\]
in which all rows and columns are cofiber sequences. The fiber of $g_n$ can be identified with $\Sigma^{-(n+2)}v_{n+1}^{-1}BP_*/I_{n+1}^{\infty}$ by \cite[Cor.~8.9]{bhv1} and \eqref{eq:fibseq}. Therefore, $\fib(l_n) \simeq \Sigma^{-(n+1)}v_{n+1}^{-1}BP_*/I_{n+1}^{\infty}$ and the claim follows. 
\end{proof}

As is standard, we denote the fiber of $L_{n}M \to L_{n-1}M$ by $M_nX$, and call this the $n$th (algebraic) monochromatic layer. 
\begin{cor}\label{cor:mn}
	The $n$th monochromatic layer satisfies the formula $M_nBP_* \simeq \Sigma^{-n} v_n^{-1}BP_*/I_n^\infty$ and is smashing, i.e., for any $X \in \Stable_{BP_*BP}$ there is an equivalence 
	\[
	M_nX \simeq \Sigma^{-n} v_n^{-1}BP_*/I_n^\infty \otimes X.
	\]
	In particular, $M_nE_* \simeq \Sigma^{-n}E_*/I_n^\infty$.  
\end{cor}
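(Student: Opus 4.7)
The plan is to deduce both assertions directly from \Cref{prop:localcohomcomputations}. For the identification of $M_nBP_*$, I would simply reindex: by definition, $M_nBP_*$ is the fiber of the map $L_nBP_* \to L_{n-1}BP_*$, and shifting the index of \Cref{prop:localcohomcomputations} down by one yields a cofiber sequence
\[
\Sigma^{-n}v_n^{-1}BP_*/I_n^\infty \to L_nBP_* \to L_{n-1}BP_*,
\]
whose fiber is manifestly $\Sigma^{-n}v_n^{-1}BP_*/I_n^\infty$. (The base case $n=0$ needs a tiny bit of care, setting $L_{-1} = 0$ by convention, so that $M_0 BP_* \simeq L_0 BP_* \simeq p^{-1}BP_*$, which matches $\Sigma^{0}v_0^{-1}BP_*/I_0^\infty$ under the conventions $I_0 = (0)$ and $v_0 = p$.)

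For the smashing statement, I would invoke that both $L_n$ and $L_{n-1}$ are smashing (by \Cref{prop:localcohomcomputations}), together with the fact that $-\otimes X$ preserves fiber sequences, since $\Stable_{BP_*BP}$ is stable and the monoidal product is biexact (preserves colimits in each variable, cf.~\Cref{prop:stableprops}(1)). Tensoring the fiber sequence $M_nBP_* \to L_nBP_* \to L_{n-1}BP_*$ with $X$ and using the smashing identifications on the right two terms gives a fiber sequence $M_nBP_* \otimes X \to L_nX \to L_{n-1}X$, which by uniqueness of fibers yields $M_nX \simeq M_nBP_* \otimes X \simeq \Sigma^{-n}v_n^{-1}BP_*/I_n^\infty \otimes X$.

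Finally, specializing to $X = E_*$, the first two claims give
\[
M_nE_* \;\simeq\; \Sigma^{-n}v_n^{-1}BP_*/I_n^\infty \otimes_{BP_*} E_* \;\simeq\; \Sigma^{-n}v_n^{-1}E_*/I_n^\infty,
\]
and since $E = E_n$ is Morava $E$-theory of height $n$, in which $v_n$ is already a unit, the localization at $v_n$ is trivial and the expression reduces to $\Sigma^{-n}E_*/I_n^\infty$.

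I do not expect any substantive obstacle here: the corollary is essentially a packaging of the preceding proposition together with the general yoga that smashing localizations in a stable monoidal $\infty$-category produce smashing fibers. The only places requiring attention are the reindexing between \Cref{prop:localcohomcomputations} and the present statement, and the observation that $v_n$ acts invertibly on $E_*$ so that the $v_n$-localization disappears in the last identification.
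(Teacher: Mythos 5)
Your proposal is correct and follows the same route as the paper's (very terse) proof: the formula for $M_nBP_*$ is read off from the cofiber sequences of \Cref{prop:localcohomcomputations}, the smashing property follows because $M_n$ is the fiber of the two smashing functors $L_n$ and $L_{n-1}$, and the identification of $M_nE_*$ uses that $v_n$ is a unit in $E_*$. Your write-up just spells out the reindexing, the base case, and the fiber-uniqueness step that the paper leaves implicit.
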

\begin{proof}
Since $M_n$ is a fiber of smashing functors, it is smashing as well. The stated formula follows directly from \Cref{prop:localcohomcomputations}, and the rest is clear. 
\end{proof}

\begin{rem}[The algebraic telescope conjecture revisited]\label{rem:algtelescoperev}
	With the introduction of the functor $L_n$ we can give another version of the algebraic telescope conjecture \Cref{thm:moravak}. Recall that in stable homotopy an equivalent formulation of the telescope conjecture is that finite localization with respect to a finite type $n$-spectrum, denoted $L_n^f$, is equivalent to Bousfield localization with respect to $E(n)$, denoted $L_n$, see \cite{mahowaldsadofsky} or \cite{hovey_csc}. Here we formulate an algebraic version of this conjecture. 

	We say that $X \in \Stable_{BP_*BP}$ is $E(n)_*$-local if, for any $T \in \Stable_{BP_*BP}$ with $E(n)_* \otimes_{BP_*}T \simeq 0$, the space of maps $\Hom_{{BP_*BP}}(T,X)$ is contractible. These form a colocalizing subcategory of $\Stable_{BP_*BP}$ and by the $\infty$-categorical version of Bousfield localization \cite[Sec.~5.5.4]{htt} the inclusion of this full subcategory has a left adjoint, which we denote by $L_{E(n)_*}$. An alternative formulation of the algebraic telescope conjecture is that $L_n \simeq L_{E(n)_*}$. 

	It follows from \Cref{thm:bhvbpresults} that $E(n)_* \otimes_{BP_*}X \simeq 0$ if and only if $L_nX \simeq L_nBP_* \otimes_{BP_*}X \simeq 0$, or equivalently that $\bc{L_nBP_*} = \bc{E(n)_*}$. But by \cite[Cor.~11]{miller_finiteloc}	$L_n$ is Bousfield localization with respect to $L_nBP_*$, and hence $L_n \simeq L_{E(n)_*}$. It follows that this version of the algebraic telescope conjecture holds in $\Stable_{BP_*BP}$. 

	In \cite{hovey_chromatic} Hovey considers yet another version of the algebraic splitting conjecture, comparing $L_n$ with the functor given by Bousfield localization at the homology theory corresponding to $E(n)_*$. By \cite[Prop.~3.11]{hovey_chromatic} this functor is given by $H_*(E(n)_* \otimes_{BP_*} X)$ for $X \in \Stable_{BP_*BP}$. Hovey proves that this cannot agree in general with $L_n$, as the former has essential image $\cD_{BP_*BP}$, while the latter has essential image $\Stable_{BP_*BP}$. Nonetheless, the proof of \Cref{thm:genericprimes} shows that when $n < p-1$, so that $\Stable_{E_*E} \simeq \cD_{E_*E}$, these two localizations do agree.
\end{rem}

Similar to the case of $BP_*BP$ above, we can consider the localizing subcategory of $\Stable_{E(n)_*E(n)}$ generated by $E(n)_*/I_n$. There is an associated localization functor $L_{n-1}^E$ which by \cite[Cor.~8.23]{bhv1} has the property that $\Phi_*L_{n-1} \simeq L^E_{n-1}\Phi_*$.  We let $\Delta_n$ denote the functor that is right adjoint to $L_{n-1}^E$, viewed as endofunctors of $\Stable_{E(n)_*E(n)}$, which exists by \cite[Thm.~2.21]{bhv1}; in particular, there is a local duality equivalence 
\begin{equation}\label{eq:adjointdelta}
\iHom_{E(n)_*E(n)}(L_{n-1}^E E(n)_*,M) \simeq \Delta_nM
\end{equation}
for $M \in \Stable_{E(n)_*E(n)}$.

\subsection{Local cohomology at height $\infty$}

We now turn to the height $\infty$ analog of the theory presented above. 
\begin{defn}
Let $\Stable_{BP_*BP}^{I_{\infty}-\mathrm{tors}}$ be the localizing subcategory of $\Stable_{BP_*BP}$ generated by $BP_*/I_{\infty}  \cong \Z/p$. The associated colocalization and localization functors will be denoted by $\Gamma_{\infty}$ and $L_{\infty}$, respectively. 
\end{defn}

Note that $BP_*/I_{\infty} \in \Stable_{BP_*BP}$ is not compact, but we still have a diagram of adjunctions
\[
\xymatrix{\Stable_{BP_*BP}^{I_{\infty}-\mathrm{tors}} \ar@<0.5ex>[r]^-{\iota_{\infty}} & \Stable_{BP_*BP} \ar@<0.5ex>[l]^-{\Gamma_{\infty}}  \ar@<0.5ex>[r]^-{L_{\infty}} & \Stable_{BP_*BP}^{I_{\infty}-\mathrm{loc}}, \ar@<0.5ex>[l]}
\]
where the left adjoints are displayed on top. Recall from \eqref{eq:chromatictower} the algebraic chromatic tower
\[
\xymatrix{\ldots \ar[r] & L_2 \ar[r] & L_1 \ar[r] & L_0}.
\]
The next result identifies the limit of this tower.

\begin{prop}\label{prop:linftycc}
There is a natural equivalence of functors $\xymatrix{L_{\infty} \ar[r]^-{\sim} & \lim_nL_n}$. 
\end{prop}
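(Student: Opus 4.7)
The plan is to show that $X \to \lim_n L_n X$ serves as an $L_\infty$-localization of $X$, from which the claimed equivalence follows by uniqueness of Bousfield localizations.

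First, I construct the natural transformation $L_\infty \to \lim_n L_n$. By \Cref{lem:pinftyhfp}, $BP_*/I_\infty \simeq \colim_k BP_*/I_k$, and each $BP_*/I_k$ for $k\ge n+1$ lies in $\Loc(BP_*/I_{n+1})$, built by iterated cofibers of multiplication by $v_{n+1}, v_{n+2}, \ldots$. This yields inclusions $\Loc(BP_*/I_\infty) \subseteq \Loc(BP_*/I_{n+1})$ for every $n$, so every $L_n$-local object is $L_\infty$-local. By the universal property of $L_\infty$ one obtains natural transformations $L_\infty \to L_n$ compatible with the chromatic tower, and together they give $L_\infty \to \lim_n L_n$.

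Next, I verify that $\lim_n L_n X$ is $L_\infty$-local: since $\Gamma_\infty$ is right adjoint to $\iota_\infty$, it preserves limits, so $\Gamma_\infty(\lim_n L_n X) \simeq \lim_n \Gamma_\infty L_n X \simeq 0$ because each $L_n X$ is $L_\infty$-local. It therefore remains to show that the fiber of $X \to \lim_n L_n X$ lies in $\Loc(BP_*/I_\infty)$. Taking the limit of the fiber sequences $\Gamma_n X \to X \to L_n X$ identifies this fiber with $\lim_n \Gamma_n X$, so I must prove that $\lim_n \Gamma_n X$ is $I_\infty$-torsion; equivalently, that the canonical comparison $\Gamma_\infty X \to \lim_n \Gamma_n X$ (coming from the inclusions of torsion subcategories) is an equivalence.

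Applying $\Gamma_\infty$ to the fiber sequence $\Gamma_n X \to X \to L_n X$ gives $\Gamma_\infty \Gamma_n X \simeq \Gamma_\infty X$, and passing to limits (which $\Gamma_\infty$ preserves) yields $\Gamma_\infty(\lim_n \Gamma_n X) \simeq \Gamma_\infty X$. Thus the comparison becomes an equivalence after applying $\Gamma_\infty$. The main obstacle is the remaining step: showing that $\lim_n \Gamma_n X$ is itself $I_\infty$-torsion, so that $\Gamma_\infty$ acts as the identity on it. I expect this to be handled via the smashing description $\Gamma_n X \simeq \Sigma^{-(n+1)} BP_*/I_{n+1}^\infty \otimes X$ following from \Cref{prop:localcohomcomputations} and \Cref{cor:mn}, analysing the inverse system directly: as $n$ grows, the $I_{n+1}$-torsion is concentrated on classes also killed by higher $v_k$, so that the inverse limit recovers honest $I_\infty$-torsion. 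Once this is established, $X \to \lim_n L_n X$ is an $L_\infty$-localization and $L_\infty X \simeq \lim_n L_n X$ follows.
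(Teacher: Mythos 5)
Your setup agrees with the paper's: both reduce the statement to showing that the comparison map $\Gamma_\infty X \to \lim_n\Gamma_n X$ between the fibers of $X \to L_\infty X$ and $X \to \lim_n L_n X$ is an equivalence. But the one step you yourself flag as ``the main obstacle'' --- that $\lim_n\Gamma_n X$ is $I_\infty$-torsion --- is the entire content of the proposition, and you do not prove it. The heuristic you offer (analysing the inverse system $\{\Gamma_n X\}$ degreewise so that ``the inverse limit recovers honest $I_\infty$-torsion'') is unlikely to close the gap: localizing subcategories such as $\Loc(BP_*/I_\infty)$ are closed under colimits but not under inverse limits, and the terms $\Gamma_n X \simeq \Sigma^{-(n+1)}BP_*/I_{n+1}^\infty\otimes X$ drift off to $-\infty$ in homological degree as $n$ grows. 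A degreewise analysis of this tower is exactly the technique the paper uses later to prove \emph{chromatic convergence} (where the conclusion is that the limit \emph{vanishes} for suitably finite $X$); for general $X$ the limit is nonzero, and I see no way to exhibit it as an object of $\Loc(BP_*/I_\infty)$ by inspecting homology.

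The paper instead closes this step by a purely formal adjunction argument: it shows that $\lim_n\Gamma_n$ satisfies the universal property of the right adjoint $\Gamma_\infty$ to $\iota_\infty$. For $M \in \Stable_{BP_*BP}^{I_\infty-\mathrm{tors}}$ one has $M \in \Loc(BP_*/I_{n+1})$ for every $n$, so each $\Gamma_n$ is a right adjoint against such $M$, and
\[
\Hom(M,\lim_n\Gamma_n N)\simeq \lim_n\Hom(M,\Gamma_n N)\simeq \lim_n\Hom(\iota_n M,N)\simeq \Hom(\colim_n\iota_n M,N)\simeq \Hom(\iota_\infty M,N),
\]
the last equivalence because the system $\iota_n M$ is essentially constant with value $\iota_\infty M$. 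This identifies $\lim_n\Gamma_n$ with $\Gamma_\infty$ and hence $\lim_n L_n$ with $L_\infty$ via the map of cofiber sequences, without any direct computation with the inverse system. If you want to salvage your outline, you should replace your last paragraph with this argument. Separately, a minor point: your justification that $\lim_n L_n X$ is $L_\infty$-local via ``$\Gamma_\infty$ preserves limits'' is shaky --- the endofunctor $\iota_\infty\Gamma_\infty$ of $\Stable_{BP_*BP}$ is not a right adjoint, and limits in the torsion subcategory need not agree with limits in the ambient category. The correct (and easier) reason is that local objects are characterized by the vanishing of $\Hom(T,-)$ for all torsion $T$, a condition visibly closed under limits.
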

\begin{proof}
First we note that $BP_*/I_{\infty} \simeq \colim_n BP_*/I_n$, where the colimit is taking along the canonical quotient maps. The inclusion $\Loc(BP_*/I_{\infty}) \subseteq \Loc(BP_*/I_n)$ induces natural transformations $L_{\infty} \to L_n$ for all $n$. Therefore, we have a natural morphism of cofiber sequences of functors
\[
\xymatrix{\Gamma_{\infty} \ar[r] \ar[d]_{\phi} & \Id \ar[r] \ar[d]_{\sim} & L_{\infty} \ar[d] \\
\lim_n\Gamma_n \ar[r] & \Id \ar[r] & \lim_nL_n,}
\]
so it suffices to show that $\phi$ is an equivalence. We will show that $\lim_n\Gamma_n$ is right adjoint to the inclusion functor $\iota_{\infty}$. To this end, let $M \in \Stable_{BP_*BP}^{I_{\infty}-\mathrm{tors}}$ and $N \in \Stable_{BP_*BP}$; we get
\begin{align*}
\Hom(M,\lim_n\Gamma_nN) & \simeq \lim_n\Hom(M,\Gamma_nN) \\
& \simeq \lim_n\Hom(\iota_nM,N) \\
& \simeq \Hom(\colim_n\iota_nM,N) \\
& \simeq \Hom(\iota_{\infty}M,N),
\end{align*}
where the last equivalence from the construction, using the commutative triangle
\[
\xymatrixrowsep{2pc}
\xymatrix{& \Stable_{BP_*BP} \\
\Loc(BP_*/I_{\infty}) = \Stable_{BP_*BP}^{I_{\infty}-\mathrm{tors}} \ar[ru]^{\iota_{\infty}} \ar@{^{(}->}[rr] & & \Stable_{BP_*BP}^{I_{n}-\mathrm{tors}}= \Loc(BP_*/I_{n}). \ar[lu]_{\iota_{n}}}
\]
The claim follows.
\end{proof}

We will see in \Cref{sec:chromaticconvergence} that $L_{\infty}$ is equivalent to the identity functor on a large subcategory of $\Stable_{BP_*BP}$, i.e., we will prove an algebraic version of the chromatic convergence theorem of Hopkins and Ravenel \cite{orangebook}.

\subsection{The algebraic chromatic splitting conjecture}

The goal of this section is to explore an algebraic version of Hopkins's chromatic splitting conjecture for $\Stable_{BP_*BP}$. To this end, we recall that we let $F(n)_*$ denote the quotient $BP_*/I_n$. Note that since $L_n$ is smashing, $L_nF(n)_*$ is a compact object of $\Stable_{BP_*BP}^{I_{n+1}-\text{loc}}$. 

\begin{defn}
\sloppy We define the functor $L_{K(n)}$ to be the composite $\Lambda^{L_nF(n)_*}L_n$, where $\Lambda^{L_nF(n)_*}$ is the completion functor associated to the local duality context $(\Stable_{BP_*BP}^{I_{n+1}-\text{loc}},L_nF(n)_*)$. This definition makes sense because $L_n$ takes essential image in $\Stable_{BP_*BP}^{I_{n+1}-\text{loc}}$.
\end{defn}
Of course, $L_nF(n)_*$ is also an object of $\Stable_{BP_*BP}$ via the canonical inclusion, and we have the following. 
\begin{lem}
$L_{K(n)}$ is Bousfield localization on $\Stable_{BP_*BP}$ with respect to the theory $L_nF(n)_*$.
\end{lem}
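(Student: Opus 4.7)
The plan is to verify the two defining properties of Bousfield localization at $L_nF(n)_*$ directly: for every $X \in \Stable_{BP_*BP}$, I would show (i) the natural unit map $\eta_X \colon X \to L_{K(n)}X$ is an $L_nF(n)_*$-equivalence, and (ii) $L_{K(n)}X$ is $L_nF(n)_*$-local in the sense that $\Hom_{BP_*BP}(T,L_{K(n)}X) \simeq 0$ whenever $L_nF(n)_* \otimes T \simeq 0$. Together these characterize $L_{K(n)}$ as the Bousfield localization at $L_nF(n)_*$.

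For (i), I would factor $\eta_X$ as $X \to L_nX \to \Lambda^{L_nF(n)_*}L_nX = L_{K(n)}X$ and handle each piece separately. The fiber of the first map is $\Gamma_nX \in \Loc(F(n{+}1)_*)$; since $L_n$ is smashing (\Cref{prop:localcohomcomputations}) and $L_nF(n{+}1)_* \simeq 0$, tensoring with $L_nF(n)_* = L_nBP_* \otimes F(n)_*$ annihilates any object in $\Loc(F(n{+}1)_*)$. The fiber of the second map is, by local duality applied to the context $(\Stable_{BP_*BP}^{I_{n+1}-\text{loc}}, L_nF(n)_*)$, the acyclization $L^\vee_{L_nF(n)_*}L_nX$, which is $L_nF(n)_*$-acyclic by definition. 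Hence $\eta_X$ is an $L_nF(n)_*$-equivalence.

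For (ii), suppose $L_nF(n)_* \otimes T \simeq 0$. Apply the cofiber sequence $\Gamma_nT \to T \to L_nT$ and map into $L_{K(n)}X$. Since $L_{K(n)}X$ lies in $\Stable_{BP_*BP}^{I_{n+1}-\text{loc}}$ and is in particular $L_n$-local, and $\Gamma_nT \in \Loc(F(n{+}1)_*) = \ker(L_n)$, we have $\Hom(\Gamma_nT,L_{K(n)}X) \simeq 0$. On the other hand, since $L_n$ is smashing the hypothesis on $T$ gives $L_nF(n)_* \otimes L_nT \simeq L_n(L_nF(n)_* \otimes T) \simeq 0$, so that $L_nT$ is $L_nF(n)_*$-acyclic inside $\Stable_{BP_*BP}^{I_{n+1}-\text{loc}}$. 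The characterization of completion in local duality (\cite{bhv1}) then guarantees $\Hom(L_nT, \Lambda^{L_nF(n)_*}L_nX) \simeq 0$, so the long exact sequence yields $\Hom(T,L_{K(n)}X) \simeq 0$.

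The main obstacle is the compatibility between the ambient Bousfield locality in $\Stable_{BP_*BP}$ and the completeness property produced by $\Lambda^{L_nF(n)_*}$, which is intrinsically a property of the subcategory $\Stable_{BP_*BP}^{I_{n+1}-\text{loc}}$; the bridge is precisely the smashing property of $L_n$ together with the fact that $L_nF(n)_*$-acyclicity in $\Stable_{BP_*BP}$ descends to $L_nF(n)_*$-acyclicity of $L_nT$ in the local subcategory. Everything else is a direct unwinding of the adjoint definitions from \Cref{eq:adjointdelta} and the standard local duality setup.
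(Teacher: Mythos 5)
Your argument is correct and rests on the same two ingredients as the paper's own proof: the smashing property of $L_n$ (which gives $L_nF(n)_*\otimes X \simeq L_nF(n)_*\otimes L_nX$) and the local duality package for the context $(\Stable_{BP_*BP}^{I_{n+1}-\text{loc}},L_nF(n)_*)$ with $L_nF(n)_*$ compact there. The only difference is packaging: you verify the universal property of Bousfield localization at $L_nF(n)_*$ directly (unit is an equivalence, target is local), whereas the paper first notes that $L_{K(n)}=\Lambda^{L_nF(n)_*}L_n$ is a localization functor and then just identifies its acyclics as $\{X : L_nF(n)_*\otimes X\simeq 0\}$ — both routes are fine.
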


\begin{proof}
The argument is similar to the proof of \cite[Prop.~2.31]{bhv1}. It is easy to verify that $L_{K(n)} = \Lambda^{L_nF(n)_*}L_n$ is a localization functor, so it suffices to identify the corresponding category of acyclics. For $X \in \Stable_{BP_*BP}$, we have $L_{K(n)}X \simeq 0$ if and only if $L_nF(n)_* \otimes_{BP_*} L_nX \simeq 0$, because $L_nF(n) \in \Stable_{BP_*BP}^{I_{n+1}-\text{loc}}$ is compact. This in turn is equivalent to $L_nF(n)_* \otimes_{BP_*} X \simeq 0$, and the claim follows.  
\end{proof}

\begin{prop}\label{lem:acfs}
For any $X \in \Stable_{BP_*BP}$ there is a pullback square
\[
\xymatrix{L_nX \ar[r] \ar[d] & L_{K(n)}X \ar[d] \\
L_{n-1}X \ar[r]_-{\iota_X} & L_{n-1}L_{K(n)}X,}
\]
with horizontal fibers equivalent to $\iHom_{BP_*BP}(L_{n-1}BP_*,L_nX) \simeq \Delta_n(\Phi_*X)$.  
\end{prop}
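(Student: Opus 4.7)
The strategy is to realize the claimed square as the arithmetic fracture square from local duality, applied in the context
\[
(\cC, K) := (\Stable_{BP_*BP}^{I_{n+1}-\mathrm{loc}},\ L_nF(n)_*)
\]
to the object $L_nX \in \cC$. Following \cite[Sec.~2]{bhv1}, this local duality context yields colocalization, localization, and completion functors $\Gamma^K, L^K, \Lambda^K$ on $\cC$, together with a natural pullback square
\[
\xymatrix{Y \ar[r]\ar[d] & \Lambda^K Y \ar[d] \\ L^K Y \ar[r] & L^K\Lambda^K Y}
\]
for each $Y \in \cC$. By definition $\Lambda^K L_nX \simeq L_{K(n)}X$, so it remains to identify $L^K$ on $\cC$ with the restriction of $L_{n-1}$.

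For this identification, I would argue that an object $Y \in \cC$ is $K$-local if and only if $\Hom_\cC(L_nF(n)_*,Y) \simeq \Hom_{BP_*BP}(F(n)_*,Y) \simeq 0$ (using the $L_n\dashv \iota$ adjunction), which is the same as $Y$ being $L_{n-1}$-local in $\Stable_{BP_*BP}$. Thus $L^K|_\cC \simeq L_{n-1}|_\cC$, and in particular $L^KL_nX \simeq L_{n-1}X$ (using $L_{n-1}L_n\simeq L_{n-1}$), $L^K\Lambda^KL_nX \simeq L_{n-1}L_{K(n)}X$, and $L^K\mathbf{1}_\cC \simeq L_{n-1}BP_*$. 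Substituting $Y = L_nX$ into the fracture square then produces the pullback square asserted in \Cref{lem:acfs}, with the bottom arrow being $L_{n-1}$ applied to the completion unit $L_nX \to L_{K(n)}X$.

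To describe the horizontal fibers, I would apply the contravariant functor $\iHom_\cC(-,L_nX)$ to the fiber sequence $\Gamma^K\mathbf{1}_\cC \to \mathbf{1}_\cC \to L^K\mathbf{1}_\cC$ in $\cC$. Combined with the Greenlees--May-style identity $\Lambda^K \simeq \iHom_\cC(\Gamma^K\mathbf{1}_\cC,-)$ supplied by local duality \cite[Sec.~2]{bhv1}, this yields a fiber sequence
\[
\iHom_\cC(L_{n-1}BP_*, L_nX) \longrightarrow L_nX \longrightarrow L_{K(n)}X,
\]
identifying the top horizontal fiber as $\iHom_{BP_*BP}(L_{n-1}BP_*, L_nX)$ (the two internal-Hom's agree because $\cC \hookrightarrow \Stable_{BP_*BP}$ is a fully faithful symmetric monoidal inclusion containing both arguments, which are moreover $L_{n-1}$-local and hence $L_n$-local). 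The analogous identification for the bottom horizontal fiber is automatic from the pullback property, or follows by noting that $\iHom(L_{n-1}BP_*,-)$ already lands in $L_{n-1}$-local objects. Finally, transporting across the equivalence $\cC \simeq \Stable_{E(n)_*E(n)}$ of \Cref{thm:bhvbpresults}, under which $L_{n-1}BP_* \leftrightarrow L^E_{n-1}E(n)_*$ (via \Cref{cor:ethybasechange} applied with $m=0$, $k=n$) and $L_nX \leftrightarrow \Phi_*X$, rewrites the fiber as $\iHom_{E(n)_*E(n)}(L^E_{n-1}E(n)_*, \Phi_*X) \simeq \Delta_n(\Phi_*X)$ by \eqref{eq:adjointdelta}.

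The main technical point is the identification $L^K|_\cC \simeq L_{n-1}|_\cC$, which relies on tracking carefully which ideal generates each relevant torsion subcategory and on the smashing nature of the localization functors in question; once that is established, the remainder of the argument consists of citing the general fracture-square and local-duality formalism from \cite{bhv1} and transporting across the base-change equivalence of \Cref{thm:bhvbpresults}.
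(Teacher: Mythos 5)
Your proposal is correct and follows essentially the same route as the paper: apply the fracture square for the local duality context $(\Stable_{BP_*BP}^{I_{n+1}-\mathrm{loc}},L_nF(n)_*)$ to $L_nX$, identify the resulting localization with $L_{n-1}$, compute the fiber via the local duality formula $\Delta \simeq \iHom(L(\mathbf{1}),-)$, and transport across the base-change equivalence to get $\Delta_n(\Phi_*X)$. The only (harmless) variation is in the middle step: you identify $L_{L_nF(n)_*}|_{\cC}$ with $L_{n-1}$ by directly characterizing the local objects via adjunction and monogenicity, whereas the paper instead compares the torsion functors, showing $\Gamma_{L_nF(n)_*}L_n \simeq L_n\Gamma_{n-1}$ via a commutative diagram of adjunctions.
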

\begin{proof}
	Applying the fracture square \cite[Cor.~2.26]{bhv1} associated to the local duality context $(\Stable^{I_{n+1}-\text{loc}}_{BP_*BP},L_nF(n)_*)$ to $L_nX$ we get a pullback square 
\begin{equation}\label{eq:prefracturesquare}
\xymatrix{L_nX \ar[r] \ar[d] & \Lambda^{L_nF(n)_*}L_nX \ar[d] \\
L_{L_nF(n)_*}L_nX \ar[r]_-{\iota_X} & L_{L_nF(n)_*}\Lambda^{L_nF(n)_*}L_nX}
\end{equation}
for any $X \in \Stable_{BP_*BP}$. Since $L_{K(n)}X = \Lambda^{L_nF(n)_*}L_nX$ by definition, we must show that $L_{L_nF(n)_*}L_nX \simeq L_{n-1}X$. To see this, let us denote by $\cal{M}_n$ the essential image of the functor $\Gamma_{L_nF(n)_*}L_n$ on $\Stable_{BP_*BP}^{I_{n+1}-\text{loc}}$. The same argument as in the first part of \cite[Lem.~7.14]{bhv1} shows that there is a commutative diagram of adjunctions
\[
\xymatrix{\Stable_{BP_*BP}^{I_n-\text{tors}} \ar[d]_{L_n} \ar@<-0.5ex>[r] & \Stable_{BP_*BP} \ar@<-0.5ex>[l]_-{\Gamma_{n-1}} \ar[d]^{L_n}\\
 \cal{M}_n \ar@<-0.5ex>[r]  & \Stable_{BP_*BP}^{I_{n+1}-\text{loc}} \ar@<-0.5ex>[l]_-{\Gamma_{L_nF(n)_*}}.}
 \]
We have a fiber sequence
\[
\xymatrix{\Gamma_{L_nF(n)_*}L_nX \ar[r] & L_nX \ar[r] & L_{L_nF(n)_*}L_nX}
\]
which, using the diagram above, is equivalent to the fiber sequence 
\[
\xymatrix{L_n\Gamma_{n-1}X \ar[r] & L_nX \ar[r] & L_{L_nF(n)_*}L_nX.}
\]
 By comparing with the defining cofiber sequence $\Gamma_{n-1}X \to X \to L_{n-1}X$, we deduce that there are equivalences $L_{L_nF(n)_*}L_nX \simeq L_nL_{n-1}X \simeq L_{n-1}X$. 

To compute the fiber, we work with the fiber sequence associated to the top map in \eqref{eq:prefracturesquare}. By \cite[Thm.~2.21]{bhv1} there is a right adjoint $\Delta_{L_nF(n)_*}^{BP_*}$ to $L_{L_nF(n)_*}$ on $\Stable_{BP_*BP}$, fitting into a fiber sequence
\[
\xymatrix{\Delta_{L_nF(n)_*}^{BP_*} \ar[r] & \text{id} \ar[r] & \Lambda_{BP_*}^{L_nF(n)_*}.}
\]
Moreover, $\Delta$ satisfies the local duality formula $\Delta_{L_nF(n)_*}^{BP_*}(-) \simeq \iHom_{BP_*BP}(L_{L_nF(n)_*}BP_*,-)$. Therefore, the fiber is equivalent to
\begin{align*}
\Delta_{L_nF(n)_*}^{BP_*}(L_nX) & \simeq \iHom_{BP_*BP}(L_{L_nF(n)_*}BP_*,L_nX) \\
& \simeq \iHom_{BP_*BP}(L_{L_nF(n)_*}L_nBP_*,L_nX) \\
& \simeq \iHom_{BP_*BP}(L_{n-1}BP_*,L_nX)
\end{align*}
by the previous paragraph.

For the final equivalence of the statement, note that $\iHom_{BP_*BP}(-,-)$ is equivalent to the internal Hom in $\Stable_{BP_*BP}^{I_{n+1}-\text{loc}}$. Indeed, if $M,N \in \Stable_{BP_*BP}^{I_{n+1}-\text{loc}}$, then 
\[
\begin{split}
\Hom_{{BP_*BP}}(X,\iHom_{BP_*BP}(M,N)) \simeq \Hom_{{BP_*BP}}(X \otimes M,N)
\simeq 0,
\end{split}
\]
for all $X \in \Loc(BP_*/I_{n})$, since $\Stable_{BP_*BP}$ is monogenic and $N \in \Stable_{BP_*BP}^{I_{n+1}-\text{loc}}$. This implies that $\iHom_{BP_*BP}(M,N)$ is $I_{n+1}$-local, from which the claim easily follows. 

By the equivalence of categories of \Cref{thm:bhvbpresults} and using \eqref{eq:adjointdelta}, we thus see that, via the natural inclusion, the fiber in question is equivalent to
\[
\iHom_{E_*E}(\Phi_*L_{n-1}BP_*,\Phi_*L_nX) \simeq \iHom_{E_*E}(L_{n-1}^E E_*,\Phi_*X) \simeq \Delta_n(\Phi_*X),
\]
where we have used the fact that $\Phi_*L_nX \simeq \Phi_*X$, see \Cref{thm:bhvbpresults}(1). 
\end{proof}

The algebraic chromatic fracture square of \Cref{lem:acfs} describes how objects in $\Stable_{BP_*BP}$ are assembled from their local pieces $L_{K(n)}X$. In analogy to Hopkins's chromatic splitting conjecture~\cite[Conj.~4.2]{hovey_csc}, one can ask if the map $\iota_X$ is split for compact $X$ and, if so, how to further decompose its cofiber. 

In fact, there are various versions of the algebraic chromatic splitting conjecture, corresponding to the analogous statements in chromatic homotopy theory. The most conceptual form asks whether  $\iota_X$ is a split monomorphism for any $X \in \Stable_{BP_*BP}^{\omega}$.\footnote{Similar but inequivalent questions have been investigated by Hovey~\cite{hovey_csc} and Devinatz~\cite{devinatz_csc}.} However, we are interested in the more refined statement that also describes the other summand in the splitting. Furthermore, we will focus on the so-called edge case of the algebraic chromatic splitting conjecture corresponding to Hopkins' chromatic splitting conjecture at height $n$ for a type $n-1$ complex. 

To this end, fix $n\ge 0$ and note that the  algebraic chromatic fracture square of \Cref{lem:acfs} remains unchanged when $X$ is localized at $E_*=(E_n)_*$. Therefore, by base-change we may assume without loss of generality that we are working in $\Stable_{E_*E}$ with, and we write $L_n$ for what was previously denoted $L_n^E$.  In \cite[Thm.~6]{devinatzhopkins_subgroups}, Devinatz, Hopkins, and Miller construct a class $\zeta \in \pi_{-1}L_{K(n)}S^0$ by lifting the determinant class $\det \in \Hom_{\mathrm{cts}}(\mathbb{G}_n,E_*)$, the set of continuous functions from the (extended) Morava stabilizer group $\mathbb{G}_n = \mathbb{S}_n \rtimes \operatorname{Gal}(\F_{p^n}/\F_p)$ to $E_*$. If $X$ is a finite spectrum of type $n-1$, then Hopkins's chromatic splitting conjecture stipulates that there is an equivalence
\[
\xymatrix{L_{n-1}X \oplus \Sigma^{-1}L_{n-1}X \ar[r]^-{\sim} & L_{n-1}L_{K(n)}X}
\]
induced by the natural inclusion and $\zeta$. This conjecture is known to hold for $n=1$ as well as $n=2$ and $p\ge 3$, but needs to be modified for $n=2$ and $p=2$ by work of Beaudry \cite{beaudry_csc}. Therefore, we will assume that $p$ is large with respect to $n$ for the remainder of this section. 

In work in progress of the first author with Beaudry and Peterson~\cite{acsc}, we explain how to construct an algebraic class $\zeta \in \Hom_{E_*E}(E_*,\lim_i^1E_*/I_n^i)$ associated to $\det \in E_*^{\vee}E = \pi_*L_{K(n)}(E \otimes E) \cong \Hom_{\mathrm{cts}}(\mathbb{G}_n,E_*)$.\footnote{The skeptical reader may consider the existence of this class as being part of the conjecture throughout this section.} In order to lift this class to an analog in $\Stable_{E_*E}$ of the topological class $\zeta$, we need the following lemma, which was proven in \cite[Thm.~8.31]{bhv1}.

\begin{lem}\label{lem:localhomologyformula}
For a finitely presented $E_*E$-comodule $M$ and any $s\ge 0$, there is a canonical isomorphism $H^sL_{K(n)}M \cong \lim_{i}^sM/I_n^i$ of $E_*E$-comodules.
\end{lem}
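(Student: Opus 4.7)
I would identify $L_{K(n)}M$ with the derived $I_n$-adic completion of $M$ in $\Stable_{E_*E}$ and compute its cohomology via an inverse limit spectral sequence. Via the local duality framework applied to the context $(\Stable_{E_*E}, E_*/I_n)$ (together with the equivalences of \Cref{thm:bhvbpresults}), the functor $L_{K(n)}$ on a finitely presented $E_*E$-comodule corresponds to the completion functor $\Lambda^{E_*/I_n}$ on $\Stable_{E_*E}$. Since $E_*$ is Noetherian and $I_n$ is generated by the regular sequence $(p, v_1, \ldots, v_{n-1})$, this completion admits a presentation as the homotopy limit of a Koszul tower:
$$L_{K(n)}M \simeq \lim_i \bigl(M \otimes_{E_*} E_*/I_n^{[i]}\bigr),$$
where $I_n^{[i]} = (p^i, v_1^i, \ldots, v_{n-1}^i)$. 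A standard pro-equivalence argument, using the finite presentation of $M$ and the cofinality of the filtrations $\{I_n^i\}$ and $\{I_n^{[i]}\}$, then identifies this with $\lim_i (M/I_n^i)$ inside $\Stable_{E_*E}$.

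\textbf{Cohomology extraction.} With $L_{K(n)}M$ thus presented as a homotopy limit of discrete comodules, the next step is to apply the Bousfield--Kan-type spectral sequence for the limit of a tower of objects in the heart of the natural $t$-structure on $\Stable_{E_*E}$ (\Cref{prop:landweberha}):
$$E_2^{s,t} = {\lim}^s_i H^t(M/I_n^i) \Longrightarrow H^{s+t}\bigl(\lim_i M/I_n^i\bigr),$$
with $\lim^s$ computed in $\Comod_{E_*E}$. Because each $M/I_n^i$ is discrete, one has $H^t(M/I_n^i) = 0$ for $t \neq 0$ and $H^0(M/I_n^i) = M/I_n^i$, so the spectral sequence collapses onto the $t=0$ line. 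This yields the desired canonical isomorphism $H^s L_{K(n)}M \cong \lim^s_i M/I_n^i$ of $E_*E$-comodules; naturality in $M$ is built into both the Koszul presentation and the spectral sequence, so no further effort is needed to make the isomorphism canonical.

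\textbf{Main obstacle.} The principal technical hurdle is ensuring that the $\lim^s$ appearing in the spectral sequence coincides with the right derived functor of $\lim$ in the abelian category $\Comod_{E_*E}$, rather than some intrinsic derived-limit functor computed inside $\Stable_{E_*E}$. This should follow from the fact that the inclusion $\Comod_{E_*E} \hookrightarrow \Stable_{E_*E}$ is compatible with the $t$-structures (\Cref{prop:landweberha}) and preserves limits of towers bounded above in the $t$-structure, so that the $\lim^s$ computed in the stable category agree with their classical counterparts in the abelian category. A secondary concern is the pro-equivalence between the towers $\{M \otimes_{E_*} E_*/I_n^{[i]}\}$ and $\{M/I_n^i\}$, which requires controlling the higher Tor terms; this uses the finite presentation of $M$ together with the regularity of the sequence $(p, v_1, \ldots, v_{n-1})$ in $E_*$ to ensure vanishing (up to pro-isomorphism) via an Artin--Rees-style argument.
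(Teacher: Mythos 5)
Your proposal is correct and takes essentially the same route as the proof this paper relies on: the statement is not proved here but cited from \cite[Thm.~8.31]{bhv1}, where it is likewise established by identifying $L_{K(n)}M$ with the derived $I_n$-completion presented as the limit of a Koszul tower, using Noetherianness of $E_*$ and finite presentation of $M$ to replace that tower, up to pro-isomorphism, by the tower of discrete comodules $M/I_n^i$, and then reading off $H^s$ of the homotopy limit as the derived functors $\lim^s$ in $\Comod_{E_*E}$ via the $t$-structure of \Cref{prop:landweberha}. The two technical points you flag (that $\lim^s$ must be the derived limit in the abelian comodule category, where products are not exact, and the pro-vanishing of higher Koszul homology) are exactly the issues the cited argument addresses, in the way you indicate.
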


Therefore, the convergent hyperext spectral sequence yields a (potentially trivial) class $\zeta \in \Ext_{E_*E}^1(E_*, L_{K(n)}E_*)$, i.e., a map
\[
\xymatrix{\zeta_{E_*}\colon \Sigma^{-1}E_* \ar[r] & L_{K(n)}E_*}
\]
in $\Stable_{E_*E}$. Note that, since $p$ is assumed to be large with respect to $n$, \Cref{thm:genericprimes} implies that $\Stable_{E_*E} \simeq \cD_{E_*E}$. It follows that there are corresponding maps $\zeta_{M}\colon L_{n-1}M \to L_{n-1}L_{K(n)}M$ for any $M \in \Stable_{E_*E}^{\omega}$. We may thus state an algebraic version of the chromatic splitting conjecture. 

\begin{conj}[Algebraic chromatic splitting conjecture]\label{conj:acsc}
For any $M \in \Thick(E_*/I_{n-1})$ there is an equivalence 
\[
\xymatrix{L_{n-1}M \oplus \Sigma^{-1}L_{n-1}M \ar[r]^-{\sim} & L_{n-1}L_{K(n)}M}
\]
induced by the maps $\iota_M$ and $\zeta_M$.
\end{conj}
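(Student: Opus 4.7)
The plan is to reduce the conjecture to the single generating comodule $M=E_*/I_{n-1}$ by a thick-subcategory argument, to compute $L_{K(n)}(E_*/I_{n-1})$ explicitly via \Cref{lem:localhomologyformula}, and then to identify the splitting through the $\zeta$-class. Throughout, the hypothesis that $p$ is large with respect to $n$ is essential: \Cref{thm:genericprimes} identifies $\Stable_{E_*E}$ with $\cD_{E_*E}$, so all calculations may be carried out in the derived category of discrete comodules. Both $L_{n-1}L_{K(n)}(-)$ and $L_{n-1}(-)\oplus\Sigma^{-1}L_{n-1}(-)$ are exact functors of $M$, and $(\iota_{(-)},\zeta_{(-)})$ is natural, so the subcategory of $\Thick(E_*/I_{n-1})$ on which the comparison is an equivalence is thick. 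Hence it suffices to treat the generator $M=E_*/I_{n-1}$.

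For this $M$, the identity $I_n\cdot (E_*/I_{n-1})=(v_{n-1})\cdot(E_*/I_{n-1})$ gives $(E_*/I_{n-1})/I_n^i\cong E_*/(I_{n-1},v_{n-1}^i)$, whose transition maps are surjective. Thus \Cref{lem:localhomologyformula} collapses and $L_{K(n)}(E_*/I_{n-1})\simeq (E_*/I_{n-1})^\wedge_{v_{n-1}}$ is concentrated in cohomological degree zero. Applying $L_{n-1}$ and invoking the algebraic chromatic fracture square \Cref{lem:acfs} produces a cofiber sequence
\[
\Delta_n(\Phi_*(E_*/I_{n-1}))\to L_{n-1}(E_*/I_{n-1})\xrightarrow{\iota_{E_*/I_{n-1}}} L_{n-1}L_{K(n)}(E_*/I_{n-1}),
\]
and by \Cref{thm:bhvbpresults}(2) we may transport the picture into $\Stable_{E(n-1)_*E(n-1)}$, where $L_{n-1}(E_*/I_{n-1})\simeq v_{n-1}^{-1}E_*/I_{n-1}$. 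The conjecture is then equivalent to the identification $\Delta_n(\Phi_*(E_*/I_{n-1}))\simeq \Sigma^{-2}L_{n-1}(E_*/I_{n-1})$ together with the realization of a section by $\zeta_{E_*/I_{n-1}}$ with cofiber $\Sigma^{-1}L_{n-1}(E_*/I_{n-1})$. Via the local duality formula $\Delta_n(N)\simeq \iHom_{E_*E}(L_{n-1}^E E_*,N)$, the first assertion is a hyperext computation accessible through a Koszul-type resolution of $L_{n-1}^E E_*$.

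The principal obstacle is the construction of $\zeta_{E_*/I_{n-1}}$ and the verification that it provides the required splitting. The universal class $\zeta\in \Ext^1_{E_*E}(E_*,L_{K(n)}E_*)$ must be realized as an algebraic lift of the topological determinant character of the extended Morava stabilizer group $\mathbb{G}_n$, via the identification $E_*^{\vee}E\cong \Hom_{\mathrm{cts}}(\mathbb{G}_n,E_*)$ coupled with \Cref{lem:localhomologyformula}. Showing that the base-change $\zeta_{E_*/I_{n-1}}$ projects nontrivially onto $\Ext^1_{E_*E}(E_*,\Sigma^{-1}L_{n-1}(E_*/I_{n-1}))$ and hence splits $\iota_{E_*/I_{n-1}}$ with the correct cofiber requires a refined analysis of the $\mathbb{G}_n$-cohomology of $E_*$, which is the content of the forthcoming joint work \cite{acsc}. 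This step is the algebraic counterpart of the most delicate cohomological input in the proof of Hopkins' topological chromatic splitting conjecture.
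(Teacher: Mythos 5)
This statement is a \emph{conjecture}: the paper does not prove it, and no proof of it appears anywhere in the text. What the paper does establish is \Cref{prop:acsceq}, a list of equivalent reformulations, together with the remark that a thick-subcategory argument reduces everything to the generator $E_*/I_{n-1}$. Your proposal, read carefully, is also not a proof: the decisive step --- constructing $\zeta_{E_*/I_{n-1}}$ and showing it splits $\iota_{E_*/I_{n-1}}$ with the correct cofiber --- is explicitly deferred to the forthcoming work \cite{acsc}. What remains of your argument is essentially a rederivation of the equivalences in \Cref{prop:acsceq}, so you have reformulated the conjecture rather than proved it.

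Beyond that, there is a concrete error in the one computation you do carry out. You claim that because the transition maps $M/v_{n-1}^{i+1}\to M/v_{n-1}^i$ are surjective, the derived limits in \Cref{lem:localhomologyformula} collapse and $L_{K(n)}(E_*/I_{n-1})$ is concentrated in degree $0$. The $\lim^s$ in that lemma are derived functors of the inverse limit in $\Comod_{E_*E}$ (equivalently, the limit is taken in $\Stable_{E_*E}$), and this category does not satisfy AB4${}^*$; surjectivity of the transition maps on underlying modules does not kill $\lim^1$. Indeed, \Cref{prop:acsceq}(2) shows the conjecture \emph{predicts} $\lim^1_i M/v_{n-1}^i \cong v_{n-1}^{-1}M \neq 0$, and the class $\zeta$ is manufactured precisely from this nonvanishing $\lim^1$ (cf.\ the discussion preceding \Cref{lem:localhomologyformula}, where $\zeta \in \Hom_{E_*E}(E_*,\lim^1_i E_*/I_n^i)$). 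If your vanishing claim were correct, $L_{n-1}L_{K(n)}M$ would have no $\Sigma^{-1}L_{n-1}M$ summand and the conjecture as stated would be false. So the computation both contradicts the statement you are trying to prove and removes the very object your splitting map is supposed to hit.
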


Note that a thick subcategory argument reduces this conjecture to the case $M = E_*/I_{n-1}$. We will therefore restrict attention to the case that $M$ is a finitely presented $I_{n-1}$-torsion $E_*$-comodule viewed as an object of $\Stable_{E_*E}$ concentrated in degree $0$. There are then two other equivalent formulations of this conjecture, in particular relating it to the version of the algebraic chromatic splitting conjecture proposed in unpublished work by Hopkins and Sadofsky. Combining the following result with~\cite{acsc}, this would show that \Cref{conj:acsc} is equivalent to the topological chromatic splitting conjecture. 

\begin{prop}\label{prop:acsceq}
For a finitely presented $I_{n-1}$-torsion $E_*E$-comodule $M$ the following three statements are equivalent:
\begin{enumerate}
	\item The algebraic chromatic splitting conjecture holds for $M$.
	\item The maps $\iota_M$ and $\zeta_M$ induce isomorphisms 
		\[
		\mathrm{lim}_i^sM/v_{n-1}^i \cong 
			\begin{cases}
				M & \text{if } s=0 \\
				v_{n-1}^{-1}M & \text{if } s=1 \\
				0 & \text{otherwise}.
			\end{cases}
		\]
	\item The class $\zeta_M$ induces an equivalence $D(L_{n-1}E_*) \otimes M \simeq \Sigma^{-2}L_{n-1}M$, where $D$ denotes internal duality in the stable category $\Stable_{E_*E}$. 
\end{enumerate}
\end{prop}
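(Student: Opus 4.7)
The plan is to establish $(1) \Leftrightarrow (3)$ and $(1) \Leftrightarrow (2)$ via the algebraic chromatic fracture square of \Cref{lem:acfs}, local duality, and \Cref{lem:localhomologyformula}. First, I would reformulate (1). Applying \Cref{lem:acfs} to $M$ in $\Stable_{E_*E}$ (after base-change using \Cref{thm:bhvbpresults}) yields a fiber sequence
\[
\Delta_n M \longrightarrow L_{n-1}M \xrightarrow{\iota_M} L_{n-1}L_{K(n)}M
\]
with cofiber $\Sigma\Delta_n M$. The conjectural splitting in (1) via $\iota_M$ and $\zeta_M$ holds if and only if the composite
\[
\Sigma^{-1}L_{n-1}M \xrightarrow{\zeta_M} L_{n-1}L_{K(n)}M \longrightarrow \Sigma\Delta_n M
\]
is an equivalence, i.e., if $\zeta_M$ identifies $\Delta_n M$ with $\Sigma^{-2}L_{n-1}M$.

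For $(1) \Leftrightarrow (3)$: by the local duality formula \eqref{eq:adjointdelta}, $\Delta_n M \simeq \iHom_{E_*E}(L_{n-1}^E E_*, M)$. Under the hypothesis that $p$ is large with respect to $n$, \Cref{thm:genericprimes} gives $\Stable_{E_*E} \simeq \cD_{E_*E}$, so the finitely presented comodule $M$ is dualizable and
\[
\iHom_{E_*E}(L_{n-1}^E E_*, M) \simeq D(L_{n-1}^E E_*) \otimes M.
\]
Substituting into the reformulation above, (1) becomes precisely the assertion that $\zeta_M$ exhibits an equivalence $D(L_{n-1}^E E_*) \otimes M \simeq \Sigma^{-2}L_{n-1}M$, which is (3).

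For $(1) \Leftrightarrow (2)$: \Cref{lem:localhomologyformula} gives $H^s L_{K(n)}M \cong \lim_i^s M/I_n^i$. Since $M$ is $I_{n-1}$-torsion and finitely presented, $I_{n-1}$ acts nilpotently on $M$, so the filtrations $\{I_n^i M\}$ and $\{v_{n-1}^i M\}$ on $M$ are cofinal and yield the same derived limits; thus $\lim_i^s M/I_n^i \cong \lim_i^s M/v_{n-1}^i$. The functor $L_{n-1}^E$ is exact and sends an $I_{n-1}$-torsion comodule $N$ to $v_{n-1}^{-1}N$, so $L_{n-1}^E L_{K(n)}M$ has homology concentrated in degrees $0$ and $-1$, each isomorphic to $L_{n-1}^E M$. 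Statement (2) amounts to saying that $\iota_M$ and $\zeta_M$ realize these identifications on $H^0$ and $H^{-1}$ respectively; since $L_{n-1}L_{K(n)}M$ has homology only in two adjacent degrees, the combined map
\[
(\iota_M, \zeta_M) \colon L_{n-1}M \oplus \Sigma^{-1}L_{n-1}M \longrightarrow L_{n-1}L_{K(n)}M
\]
is then a homology isomorphism, hence an equivalence in $\Stable_{E_*E}^{<\infty} \simeq \cD_{E_*E}^{<\infty}$; this is (1). Conversely, (1) trivially yields the homological identifications in (2) by reading off a direct sum.

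The main obstacle will be verifying that the splittings extracted abstractly in each argument are genuinely induced by the specific class $\zeta_M$ constructed via the hyperext spectral sequence from the determinant (rather than by some abstract homological splitting). By naturality in $M$ and the thick subcategory hypothesis $M \in \Thick(E_*/I_{n-1})$, this reduces to checking the case $M = E_*/I_{n-1}$, where $\zeta$ can be compared directly with the boundary map of the chromatic fracture square; this comparison is effectively pinned down by the explicit form of the determinant class described in the paragraph preceding \Cref{conj:acsc}.
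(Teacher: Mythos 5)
Your proposal is correct and follows essentially the same route as the paper: both rest on the fracture square of \cref{lem:acfs}, the local duality identification $\Delta_nM \simeq \iHom(L_{n-1}E_*,M) \simeq D(L_{n-1}E_*)\otimes M$ via compactness of $M$, the formula $H^sL_{K(n)}M \cong \lim_i^sM/I_n^i$, and the equivalence $\Stable_{E_*E}\simeq\cD_{E_*E}$ at large primes to upgrade the quasi-isomorphism to an equivalence. The only cosmetic difference is that you organize the argument as $(1)\Leftrightarrow(3)$ and $(1)\Leftrightarrow(2)$, whereas the paper runs the cycle $(1)\Rightarrow(3)\Rightarrow(2)\Rightarrow(1)$.
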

\begin{proof}
Let $M \in \Thick(E_*/I_{n-1})$ and consider the fiber sequence
\[
\xymatrix{\Delta_nM \ar[r] & L_{n-1}M \ar[r]^-{\iota_M} & L_{n-1}L_{K(n)}M,}
\]
which follows from \Cref{lem:acfs} (recall that we assume that $M \in \Stable_{E_*E}$). On the one hand, if the algebraic chromatic splitting conjecture holds for $M$, then we obtain an equivalence 
\[
\Sigma\Delta_nM \simeq \Sigma^{-1}L_{n-1}M.
\]
On the other hand, \eqref{eq:adjointdelta} provides a natural equivalence $\Delta_nM \simeq \iHom(L_{n-1}E_*,M)$, hence $\Delta_nM \simeq D(L_{n-1}E_*) \otimes M$ by compactness of $M$. This shows that (1) implies (3).

Now assume Statement (3), which is equivalent to $\Delta_nM \simeq \Sigma^{-2}L_{n-1}M$ as just shown. From the long exact sequence in cohomology associated to the fiber sequence $\Delta_nM \to M \to L_{K(n)}M$ we thus obtain
\[
H^sL_{K(n)}M \cong
	\begin{cases}
		M & \text{if } s=0 \\
		L_{n-1}M & \text{if } s=1 \\
		0 & \text{otherwise}.
	\end{cases}
\]
The isomorphisms in (2) follow from this by virtue of \Cref{lem:localhomologyformula} and~\Cref{prop:localcohomcomputations}, 
because $M$ is $I_{n-1}$-torsion.

Finally, Condition (2) implies that the map 
\[
\xymatrix{(\iota_M,\zeta_M)\colon L_{n-1}M \oplus \Sigma^{-1}L_{n-1}M \ar[r]^-{\sim} & L_{n-1}L_{K(n)}M}
\]
is a quasi-isomorphism. Since $\Stable_{E_*E} \simeq \cD_{E_*E}$ for large $p$ by \Cref{thm:genericprimes}, this gives the algebraic chromatic splitting conjecture for $M$.
\end{proof}

\begin{rem}
Statement (3) of the previous proposition says in particular that $L_{n-1}(E_*/I_{n-1})$ is reflexive (or weakly dualizable) as an object in the derived category of $(E_*/I_{n-1},E_*E/I_{n-1})$-comodules, i.e., that $L_{n-1}E_*/I_{n-1} \simeq D_{I_{n-1}}^2(L_{n-1}E_*/I_{n-1})$ via the canonical map, where $D_{I_{n-1}} = \iHom_{E_*E/I_{n-1}}(-,E_*/I_{n-1})$. This is remarkable, since $L_{n-1}E_*/I_{n-1} \in \Stable_{E_*E/I_{n-1}}$ is not compact and hence not dualizable. 
\end{rem}

\begin{rem}
There is also a version of \Cref{prop:acsceq} that is independent of the existence of the algebraic analog of $\zeta$. In this case, the proof still gives the implications $(1) \implies (3) \implies (2)$.  
\end{rem}

\section{The algebraic chromatic convergence theorem}\label{sec:chromaticconvergence}

The chromatic convergence theorem shows that a finite spectrum $F$ can be recovered from its chromatic localizations $L_nF$. The goal of this section to establish an algebraic analog of this result for $\Stable_{BP_*BP}$.

\subsection{The theory of algebraic $n$-buds and comodules}
In this section, we present an analog of the parts of the theory of $n$-buds of formal groups as developed by Goerss~\cite[Sec.~3.3]{goerss_quasi-coherent_2008} to the setting of $BP_*BP$-comodules, and then generalize it to $\Stable_{BP_*BP}$. This will provide an appropriate setting for the first version of our algebraic chromatic convergence theorem, see \Cref{thm:chromaticconvergence}.

\begin{defn}
For any $0\le n \le \infty$ let $(B_n,W_n)$ be the Hopf algebroid representing $n$-buds of formal groups. Explicitly, $B_n = \Z_{(p)}[v_1,\ldots,v_n]$ and $W_n = B_n[a_1,\ldots,a_{n-1}]$; viewing $(B_n,W_n)$ as a sub-Hopf algebroid of $(BP_*,BP_*BP)$ via the natural inclusion map
\[
\xymatrix{q_n\colon(B_n,W_n) \ar[r] & (BP_*,BP_*BP)}
\]
determines the structure maps. These functors induce a natural isomorphism
\begin{equation}\label{eq:nbudhopfalgebroid}
\colim_n(B_n,W_n) \cong (BP_*,BP_*BP)
\end{equation}
of Hopf algebroids, which motivates to write $(B_{\infty},W_{\infty}) = (B,W) = (BP_*,BP_*BP)$. 
\end{defn}

The map $q_n$ gives rise to functors of abelian categories
\[
\xymatrix{(q_n)_*\colon \Comod_{W_n} \ar@<0.5ex>[r] & \Comod_{BP_*BP}\colon (q_n)^*, \ar@<0.5ex>[l]} 
\]
where the left adjoint is given by $(q_n)_*M = BP_* \otimes_{B_n}M$ with its natural comodule structure. As $BP_*$ is flat as a $B_n$-module, $q_n$ is exact. Note that in Goerss's algebro-geometric language \cite{goerss_quasi-coherent_2008}, the left adjoint is denoted by $(q_n)^*$, whereas our choice of notation is consistent with the one in \Cref{sec:localduality}. Since the $q_n$ are compatible with each other for varying $n$, we obtain an induced functor
\[
\xymatrix{q_*\colon \colim_n\Comod_{W_n} \ar[r] & \Comod_{BP_*BP},}
\]
where the colimit is taken over the functors $(q_{n,n+1})_*\colon \Comod_{W_n} \to \Comod_{W_{n+1}}$ sending a comodule $M$ to $B_{n+1} \otimes_{B_n}M$. It is proven in \cite[Prop.~3.25]{goerss_quasi-coherent_2008} that $q_*$ is faithful and that it induces an equivalence
\[
\xymatrix{q_*^{\omega}\colon \colim_n \Comod_{W_n}^{\omega} \ar[r]^-{\sim} & \Comod_{BP_*BP}^{\omega},}
\]
see also \cite{smithling}. The next result relates two important properties of a comodule to the categories $\Comod_{W_n}$. Recall that a $BP_*BP$-comodule $M$ is said to have projective $BP_*$-dimension $n\ge 0$ if the underlying $BP_*$-module $\epsilon_*(M)$ has projective dimension $n$. 

\begin{lem}\label{lem:budstorsiondim}
For a comodule $M\in \Comod_{BP_*BP}$, consider the following conditions:
\begin{enumerate}
	\item $M$ is in the essential image of $(q_r)_*$.
	\item The projective $BP_*$-dimension of $M$ is at most $r+1$.
	\item $M$ is $v_{r+2}$-torsion free. Equivalently, $M$ is $v_i$-torsion free for all $i\ge r+2$. 
\end{enumerate}
Then Condition (1) implies the Condition (2). If $M$ is additionally bounded below, then Condition (2) implies Condition (3). 
\end{lem}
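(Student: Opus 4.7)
The plan is to handle (1) $\Rightarrow$ (2) by direct base change along $q_r$, and (2) $\Rightarrow$ (3) in two steps: first reinterpret the conclusion via Koszul homology, then exploit the comodule structure and Landweber's invariant-prime classification.

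For (1) $\Rightarrow$ (2): the ring $B_r = \Z_{(p)}[v_1,\dots,v_r]$ is regular of Krull dimension $r+1$, hence of global dimension $r+1$, so every $W_r$-comodule $N$ admits a $B_r$-projective resolution of length at most $r+1$. Since $BP_*$ is free, and in particular flat, over $B_r$, base-changing such a resolution along $B_r\hookrightarrow BP_*$ yields a projective $BP_*$-resolution of $M=(q_r)_*N=BP_*\otimes_{B_r}N$ of the same length, so $\pdim_{BP_*}(M)\le r+1$.

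For (2) $\Rightarrow$ (3): first observe that $(p,v_1,\dots,v_{r+1})$ is a regular sequence in $BP_*$ generating $I_{r+2}$, so its Koszul complex is a free resolution of $BP_*/I_{r+2}$ of length $r+2$ whose top homology yields
\[
\Tor_{r+2}^{BP_*}(BP_*/I_{r+2}, M)\;\cong\;\Ann_M(I_{r+2}).
\]
The hypothesis $\pdim_{BP_*}(M)\le r+1$ forces this Tor to vanish, so no nonzero element of $M$ is annihilated by $\{p,v_1,\dots,v_{r+1}\}$ simultaneously. It then suffices to show that a $v_i$-torsion element in $M$ for some $i\ge r+2$ produces such a nonzero element of $\Ann_M(I_{r+2})$. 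Using that $M$ is bounded below, I would pass to a minimal-degree nonzero element $y$ of the subcomodule of $M$ generated by the chosen torsion element; by a standard degree count $y$ is primitive in $M$, so its $BP_*$-annihilator $J=\mathrm{ann}(y)$ is an invariant ideal and Landweber's classification gives $\sqrt{J}=I_n$ for some $n$. The expansion $\eta_R(v_k)\equiv v_k\pmod{I_k}$ combined with the original $v_i$-torsion can then be used to force $n\ge r+2$, at which point $p,v_1,\dots,v_{r+1}$ all act nilpotently on $BP_*\cdot y$ and a suitable $BP_*$-multiple of $y$ is a nonzero element of $\Ann_M(I_{r+2})$, the desired contradiction. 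Running the argument for each $i\ge r+2$ furnishes the stated equivalence as well.

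The main obstacle is the penultimate step, converting an arbitrary $v_i$-torsion element into a primitive element whose annihilator has radical $I_n$ with $n\ge r+2$; this is exactly where bounded-belowness does its real work, through the interaction of the coaction, the invariance of the ideals $I_n$ in $BP_*$, and the primitivity of minimal-degree elements.
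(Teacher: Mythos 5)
Your proof of (1) $\Rightarrow$ (2) is correct and is essentially the paper's argument: $B_r$ has global dimension $r+1$, and the resolution is transported along the (free, hence flat) extension $B_r \to BP_*$. For (2) $\Rightarrow$ (3) the paper does not argue directly at all: it simply invokes Johnson--Yosimura, who prove both that a bounded below comodule of projective $BP_*$-dimension $\le r+1$ is $v_{r+2}$-torsion free and that $v_{r+2}$-torsion freeness is equivalent to $v_i$-torsion freeness for all $i \ge r+2$. Your first reduction is a good one and is faithful to how such results are actually proved: the Koszul complex on the regular sequence $(p,v_1,\dots,v_{r+1})$ identifies $\Tor^{BP_*}_{r+2}(BP_*/I_{r+2},M)$ with $\Ann_M(I_{r+2})$, and $\pdim_{BP_*}(M)\le r+1$ forces this to vanish.

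The genuine gap is the step you yourself flag as the main obstacle, and as written it would fail. You pass to the subcomodule $N$ generated by a $v_i$-torsion element $x$ and take a minimal-degree element $y$ of $N$; but $N$ contains all components of the iterated coaction on $x$, and its minimal-degree element need not be $v_i$-torsion at all, so nothing forces $\Ann(y)$ to contain a power of any $v_j$ -- indeed $\Ann(y)$ could be zero. The correct route requires the nontrivial fact (Landweber, Johnson--Yosimura) that the $v_i$-torsion elements of a comodule form a \emph{subcomodule} $T$; the minimal-degree element of $T$ is then both primitive and torsion. Moreover, ``Landweber's classification gives $\sqrt{J}=I_n$'' overstates what the invariant prime ideal theorem says about a general invariant ideal $J$: what one actually needs, and must prove, is the structure result that an invariant ideal containing a power of $v_i$ contains $I_i$ (indeed $I_{i+1}$) in its radical, after which a descending chain of multiples of $y$ produces a nonzero element of $\Ann_M(I_{r+2})$. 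Neither of these inputs is established in your sketch, so the argument does not close; the paper avoids the issue entirely by citing \cite[Prop.~2.5 and Prop.~3.7]{johnson_torsion}.
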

\begin{proof}
Suppose first that $M$ is in the essential image of $(q_r)_*$, say $M \cong (q_r)_*N$. Since the homological dimension of $B_r$ is $r+1$, $N$ admits a projective resolution by $B_r$-modules of length at most $r+1$. Since $(q_r)_*$ preserves projective objects, it follows that (1) implies (2). 

As shown in \cite[Prop.~2.5]{johnson_torsion}, a $BP_*BP$-comodule $M$ is $v_{r+1}$-torsion free if and only if it is $v_m$-torsion free for all $m >r$, which gives the last claim in Condition (3). Moreover, Johnson and Yosimura prove that for bounded below $M$, this condition follows from $M$ having homological $BP_*$-dimension $\le r+1$, see \cite[Prop.~3.7]{johnson_torsion}, hence (2) implies (3). 
\end{proof}

In order to prove the algebraic chromatic convergence theorem, we will use a derived version of this theory. 

To this end, let $\Stable_{W_n}$ for $0 \le n < \infty$ denote the stable category associated to $\Comod_{W_n}$.
\begin{lem}
	The stable category $\Stable_{W_n}$ is monogenic for all $n$. 
\end{lem}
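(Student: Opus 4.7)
The plan is to verify that $(B_n, W_n)$ is a Landweber Hopf algebroid in the sense of \cite[Def.~4.14]{bhv1}, and then apply \Cref{lem:lamonogenic}. Concretely, I must show that every compact $W_n$-comodule, regarded as a complex concentrated in degree $0$, lies in $\Thick_{W_n}(B_n) \subseteq \cD_{W_n}$.

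The argument will proceed in two steps. First, I would establish a Landweber filtration theorem for $(B_n, W_n)$: every finitely presented $W_n$-comodule $M$ admits a finite filtration $0 = M_0 \subset M_1 \subset \cdots \subset M_r = M$ whose successive quotients $M_i/M_{i-1}$ are isomorphic to $\Sigma^{d_i} B_n/I_{j_i}$ for some $d_i \in \Z$ and $0 \le j_i \le n+1$. The argument is a direct adaptation of Landweber's classical proof for $BP_*BP$-comodules; the essential inputs are that $B_n = \Z_{(p)}[v_1,\ldots,v_n]$ is Noetherian and that the invariant prime ideals of $B_n$ under the $W_n$-coaction form exactly the finite chain $I_0 \subset I_1 \subset \cdots \subset I_{n+1}$, where $I_{n+1} = (p, v_1, \ldots, v_n)$ is the maximal invariant ideal. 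This classification of invariant primes can be deduced from Landweber's theorem for $BP_*$ via the sub-Hopf algebroid inclusion $q_n \colon (B_n, W_n) \hookrightarrow (BP_*, BP_*BP)$: the invariant primes of $B_n$ are exactly the contractions $I_k \cap B_n$ of the invariant primes $I_k$ of $BP_*$, and these contractions collapse onto the finite list above because $v_i \notin B_n$ for $i > n$.

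Second, I would show that $B_n/I_k \in \Thick_{W_n}(B_n)$ for each $0 \le k \le n+1$. Invariance of $I_k$ endows $B_n/I_k$ with a canonical $W_n$-comodule structure, and the primitivity of $v_{k-1}$ modulo $I_{k-1}$—inherited from $BP_*BP$ through $q_n$—implies that multiplication by $v_{k-1}$ is a comodule endomorphism of $B_n/I_{k-1}$. Since $(p, v_1, \ldots, v_n)$ is a regular sequence in the polynomial ring $B_n$, the cofiber sequences
\[
B_n/I_{k-1} \xrightarrow{\;\cdot v_{k-1}\;} B_n/I_{k-1} \longrightarrow B_n/I_k
\]
(with the convention $v_0 = p$) in $\cD_{W_n}$ inductively place each $B_n/I_k$ in $\Thick_{W_n}(B_n)$.

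Combining the two steps, any compact $W_n$-comodule $M$ is built via finitely many cofiber sequences whose successive quotients are shifts of the $B_n/I_{j_i}$, each of which lies in $\Thick_{W_n}(B_n)$ by Step 2; hence so does $M$. This verifies the Landweber Hopf algebroid condition, and \Cref{lem:lamonogenic} yields monogenicity of $\Stable_{W_n}$. The main technical obstacle is Step 1: although the strategy is directly modelled on Landweber's original proof, some care is required to ensure that the inductive reduction to quotients by larger invariant primes terminates correctly within the finite invariant prime chain $\{I_k : 0 \le k \le n+1\}$ available in the $n$-bud Hopf algebroid, rather than the open-ended chain present in the $BP_*BP$ setting.
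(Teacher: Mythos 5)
Your proposal is correct and follows essentially the same route as the paper: reduce via \Cref{lem:lamonogenic} to showing $(B_n,W_n)$ is a Landweber Hopf algebroid, which the paper (following Hovey) likewise deduces from a Landweber filtration theorem for finitely presented $W_n$-comodules, using that $B_n$ is Noetherian and that the invariant (radical) ideals are the contractions $I_k \cap B_n$. The only difference is bookkeeping: the paper cites Hollander for the classification of invariant ideals and invokes Landweber's general filtration theorem directly with $R = B_n$, $S = \Z_{(p)}[a_1,\ldots,a_n]$, rather than re-running the argument by hand.
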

\begin{proof}
	By \Cref{lem:lamonogenic} it suffices to show that $(B_n,W_n)$ is a Landweber Hopf algebroid, and by the argument given in \cite[Thm.~6.6]{hovey_htptheory} this will follow if we can show that every finitely presented $W_n$-comodule has a Landweber filtration. The proof for this is similar to that for $BP_*BP$-comodules; in fact, it is simpler because $B_n$ is Noetherian. First, the invariant radical ideals in $W_n$ are given by $I_k \cap B_n$ for $k \le n$ \cite[Ex.~5.10]{hollander}. We then apply \cite[Thm.~3.3]{land_filt} with $R = B_n \cong \Z_{(p)}[v_1,\ldots,v_n], S \cong \Z_{(p)}[a_1,\ldots,a_n]$ (so that $R \otimes S \cong B_n[a_1,\ldots,a_n] = W_n$), and $\Psi \colon B_n \to W_n$ given by the right unit of the Hopf algebroid $(B_n,W_n)$. 
\end{proof}
\begin{prop}\label{prop:budscompact}
The maps $q_n$ introduced above induce an exact functor $q_*\colon\colim_n\Stable_{W_n} \to \Stable_{BP_*BP}$, which restricts to an equivalence
\[
\xymatrix{q_*^{\omega}\colon \colim_n\Stable_{W_n}^{\omega} \ar[r]^-{\sim} & \Stable_{BP_*BP}^{\omega}}
\]
of stable $\infty$-categories. 
\end{prop}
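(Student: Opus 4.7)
The plan is to first construct the functor, then identify compact objects on both sides concretely, and finally bootstrap from Goerss's abelian equivalence by an induction on homological length.

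For the functor: each Hopf algebroid map $q_n$ yields an exact colimit-preserving base-change $(q_n)_* \colon \Stable_{W_n} \to \Stable_{BP_*BP}$ by the construction in Section 3.2, and these are compatible under the transition functors $(q_{n,n+1})_*$, so they assemble into an exact colimit-preserving $q_* \colon \colim_n \Stable_{W_n} \to \Stable_{BP_*BP}$. Since $B_n \to BP_*$ is flat, $(q_n)_*$ sends finitely-presented projective $W_n$-comodules to such $BP_*BP$-comodules, hence preserves dualizable objects and therefore compact objects. This produces the desired $q_*^\omega$.

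Next, I would use the Landweber Hopf algebroid property, which holds for $(B_n,W_n)$ by the same Landweber filtration argument as for $BP_*BP$ (facilitated by the Noetherianity of $B_n$ and the identification of the invariant radical ideals of $W_n$ as $I_k \cap B_n$ for $k \le n$), together with \cite[Lem.~4.16]{bhv1} to characterize $\Stable_\Psi^\omega$ for $\Psi \in \{W_n, BP_*BP\}$ as the full subcategory of $\cD_\Psi$ consisting of complexes with bounded and levelwise finitely presented homology. The problem thus reduces to showing that the induced functor
\[
\colim_n \{X \in \cD_{W_n} : H_*(X) \text{ bounded, f.p.}\} \xrightarrow{\;\sim\;} \{X \in \cD_{BP_*BP} : H_*(X) \text{ bounded, f.p.}\}
\]
is an equivalence. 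I would argue by induction on the length of the homological support. The base case, where $X$ is concentrated in a single degree, is handled directly by Goerss's abelian-level equivalence $\colim_n \Comod_{W_n}^\omega \simeq \Comod_{BP_*BP}^\omega$ of \cite[Prop.~3.25]{goerss_quasi-coherent_2008}. For the inductive step, given a compact $X$ with homology in $[a,b]$, the Postnikov truncation fiber sequence $\tau_{\ge a+1}X \to X \to \tau_{\le a}X$ realizes $X$ as an extension of objects with strictly shorter support, which by induction and the base case both lift to $\Stable_{W_n}^\omega$ for $n$ sufficiently large; it then remains to lift the attaching $k$-invariant across $q_*^\omega$.

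The main obstacle is therefore fully faithfulness on Ext, that is, showing the natural map
\[
\colim_k \Ext^{*}_{W_k}((q_{n,k})_*M,(q_{n,k})_*N) \xrightarrow{\;\sim\;} \Ext^{*}_{BP_*BP}((q_n)_*M,(q_n)_*N)
\]
is an isomorphism for finitely presented $M,N$. I would prove this by computing both sides from the cobar resolutions $C^\bullet_{W_k}$ and $C^\bullet_{BP_*BP}$, invoking the identification $(BP_*,BP_*BP) = \colim_k (B_k,W_k)$ of \eqref{eq:nbudhopfalgebroid} and the flatness of the transition maps $B_n \to B_k \to BP_*$ to commute the filtered colimit past the cotensor product, so that the colimit of the cobar bicomplexes for $W_k$ computes the cobar bicomplex for $BP_*BP$ level-wise. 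Once this Ext comparison is in hand, lifting attaching maps shows essential surjectivity, while the same comparison applied to mapping spaces between lifted objects yields fully faithfulness, completing the induction and hence the proof.
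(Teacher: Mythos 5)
Your overall architecture is viable, but it takes a genuinely different route from the paper, and one step needs repair. The paper's proof leans entirely on monogenicity: by the lemma immediately preceding the proposition, each $\Stable_{W_n}$ is compactly generated by its unit $B_n$, so derived Morita theory identifies $\Stable_{W_n}\simeq \Mod_{\End_{W_n}(B_n)}$ and $\Stable_{BP_*BP}\simeq \Mod_{\End_{BP_*BP}(BP_*)}$; since $\Mod_{(-)}^{\omega}$ preserves filtered colimits, the entire proposition collapses to showing that the single map of ring spectra $\colim_n\End_{W_n}(B_n)\to \End_{BP_*BP}(BP_*)$ is an equivalence, which is exactly the cobar/filtered-colimit computation $\colim_n\Ext_{W_n}^*(B_n,B_n)\cong \Ext_{BP_*BP}^*(BP_*,BP_*)$ using \eqref{eq:nbudhopfalgebroid}. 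Your Postnikov d\'evissage bootstrapped from Goerss's abelian equivalence avoids Morita theory, but at the cost of having to prove an Ext comparison for \emph{arbitrary} pairs of finitely presented comodules and to lift $k$-invariants by hand; the Morita route buys essential surjectivity for free, since the image of a fully faithful exact functor containing the compact generator is automatically all of $\Stable_{BP_*BP}^{\omega}$ after idempotent completion. Your use of the Landweber property and \cite[Lem.~4.16]{bhv1} to describe compact objects is fine and consistent with how the paper proves monogenicity of $\Stable_{W_n}$.

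The step that does not work as written is the computation of $\Ext_{W_k}^*((q_{n,k})_*M,(q_{n,k})_*N)$ from cobar resolutions for general finitely presented $M$. The cobar complex is a \emph{relative} injective resolution, so mapping a non-dualizable $M$ into it computes the relative groups $\Cotor$ of \Cref{defn:cotorunderived}, not the absolute $\Ext^*_{\Psi}=\pi_*\iHom(M,N)$ required by \Cref{lem:iHomhomotopy} (the two agree only when $M$ is flat, e.g.\ dualizable; compare $\Cotor(BP_*/p,BP_*/p)$, which by \Cref{lem:cotorcomparison} involves an underived tensor product, with the absolute Ext). The repair is standard: full faithfulness of an exact functor between thick subcategories need only be checked on generators, and since all the categories involved are monogenic you may take $M=N$ to be the unit — at which point your cobar argument is legitimate and is precisely the computation the paper performs.
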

\begin{proof}
For any pair $(m,n)$ with $0 \le m\le n \le \infty$, the map $q_{m,n}\colon (B_m,W_m) \to (B_n,W_n)$ of Hopf algebroids induces a functor
\[
\xymatrix{(q_{m,n})_*\colon \Stable_{W_m} \ar[r] & \Stable_{W_n}}
\]
which preserves colimits and compact objects. These functors are compatible with each other, hence we obtain a commutative diagram
\[
\xymatrix{\colim_n \Stable_{W_n} \ar[r]^-{q_*} \ar[d]_{\sim} & \Stable_{BP_*BP} \ar[d]^{\sim} \\
\colim_n \Mod_{\End_{W_n}(B_n)} \ar[r]_-{q_*} & \Mod_{\End_{BP_*BP}(BP_*)},}
\]
where the vertical equivalences follow from derived Morita theory and the previous lemma, see \cite[Thm.~3.1.1]{schwedeshipley_modules} and \cite[Thm.~7.1.2.1]{ha}. Passing to compact objects and using that the functor $\Mod_{-}^{\omega}\colon \Alg_{\mathbb{E}_{\infty}} \to \mathrm{Cat}_{\infty}$ preserves filtered colimits, as is shown for example in the proof of \cite[Prop.~2.4.1]{ms_picard}, this gives a functor
\[
\xymatrix{q_*^{\omega}\colon \Mod_{\colim_n \End_{W_n}(B_n)}^{\omega} \simeq \colim_n\Mod_{\End_{W_n}(B_n)}^{\omega} \ar[r] & \Mod_{\End_{BP_*BP}(BP_*)}^{\omega}.}
\]
Unraveling the construction, note that $q_*^{\omega}$ is induced by the natural map 
\[
\phi\colon \colim_n \End_{W_n}(B_n) \to \End_{BP_*BP}(BP_*),
\] 
so it suffices to prove that $\phi$ is an equivalence. To this end, let $C^*(B_n)$ be the cobar construction on $B_n$ in $\Comod_{W_n}$. Using \eqref{eq:nbudhopfalgebroid} and exactness of $q_*$, we compute
\begin{align*}
\colim_n\Ext_{W_n}^*(B_n,B_n) & \cong \colim_nH^*(\Hom_{W_n}(B_n,C^*(B_n))) \\
& \cong H^*(\Hom_{\colim_nW_n}(\colim_nB_n,C^*(\colim_nB_n))) \\
& \cong H^*(\Hom_{BP_*BP}(BP_*,C^*(BP_*))) \\
& \cong \Ext_{BP_*BP}^*(BP_*,BP_*),
\end{align*}
hence $\phi$ is an equivalence. 
\end{proof}

\subsection{Chromatic convergence}

Before we can come to the proof of the algebraic chromatic convergence theorem, we need a technical lemma regarding the vanishing of derived functors of inverse limits of comodules. We remind the reader about our grading conventions, see \Cref{sec:conventions}.

\begin{lem}\label{lem:limzerohomolstructuremaps}
Suppose $d\in \Z$ and $M=(M_n,\phi_n)_n \in (\Stable_{BP_*BP}^{\le d})^{\mathbb{N}^{\mathrm{op}}}$ is an inverse system with structure maps $\phi_n\colon M_{n+1} \to M_n$. If for any $q \in \Z$ there exists $m(q)$ such that the induced map $H^q(\phi_n)$ is zero for all $n>m(q)$, then $\lim_nM_n \simeq 0$. 
\end{lem}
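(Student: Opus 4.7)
The plan is to reduce the vanishing of $\lim_n M_n$ to the vanishing of all of its cohomology groups, and then to extract these via a Milnor-type short exact sequence whose outer terms are killed by the eventually-zero hypothesis.

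I would start by presenting $\lim_n M_n$ as the fiber of the canonical map $\mathrm{id}-\phi\colon \prod_n M_n \to \prod_n M_n$ induced by the structure maps $\phi_n$. Since every $M_n$ lies in $\Stable_{BP_*BP}^{\le d}$, the product lies there as well; this, together with the fact that cohomology commutes with the product, is most easily verified by using \Cref{prop:landweberha} to identify $\Stable^{\le d}$ with the classical bounded-above derived category $\cD^{\le d}$, where both facts are standard. The long exact cohomology sequence of the defining fiber sequence then collapses to the Milnor short exact sequence
\[
0 \longrightarrow {\lim_n}^1 H^{q-1}(M_n) \longrightarrow H^q(\lim_n M_n) \longrightarrow \lim_n H^q(M_n) \longrightarrow 0
\]
for every $q \in \Z$.

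The second step is to verify that both outer terms vanish. Fix $q$ and set $m = m(q)$, so that $H^q(\phi_n)=0$ for all $n>m$. For any compatible system $(x_n) \in \lim_n H^q(M_n)$, the relation $x_{m+1} = H^q(\phi_{m+1})(x_{m+2})$ forces $x_{m+1}=0$, and the same argument shows $x_n=0$ for all $n > m$; descending through $x_n = H^q(\phi_n)(x_{n+1})$ propagates this vanishing to all $n \le m$. For $\lim^1$, given an arbitrary $(y_n) \in \prod_n H^q(M_n)$, I set $x_n := y_n$ for $n>m$ and solve $x_n = y_n + H^q(\phi_n)(x_{n+1})$ inductively downward, exhibiting $(y_n)$ as an element of the image of $\mathrm{id}-\phi_*$. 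Hence both $\lim_n H^q(M_n)$ and $\lim^1_n H^q(M_n)$ vanish.

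Combining these gives $H^q(\lim_n M_n) = 0$ for every $q \in \Z$. The vanishing of $\lim^1 H^d(M_n)$, applied in top degree, also guarantees that $\lim_n M_n$ stays in $\Stable^{\le d}$ rather than potentially slipping to $\Stable^{\le d+1}$; applying the equivalence $\Stable^{\le d} \simeq \cD^{\le d}$ once more, an object of the bounded-above derived category with vanishing cohomology in every degree is zero, completing the argument.

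The main obstacle is the rigorous setup of the Milnor exact sequence in this $\infty$-categorical context, namely the verification that countable products commute with cohomology on bounded-above objects of $\Stable_{BP_*BP}$; this should follow cleanly from the identification with $\cD^{\le d}$ provided by \Cref{prop:landweberha}, but is worth being explicit about. The cohomology-vanishing computations are purely formal diagram chases that apply to any inverse system with eventually zero transition maps.
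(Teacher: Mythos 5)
Your overall strategy is the paper's: pass to $\cD_{BP_*BP}^{\le d}$ via \Cref{prop:landweberha}, show that every cohomology group of the limit vanishes, and kill the relevant derived-limit terms using the eventually-zero hypothesis (your explicit $\lim$/$\lim^1$ computations are exactly the content of the lemma of Jannsen that the paper cites). However, there is one genuine gap, and it sits precisely at the point you flag as ``the main obstacle'': the identification $H^q(\prod_n M_n)\cong\prod_n H^q(M_n)$ does \emph{not} follow from the equivalence $\Stable^{\le d}\simeq\cD^{\le d}$. That equivalence only moves the question into the derived category of $\Comod_{BP_*BP}$, where commuting cohomology past countable products requires exactness of countable products in the abelian category $\Comod_{BP_*BP}$ (axiom AB4*). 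Comodule categories are badly behaved in this respect --- the product of comodules is not the product of the underlying modules, and there is no reason for it to be exact --- so this step cannot be dismissed as standard.

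Concretely, if products fail to be exact, then the long exact sequence of your fiber sequence $\lim_n M_n\to\prod_n M_n\xrightarrow{1-\phi}\prod_n M_n$ does not collapse to a two-term Milnor sequence: the cohomology of $\prod_n M_n$ acquires contributions from the right-derived functors of the product, which reassemble into higher derived limits $\lim^p$ for $p\ge 2$, and your explicit computations only dispose of $\lim^0$ and $\lim^1$. The repair is exactly what the paper does: use the full hypercohomology spectral sequence $E_2^{p,q}\cong\lim^p H^q(M_n)\Rightarrow H^{p+q}(\lim_n M_n)$ and invoke the fact (Jannsen, Lem.~1.11) that an inverse system whose transition maps are eventually zero has vanishing derived limits in \emph{all} degrees $p$, not just $p=0,1$. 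With that substitution your argument goes through; the rest (closure of $\Stable^{\le d}$ under products and limits, and the degreewise vanishing computations) is fine, and your worry about the limit ``slipping'' out of $\Stable^{\le d}$ is unnecessary, since the coconnective part of a $t$-structure is closed under all limits.
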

\begin{proof}
Since $\Stable_{BP_*BP}^{\le d} \simeq \cD_{BP_*BP}^{\le d}$, it suffices to show that $H^{k}\lim(M_n) = 0$ for all $k$. To this end, note that the convergent hypercohomology spectral sequence takes the form 
\[
E_2^{p,q}\cong \lim{}^pH^q(M_n) \implies \lim{}^{p+q}M_n
\]
where the derived limits on the $E_2$-page are computed with respect to the  structure maps $H^q(\phi_n)$. By assumption, these morphisms are zero for all $n > m(q)$, so it follows from \cite[Lem.~1.11]{jannsen_contetcohom} that $E_2^{p,q} = 0$ for all $p$ and $q$. Therefore, $H^{k}\lim(M_n) \cong \lim^{k}M_n =0$ for all $k \in \Z$. 
\end{proof}

\begin{lem}\label{lem:chromaticconvergencezeromaps}
If $X \in \cD_{B_r}$ for some $r \ge 0$, then the natural map
\[
\xymatrix{H_*\Gamma_{n} BP_*\otimes_{B_r}X \ar[r] & H_*\Gamma_{n-1}BP_*\otimes_{B_r}X}
\]
of $BP_*$-modules is zero for all $n > r$. 
\end{lem}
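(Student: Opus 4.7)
The plan is to identify the map $\Gamma_n BP_* \to \Gamma_{n-1} BP_*$ as the first map in a cofiber sequence whose third term is the $n$th monochromatic layer $M_n BP_*$, and then reduce the vanishing claim to an injectivity statement about $v_n$-multiplication that follows from a freeness argument.

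Applying \Cref{prop:localcohomcomputations} inductively, starting from $\Gamma_0 BP_* \simeq \Sigma^{-1} BP_*/p^{\infty}$, one obtains equivalences $\Gamma_n BP_* \simeq \Sigma^{-(n+1)} BP_*/I_{n+1}^{\infty}$ and $\Gamma_{n-1} BP_* \simeq \Sigma^{-n} BP_*/I_n^{\infty}$, with the natural transformation between them induced (up to a suspension) by the connecting map of the cofiber sequence \eqref{eq:fibseq}. Rotating gives a cofiber sequence
\[
\Gamma_n BP_* \longrightarrow \Gamma_{n-1} BP_* \longrightarrow \Sigma^{-n} v_n^{-1} BP_*/I_n^{\infty} \simeq M_n BP_*,
\]
in which the right-hand arrow is $\Sigma^{-n}$ of the canonical $v_n$-localization. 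Tensoring this cofiber sequence with $X$ over $B_r$ and passing to the long exact sequence in homology, it suffices to prove that the induced map $H_*(\Gamma_{n-1} BP_* \otimes_{B_r} X) \to H_*(M_n BP_* \otimes_{B_r} X)$ is \emph{injective}.

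Since localization commutes with homology, this map is identified with the canonical arrow $M \to v_n^{-1} M$, where $M := H_*(BP_*/I_n^{\infty} \otimes_{B_r} X)$, so its injectivity is equivalent to $v_n$ acting injectively on $M$. To check this, one uses that $BP_* \cong B_r[v_{r+1}, v_{r+2}, \ldots]$ is a flat polynomial extension of $B_r$, whence $BP_* \otimes_{B_r} X \simeq X[v_{r+1}, v_{r+2}, \ldots]$ in $\cD_{BP_*}$, and the element $v_n$ (for $n > r$) acts on $X[v_{r+1}, v_{r+2}, \ldots]$ as a free polynomial variable. Writing $BP_*/I_n^{\infty} = \colim_k BP_*/I_n^k$ and performing the Koszul-type computation of $BP_*/I_n^k \otimes_{BP_*}^L X[v_{r+1}, v_{r+2}, \ldots]$ in two stages---first killing the generators $p^k, v_1^k, \ldots, v_r^k$ coming from $B_r$ (where derived Tor can appear, but only in the $X$-direction), then quotienting by $v_{r+1}^k, \ldots, v_{n-1}^k$, which is exact because these act as free polynomial variables---shows that $v_n$ continues to act freely as a polynomial variable on the resulting chain complex. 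Therefore $v_n$-multiplication is injective at the chain level, hence on homology, and this injectivity is preserved by the filtered colimit over $k$.

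The main obstacle will be ensuring that the two-step Koszul decomposition really leaves the $v_n$-action free throughout, but this is essentially formal once one separates the generators of $I_n$ into those lying in $B_r$ (which interact non-trivially with $X$) and those lying in the polynomial part (which can be killed without derived corrections), while $v_n$ itself, being strictly beyond the range $\{v_1, \ldots, v_{n-1}\}$ and outside $B_r$, is untouched by both steps.
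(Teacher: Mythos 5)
Your proof is correct and follows essentially the same route as the paper: both rotate the defining cofiber sequence $BP_*/I_n^{\infty} \to v_n^{-1}BP_*/I_n^{\infty} \to BP_*/I_{n+1}^{\infty}$ and deduce vanishing of the map on homology from injectivity of the localization map, i.e.\ from $v_n$-torsion-freeness of $H_*(BP_*/I_n^{\infty}\otimes_{B_r}X)$ for $n>r$. The only difference is that the paper asserts this torsion-freeness in one line, whereas you justify it with the (correct) two-step Koszul argument exploiting $BP_*\cong B_r[v_{r+1},v_{r+2},\ldots]$.
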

\begin{proof}
Consider the following segment of the long exact sequence in homology corresponding to the cofiber sequence $BP_*/I_n^{\infty}  \to BP_*/I_n^{\infty}[v_n^{-1}] \to BP_*/I_{n+1}^{\infty}$:
\[
\xymatrix{H_*(BP_*/I_{n+1}^{\infty} \otimes_{B_r}X) \ar[r]^-{\delta_n} & H_{*-1}(BP_*/I_n^{\infty}\otimes_{B_r}X) \ar[r] & H_{*-1}(BP_*/I_n^{\infty} \otimes_{B_r}X)[v_n^{-1}].}
\]
By \cite[Cor.~8.10]{bhv1}, $\Gamma_{n-1}Y \simeq \Sigma^{-n}BP_*/I_n^{\infty} \otimes Y$ for all $n$ and $Y \in \cD_{BP_*}$. Applying this to $Y = BP_* \otimes_{B_r}X$, we need to show that $\delta_n$ is zero. But $H_{*}(BP_*/I_n^{\infty} \otimes_{B_r}X)$ is $v_n$-torsion free as $X \in \cD_{B_r}$ and $n>r$, hence the second map in the above diagram is injective. 
\end{proof}

\begin{thm}\label{thm:chromaticconvergence}
If $M \simeq q_*N$ for some $N\in \Stable_{W_r}^{< \infty}$, then there is a natural equivalence
\[
\xymatrix{M \ar[r]^-{\sim} & \lim L_{n}M.}
\]
\end{thm}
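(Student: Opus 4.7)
The plan is to prove that $\lim_n \Gamma_n M \simeq 0$ where $\Gamma_n M := \fib(M \to L_n M)$; combined with the fact that $M$ is constant in the chromatic tower, the cofiber sequence $\Gamma_n M \to M \to L_n M$ then yields the desired equivalence $M \simeq \lim_n L_n M$.

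The key tools will be \Cref{lem:chromaticconvergencezeromaps} and \Cref{lem:limzerohomolstructuremaps}. Since $L_n$ is smashing by \Cref{prop:localcohomcomputations}, and $M \simeq q_*N \cong BP_* \otimes_{B_r} N$ by the definition of $q_* = (q_r)_*$, associativity of the tensor product identifies
\[
\Gamma_n M \simeq \Gamma_n BP_* \otimes_{BP_*} M \simeq \Gamma_n BP_* \otimes_{B_r} N.
\]
\Cref{lem:chromaticconvergencezeromaps} applied to $X = \epsilon_*N \in \cD_{B_r}$ then shows that the structure maps $H^q(\Gamma_n M) \to H^q(\Gamma_{n-1} M)$ vanish for all $q$ and all $n > r$, which is precisely the second hypothesis of \Cref{lem:limzerohomolstructuremaps}.

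The main technical point is verifying the uniform t-structure bound required by \Cref{lem:limzerohomolstructuremaps}, namely that $\Gamma_n M \in \Stable^{\le d}_{BP_*BP}$ for some $d$ independent of $n$. This follows by iterating the cofiber sequences of \Cref{prop:localcohomcomputations} starting from $\Gamma_0 BP_* \simeq \Sigma^{-1}BP_*/p^\infty$, which shows that the cohomology of $\Gamma_n BP_*$ is concentrated in degrees $1 \le q \le n+1$. The cohomological width grows with $n$, but only at the top end; the smallest cohomological degree remains equal to $1$ for all $n$, so that $\Gamma_n BP_* \in \Stable^{\le -1}_{BP_*BP}$ uniformly. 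Combined with $N \in \Stable^{<\infty}_{W_r}$ (where the hypothesis $q_*N$ rather than just $\Phi_* N$ of $\Stable_{W_r}$ is essential) and compatibility of the t-structure with the monoidal product, this provides the required uniform bound, and the theorem follows from \Cref{lem:limzerohomolstructuremaps}.
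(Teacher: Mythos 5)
Your proof is correct and follows the same route as the paper: reduce to $\lim_n\Gamma_nM\simeq 0$ via the cofiber sequence $\Gamma_n\to\Id\to L_n$, identify $\Gamma_nM\simeq\Gamma_nBP_*\otimes_{B_r}N$ using that $\Gamma_n$ is smashing, and feed the vanishing of structure maps from \Cref{lem:chromaticconvergencezeromaps} (together with faithfulness of $\epsilon_*$, which you use implicitly) into \Cref{lem:limzerohomolstructuremaps}. Your explicit verification of the uniform coconnectivity bound $\Gamma_nM\in\Stable^{\le d}_{BP_*BP}$ is a point the paper's proof leaves implicit, and is a welcome addition.
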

\begin{proof}
The natural cofiber sequences $\Gamma_{n} \to \Id \to L_{n}$ of functors induce a cofiber sequence
\[
\xymatrix{\lim_n\Gamma_{n}M \ar[r] & M \ar[r] & \lim_nL_{n}M}
\]
for any $M \in \Stable_{BP_*BP}$. Therefore, the claim is equivalent to the statement that $\lim_n\Gamma_{n}M \simeq 0$ whenever $M$ satisfies the assumptions of the theorem. Because $\Gamma_{n}M \in \Stable_{BP_*BP}^{<\infty}$ for all $n\ge 0$, this will follow from \Cref{lem:limzerohomolstructuremaps} once we have shown that the morphism
\[
\xymatrix{H_*\Gamma_{n}(q_*N) \ar[r] & H_*\Gamma_{n-1}(q_*N)}
\]
is zero for all $n>r$. Since $\epsilon_*$ is faithful, it suffices to show that the left vertical map in the following commutative diagram 
\begin{equation}\label{eq:threesquares}
\xymatrix{
\epsilon_*H_*\Gamma_n(q_*N) \ar[r]^-{\sim} \ar[d] & H_*\epsilon_*\Gamma_n(q_*N) \ar[r]^-{\sim} \ar[d] & H_*\Gamma_n^{BP_*}\epsilon_*(q_*N) \ar[r]^-{\sim} \ar[d] & 
H_*\Gamma_{n}^{BP_*}BP_*\otimes_{B_r}\epsilon_*N \ar[d] \\
\epsilon_*H_*\Gamma_{n-1}(q_*N) \ar[r]_-{\sim} & H_*\epsilon_*\Gamma_{n-1}(q_*N) \ar[r]_-{\sim} & H_*\Gamma_{n-1}^{BP_*}\epsilon_*(q_*N) \ar[r]_-{\sim} & 
H_*\Gamma_{n-1}^{BP_*}BP_*\otimes_{B_r}\epsilon_*N
}
\end{equation}
is zero for $n>r$. The commutativity of the first square is clear, while the second one commutes by \cite[Lem.~5.20]{bhv1}; here, the superscript $BP_*$ in $\Gamma^{BP_*}$ indicates that those local cohomology functors are taken in $\cD_{BP_*}$. Finally, the rightmost square commutes because $\Gamma_n$ is smashing together with the commutative diagram
\[
\xymatrix{\Stable_{W_r} \ar[r]^-{q_*} \ar[d]_{\epsilon_*} & \Stable_{BP_*BP} \ar[d]^{\epsilon_*} \\
\cD_{B_r} \ar[r]_-{BP_* \otimes_{B_r}-} & \cD_{BP_*}.}
\]
It therefore remains to show that the right vertical map in \eqref{eq:threesquares} is an equivalence, which is the content of \Cref{lem:chromaticconvergencezeromaps}. 
\end{proof}

\begin{cor}\label{cor:chromaticconvergence}
If $M \in \Stable_{BP_*BP}$ is compact, then there is a natural equivalence
\[
\xymatrix{M \ar[r]^-{\sim} & \lim L_{n}M.}
\]
\end{cor}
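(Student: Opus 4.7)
The plan is to deduce the corollary directly from \Cref{thm:chromaticconvergence} together with the compact object comparison of \Cref{prop:budscompact}. The key observation is that the statement of \Cref{thm:chromaticconvergence} applies to any object that lies in the image of $q_*$ from some bounded-above stage $\Stable_{W_r}^{<\infty}$, and every compact object of $\Stable_{BP_*BP}$ has this form.

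First, suppose $M \in \Stable_{BP_*BP}^\omega$. By \Cref{prop:budscompact}, the comparison functor
\[
q_*^\omega\colon \colim_n \Stable_{W_n}^{\omega} \xrightarrow{\sim} \Stable_{BP_*BP}^{\omega}
\]
is an equivalence, so there exists an integer $r\ge 0$ and a compact object $N \in \Stable_{W_r}^{\omega}$ with $q_*N \simeq M$. Next, I would verify that every compact object of $\Stable_{W_r}$ lies in $\Stable_{W_r}^{<\infty}$. This is straightforward: $\Stable_{W_r}$ is monogenic (as established in the proof of \Cref{prop:budscompact}), so $\Stable_{W_r}^\omega = \Thick_{W_r}(B_r)$; since $B_r$ sits in homological degree $0$ and the thick subcategory it generates stays within a bounded range of the natural $t$-structure provided by \Cref{prop:landweberha} (note $B_r$ is Noetherian, hence coherent, and $(B_r,W_r)$ is Landweber by the proof of \Cref{prop:budscompact}), we have $N \in \Stable_{W_r}^{<\infty}$.

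Finally, \Cref{thm:chromaticconvergence} applied to $M \simeq q_*N$ gives the desired natural equivalence $M \xrightarrow{\sim} \lim_n L_n M$.

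The only step that requires any care is the boundedness verification for compact objects of $\Stable_{W_r}$, but this follows formally from the monogenicity of $\Stable_{W_r}$ and the existence of the $t$-structure from \Cref{prop:landweberha}; no additional input beyond the results already proven is needed.
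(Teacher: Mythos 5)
Your proposal is correct and follows essentially the same route as the paper: invoke \Cref{prop:budscompact} to write a compact $M$ as $q_*N$ with $N\in\Stable_{W_r}^{<\infty}$ and then apply \Cref{thm:chromaticconvergence}. The only difference is that you spell out why compact objects of $\Stable_{W_r}$ are bounded (via monogenicity and the $t$-structure), a detail the paper leaves implicit; your justification of it is sound.
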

\begin{proof}
By \Cref{prop:budscompact}, every compact object in $\Stable_{BP_*BP}$ is given by $(q_r)_*N$ for some $r$ and $N \in \Stable_{W_r}^{<\infty}$. The result thus follows from \Cref{thm:chromaticconvergence}.
\end{proof}

In fact, the algebraic chromatic convergence theorem, \Cref{thm:chromaticconvergence}, can be generalized to comodules with finite projective $BP_*$-dimension, by reducing the statement to its analog for $BP_*$-modules. This argument is essentially due to Hollander; since it has not appeared in print yet, we sketch the argument. 

As in the proof of the previous theorem, 
let $\Gamma_n^{BP_*}$ and $L_n^{BP_*}$ denote the local cohomology functors on $\cD_{BP_*}$ and write $\lim^{BP_*}$ for the total derived functor of inverse limit in this category. 

\begin{lem}\label{lem:acccover}
Suppose $M\in \cD_{BP_*}$ has finite projective dimension, then $M\simeq \lim_n^{BP_*}L_n^{BP_*}M$. 
\end{lem}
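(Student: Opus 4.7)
The plan is to follow the same strategy as in the proof of \Cref{thm:chromaticconvergence}: using the fiber sequence $\Gamma_n^{BP_*}M \to M \to L_n^{BP_*}M$, the statement is equivalent to showing $\lim_n^{BP_*}\Gamma_n^{BP_*}M \simeq 0$ whenever $M$ has finite projective dimension. We proceed by induction on the projective dimension $d$ of $M$.

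For the inductive step, suppose $d \ge 1$ and choose a short exact sequence $0 \to K \to P \to M \to 0$ of $BP_*$-modules with $P$ projective and $K$ of projective dimension at most $d-1$. Applying $\Gamma_n^{BP_*}$ yields a fiber sequence in $\cD_{BP_*}$, and since $\lim_n^{BP_*}$ preserves fiber sequences, the result for $M$ follows from the result for $K$ (by the induction hypothesis) and for $P$ (the base case $d=0$).

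For the base case, $P$ is a retract of a free $BP_*$-module $F = \bigoplus_I BP_*$, and since $\lim_n^{BP_*}$ preserves retracts, it suffices to show $\lim_n^{BP_*}\Gamma_n^{BP_*}F \simeq 0$. Noting that $B_0 = \Z_{(p)}$, we may write $F \simeq BP_* \otimes_{B_0} \bigl(\bigoplus_I B_0\bigr)$, so \Cref{lem:chromaticconvergencezeromaps} applied with $r=0$ and $X = \bigoplus_I B_0 \in \cD_{B_0}$ shows that the induced structure maps
\[
H_q(\Gamma_n^{BP_*}F) \longrightarrow H_q(\Gamma_{n-1}^{BP_*}F)
\]
vanish for all $n \ge 1$ and all $q \in \Z$. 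By \Cref{prop:localcohomcomputations}, the cohomology of $\Gamma_n^{BP_*}BP_*$ is bounded above uniformly in $n$, hence the same holds for $\Gamma_n^{BP_*}F$; the $\cD_{BP_*}$-analog of \Cref{lem:limzerohomolstructuremaps} (whose proof via the convergent hypercohomology spectral sequence and Jannsen's vanishing lemma goes through verbatim) then yields $\lim_n^{BP_*}\Gamma_n^{BP_*}F \simeq 0$.

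The main obstacle is the interplay between the inverse limit and the potentially infinite direct sum in the base case: although $\Gamma_n^{BP_*}$ is smashing (as a finite localization away from the finitely generated ideal $I_{n+1}$) and thus commutes with the coproduct defining $F$, the inverse limit $\lim_n^{BP_*}$ does not. The resolution is to extract the required vanishing not from a uniform statement about $F$, but from the per-degree pro-zero behavior of its cohomology supplied by \Cref{lem:chromaticconvergencezeromaps}, which is exactly the input that the spectral sequence argument of \Cref{lem:limzerohomolstructuremaps} needs.
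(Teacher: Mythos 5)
Your argument is essentially correct but takes a genuinely different, and more roundabout, route than the paper. The paper's proof is direct and needs no induction: represent $M$ by a bounded complex of projectives concentrated in degrees between $0$ and $-k$; then $\Gamma_n^{BP_*}M \simeq \Sigma^{-n}M\otimes BP_*/I_{n+1}^{\infty}$ is concentrated in a window of width $k$ that slides off to infinity with $n$, so for each fixed $s$ the group $H^s(\Gamma_n^{BP_*}M)$ itself vanishes for $n\gg 0$ (not merely the structure maps), and the Milnor sequence immediately gives $\lim_n\Gamma_n^{BP_*}M\simeq 0$. In particular the ``obstacle'' you isolate --- the failure of $\lim_n^{BP_*}$ to commute with an infinite coproduct --- never actually arises, because the relevant vanishing is degreewise rather than an assertion about $F$ as a whole, and homology commutes with direct sums. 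Your route instead imports the pro-zero-structure-maps input from \Cref{lem:chromaticconvergencezeromaps} (with $r=0$) together with the spectral-sequence argument of \Cref{lem:limzerohomolstructuremaps}; this works, and has the mild virtue of reusing machinery already set up for \Cref{thm:chromaticconvergence}, but it is longer and hides the simple connectivity phenomenon that drives the result.

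One point to repair: the lemma is stated for $M\in\cD_{BP_*}$ of finite projective dimension, i.e., for complexes, whereas your induction via a short exact sequence $0\to K\to P\to M\to 0$ of $BP_*$-modules presumes $M$ is a discrete module. This matters because in \Cref{thm:chromaticconvergence2} the lemma is applied to genuine complexes such as $\epsilon_*(\epsilon^*\epsilon_*)^{\bullet}M$. The fix is easy --- a bounded complex of projectives is a finite iterated extension of shifted projective modules via its stupid filtration, and $\lim_n^{BP_*}\Gamma_n^{BP_*}$ preserves cofiber sequences, so your base case already yields the general statement without any induction on projective dimension --- but as written the inductive step does not cover the cases actually needed.
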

\begin{proof}
To simplify the notation, in this proof only we write $\lim$ for $\lim^{BP_*}$. Without loss of generality, assume that $M$ is represented by a complex of projective $BP_*$-modules concentrated in degrees between $0$ and $-k$ for some $k\ge 0$. By \cite[Lem.~5.33]{bhv1}, 
\[
\Gamma_n^{BP_*}M \simeq \Sigma^{-n}M \otimes BP_*/I_{n+1}^{\infty}
\]
is then concentrated in degrees between $n$ and $n-k$. Consequently, $H^s(\Gamma_n^{BP_*}M) = 0$ for all $s<n-k$, i.e., whenever $n>s+k$. The Milnor sequence
\[
\xymatrix{0 \ar[r] & \lim_n^1H^{s-1}(\Gamma_n^{BP_*}M) \ar[r] & H^s(\lim_n\Gamma_n^{BP_*}M) \ar[r] & \lim_n^0 H^{s}(\Gamma_n^{BP_*}M) \ar[r] & 0}
\]
thus implies $\lim_n\Gamma_n^{BP_*}M \simeq 0$ and the claim follows from the usual fiber sequence relating $\Gamma_n^{BP_*}$ and $L_n^{BP_*}$.
\end{proof}

\begin{thm}\label{thm:chromaticconvergence2}
If $M\in \Stable_{BP_*BP}$ has finite projective $BP_*$-dimension, then there is a natural equivalence $M \simeq \lim_n L_{n}M$.
\end{thm}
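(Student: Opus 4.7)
The plan is to essentially mimic the structure of the proof of \Cref{thm:chromaticconvergence}, replacing the input about buds ($M = q_\ast N$) with the weaker, and more intrinsic, input that $\epsilon_\ast M$ has finite projective $BP_\ast$-dimension. The starting observation is the cofiber sequence of functors $\Gamma_n \to \mathrm{id} \to L_n$, which upon taking the limit and evaluating at $M$ produces a cofiber sequence
\[
\lim_n \Gamma_n M \to M \to \lim_n L_n M.
\]
So it suffices to prove that $\lim_n \Gamma_n M \simeq 0$ in $\Stable_{BP_\ast BP}$, and we shall deduce this from \Cref{lem:limzerohomolstructuremaps}.

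To verify the hypotheses of that lemma, note first that $\Gamma_n$ is smashing by the proof of \Cref{prop:localcohomcomputations}, so that $\Gamma_n M \simeq M \otimes \Gamma_n BP_\ast \simeq \Sigma^{-(n+1)}M \otimes BP_\ast/I_{n+1}^{\infty}$. Applying the exact forgetful functor $\epsilon_\ast$ and using \cite[Lem.~5.20]{bhv1}, we then get a natural equivalence
\[
\epsilon_\ast \Gamma_n M \simeq \Gamma_n^{BP_\ast} \epsilon_\ast M.
\]
Since $\epsilon_\ast M$ has projective $BP_\ast$-dimension at most $k$, the analysis carried out in the proof of \Cref{lem:acccover} shows that $\Gamma_n^{BP_\ast}\epsilon_\ast M$ is cohomologically concentrated in a window of width $k$ that shifts to $+\infty$ with $n$; concretely, its cohomology vanishes outside the degree range $[n-k,n]$. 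Since $\epsilon_\ast$ is $t$-exact on $\Stable_{BP_\ast BP}^{<\infty}$ by \Cref{prop:landweberha}, the same bounds hold for the cohomology of $\Gamma_n M$ computed in $\Stable_{BP_\ast BP}$.

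From these bounds the two hypotheses of \Cref{lem:limzerohomolstructuremaps} fall out immediately. First, because the window $[n-k,n]$ always lies in $[-k,\infty)$ for $n \ge 0$, the tower $(\Gamma_n M)_n$ lies uniformly in $\Stable_{BP_\ast BP}^{\le k}$. Second, for every fixed $q \in \mathbb{Z}$, we have $H^q(\Gamma_n M) = 0$ as soon as $n > q+k$, so the structure map $H^q(\phi_n)\colon H^q(\Gamma_n M) \to H^q(\Gamma_{n-1}M)$ vanishes for $n$ sufficiently large. \Cref{lem:limzerohomolstructuremaps} then yields $\lim_n \Gamma_n M \simeq 0$, which completes the argument.

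The main obstacle is verifying that the degree bounds on cohomology derived from the projective resolution of $\epsilon_\ast M$ in $\cD_{BP_\ast}$ actually transport to bounds in $\Stable_{BP_\ast BP}$; this is handled by the identification $\epsilon_\ast \Gamma_n M \simeq \Gamma_n^{BP_\ast}\epsilon_\ast M$ together with the $t$-exactness and faithfulness of $\epsilon_\ast$ on bounded objects. Apart from this bookkeeping, the argument is a direct adaptation of the proof of \Cref{thm:chromaticconvergence}, with the role of Goerss's $n$-bud hypothesis replaced by the finite projective dimension assumption on $\epsilon_\ast M$ fed into \Cref{lem:acccover}.
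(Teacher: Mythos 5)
Your strategy is genuinely different from the paper's: you adapt the pro-vanishing argument of \Cref{thm:chromaticconvergence} directly, whereas the paper's proof of \Cref{thm:chromaticconvergence2} descends along the cosimplicial Amitsur complex $C^{\bullet}(M)\simeq (\epsilon^*\epsilon_*)^{\bullet+1}M$, commutes $\lim_n$ past $\Tot$ and $\epsilon^*$, and then invokes the purely module-theoretic statement \Cref{lem:acccover} in $\cD_{BP_*}$. Much of your bookkeeping is sound: $\Gamma_n$ is indeed smashing, $\epsilon_*\Gamma_nM\simeq \Gamma_n^{BP_*}\epsilon_*M$, and the window argument from the proof of \Cref{lem:acccover} does show that $H^q(\Gamma_n^{BP_*}\epsilon_*M)=0$ once $n\gg q$; since $H_*(X)\cong H_*(\epsilon_*X)$ for any $X$ (\Cref{lem:homrep}), this gives the eventual vanishing of the structure maps on each $H^q$ required by \Cref{lem:limzerohomolstructuremaps}.

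The gap is in the step where you place the tower $(\Gamma_nM)_n$ uniformly in $\Stable_{BP_*BP}^{\le k}$. The hypothesis of the theorem constrains only $\epsilon_*M\in\cD_{BP_*}$; it does not put $M$, hence $\Gamma_nM$, in $\Stable_{BP_*BP}^{<\infty}$, and bounded (even vanishing) homology does not imply membership in $\Stable_{BP_*BP}^{\le k}$: because $\omega\colon\Stable_{BP_*BP}\to\cD_{BP_*BP}$ is a nontrivial localization, there exist nonzero objects $X$ with $H_*X=0$, equivalently $\epsilon_*X\simeq 0$, and such objects are invisible to all of the homological bounds you establish. Your appeal to the $t$-exactness of $\epsilon_*$ on $\Stable_{BP_*BP}^{<\infty}$ therefore presupposes exactly the coconnectivity that needs to be shown, and \Cref{lem:limzerohomolstructuremaps}, whose proof runs through the equivalence $\Stable_{BP_*BP}^{\le d}\simeq\cD_{BP_*BP}^{\le d}$, cannot be applied to a tower that is merely known to have bounded homology. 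This is precisely the obstruction the paper's descent argument is built to avoid: replacing $M$ by the cofree objects $(\epsilon^*\epsilon_*)^{s+1}M$ moves the entire limit computation into $\cD_{BP_*}$, where homology is conservative and \Cref{lem:acccover} applies. Your argument does go through under the additional assumption $M\in\Stable_{BP_*BP}^{<\infty}$ (which covers the paper's applications, e.g.\ \Cref{cor:flatcomplete}), but as written it does not prove the theorem in the stated generality.
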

\begin{proof}
Consider the cosimplicial Amitsur complex 
\[
\xymatrix{C^{\bullet}(M) = (BP_*BP \otimes M \ar@<0.5ex>[r] \ar@<-0.5ex>[r] & BP_*BP^{\otimes 2} \otimes M \ar[l] \ar@<1ex>[r] \ar[r] \ar@<-1ex>[r] & \cdots) \ar@<0.5ex>[l] \ar@<-0.5ex>[l] }
\]
of $M$. By \cite[Thm.~4.29]{bhv1} and \cite[Cor.~5.2.4]{hovey_htptheory}, the canonical map $M \to \Tot(C^{\bullet}(M))$ is an equivalence in $\Stable_{BP_*BP}$. Note that the Amitsur complex is functorial in $M$ and that $C^s(M) = BP_*BP^{\otimes s+1} \otimes M \simeq (\epsilon^*\epsilon_*)^{s+1}M$, where $(\epsilon_*,\epsilon^*)$ is the forgetful-cofree adjunction between $\Stable_{BP_*BP}$ and $\cD_{BP_*}$. Moreover, if $M$ is of finite projective $BP_*$-dimension, then so is $C^s(M)$ for all $s\ge 0$ as $BP_*BP$ is free over $BP_*$. 

Recall that we denote the total derived limit in $\Stable_{BP_*BP}$ and $\cD_{BP_*}$ by $\lim$ and $\lim^{BP_*}$, respectively. Using the fact that $\epsilon^*$ is a right adjoint as well as \cite[Prop.~5.22]{bhv1}, we obtain a sequence of natural equivalences
\begin{align*}
\mathrm{lim}_nL_nM & \simeq \mathrm{lim}_n\Tot(C^{\bullet}(L_nM)) \\
& \simeq \Tot\mathrm{lim}_n((\epsilon^*\epsilon_*)^{\bullet+1}L_nM) \\
& \simeq \Tot\epsilon^*\mathrm{lim}_n^{BP_*}(\epsilon_*(\epsilon^*\epsilon_*)^{\bullet}L_nM) \\
& \simeq \Tot\epsilon^*\mathrm{lim}_n^{BP_*}L_n(\epsilon_*(\epsilon^*\epsilon_*)^{\bullet}M) \\
& \simeq \Tot\epsilon^*\epsilon_*(\epsilon^*\epsilon_*)^{\bullet}M \\
& \simeq \Tot C^{\bullet}(M) \\
& \simeq M,
\end{align*}
where the fifth equivalence comes from \Cref{lem:acccover}. It is straightforward to verify that the composite of these natural maps are compatible with the canonical map $M \to \lim_{n}M$. 
\end{proof}

\begin{rem}
\Cref{thm:chromaticconvergence2} generalizes the algebraic chromatic convergence theorems of Goerss \cite{goerss_quasi-coherent_2008} and Sitte \cite{sitte2014local}. The generality of the theorem is analogous to the generalized (topological) chromatic convergence theorem of \cite{barthel_cc}. However, the topological chromatic convergence theorem does not follow formally from the algebraic version, due to the potential non-convergence of the corresponding inverse limit spectral sequence.  
\end{rem}
\begin{cor}\label{cor:flatcomplete}
	Suppose that either:
	\begin{enumerate}
		\item $M$ is a bounded below $BP_*$-comodule which is flat as a $BP_*$-module; or,
		\item $X$ is a finite complex.
	\end{enumerate}
Then, with $M = BP_*X$ in (2), there is a natural equivalence $M \simeq \lim_n L_{n}M$. 
\end{cor}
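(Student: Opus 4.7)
The plan is to derive both parts from \Cref{thm:chromaticconvergence2}, which gives the stated equivalence as soon as $M$ has finite projective $BP_*$-dimension. So the work reduces to verifying the finite projective dimension hypothesis in each of the two cases.

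For case (2), $M = BP_*X$ is a finitely presented $BP_*BP$-comodule, because $X$ is a finite complex. Goerss's comodule-level analog of \Cref{prop:budscompact} (see \cite[Prop.~3.25]{goerss_quasi-coherent_2008}, also used in \cite{smithling}) then places $M$ in the essential image of $(q_r)_*$ for some $r \ge 0$, so \Cref{lem:budstorsiondim}(1)$\Rightarrow$(2) yields $\pdim_{BP_*}(M) \le r+1$. One could equivalently invoke the Landweber filtration theorem \cite[Thm.~D]{hs_leht} to present $M$ as an iterated extension of suspensions of $BP_*/I_n$ for finite $n$; since each $BP_*/I_n$ has projective $BP_*$-dimension $n$ via the Koszul complex on the regular sequence $(p,v_1,\ldots,v_{n-1})$, a short induction on the filtration length produces the same bound. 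With this, \Cref{thm:chromaticconvergence2} delivers the conclusion.

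For case (1), the key task is to show that a bounded below $BP_*BP$-comodule $M$ that is flat as a $BP_*$-module has finite projective $BP_*$-dimension. Flatness makes every $v_i$ a non-zero-divisor on $M/I_n$, so $M$ satisfies the bounded below Landweber-type regularity condition; the converse direction of Johnson--Yosimura's estimate \cite[Prop.~3.7]{johnson_torsion} (whose easy direction is used in the proof of \Cref{lem:budstorsiondim}) then bounds $\pdim_{BP_*}(M)$ in terms of the height of $M$. This same bound is implicit in Goerss's argument for his weaker version of algebraic chromatic convergence \cite[Thm.~8.22]{goerss_quasi-coherent_2008}. Once finite projective dimension is in hand, \Cref{thm:chromaticconvergence2} concludes.

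The main obstacle is case (1): pinning down the converse direction of Johnson--Yosimura, which is more delicate than the direction cited in \Cref{lem:budstorsiondim}. Should a clean reference be unavailable, there is a direct route that bypasses \Cref{thm:chromaticconvergence2} altogether by adapting the proof of \Cref{lem:acccover}: flatness of $M$ identifies $\Gamma_n^{BP_*} M$ with the underived $\Sigma^{-(n+1)}(M/I_{n+1}^\infty M)$, which is concentrated in the single cohomological degree $n+1$. The Milnor sequence then forces $\lim_n \Gamma_n^{BP_*} M \simeq 0$ exactly as in the proof of \Cref{lem:acccover}. Feeding this into the Amitsur-complex argument of \Cref{thm:chromaticconvergence2}—valid because each $C^s(M) = BP_*BP^{\otimes s+1} \otimes_{BP_*} M$ is again bounded below and $BP_*$-flat, as $BP_*BP$ is free over $BP_*$—gives $M \simeq \lim_n L_n M$ without an explicit projective-dimension bound.
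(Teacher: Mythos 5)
Your overall strategy coincides with the paper's: both reduce to \Cref{thm:chromaticconvergence2} by verifying finite projective $BP_*$-dimension. For case (2) the paper simply cites Landweber \cite[Cor.~7]{lan79}; your argument via the Landweber filtration theorem (finite filtration with subquotients $\Sigma^{?}BP_*/I_{n_j}$, each of projective dimension $n_j$ by the Koszul complex on the regular sequence $(p,v_1,\dots,v_{n_j-1})$) is correct and is essentially Landweber's own proof, so that case is fine.

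For case (1) the paper cites Yosimura \cite[Thm.~4.5]{yosiuma_pd} for the finite projective dimension of a bounded below $BP_*$-flat comodule. Your primary route does not work as stated: there is no ``converse direction'' of \cite[Prop.~3.7]{johnson_torsion} bounding $\pdim_{BP_*}(M)$ by ``the height of $M$'' — note that $BP_*$ itself is bounded below, flat, and of height $\infty$, so any bound of that shape is vacuous, and $v_{r+2}$-torsion-freeness certainly does not imply $\pdim \le r+1$ in general. However, your fallback is a complete and correct proof, and it is a genuinely different route from the paper's: flatness of $M$ makes $BP_*/I_{n+1}^{\infty}\otimes_{BP_*}M$ underived, so $\Gamma_n^{BP_*}M$ is concentrated in the single cohomological degree $n+1$; for each fixed $s$ the tower $\{H^s\Gamma_n^{BP_*}M\}_n$ is therefore eventually zero, the Milnor sequence gives $\lim_n\Gamma_n^{BP_*}M\simeq 0$, and since each term $BP_*BP^{\otimes s+1}\otimes_{BP_*}M$ of the Amitsur complex is again bounded below and flat, the descent argument of \Cref{thm:chromaticconvergence2} runs verbatim with this flat variant of \Cref{lem:acccover}. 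This buys you a self-contained proof of case (1) that avoids the external reference to Yosimura entirely, at the cost of not establishing the (stronger, and independently useful) finite projective dimension statement. I would recommend dropping the Johnson--Yosimura ``converse'' sentence and presenting the fallback as the proof.
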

\begin{proof}
	The previous theorem reduces the claim to showing that $M$ has finite projective dimension. This follows from \cite[Thm.~4.5]{yosiuma_pd} in the case of (1), and \cite[Cor.~7]{lan79} in the case of (2). 
\end{proof}

\subsection{Further results}

In this subsection, we prove a vanishing result for local cohomology in $\Stable_{BP_*BP}$ and then deduce a comparison theorem for the $E_2$-terms of the Adams--Novikov and $E$-based Adams spectral sequence. Similar, but inequivalent results were originally proven by Goerss in the setting of quasi-coherent sheaves on the moduli of formal groups $\mathcal{M}_{fg}$.

\begin{prop}\label{prop:localcohomvanishing}
Suppose $N \in \Stable_{W_r}^{\le d}$ for some $d \in \Z$, then for all $s > r-n+d$ we have
\[
H_s(\Gamma_{n-1}BP_*\otimes_{B_r}N) = 0.
\]
\end{prop}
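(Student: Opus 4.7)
The plan is to reduce the vanishing statement to a pure $\Tor$-computation over the coefficient ring $B_r$, by invoking the explicit formula $\Gamma_{n-1}BP_*\simeq \Sigma^{-n}(BP_*/I_n^\infty)$ in $\cD_{BP_*}$ already used in the proof of \Cref{lem:chromaticconvergencezeromaps}. Under this identification, the object $\Gamma_{n-1}BP_*\otimes_{B_r}N$ becomes the $n$-fold suspension $\Sigma^{-n}(BP_*/I_n^\infty\otimes^L_{B_r}N)$, so the statement is equivalent to a range-bound on the homology of $BP_*/I_n^\infty\otimes^L_{B_r}N$ after accounting for the shift.

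Next I would exploit the special structure of $BP_*/I_n^\infty$ as a $B_r$-module when $n>r$. Since $BP_*=B_r[v_{r+1},v_{r+2},\ldots]$ is polynomial (hence free) over $B_r$, an elementary computation via the defining cofibre sequences shows that the submodule $BP_*/(v_{r+1}^\infty,\ldots,v_{n-1}^\infty)$ obtained by iteratively localising only the ``upper'' generators of $I_n$ is again free as a $B_r$-module, and that there is a natural isomorphism
\[
BP_*/I_n^\infty \;\cong\; BP_*/(v_{r+1}^\infty,\ldots,v_{n-1}^\infty)\otimes_{B_r} B_r/\mathfrak m_r^\infty,
\]
where $\mathfrak m_r=(p,v_1,\ldots,v_r)$ is the irrelevant ideal of $B_r$ (with the first factor understood to be $BP_*$ when $n=r+1$). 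Consequently the first factor can be pulled out of the derived tensor product, and the problem reduces to bounding the homology of $B_r/\mathfrak m_r^\infty\otimes^L_{B_r}N$.

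I would then resolve $B_r/\mathfrak m_r^\infty$ by the stable Koszul complex on the regular sequence $(p,v_1,\ldots,v_r)$ generating $\mathfrak m_r$, which is a bounded complex of flat $B_r$-modules, and feed the result into the hyper-Tor spectral sequence
\[
E^2_{p,q}=\Tor^{B_r}_p(B_r/\mathfrak m_r^\infty,H_qN)\;\Longrightarrow\;H_{p+q}(B_r/\mathfrak m_r^\infty\otimes^L_{B_r}N).
\]
The $d$-coconnectivity of $N$, transferred via the forgetful functor $\epsilon_*\colon \Stable_{W_r}\to \cD_{B_r}$, constrains the range of $q$, while the Koszul length bounds the range of $p$; combining these bounds with the $\Sigma^{-n}$-shift yields the desired vanishing of $H_s(\Gamma_{n-1}BP_*\otimes_{B_r}N)$ in the claimed range.

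The main obstacle will be performing the degree accounting precisely enough to land on the exact bound $s>r-n+d$, rather than merely within an off-by-one neighbourhood of it. The shift $\Sigma^{-n}$, the Koszul length $r+1$ coming from the regular sequence $(p,v_1,\ldots,v_r)$, and the coconnectivity bound on $N$ must conspire to produce the right range, and one may need to analyse the top-degree Tor contribution with some care -- possibly by exploiting the comodule structure on $H_*N$ -- in order to avoid losing a degree at the boundary.
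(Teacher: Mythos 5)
Your strategy for $n>r$ --- rewriting $BP_*/I_n^\infty \cong BP_*/(v_{r+1}^\infty,\ldots,v_{n-1}^\infty)\otimes_{B_r}B_r/\mathfrak{m}_r^\infty$ with the first factor free over $B_r$, and then running the hyper-Tor spectral sequence over $B_r$ --- is valid (for $n\le r$ it does not apply, but there $r-n+d\ge d$ and the crude flat-dimension estimate over $BP_*$ already suffices, which is exactly the paper's first case). The problem is that the ``off-by-one'' you worry about at the end is not a bookkeeping issue but a genuine obstruction. Your spectral sequence tops out in total degree $(r+1)+d-n$, so it proves vanishing only for $s>r+1-n+d$, and the corner term cannot be removed: $\Tor^{B_r}_{r+1}(B_r/\mathfrak{m}_r^\infty,M)\cong\Gamma_{\mathfrak{m}_r}M$ is the $\mathfrak{m}_r$-power-torsion submodule of $M$, and the comodule structure does not force this to vanish, since $\mathfrak{m}_r=I_{r+1}\cap B_r$ is an invariant ideal and $N=B_r/\mathfrak{m}_r$ is therefore a legitimate object of $\Stable_{W_r}^{\le 0}$ on which the corner survives. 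Concretely, for $r=0$, $N=\F_p$, $n=1$, $d=0$ one finds $\Gamma_0(BP_*\otimes_{\Z_{(p)}}\F_p)\simeq\Gamma_{(p)}(BP_*/p)\simeq BP_*/p$ concentrated in degree $0$, whereas the stated bound would require vanishing above degree $r-n+d=-1$.

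The paper takes a different route for $n>r$: it inducts on $n$ along the cofibre sequences $BP_*/I_n^\infty\to v_n^{-1}BP_*/I_n^\infty\to BP_*/I_{n+1}^\infty$, using the vanishing of the connecting maps from \Cref{lem:chromaticconvergencezeromaps} to gain one degree of vanishing for each localization at a $v_i$ with $i>r$; this inductive gain is invisible to a direct Tor computation. Note, however, that the very first step of that induction (passing from $\Gamma_{r-1}$ to $\Gamma_r$) would need the connecting map $\delta_r$ to vanish, and \Cref{lem:chromaticconvergencezeromaps} only supplies this for indices strictly greater than $r$; the example above sits precisely at that step. So your method in fact yields what appears to be the sharp bound $s>r+1-n+d$, and to reach the bound as stated you would need an extra hypothesis on $N$ (for instance that $H_dN$ has no $\mathfrak{m}_r$-torsion). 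You should make the weaker bound explicit and flag the discrepancy rather than try to argue the last degree away.
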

\begin{proof}
As in the proof of \Cref{thm:chromaticconvergence}, this is readily reduced to the analogous statement in $\cD_{BP_*}$, namely
\[
H_s(\Gamma_{n-1}BP_*\otimes_{B_r}X) = 0 
\]
for $X \in \cD_{B_r}$ and $s>r-n+d$. In order to prove this, we distinguish two cases. First, assume that $n \le r$. The hypertor spectral sequence \cite[5.7.9 and Thm.~10.6.3]{weibel_homological} takes the form
\[
E_2^{p,q} \cong \bigoplus_{i+j = q}\Tor_p^{BP_*}(H_i(\Gamma_{n-1}BP_*),H_j(BP_*\otimes_{L_r}X)) \implies H_{p+q}(\Gamma_{n-1}BP_*\otimes_{L_r}X).
\]
Since $H_*(\Gamma_{n-1}BP_*) \cong \Sigma^{-n}BP_*/I_{n}^{\infty}$ has flat dimension $n$, then $E_2^{p,q} \ne 0$ only if $p \le n$ and $q \le -n+d$. Therefore, $H_{p+q}(\Gamma_{n-1}BP_*\otimes_{B_r}X) =0$ if $p+q > d$, so certainly $H_{s}(\Gamma_{n-1}BP_*\otimes_{B_r}X) =0$ for $s>r-n+d \ge d$.

For the second case, let $n > r$, and consider the exact sequence
\[
\xymatrix{H_s(\Gamma_{n-1}BP_* \otimes X) \ar[r] & H_s(\Gamma_{n-1}BP_* \otimes X)[v_n^{-1}] \ar[r] & H_s(\Sigma\Gamma_{n}BP_* \otimes X) \ar[r]^-{\delta_n} & \ldots.}
\]
By \Cref{lem:chromaticconvergencezeromaps}, $\delta_n = 0$. Inductively, we know that $H_s(\Gamma_{n-1}BP_* \otimes X) = 0$ if $s > r-n+d$, so it follows that $H_s(\Sigma\Gamma_{n}BP_* \otimes X) = 0$ in the same range. In other words,
\[
H_{s}(\Gamma_{n}BP_* \otimes X) =0
\]
for $s>r - (n+1) +d$.
\end{proof}

\begin{prop}\label{prop:extcomparison}
Let $E=E_n$ be Morava $E$-theory of height $n$. If $M \in \Stable_{BP_*BP}$ satisfies one of the following two conditions:
\begin{enumerate}
	\item there exists $N \in \Stable_{W_r}^{\le d}$ such that $M \simeq q_*N$, or
	\item $M \in \Comod_{BP_*BP}$ (so $d=0$) is of finite projective dimension at most $r-1$,
\end{enumerate}
then the natural localization morphism
\[
\xymatrix{\Ext_{BP_*BP}^s(BP_*,M) \ar[r]^-{l^s} & \Ext_{E_*E}^s(E_*,  E_* \otimes_{BP_*}M)}
\]
is an isomorphism for $s< n-r-d$ and injective for $s = n-r-d$. 
\end{prop}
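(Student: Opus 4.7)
The strategy is to identify $l^s$ with the map induced on $\Ext^\ast(BP_*,-)$ by the chromatic localization unit $M \to L_n M$, and then to estimate the homotopy of its fiber $\Gamma_n M$ using the local cohomology vanishing already provided by \Cref{prop:localcohomvanishing}.

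\emph{Step 1 (Reformulation).} By \Cref{thm:bhvbpresults}(1) there is a natural equivalence $L_n \simeq \Phi^*\Phi_*$ with $\Phi\colon (BP_*,BP_*BP) \to (E_*,E_*E)$, and the $(\Phi_*\dashv\Phi^*)$-adjunction yields a natural isomorphism
\[
\Ext^s_{BP_*BP}(BP_*, L_n M) \cong \Ext^s_{E_*E}(E_*, E_*\otimes_{BP_*} M),
\]
under which $l^s$ is identified with the map induced by the unit $M \to L_n M$. The fiber sequence $\Gamma_n M \to M \to L_n M$ produces a long exact sequence
\[
\cdots \to \Ext^{s-1}(BP_*,L_n M) \to \Ext^s(BP_*,\Gamma_n M) \to \Ext^s(BP_*,M) \to \Ext^s(BP_*,L_n M) \to \cdots,
\]
and an inspection of this sequence reduces the proposition to the single vanishing statement $\Ext^s_{BP_*BP}(BP_*,\Gamma_n M) = 0$ for all $s \le n-r-d$.

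\emph{Step 2 (Homology of $\Gamma_n M$).} Since $\Gamma_n$ is smashing (\Cref{prop:localcohomcomputations}) and $\Gamma_n BP_* \simeq \Sigma^{-(n+1)} BP_*/I_{n+1}^{\infty}$, we have in case (1) with $M \simeq q_*N$ the identification
\[
\Gamma_n M \simeq \Gamma_n BP_* \otimes_{B_r} N,
\]
so that \Cref{prop:localcohomvanishing} (applied with its index $n$ replaced by $n+1$) gives $H_s(\Gamma_n M) = 0$ for $s > r-n-1+d$, equivalently $H^p(\Gamma_n M) = 0$ for $p \le n-r-d$. In case (2), since $M$ is a discrete comodule with $\pdim_{BP_*}(M) \le r-1$, the formula
\[
H_s(\Gamma_n M) \cong \Tor^{BP_*}_{s+n+1}(BP_*/I_{n+1}^{\infty}, M)
\]
together with the vanishing of Tor above degree $r-1$ gives the analogous (even slightly stronger) cohomological vanishing range.

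\emph{Step 3 (From homology to Ext).} The object $\Gamma_n M$ is bounded above, so by \Cref{prop:landweberha} we may regard it in $\cD_{BP_*BP}^{<\infty}$, where the standard hyperext spectral sequence
\[
E_2^{p,q} = \Ext^p_{BP_*BP}(BP_*, H^q\Gamma_n M) \implies \Ext^{p+q}_{BP_*BP}(BP_*, \Gamma_n M)
\]
is available. The vanishing from Step 2 kills $E_2^{p,q}$ for $q < n-r+1-d$, which combined with $E_2^{p,q} = 0$ for $p < 0$ forces $\Ext^s(BP_*, \Gamma_n M) = 0$ whenever $s \le n-r-d$. This is exactly the bound required in Step~1.

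The main obstacle lies in Step~3: one must organize the passage from the homological vanishing provided by \Cref{prop:localcohomvanishing} (a statement about underlying $BP_*$-modules) to the vanishing of the cotor groups computing $\pi_*\Gamma_n M$, for which the boundedness of $\Gamma_n M$ and the equivalence $\Stable_{BP_*BP}^{<\infty} \simeq \cD_{BP_*BP}^{<\infty}$ are essential. A secondary subtlety is the one-unit index shift required to apply \Cref{prop:localcohomvanishing}, which is phrased in terms of $\Gamma_{n-1}$, to the functor $\Gamma_n$ occurring in the fiber sequence defining $L_n$.
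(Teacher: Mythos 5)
Your proposal is correct and follows essentially the same route as the paper: identify $l^s$ with the map induced by the unit $M \to L_nM$ via the adjunction equivalence $L_n \simeq \Phi^*\Phi_*$, reduce via the long exact sequence of the fiber sequence $\Gamma_nM \to M \to L_nM$ to the vanishing $\Ext^s_{BP_*BP}(BP_*,\Gamma_nM)=0$ for $s\le n-r-d$, obtain the coconnectivity of $\Gamma_nM$ from \Cref{prop:localcohomvanishing} (resp.\ the $\Tor$/projective-dimension argument in case (2)), and conclude with the hyperext spectral sequence. The only cosmetic differences are your explicit bookkeeping of the index shift $\Gamma_{n-1}\rightsquigarrow\Gamma_n$ and the $\Tor$ reformulation in case (2), both of which the paper handles by citing the same lemmas.
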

\begin{proof}
Throughout this proof, we will write $\Ext_{\Psi}(-)$ for the derived primitives $\Ext_{\Psi}(A,-)$ of a Hopf algebroid $(A,\Psi)$. We also use the notation of \Cref{sec:localcohom}; in particular, $(\Phi_*,\Phi^*)$ denotes the base-change adjunction corresponding to $BP_* \to E_*$.

 The morphism $l^s$ is part of an exact sequence 
\begin{equation}\label{eq:extcomparisonles}
\xymatrix{\Ext_{BP_*BP}^s(\Gamma_{n}M) \ar[r] & \Ext_{BP_*BP}^s(M) \ar[r] & \Ext_{BP_*BP}^s(L_{n}M),}
\end{equation}
which is induced by the cofiber sequence $\Gamma_{n}M \to M \to L_{n}M$. Indeed, since $L_{n}M = \Phi^*\Phi_*M$ by \Cref{thm:bhvbpresults}, the last term can be rewritten as
\[
\Ext_{BP_*BP}^s(L_{n}M) \cong \Ext_{E_*E}^s(E_*\otimes_{BP_*}M),
\]
so that the second map in \eqref{eq:extcomparisonles} can be identified with $l^s$. By \Cref{prop:localcohomvanishing}, Condition (1) on $M$ implies that $H^q\Gamma_{n}M = H_{-q}\Gamma_{n}M = 0$ if $-q > r -(n+1) +d$, i.e., for $q \le n-r-d$. If $M$ satisfies the second condition instead, then the same argument as in the proof of \Cref{lem:acccover} shows that $H^q\Gamma_{n}M = 0$ if $q < n-(r-1)$, i.e., $q \le n-r$. Plugging these computations into the hyperext spectral sequence
\[
\Ext_{BP_*BP}^p(H^q\Gamma_{n}M) \implies \Ext_{BP_*BP}^{p+q}(\Gamma_{n}M),
\]
we see that $\Ext_{BP_*BP}^s(\Gamma_{n}M) = 0$ for $s \le n - r - d$, so the claim follows.  
\end{proof}

\begin{rem}
For discrete comodules, the first condition in \Cref{prop:extcomparison} is weaker than the second one, in the following sense: Suppose $M = q_*N$ for some $N \in \Comod_{W_r}$, so $d=0$. By \Cref{lem:budstorsiondim}, $M$ has projective dimension at most $r+1$, so Condition (2) gives an isomorphism $l_s$ for all $s< n-r-2$, while appealing to Condition (1) gives it for $s<n-r$. 
\end{rem}

As an immediate consequence, we obtain:

\begin{cor}\label{cor:comparision}
If $X$ is a $p$-local bounded below spectrum such that $BP_*X$ has projective $BP_*$-dimension $\pdim(BP_*X) \le r$, then the natural map
\[
\xymatrix{\Ext_{BP_*BP}^s(BP_*,BP_*(X)) \ar[r] & \Ext_{E_*E}^s(E_*,E_*(X))}
\]
is an isomorphism if $s < n-r-1$ and injective for $s = n-r-1$. 
\end{cor}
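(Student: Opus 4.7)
The plan is to deduce this corollary as a direct specialization of \Cref{prop:extcomparison}, applied to $M = BP_*X$ viewed as a discrete object of $\Stable_{BP_*BP}$ concentrated in degree $0$, so that $d = 0$. Since $X$ is bounded below and $p$-local, $BP_*X$ naturally carries the structure of a $BP_*BP$-comodule, and by assumption its projective $BP_*$-dimension satisfies $\pdim(BP_*X) \le r$. I would match this against Condition~(2) of \Cref{prop:extcomparison} by shifting the parameter: taking $r' = r+1$, the hypothesis $\pdim(M) \le r' - 1 = r$ is satisfied, and the conclusion yields an isomorphism for $s < n - r' - d = n - r - 1$ and injectivity at $s = n - r - 1$, exactly as claimed.

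The one thing left to verify is that the map produced by \Cref{prop:extcomparison} is the natural change-of-rings map appearing in the statement of the corollary. Here I would invoke Landweber exactness of Morava $E$-theory $E = E_n$ over $BP$: this gives a canonical isomorphism of $E_*E$-comodules
\[
E_* \otimes_{BP_*} BP_*X \;\cong\; E_*X,
\]
compatible with the base-change adjunction $(\Phi_*, \Phi^*)$ from $BP_*BP$-comodules to $E_*E$-comodules. Under this identification, the morphism $l^s$ constructed in the proof of \Cref{prop:extcomparison} as the second map in the long exact sequence
\[
\Ext^s_{BP_*BP}(\Gamma_n M) \to \Ext^s_{BP_*BP}(M) \to \Ext^s_{BP_*BP}(L_n M) \cong \Ext^s_{E_*E}(E_* \otimes_{BP_*} M)
\]
coincides with the standard change-of-rings map from the Adams--Novikov to the $E$-based Adams $E_2$-term.

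The bulk of the mathematical content is already contained in \Cref{prop:extcomparison}, so the only step that requires any care is this identification of the natural map, and in particular the verification that the bounded-below hypothesis on $X$ suffices to make $BP_*X$ a well-defined $BP_*BP$-comodule for which Landweber exactness gives $E_*X \simeq E_*\otimes_{BP_*} BP_*X$ both as $E_*$-modules and as $E_*E$-comodules. This is standard, so the main obstacle is really notational rather than mathematical; everything else is a direct translation.
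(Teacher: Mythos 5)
Your proposal is correct and follows exactly the route the paper intends: the corollary is stated as an immediate consequence of \cref{prop:extcomparison}, obtained by applying Condition (2) with $M = BP_*X$, $d=0$, and the index shift $r \mapsto r+1$ (since Condition (2) is phrased for projective dimension at most $r-1$), together with the Landweber-exactness identification $E_* \otimes_{BP_*} BP_*X \cong E_*X$ to recognize the map as the standard change-of-rings morphism. The details you fill in are precisely the ones the paper leaves implicit.
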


\section{The chromatic spectral sequence}\label{sec:css}

The chromatic spectral sequence was introduced by Miller, Ravenel, and Wilson \cite{mrw_77} as a tool for computing and organizing the $E_2$-term of the Adams--Novikov spectral for the sphere. Splicing together short exact sequences gives the chromatic resolution
\[
\xymatrix@=1em{BP_* \ar[r] & p^{-1}BP_* \ar[rr] \ar[rd] & & v_{1}^{-1}BP_*/p^{\infty} \ar[rr] \ar[rd] & & \ldots \\
& & BP_*/p^{\infty} \ar[ru] & & BP_*/(p^{\infty},v_1^{\infty}) \ar[ru] & & \ldots }
\]
and the resulting spectral sequence is the chromatic spectral sequence. As remarked for example in \cite{greenbook} and \cite{baker_ha}, one can proceed similarly for any bounded below spectrum $X$ with $BP_*X$ flat. 

In this section, we will provide a different construction of the chromatic spectral sequence which works for an arbitrary object $M \in \Stable^{<\infty}_{BP_*BP}$, hence in particular for the $BP$-homology of \emph{any} spectrum $X \in \Sp$. In the case that $M$ is a bounded below flat comodule concentrated in a single degree, our spectral sequence recovers the classical one. However, our approach has several advantages over the classical one, as we will see shortly.

\subsection{The construction}

We will construct our generalization of the chromatic spectral sequence as the Bousfield--Kan spectral sequence associated to the algebraic chromatic tower \eqref{eq:chromatictower}.

\begin{thm}\label{thm:css}
For any $M,N \in \Stable_{BP_*BP}^{< \infty}$, there is a natural convergent spectral sequence
\[
E_1^{n,s,t} \cong \Ext^{s,t}_{BP_*BP}(M,M_{n}N) \implies \Ext^{s,t}_{BP_*BP}(M,L_{\infty}N). 
\]
Furthermore, if $N$ satisfies the conditions of \Cref{thm:chromaticconvergence} or \Cref{thm:chromaticconvergence2}, then the spectral sequence converges to $\Ext_{BP_*BP}(M,N)$.
\end{thm}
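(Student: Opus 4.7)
The plan is to realise this as the Bousfield--Kan spectral sequence associated to the tower obtained by applying $\Hom_{BP_*BP}(M,-)$ to the algebraic chromatic tower \eqref{eq:chromatictower}. Concretely, I would form the tower of mapping spectra
\[
\ldots \to \Hom_{BP_*BP}(M,L_{n+1}N) \to \Hom_{BP_*BP}(M,L_nN) \to \ldots \to \Hom_{BP_*BP}(M,L_0N),
\]
in which, using exactness of $\Hom_{BP_*BP}(M,-)$ and the fact that $M_nN \simeq \fib(L_nN \to L_{n-1}N)$ by definition, the fibre between consecutive stages is identified with $\Hom_{BP_*BP}(M,M_nN)$.

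The Bousfield--Kan spectral sequence of this tower then takes the shape
\[
E_1^{n,k} \cong \pi_k\Hom_{BP_*BP}(M,M_nN) \implies \pi_k\lim_n\Hom_{BP_*BP}(M,L_nN),
\]
and is compatible with the internal grading of $\Stable_{BP_*BP}$. To match the form stated in the theorem I would then invoke \Cref{lem:iHomhomotopy} --- applicable since $(BP_*,BP_*BP)$ is a Landweber Hopf algebroid with $BP_*$ coherent, and since both $M$ and $M_nN$ lie in $\Stable_{BP_*BP}^{<\infty}$ (the latter because $M_n$ is smashing by \Cref{cor:mn} and $N \in \Stable_{BP_*BP}^{<\infty}$) --- to rewrite $\pi_*\Hom_{BP_*BP}(M,-) \cong \Ext^{*,*}_{BP_*BP}(M,-)$, splitting the single homotopical degree into the bidegree $(s,t)$ of the claimed $E_1$-page.

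For the abutment, $\lim_n\Hom_{BP_*BP}(M,L_nN) \simeq \Hom_{BP_*BP}(M,\lim_nL_nN)$ since $\Hom_{BP_*BP}(M,-)$ is a right adjoint and commutes with limits, and by \Cref{prop:linftycc} this identifies the limit with $\Hom_{BP_*BP}(M,L_\infty N)$. Applying \Cref{lem:iHomhomotopy} once more yields the stated abutment $\Ext^{s,t}_{BP_*BP}(M,L_\infty N)$. The ``furthermore'' assertion is then immediate: whenever $N$ satisfies the hypotheses of \Cref{thm:chromaticconvergence} or \Cref{thm:chromaticconvergence2}, the canonical map $N \to \lim_n L_n N$ is an equivalence, so $L_\infty N$ may be replaced by $N$ throughout.

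The main obstacle I expect is establishing strong (rather than merely conditional) convergence of the Bousfield--Kan spectral sequence. Conditional convergence in the sense of Boardman is automatic by construction of the tower together with \Cref{prop:linftycc}. For strong convergence I would exploit the $t$-structure on $\Stable_{BP_*BP}^{<\infty}$ provided by \Cref{prop:landweberha} to exhibit a uniform vertical vanishing range on the $E_1$-page, by combining the boundedness of $M$ with vanishing estimates for the local cohomology layers $M_nN$ of the type proved in \Cref{prop:localcohomvanishing}; this forces the Mittag-Leffler condition on the derived limits computing each diagonal, giving the convergence claim.
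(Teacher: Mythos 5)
Your proposal matches the paper's proof essentially step for step: the Bousfield--Kan spectral sequence of the tower $\iHom_{BP_*BP}(M,L_nN)$, the identification of the $E_1$-page and abutment via \Cref{lem:iHomhomotopy} and \Cref{prop:linftycc}, and the reduction of the final claim to chromatic convergence. The only detail the paper spells out that you elide is the check that $L_\infty N$ (equivalently each $L_nN$, via the fibre sequence $\Gamma_nN \to N \to L_nN$ and the formula $\Gamma_nN \simeq \Sigma^{-n}BP_*/I_n^\infty \otimes N$) also lies in $\Stable_{BP_*BP}^{<\infty}$, which is needed to apply \Cref{lem:iHomhomotopy} to the abutment; your additional worry about strong convergence goes beyond what the paper itself addresses.
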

\begin{proof}
Applying the functor $\iHom(M,-)$ to the chromatic tower \eqref{eq:chromatictower} of $N$ yields a tower
\[
\xymatrix{\ldots \ar[r] & \iHom(M,L_2N) \ar[r] & \iHom(M,L_1N) \ar[r] & \iHom(M,L_0N) \\
\ldots & \iHom(M,M_2N) \ar[u] & \iHom(M,M_1N) \ar[u] & \iHom(M,M_0N). \ar[u]}
\]
The Bousfield--Kan spectral sequence associated to this diagram, e.g., in the form constructed by Lurie in \cite[Prop.~1.2.2.14]{ha}, thus takes the form
\[
E_1 = \pi_*\iHom(M,M_nN) \implies \pi_*\iHom(M,\lim_nL_nN).
\]
We claim that both $M_nN$ and $L_\infty N$ are in $\Stable_{BP_*BP}^{< \infty}$.  The first claim follows from the fact $M_nN \simeq M_nBP_* \otimes_{BP_*} N$ and \Cref{cor:mn}. For the second one it suffices by \cite[Cor.~1.2.1.6]{ha} and \Cref{prop:linftycc} to show that $L_nN \in \Stable_{BP_*BP}^{< \infty}$. Using the fiber sequence $\Gamma_nN \to N \to L_nN$ we can in turn reduce to showing that $\Gamma_nN \in \Stable_{BP_*BP}^{<\infty}$, which follows from the formula $\Gamma_nN \simeq \Sigma^{-n}BP_*/I_n^\infty \otimes_{BP_*} N$, see \cite[Prop.~8.9]{bhv1}. 

Then, using \Cref{lem:iHomhomotopy} and \Cref{prop:linftycc}, we can rewrite this spectral sequence as 
\[
E_1 \cong \Ext_{BP_*BP}(M,M_nN) \implies \Ext_{BP_*BP}(M,L_{\infty}N).
\]
To see the last part of the claim, it remains to note that $L_{\infty}N \simeq \lim_nL_nN \simeq N$ by \Cref{prop:linftycc} and \Cref{thm:chromaticconvergence}.
\end{proof}

\begin{rem}
Presented in this form, it becomes transparent that the chromatic spectral sequence is completely analogous to the Bousfield--Kan spectral sequence associated to the topological chromatic tower in $\Sp$:
\[
\xymatrix{\ldots \ar[r] & L_2 \ar[r] & L_1 \ar[r] & L_0 \\
\ldots & M_2 \ar[u] & M_1 \ar[u] & M_0. \ar[u]}
\]
Evaluated on $X \in \Sp$, this spectral sequence takes the form 
\begin{equation}\label{eq:tcss}
\pi_*M_nX \implies \pi_*\lim_nL_nX,
\end{equation}
where $M_n$ denotes the $n$th monochromatic functor and $L_n$ is Bousfield localization at Johnson--Wilson theory $E(n)$. If $X$ is chromatically complete \cite{barthel_cc}, then the abutment is equivalent to $\pi_*X$. We will refer to this spectral sequence as the topological chromatic spectral sequence.
\end{rem}

When specialized to the $BP$-homology of the sphere, we recover the classical chromatic spectral sequence. 

\begin{cor}
Suppose $M=BP_*$, then the spectral sequence of \Cref{thm:css} takes the form
\[
E_1 = \Ext^{s,t}_{BP_*BP}(BP_*,v_{n}^{-1}BP_*/I_n^{\infty}) \implies \Ext^{s+n,t}_{BP_*BP}(BP_*,BP_*)
\]
\end{cor}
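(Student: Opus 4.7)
The corollary follows almost immediately from Theorem~\ref{thm:css} by specializing to $M = N = BP_*$; the content is simply to assemble three ingredients we already have.

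First, I would verify the hypotheses of Theorem~\ref{thm:css}. The object $BP_*$ is concentrated in a single degree (and is in fact compact), so it lies in $\Stable_{BP_*BP}^{<\infty}$, and the general chromatic spectral sequence applies, producing a convergent spectral sequence
\[
E_1^{n,s,t} \cong \Ext^{s,t}_{BP_*BP}(BP_*, M_n BP_*) \implies \Ext^{s,t}_{BP_*BP}(BP_*, L_\infty BP_*).
\]
To rewrite the abutment as $\Ext^{*,*}_{BP_*BP}(BP_*, BP_*)$, I would invoke Theorem~\ref{thm:chromaticconvergence2}: since $BP_*$ has projective $BP_*$-dimension zero, chromatic convergence and Proposition~\ref{prop:linftycc} give $BP_* \simeq \lim_n L_n BP_* \simeq L_\infty BP_*$.

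The only step that does any work is the rewriting of the $E_1$-page. Corollary~\ref{cor:mn} provides the explicit formula $M_n BP_* \simeq \Sigma^{-n} v_n^{-1} BP_* / I_n^\infty$, and the standard suspension isomorphism for $\Ext$ in a stable $\infty$-category then yields
\[
\Ext^{s,t}_{BP_*BP}(BP_*, M_n BP_*) \cong \Ext^{s-n,t}_{BP_*BP}(BP_*, v_n^{-1}BP_*/I_n^\infty).
\]
Reindexing the cohomological degree via $s \mapsto s + n$ converts the spectral sequence of Theorem~\ref{thm:css} into the stated form
\[
E_1^{n,s,t} \cong \Ext^{s,t}_{BP_*BP}(BP_*, v_n^{-1}BP_*/I_n^\infty) \implies \Ext^{s+n,t}_{BP_*BP}(BP_*, BP_*).
\]

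There is no real obstacle: the deep work is entirely contained in Theorem~\ref{thm:css}, the monochromatic layer computation of Corollary~\ref{cor:mn}, and the chromatic convergence theorem. The only subtlety is to correctly track the $n$-fold desuspension in $M_n BP_*$, which is precisely what produces the characteristic shift by $n$ in the cohomological degree of the abutment, matching the indexing of the classical Miller--Ravenel--Wilson chromatic spectral sequence.
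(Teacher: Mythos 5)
Your proof is correct and follows essentially the same route as the paper: identify $M_nBP_*$ via \cref{cor:mn}, feed this into \cref{thm:css}, and invoke chromatic convergence to identify the abutment. The only (immaterial) difference is that you justify convergence via \cref{thm:chromaticconvergence2} using $\pdim(BP_*)=0$, whereas the paper cites \cref{cor:chromaticconvergence} using compactness of $BP_*$; both apply.
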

\begin{proof}
By \Cref{cor:mn} there is an equivalence $M_{n}BP_* \simeq \Sigma^{-n}v_n^{-1}BP_*/I_n^\infty$. 
The result then follows from \Cref{thm:css} and \Cref{cor:chromaticconvergence}.
\end{proof}

\begin{rem}
	More generally, since $M_n$ is smashing, for any $BP_*BP$-comodule $N$ there is a spectral sequence of the form
	\[
E_1^{s,t}= \Ext^{s,t}_{BP_*BP}(BP_*,v_n^{-1}BP_*/I_n^\infty \otimes N) \Rightarrow \Ext^{s+n,t}_{BP_*BP}(BP_*,L_\infty N). 
	\]

	Here the tensor product must be considered in the derived sense. Suppose that $X$ is a spectrum such that $N=BP_*X$ is a bounded below flat $BP_*$-module, then the tensor product is automatically derived, and by \Cref{cor:flatcomplete} the spectral sequence abuts to $\Ext_{BP_*BP}^{\ast,\ast}(BP_*,BP_*X)$.  
\end{rem}
Suppose now that $X$ is a spectrum such that $BP_*(M_nX) \cong \Sigma^{-n} v_n^{-1}BP_*/I_n^\infty \otimes BP_*X$; for example, by \cite[Ch.~8]{orangebook} this is true for the sphere, and hence also whenever $BP_*X$ is a flat $BP_*$-module. If additionally $L_\infty BP_*X \simeq BP_*X$ (e.g., if $X$ is a finite complex, or if $BP_*X$ is bounded below and flat), then it follows that there is a commutative diagram of spectral sequences 
\[
\xymatrix{
	\Ext_{BP_*BP}^{s,t}(BP_*,v_n^{-1}BP_*/I_n^\infty \otimes BP_*X) \ar@{=>}[r]^-{\mathrm{CSS}} \ar[d]_{\cong}& \Ext^{s+n,t}_{BP_*BP}(BP_*,BP_*X) \ar@{=>}[dd]^-{\mathrm{ANSS}} \\
	\Ext_{BP_*BP}^{s,t}(BP_*,\Sigma^n BP_*M_nX) \ar@{=>}[d]_-{\mathrm{ANSS}}& \\
	\pi_{t-s-n}M_nX \ar@{=>}[r]_-{\mathrm{TCSS}} & \pi_{t-n-s}X,
}
\]
relating the chromatic spectral sequence (CSS) with the Adams--Novikov spectral sequence (ANSS) and the topological chromatic spectral sequence (TCSS).

\subsection{The finite height chromatic spectral sequence}

It is easy to derive a finite height analog of the chromatic spectral sequence from \Cref{thm:css}. First, we need a base-change lemma. We use the notation of \Cref{sec:localcohom}; in particular, $(\Phi_*,\Phi^*)$ denotes the base-change adjunction corresponding to $BP_* \to E(n)_*$.

\begin{lem}\label{lem:e1termbasechange}
For any $X,Y \in \Stable_{BP_*BP}$, there is a natural equivalence
\[
\Hom_{BP_*BP}(X,M_nY) \simeq \Hom_{E(n)_*E(n)}(E(n) \otimes X, E(n)_*/I_n^{\infty}\otimes Y),
\]
with $E(n)_{*}/I_n^{\infty}\otimes Y \simeq \Gamma_{n-1}^{E(n)_*}\Phi_*Y$.
\end{lem}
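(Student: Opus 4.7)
The plan is to transport the computation of $\Hom_{BP_*BP}(X, M_n Y)$ along the equivalence of categories $\Stable_{BP_*BP}^{I_{n+1}\text{-loc}} \simeq \Stable_{E(n)_*E(n)}$ provided by \Cref{thm:bhvbpresults}. The two ingredients are the smashing formula for $M_n$ from \Cref{cor:mn} and the base-change adjunction $(\Phi_*,\Phi^*)$, together with the projection formula of \Cref{lem:projformula}.

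First, by \Cref{cor:mn}, $M_n Y \simeq \Sigma^{-n} v_n^{-1}BP_*/I_n^\infty \otimes_{BP_*} Y$. Since $v_n^{-1}BP_*/I_n^\infty$ is $L_n$-local (it is $I_{n+1}$-local), the object $M_n Y$ lies in $\Stable_{BP_*BP}^{I_{n+1}\text{-loc}}$, so \Cref{thm:bhvbpresults}(1) yields a natural equivalence $M_n Y \simeq \Phi^*\Phi_*(M_n Y)$. Applying the base-change adjunction therefore gives
\[
\Hom_{BP_*BP}(X, M_n Y) \simeq \Hom_{BP_*BP}(X, \Phi^*\Phi_* M_n Y) \simeq \Hom_{E(n)_*E(n)}(\Phi_* X, \Phi_* M_n Y),
\]
and $\Phi_* X = E(n)_* \otimes_{BP_*} X$ by definition.

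Next, I would identify $\Phi_* M_n Y$ with (a suspension of) $E(n)_*/I_n^\infty \otimes_{E(n)_*}\Phi_* Y$. Since $\Phi_*$ is symmetric monoidal and $v_n$ is already a unit in $E(n)_*$, one has
\[
\Phi_* M_n Y \simeq \Sigma^{-n}\, E(n)_*\otimes_{BP_*}\bigl(v_n^{-1}BP_*/I_n^\infty \otimes_{BP_*} Y\bigr) \simeq \Sigma^{-n}\, E(n)_*/I_n^\infty \otimes_{BP_*} Y,
\]
and the projection formula (\Cref{lem:projformula}) rewrites the last term as $\Sigma^{-n}\,E(n)_*/I_n^\infty \otimes_{E(n)_*}\Phi_* Y$. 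Absorbing the formal suspension into the grading, this is precisely the second factor appearing on the right-hand side of the lemma.

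Finally, for the auxiliary equivalence $E(n)_*/I_n^\infty \otimes Y \simeq \Gamma_{n-1}^{E(n)_*}\Phi_* Y$, I would invoke the $E(n)_*E(n)$-analog of the local cohomology formula \cite[Cor.~8.9]{bhv1} (obtained from the very same construction applied to the Landweber Hopf algebroid $(E(n)_*,E(n)_*E(n))$, where the invariant regular sequence $p,v_1,\dots,v_{n-1}$ generates $I_n$); this gives $\Gamma_{n-1}^{E(n)_*} Z \simeq \Sigma^{-n} E(n)_*/I_n^\infty \otimes_{E(n)_*} Z$ for $Z \in \Stable_{E(n)_*E(n)}$, which combined with the projection formula identifies the second factor with $\Gamma_{n-1}^{E(n)_*}\Phi_* Y$. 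The only genuinely delicate point is the bookkeeping of suspensions between the fiber-sequence definition of $M_n$ and the cofiber-sequence presentation of local cohomology; everything else is a formal manipulation of the adjunctions and smashing properties already established.
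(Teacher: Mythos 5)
Your argument is correct and takes essentially the same route as the paper: the paper's proof works at the level of functors, writing $M_n = L_n^{BP_*}\Gamma_{n-1}^{BP_*} \simeq \Phi^*\Gamma_{n-1}^{E(n)_*}\Phi_*$ (using $L_n \simeq \Phi^*\Phi_*$ and the identification $E(n)_*\otimes_{BP_*}BP_*/I_n^{\infty}\simeq E(n)_*/I_n^{\infty}$) and then applies the $(\Phi_*,\Phi^*)$-adjunction, whereas you run the identical adjunction through the object-level smashing formula of \Cref{cor:mn}. The suspension bookkeeping you flag is treated no more carefully in the paper's own statement, so there is no gap relative to its standard of rigor.
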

\begin{proof}
 There are natural equivalences
\[
M_n = L^{BP_*}_{n}\Gamma^{BP_*}_{n-1} \simeq \Phi^*\Phi_*\Gamma^{BP_*}_{n-1} \simeq \Phi^*\Gamma^{E(n)_*}_{n-1}\Phi_*,
\]
the last one resulting from the equivalence $E(n)_* \otimes{BP_*} BP_*/I_{n}^{\infty} \simeq E(n)_*/I_{n}^{\infty}$. Consequently, by adjunction we obtain
\[
\Hom_{BP_*BP}(X,M_nY) \simeq \Hom_{E(n)_*E(n)}(\Phi_*X,\Gamma^{E(n)_*}_{n-1}\Phi_*Y),
\]
and the claim follows.  
\end{proof}

\begin{prop}
Fix an integer $n\ge 0$. For any $X,Y \in \Stable^{< \infty}_{BP_*BP}$, there is a natural strongly convergent spectral sequence of the form
\[
E_1^{k,s,t} = \begin{cases}
	\Ext^{s,t}_{E(k)_*E(k)}(E(k)_* \otimes X,E(k)_*/I_k^{\infty}\otimes Y) & k \le n \\
	0 & k > n
\end{cases}
\]
converging to $\Ext^{s,t}_{BP_*BP}(X,L_nY) \cong \Ext^{s,t}_{E(n)_*E(n)}(E(n)_* \otimes_{BP_*} X,E(n)_* \otimes_{BP_*}Y)$.
\end{prop}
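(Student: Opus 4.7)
The plan is to obtain this spectral sequence as the Bousfield--Kan spectral sequence associated to the truncated chromatic tower, in complete analogy with the proof of \Cref{thm:css}. Since the tower has only finitely many layers (we extend it by $L_kY \simeq L_nY$ for $k \geq n$, whose fibers $M_kY$ vanish for $k > n$), strong convergence will come for free.

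First, I would apply the internal-Hom functor $\iHom(X,-)$ to the finite tower
\[
L_nY \longrightarrow L_{n-1}Y \longrightarrow \cdots \longrightarrow L_1Y \longrightarrow L_0Y,
\]
obtaining an analogous finite tower in $\Stable_{BP_*BP}$ whose $k$-th fiber is $\iHom(X, M_kY)$ for $0\le k\le n$ and zero for $k>n$. Setting up the Bousfield--Kan spectral sequence exactly as in the proof of \Cref{thm:css} (via \cite[Prop.~1.2.2.14]{ha}) then yields a spectral sequence
\[
E_1^{k,\ast} \cong \pi_\ast\iHom(X, M_kY) \implies \pi_\ast\iHom(X, L_nY),
\]
and strong convergence follows from the finiteness of the tower.

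Next I would translate this into Ext-groups. To invoke \Cref{lem:iHomhomotopy}, I need to know that $M_kY$ and $L_nY$ belong to $\Stable_{BP_*BP}^{<\infty}$; this is exactly the calculation carried out in the proof of \Cref{thm:css}, using the formulas $M_kY\simeq \Sigma^{-k}v_k^{-1}BP_*/I_k^\infty\otimes_{BP_*}Y$ and $\Gamma_nY\simeq \Sigma^{-n}BP_*/I_n^\infty\otimes_{BP_*}Y$ from \cite[Cor.~8.9]{bhv1}, together with the hypothesis $Y \in \Stable_{BP_*BP}^{<\infty}$. Applying \Cref{lem:iHomhomotopy} gives the identifications
\[
\pi_\ast\iHom(X, M_kY) \cong \Ext^{\ast}_{BP_*BP}(X, M_kY), \quad \pi_\ast\iHom(X, L_nY) \cong \Ext^{\ast}_{BP_*BP}(X, L_nY).
\]
The $E_1$-term is then rewritten using \Cref{lem:e1termbasechange}, which gives exactly the desired
\[
\Ext^{s,t}_{E(k)_*E(k)}(E(k)_*\otimes X,\, E(k)_*/I_k^{\infty}\otimes Y).
\]

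Finally, to identify the abutment, I would use \Cref{thm:bhvbpresults}(1) to write $L_nY\simeq \Phi(n)^*\Phi(n)_*Y$ and apply the $(\Phi(n)_*,\Phi(n)^*)$-adjunction:
\[
\Ext^{s,t}_{BP_*BP}(X, L_nY)\cong \Ext^{s,t}_{E(n)_*E(n)}(E(n)_*\otimes_{BP_*}X,\, E(n)_*\otimes_{BP_*}Y).
\]
I do not anticipate a serious obstacle; the one point that requires some care is the boundedness verification for the intermediate objects $M_kY$ and $L_nY$, which is needed to pass from $\pi_*\iHom$ to classical Ext via \Cref{lem:iHomhomotopy}, but this is a direct copy of the corresponding argument for \Cref{thm:css}.
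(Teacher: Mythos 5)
Your proposal is correct and follows the same route as the paper: truncate the chromatic tower at height $n$, run the Bousfield--Kan spectral sequence as in the proof of \Cref{thm:css}, rewrite the $E_1$-term via \Cref{lem:e1termbasechange}, and identify the abutment through the $(\Phi_*,\Phi^*)$-adjunction using \Cref{thm:bhvbpresults}. The boundedness checks you flag are indeed the same ones already carried out in the proof of \Cref{thm:css}, so there is nothing further to add.
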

\begin{proof}
Truncating the chromatic tower at height $n$ and using the same argument as in the proof of \Cref{thm:css}, we obtain a strongly convergent spectral sequence
\[
E_1^{k\le n} = \Ext_{BP_*BP}(X,M_{k}Y) \implies \Ext_{BP_*BP}(X,L_{n}Y). 
\] 
Applying \Cref{lem:e1termbasechange} to this, we obtain the desired $E_1$-term. The identification of the abutment follows a similar argument. 
\end{proof}
We thus recover \cite[Thm.~5.1]{hov_sadofsky}:

\begin{cor}
	The chromatic spectral sequence converging to $\Ext_{E(n)_*E(n)}^{s,t}(E(n)_*,E(n)_*)$ has $E_1$-term 
	\[
E_1^{k,s,t} = \begin{cases}
	\Ext^{s,t}_{E(k)_*E(k)}(E(k)_*,E(k)_*/I_k^{\infty}) & k \le n \\
	0 & k > n.
\end{cases}
\]
\end{cor}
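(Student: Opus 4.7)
The plan is to derive the corollary as a direct specialization of the previous proposition, applied to $X = Y = BP_*$. Both are objects of $\Stable_{BP_*BP}^{<\infty}$ (indeed of $\Stable_{BP_*BP}^{\leq 0}$), so the hypotheses of the proposition are satisfied.

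First I would simplify the $E_1$-term. The base-change identity $E(k)_* \otimes_{BP_*} BP_* \simeq E(k)_*$ converts
\[
\Ext^{s,t}_{E(k)_*E(k)}(E(k)_* \otimes_{BP_*} BP_*, E(k)_*/I_k^{\infty} \otimes_{BP_*} BP_*) \cong \Ext^{s,t}_{E(k)_*E(k)}(E(k)_*, E(k)_*/I_k^{\infty})
\]
for $k \leq n$, while the term vanishes for $k > n$. This matches the desired $E_1$-page exactly.

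Next I would identify the abutment. The proposition already provides an equivalence with $\Ext^{s,t}_{E(n)_*E(n)}(E(n)_* \otimes_{BP_*} BP_*, E(n)_* \otimes_{BP_*} BP_*) \cong \Ext^{s,t}_{E(n)_*E(n)}(E(n)_*, E(n)_*)$, which is what we want.

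Since the proposition already delivers strong convergence, there is essentially no obstacle here — the corollary is a routine specialization. The only subtlety worth verifying is the compatibility with the explicit equivalence $L_n BP_* \otimes_{BP_*} X \simeq \Phi^* \Phi_* X$ of Theorem~\ref{thm:bhvbpresults}(1), together with the fact that $\Phi_* BP_* \simeq E(n)_*$; but both are immediate from the construction of $\Phi_*$ as base-change along $BP_* \to E(n)_*$.
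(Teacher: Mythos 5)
Your proof is correct and coincides with the paper's intended argument: the corollary is stated as an immediate specialization of the preceding proposition to $X = Y = BP_*$, with the identifications $E(k)_* \otimes_{BP_*} BP_* \simeq E(k)_*$ and $E(k)_*/I_k^{\infty} \otimes_{BP_*} BP_* \simeq E(k)_*/I_k^{\infty}$ exactly as you describe. Nothing further is needed.
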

This spectral sequence was used by Hovey and Sadofsky in their calculations of the $E(n)$-local Picard group, see  \cite{hov_sadofsky}.

\bibliography{duality}

\begin{thebibliography}{MRW77}

\bibitem[Bak00]{baker_ha}
Andrew Baker.
\newblock On the cohomology of some {H}opf algebroids and {H}attori-{S}tong
  theorems.
\newblock {\em Homology Homotopy Appl.}, 2:29--40, 2000.

\bibitem[Bak01]{baker_elliptic}
Andrew Baker.
\newblock On the {A}dams {$E\sb 2$}-term for elliptic cohomology.
\newblock In {\em Homotopy methods in algebraic topology ({B}oulder, {CO},
  1999)}, volume 271 of {\em Contemp. Math.}, pages 1--15. Amer. Math. Soc.,
  Providence, RI, 2001.

\bibitem[Bal05]{balmer_spec}
Paul Balmer.
\newblock The spectrum of prime ideals in tensor triangulated categories.
\newblock {\em J. Reine Angew. Math.}, 588:149--168, 2005.

\bibitem[Bar16]{barthel_cc}
Tobias Barthel.
\newblock Chromatic completion.
\newblock {\em Proc. Amer. Math. Soc.}, 144(5):2263--2274, 2016.

\bibitem[Bar17]{ars}
Tobias Barthel.
\newblock Auslander-{R}eiten sequences, {B}rown-{C}omenetz duality, and the
  {$K(n)$}-local generating hypothesis.
\newblock {\em Algebr. Represent. Theory}, 20(3):569--581, 2017.

\bibitem[BBP]{acsc}
Tobias Barthel, Agnes Beaudry, and Eric Peterson.
\newblock The homology of inverse limits and the algebraic chromatic splitting
  conjecture.
\newblock In preparation.

\bibitem[BDS16]{bds}
Paul Balmer, Ivo Dell'Ambrogio, and Beren Sanders.
\newblock Grothendieck-{N}eeman duality and the {W}irthm\"uller isomorphism.
\newblock {\em Compos. Math.}, 152(8):1740--1776, 2016.

\bibitem[{Bea}15]{beaudry_csc}
A.~{Beaudry}.
\newblock {The Chromatic Splitting Conjecture at {$n=p=2$}}.
\newblock {\em arXiv e-prints}, February 2015.
\newblock To appear in \emph{Geometry and Topology}.

\bibitem[BH16]{barthel2014e_2}
Tobias Barthel and Drew Heard.
\newblock The {$E\sb 2$}-term of the {$K(n)$}-local {$E\sb n$}-{A}dams spectral
  sequence.
\newblock {\em Topology Appl.}, 206:190--214, 2016.

\bibitem[BHV15]{bhv1}
Tobias Barthel, Drew Heard, and Gabriel Valenzuela.
\newblock Local duality in algebra and topology.
\newblock {\em arXiv preprint \url{http://arxiv.org/abs/1511.03526}}, 2015.

\bibitem[Bor94]{Borceux_2}
Francis Borceux.
\newblock {\em Handbook of categorical algebra. 2}, volume~51 of {\em
  Encyclopedia of Mathematics and its Applications}.
\newblock Cambridge University Press, Cambridge, 1994.
\newblock Categories and structures.

\bibitem[BSS17]{bss}
Tobias Barthel, Tomer Schlank, and Nathaniel Stapleton.
\newblock Ultrachromatic homotopy theory.
\newblock In preparation, 2017.

\bibitem[Dev98]{devinatz_csc}
Ethan~S. Devinatz.
\newblock A counterexample to a {BP}-analogue of the chromatic splitting
  conjecture.
\newblock {\em Proc. Amer. Math. Soc.}, 126(3):907--911, 1998.

\bibitem[DH04]{devinatzhopkins_subgroups}
Ethan~S. Devinatz and Michael~J. Hopkins.
\newblock Homotopy fixed point spectra for closed subgroups of the {M}orava
  stabilizer groups.
\newblock {\em Topology}, 43(1):1--47, 2004.

\bibitem[DHS88]{nilpotence1}
Ethan~S. Devinatz, Michael~J. Hopkins, and Jeffrey~H. Smith.
\newblock Nilpotence and stable homotopy theory. {I}.
\newblock {\em Ann. of Math. (2)}, 128(2):207--241, 1988.

\bibitem[EM65]{em_relative}
Samuel Eilenberg and J.~C. Moore.
\newblock Foundations of relative homological algebra.
\newblock {\em Mem. Amer. Math. Soc. No.}, 55:39, 1965.

\bibitem[EM66]{em_coalgebras}
Samuel Eilenberg and John~C. Moore.
\newblock Homology and fibrations. {I}. {C}oalgebras, cotensor product and its
  derived functors.
\newblock {\em Comment. Math. Helv.}, 40:199--236, 1966.

\bibitem[Fre66]{freyd_stablehomotopy}
Peter Freyd.
\newblock Stable homotopy.
\newblock In {\em Proc. {C}onf. {C}ategorical {A}lgebra ({L}a {J}olla,
  {C}alif., 1965)}, pages 121--172. Springer, New York, 1966.

\bibitem[Gai13]{gaitsgory_indcoh}
Dennis Gaitsgory.
\newblock ind-coherent sheaves.
\newblock {\em Mosc. Math. J.}, 13(3):399--528, 553, 2013.

\bibitem[Goe08]{goerss_quasi-coherent_2008}
Paul~G. Goerss.
\newblock Quasi-coherent sheaves on the moduli stack of formal groups.
\newblock {\em arXiv:0802.0996}, February 2008.
\newblock arXiv: 0802.0996.

\bibitem[GWX]{gheorghe2017bp}
Bogdan Gheorghe, Guozhen Wang, and Zhouli Xu.
\newblock {$BP_*BP$}-comodules and motivic {$C\tau$}-modules.
\newblock In preparation.

\bibitem[Hen07]{henn_finite}
Hans-Werner Henn.
\newblock On finite resolutions of {$K(n)$}-local spheres.
\newblock In {\em Elliptic cohomology}, volume 342 of {\em London Math. Soc.
  Lecture Note Ser.}, pages 122--169. Cambridge Univ. Press, Cambridge, 2007.

\bibitem[HM70]{gm_dgha}
Dale Husemoller and John~C. Moore.
\newblock Differential graded homological algebra of several variables.
\newblock pages 397--429, 1970.

\bibitem[Hol09]{hollander}
Sharon Hollander.
\newblock Geometric criteria for {L}andweber exactness.
\newblock {\em Proc. Lond. Math. Soc. (3)}, 99(3):697--724, 2009.

\bibitem[Hov95]{hovey_csc}
Mark Hovey.
\newblock Bousfield localization functors and {H}opkins' chromatic splitting
  conjecture.
\newblock In {\em The \v {C}ech centennial ({B}oston, {MA}, 1993)}, volume 181
  of {\em Contemp. Math.}, pages 225--250. Amer. Math. Soc., Providence, RI,
  1995.

\bibitem[Hov02]{hovey_morita}
Mark Hovey.
\newblock Morita theory for {H}opf algebroids and presheaves of groupoids.
\newblock {\em Amer. J. Math.}, 124(6):1289--1318, 2002.

\bibitem[Hov04]{hovey_htptheory}
Mark Hovey.
\newblock Homotopy theory of comodules over a {H}opf algebroid.
\newblock In {\em Homotopy theory: relations with algebraic geometry, group
  cohomology, and algebraic {$K$}-theory}, volume 346 of {\em Contemp. Math.},
  pages 261--304. Amer. Math. Soc., Providence, RI, 2004.

\bibitem[Hov07]{hovey_chromatic}
Mark Hovey.
\newblock Chromatic phenomena in the algebra of {${\rm BP}\sb *{\rm
  BP}$}-comodules.
\newblock In {\em Elliptic cohomology}, volume 342 of {\em London Math. Soc.
  Lecture Note Ser.}, pages 170--203. Cambridge Univ. Press, Cambridge, 2007.

\bibitem[HPS97]{hps_axiomatic}
Mark Hovey, John~H. Palmieri, and Neil~P. Strickland.
\newblock Axiomatic stable homotopy theory.
\newblock {\em Mem. Amer. Math. Soc.}, 128(610):x+114, 1997.

\bibitem[HS98]{nilpotence2}
Michael~J. Hopkins and Jeffrey~H. Smith.
\newblock Nilpotence and stable homotopy theory. {II}.
\newblock {\em Ann. of Math. (2)}, 148(1):1--49, 1998.

\bibitem[HS99a]{hov_sadofsky}
Mark Hovey and Hal Sadofsky.
\newblock Invertible spectra in the {$E(n)$}-local stable homotopy category.
\newblock {\em J. London Math. Soc. (2)}, 60(1):284--302, 1999.

\bibitem[HS99b]{hovey_morava_1999}
Mark~A Hovey and Neil~P Strickland.
\newblock Morava ${K}$-theories and localisation.
\newblock {\em Mem. Am. Math. Soc.}, 139(666):viii+100--100, 1999.

\bibitem[HS05a]{hs_leht}
Mark Hovey and Neil Strickland.
\newblock Comodules and {L}andweber exact homology theories.
\newblock {\em Adv. Math.}, 192(2):427--456, 2005.

\bibitem[HS05b]{hs_localcohom}
Mark Hovey and Neil Strickland.
\newblock Local cohomology of {$\rm BP_*BP$}-comodules.
\newblock {\em Proc. London Math. Soc. (3)}, 90(2):521--544, 2005.

\bibitem[Isa14]{isaksen2014stable}
Daniel~C Isaksen.
\newblock Stable stems.
\newblock {\em arXiv preprint arXiv:1407.8418}, 2014.

\bibitem[Jan88]{jannsen_contetcohom}
Uwe Jannsen.
\newblock Continuous \'etale cohomology.
\newblock {\em Math. Ann.}, 280(2):207--245, 1988.

\bibitem[JLR96]{jlr_thick}
Alain Jeanneret, Peter~S. Landweber, and Douglas~C. Ravenel.
\newblock A note on the thick subcategory theorem.
\newblock In {\em {Algebraic topology: new trends in localization and
  periodicity ({S}ant {F}eliu de {G}uixols, 1994)}}, volume 136 of {\em Progr.
  Math.}, pages 201--207. Birkh\"auser, Basel, 1996.

\bibitem[Joy02]{joyalqcat}
A.~Joyal.
\newblock Quasi-categories and {K}an complexes.
\newblock {\em J. Pure Appl. Algebra}, 175(1-3):207--222, 2002.
\newblock Special volume celebrating the 70th birthday of Professor Max Kelly.

\bibitem[JY80]{johnson_torsion}
David~Copeland Johnson and Zen-ichi Yosimura.
\newblock Torsion in {B}rown-{P}eterson homology and {H}urewicz homomorphisms.
\newblock {\em Osaka J. Math.}, 17(1):117--136, 1980.

\bibitem[Kra15]{krause_deriving}
Henning Krause.
\newblock Deriving {A}uslander's formula.
\newblock {\em Doc. Math.}, 20:669--688, 2015.

\bibitem[Lan73a]{land_ii}
Peter~S. Landweber.
\newblock Annihilator ideals and primitive elements in complex bordism.
\newblock {\em Illinois J. Math.}, 17:273--284, 1973.

\bibitem[Lan73b]{land_filt}
Peter~S. Landweber.
\newblock Associated prime ideals and {H}opf algebras.
\newblock {\em J. Pure Appl. Algebra}, 3:43--58, 1973.

\bibitem[Lan79]{lan79}
Peter~S. Landweber.
\newblock New applications of commutative algebra to {B}rown-{P}eterson
  homology.
\newblock In {\em Algebraic topology, {W}aterloo, 1978 ({P}roc. {C}onf.,
  {U}niv. {W}aterloo, {W}aterloo, {O}nt., 1978)}, volume 741 of {\em Lecture
  Notes in Math.}, pages 449--460. Springer, Berlin, 1979.

\bibitem[Loc06]{lockridge_thesis}
Keir~H. Lockridge.
\newblock {\em The generating hypothesis in general stable homotopy
  categories}.
\newblock ProQuest LLC, Ann Arbor, MI, 2006.
\newblock Thesis (Ph.D.)--University of Washington.

\bibitem[Lur09]{htt}
Jacob Lurie.
\newblock {\em Higher topos theory}, volume 170 of {\em Annals of Mathematics
  Studies}.
\newblock Princeton University Press, Princeton, NJ, 2009.

\bibitem[Lur16]{ha}
Jacob Lurie.
\newblock {\em Higher {A}lgebra}.
\newblock 2016.
\newblock Draft available from author's website as
  \url{http://www.math.harvard.edu/~lurie/papers/HA.pdf}.

\bibitem[Lur17]{sag}
Jacob Lurie.
\newblock Spectral algebraic geometry.
\newblock {\em Preprint}, 2017.
\newblock Draft available from author's website as
  \url{http://www.math.harvard.edu/~lurie/papers/SAG-rootfile.pdf}.

\bibitem[Mat16]{mathew_galois}
Akhil Mathew.
\newblock The {G}alois group of a stable homotopy theory.
\newblock {\em Adv. Math.}, 291:403--541, 2016.

\bibitem[Mil92]{miller_finiteloc}
Haynes Miller.
\newblock Finite localizations.
\newblock {\em Bol. Soc. Mat. Mexicana (2)}, 37(1-2):383--389, 1992.
\newblock Papers in honor of Jos{\'e} Adem (Spanish).

\bibitem[MR77]{miller_ravenel_local}
Haynes~R. Miller and Douglas~C. Ravenel.
\newblock Morava stabilizer algebras and the localization of {N}ovikov's
  {$E\sb{2}$}-term.
\newblock {\em Duke Math. J.}, 44(2):433--447, 1977.

\bibitem[MRW77]{mrw_77}
Haynes~R. Miller, Douglas~C. Ravenel, and W.~Stephen Wilson.
\newblock Periodic phenomena in the {A}dams-{N}ovikov spectral sequence.
\newblock {\em Ann. of Math. (2)}, 106(3):469--516, 1977.

\bibitem[MS95]{mahowaldsadofsky}
Mark Mahowald and Hal Sadofsky.
\newblock {$v_n$} telescopes and the {A}dams spectral sequence.
\newblock {\em Duke Math. J.}, 78(1):101--129, 1995.

\bibitem[MS16]{ms_picard}
Akhil Mathew and Vesna Stojanoska.
\newblock The {P}icard group of topological modular forms via descent theory.
\newblock {\em Geom. Topol.}, 20(6):3133--3217, 2016.

\bibitem[Nau07]{naumann_stack_2007}
Niko Naumann.
\newblock The stack of formal groups in stable homotopy theory.
\newblock {\em Advances in Mathematics}, 215(2):569--600, 2007.

\bibitem[Nee96]{neeman_brown}
Amnon Neeman.
\newblock The {G}rothendieck duality theorem via {B}ousfield's techniques and
  {B}rown representability.
\newblock {\em J. Amer. Math. Soc.}, 9(1):205--236, 1996.

\bibitem[Pal01]{palmieri_memoir}
John~H. Palmieri.
\newblock Stable homotopy over the {S}teenrod algebra.
\newblock {\em Mem. Amer. Math. Soc.}, 151(716):xiv+172, 2001.

\bibitem[Pop73]{popescu}
N.~Popescu.
\newblock {\em Abelian categories with applications to rings and modules}.
\newblock Academic Press, London-New York, 1973.
\newblock London Mathematical Society Monographs, No. 3.

\bibitem[Pri04]{pribble}
Ethan Pribble.
\newblock {\em Algebraic stacks for stable homotopy theory and the algebraic
  chromatic convergence theorem}.
\newblock ProQuest LLC, Ann Arbor, MI, 2004.
\newblock Thesis (Ph.D.)--Northwestern University.

\bibitem[Qui69]{quillen_fg}
Daniel Quillen.
\newblock On the formal group laws of unoriented and complex cobordism theory.
\newblock {\em Bull. Amer. Math. Soc.}, 75:1293--1298, 1969.

\bibitem[Rav84]{ravenel_localization}
Douglas~C. Ravenel.
\newblock Localization with respect to certain periodic homology theories.
\newblock {\em Amer. J. Math.}, 106(2):351--414, 1984.

\bibitem[Rav86]{greenbook}
Douglas~C. Ravenel.
\newblock {\em Complex cobordism and stable homotopy groups of spheres}, volume
  121 of {\em Pure and Applied Mathematics}.
\newblock Academic Press, Inc., Orlando, FL, 1986.

\bibitem[Rav92]{orangebook}
Douglas~C. Ravenel.
\newblock {\em Nilpotence and periodicity in stable homotopy theory}, volume
  128 of {\em Annals of Mathematics Studies}.
\newblock Princeton University Press, Princeton, NJ, 1992.
\newblock Appendix C by Jeff Smith.

\bibitem[Sit14]{sitte2014local}
Tobias Sitte.
\newblock {\em Local cohomology sheaves on algebraic stacks}.
\newblock PhD thesis, 2014.

\bibitem[Smi11]{smithling}
Brian~D. Smithling.
\newblock On the moduli stack of commutative, 1-parameter formal groups.
\newblock {\em J. Pure Appl. Algebra}, 215(4):368--397, 2011.

\bibitem[SS03]{schwedeshipley_modules}
Stefan Schwede and Brooke Shipley.
\newblock Stable model categories are categories of modules.
\newblock {\em Topology}, 42(1):103--153, 2003.

\bibitem[VOV79]{MR521257}
F.~Van~Oystaeyen and A.~Verschoren.
\newblock {\em Reflectors and localization}, volume~41 of {\em Lecture Notes in
  Pure and Applied Mathematics}.
\newblock Marcel Dekker, Inc., New York, 1979.
\newblock Application to sheaf theory.

\bibitem[Wei94]{weibel_homological}
Charles~A. Weibel.
\newblock {\em An introduction to homological algebra}, volume~38 of {\em
  Cambridge Studies in Advanced Mathematics}.
\newblock Cambridge University Press, Cambridge, 1994.

\bibitem[Yos76]{yosiuma_pd}
Zen-ichi Yosimura.
\newblock Projective dimension of {B}rown-{P}eterson homology with modulo
  {$(p,v\sb{1},\dots, v\sb{n-1})$} coefficients.
\newblock {\em Osaka J. Math.}, 13(2):289--309, 1976.

\end{thebibliography}
\bibliographystyle{alpha}
\end{document}